\DeclareOldFontCommand{\rm}{\normalfont\rmfamily}{\mathrm}
\DeclareOldFontCommand{\sf}{\normalfont\sffamily}{\mathsf}
\DeclareOldFontCommand{\tt}{\normalfont\ttfamily}{\mathtt}
\DeclareOldFontCommand{\bf}{\normalfont\bfseries}{\mathbf}
\DeclareOldFontCommand{\it}{\normalfont\itshape}{\mathit}
\DeclareOldFontCommand{\sl}{\normalfont\slshape}{\@nomath\sl}
\DeclareOldFontCommand{\sc}{\normalfont\scshape}{\@nomath\sc}
\def\dif{\mathrm{d}}
\def\Pas{\P\text{-a.s.}}
\def\diag{\text{diag}}
\renewcommand{\P}{\ensuremath{\mathbb{P}}}
\DeclareMathOperator*{\argmin}{arg\,min}
\newcommand{\vt}{\ensuremath{\vartheta}}
\DeclareMathOperator{\tr}{tr}
\def\cin{\xrightarrow{n\to\infty}}
\def\cip{\xrightarrow[\hphantom{\longrightarrow}]{p}}
\def\cas{\xrightarrow[\hphantom{\longrightarrow}]{a.s.}}
\def\cid{\xrightarrow[\hphantom{\longrightarrow}]{w}}
\def\cil2{\xrightarrow[\hphantom{\longrightarrow}]{L^2}}
\def\ccip{\xrightarrow{ ~p~ }}
\def\ccid{\xrightarrow{ ~w~ }}
\def\ccil2{\xrightarrow[c]{ \,L^2\, }}
\def\limn{\lim_{n\to\infty}}
\DeclareMathAlphabet\mathbfcal{OMS}{cmsy}{b}{n} 
\theoremstyle{plain}
\newtheorem{theorem}{Theorem}[section]
\newtheorem{lemma}[theorem]{Lemma}
\newtheorem{proposition}[theorem]{Proposition}
\newtheorem{definition}[theorem]{Definition}
\newtheorem{corollary}[theorem]{Corollary}
\newtheorem{assumptionletter}{{\textbf{Assumption}}}
\theoremstyle{definition}
\newtheorem{remark}[theorem]{Remark}
\numberwithin{equation}{section}
\title{Quasi-maximum likelihood estimation for cointegrated  continuous-time state space models observed at low frequencies}
\author{Vicky Fasen-Hartmann \setcounter{footnote}{1}\thanks{Institute of Stochastics, Englerstra{\ss}e 2,
D-76131 Karlsruhe, Germany, email: vicky.fasen@kit.edu} \label{fnote}
\thanks{Financial
support by the Deutsche Forschungsgemeinschaft through the research
grant FA 809/2-2 is gratefully acknowledged.} \and Markus Scholz \thanks{Allianz Lebensversichung-AG, Reinsburgstra{\ss}e 19, D-70197 Stuttgart, Germany. }}
\date{}
\begin{document}
%
\maketitle
%
\begin{abstract} \vspace*{-1cm}
In this paper, we investigate quasi-maximum likelihood (QML) estimation  for the parameters of a cointegrated solution of a continuous-time linear state space model  observed at discrete time points. The class of cointegrated solutions of continuous-time linear state space models is equivalent to the class of cointegrated continuous-time ARMA (MCARMA) processes.
As a start, some pseudo-innovations are constructed to be able to define a
QML-function. Moreover,
 the parameter vector is divided appropriately in long-run and short-run parameters using a
representation for cointegrated solutions of continuous-time linear state space models  as a sum of a  Lévy process plus a stationary solution of a linear state space model.
 Then, we establish the consistency of our estimator in three steps.
First, we show  the consistency for the  QML estimator of the long-run parameters. In the next step, we calculate its consistency rate.
Finally, we use these results to prove the consistency for the QML estimator of the short-run parameters. After all, we derive the limiting distributions of the estimators. The long-run parameters are asymptotically mixed normally distributed, whereas the short-run parameters are asymptotically  normally distributed.
The performance of the QML estimator is demonstrated by a simulation study.
\end{abstract}

\noindent
\begin{tabbing}
\emph{AMS Subject Classification 2010: }\=Primary:  91G70, 62H12
\\ \> Secondary:  62M10, 60F05
\end{tabbing}

\vspace{0.2cm}\noindent\emph{Keywords:} 
Cointegration, (super-)consistency, identifiability, Kalman filter, MCARMA process, 
pseudo-innovation, quasi-maximum likelihood estimation, state space model.

\section{Introduction}
%
%

This paper deals with quasi-maximum likelihood (QML) estimation  for the parameters of a  cointegrated solution of a continuous-time linear state space model.
The source of randomness in our  model is a L\'evy process, i.e., an $\R^m$-valued stochastic process $L=(L(t))_{t\geq 0}$ with $L(0)=0_m$ $\mathbb{P}$-a.s., stationary and independent increments, and c\`adl\`ag sample paths.
A typical example of a Lévy process is a Brownian motion. More details on L\'evy processes can be found, e.g., in the monograph of Sato \cite{sato1999levy}.
For deterministic matrices $A\in \R^{N\times N}$, $B\in  \R^{N\times m}$, $C\in \R^{d\times N}$ and an $\R^m$-valued Lévy process $L$,
an $\R^d$-valued continuous-time \textsl{linear state space model} $(A,B,C,L)$  is defined by the state and observation equation
\begin{align} \label{SPR}
\begin{array}{rcl}
    \dif X(t)& = & A X(t) \dif t + B \dif L(t),\\
    Y(t) &=& C X(t).
\end{array}
\end{align}
The state vector process $X=(X(t))_{t \geq 0}$ is an $\R^N$-valued process and the output process $Y=(Y(t))_{t \geq 0}$ is an $\R^d$-valued process. Since the driving noise in this model is a Lévy process
the model allows flexible margins. In particular, the margins can be Gaussian if we use a Brownian motion as Lévy process.

The topic of this paper are \textsl{cointegrated} solutions $Y$ of linear state space models. Cointegrated means that $Y$ is non stationary but has stationary increments, and
there exist linear combinations of $Y$ which are stationary. The cointegration space is the space spanned by all vectors $\beta$ so that $\beta^{\mathsf{T}}Y$ is stationary.
Without any transformation of the state space model \eqref{SPR} it is impossible to see clearly if there exists a cointegrated solution, not to mention the form of
the  cointegration space.
In the case of a minimal state-space model (see Bernstein~\cite{Bernstein2009} for a definition), the eigenvalues of $A$ determine whether a solution $Y$ may be  stationary or cointegrated. If the
 eigenvalue $0$ of $A$ has the same geometric and algebraic multiplicity $0<c<\min(d,m)$, and all other eigenvalues of $A$ have negative real parts, then
there exists a cointegrated solution $Y$.
In that case
 $Y$ has the form
\begin{align}
\label{eqRepContCointSSMSepar}
    Y(t)=C_1Z +C_1B_1L(t) +Y_{st}(t),
\end{align}
where $B_1\in \R^{c\times m}$ and $C_1\in \R^{d\times c}$ have rank $c$ (see Fasen-Hartmann and Scholz \cite[Theorem 3.2]{FasenScholz1}). The starting vector $Z$ is a $c$-dimensional random  vector. The process $Y_{st}=(Y_{st}(t))_{t\geq 0}$ is a stationary  solution of the state space model
 \begin{align} \label{SPstationaer}
\begin{array}{rcl}
        \dif X_{st}(t)& = & A_2 X(t) \dif t + B_2 \dif L(t),\\
    Y_{st}(t) &=& C_2 X_{st}(t),
\end{array}
\end{align}
driven by the Lévy process $L$ with $A_2\in\R^{(N-c)\times(N-c)}$, $B_2\in\R^{(N-c)\times m}$ and $C_2\in\R^{d\times(N-c)}$.
The matrices $A,A_1,A_2$, $B,B_1,B_2$, $C,C_1,C_2$ and $C_3$ are related through an invertible transformation matrix $T\in\R^{N\times N}$ such that
\beao
    TA{T}^{-1}=\begin{pmatrix}0_{c\times c} & 0_{c\times (N-c)}\\ 0_{ (N-c)\times c} & A_2\end{pmatrix}=:A', \quad
    TB=\begin{pmatrix} {B_1}^\mathsf{T},  {B_2}^\mathsf{T}\end{pmatrix}^\mathsf{T}=:B' \quad
 \mbox{and} \quad  C{T}^{-1}=\begin{pmatrix}C_1, & C_2 \end{pmatrix}=:C',
\eeao
where ${B_i}^\mathsf{T}$ denotes the transpose of $B_i$ ($i=1,2$) and $0_{ (N-c)\times c}\in\R^{(N-c)\times c}$ denotes a matrix with only zero components.
The process $Y$  in \eqref{eqRepContCointSSMSepar} is obviously cointegrated with cointegration space spanned by the orthogonal of $C_1$
if the covariance matrix $\Cov(L(1))$ is non-singular. 
The probabilistic properties of $Y$ are analyzed in detail in Fasen-Hartmann and Scholz~\cite{FasenScholz1} and lay the
groundwork for the present paper. Remarkable is that $Y$ is a solution of the state space model
 $(A',B',C',L)$ as well.

The class
of cointegrated solutions of linear state space models is huge. They are equal to the class of cointegrated multivariate continuous-time  ARMA (MCARMA) processes
(see Fasen-Hartmann and Scholz~\cite{FasenScholz1}). As the name suggests, MCARMA processes are the continuous-time versions of the popular and well-established
ARMA processes in discrete-time. In finance and economics continuous-time models
provide the basis for option pricing, asset allocation and term structure theory. The underlying observations
of asset prices, exchange rates, and interest rates are often irregularly spaced, in particular,
in the context of high frequency data. Consequently, one often works with continuous-time models
which infer the implied dynamics and properties of the estimated model at different frequencies (see Chen et al.~\cite{CHEN2017293}).
Fitting discrete-time models to such kind of data have the drawback that the model parameters are not
time-invariant: If the sampling frequency changes, then the  parameters of the discrete-time model change as well.
The advantages of continuous-time modelling over discrete-time modelling in economics and finance
 are described in detail, i.a., in
the distinguished papers of Bergstrom~\cite{Bergstrom1990}, Phillips~\cite{Phillips1991}; Chambers, McCrorie and
 Thornton~\cite{ChambersMcCrorieThornton2017} and in   signal processing, systems and control they are described
 in Sinha and Rao~\cite{SinhaRao}. \label{pageref4}
In particular,  MCARMA models are applied in diversified fields as signal processing, systems and control
(see Garnier and Wang~\cite{GarnierWang2008}, Sinha and Rao~\cite{SinhaRaoBuch}),
high-frequency financial econometrics (see Todorov \cite{Todorov2009})  and financial mathematics (see Benth et al. \cite{benth2014}, Andresen et al. \cite{andresen}).
Thornton and Chambers~\cite{Thornton:Chambers:2012} use them as well for modelling sunspot data.
Empirical relevance of non-stationary
MCARMA processes in economics and in finance is shown, i.a., in Thornton and Chambers~\cite{Thornton:Chambers:2012,Thornton:Chambers:2016,Thornton:Chambers:2017}.



There is not much known about the statistical inference of cointegrated  Lévy driven MCARMA models.
In the context of non-stationary MCARMA processes most attention is paid to Gaussian MCAR$(p)$ (multivariate continuous-time AR) processes: 
 An algorithm to estimate the structural parameters in a  Gaussian
MCAR$(p)$ model by maximum-likelihood started already by Harvey and Stock~\cite{Harvey:Stock:1985,Harvey:Stock:1988,HarveyStock1989} and
were further explored in the well-known paper of Bergstrom~\cite{Bergstrom:1997}.  Zadrozny~\cite{Zadrozny1988} investigates continuous-time Brownian motion driven ARMAX models
allowing stocks and flows at different frequencies and higher order integration.
These papers use the state space representation of the MCARMA process and Kalman filtering techniques to compute the Gaussian likelihood function.  In a recent paper Thornton and Chambers~\cite{Thornton:Chambers:2017} extend the results to MCARMA
processes with mixed stock-flow data using an exact discrete-time ARMA representation of the low-frequency observed MCARMA process. However, all of the papers have in common on the one hand,
that they do not analyze the asymptotic properties of the estimators. On the other hand, they are not able to
estimate the cointegration space directly or rather relate their results to cointegrated models.

Besides, statistical inference and identification of  continuously and discretely observed cointegrated Gaussian MCAR$(1)$ processes, which are  homogeneous Gaussian diffusions, are considered  in
Kessler and Rahbek \cite{KesslerRahbek2001,Kessler}; Stockmarr and
Jacobsen~\cite{litstockmarrjacobsen1994} 
and frequency domain estimators for cointegrated Gaussian MCAR$(p)$ models
are topic of 
Chambers and McCrorie~\cite{Chambers:McCrorie:2007}.
There are only a few papers investigating non-Gaussian cointegrated MCARMA processes. 
For example, Fasen \cite{Fasen2013} treats a multiple regression model in continuous-time. There the stationary part is a
 multivariate Ornstein-Uhlenbeck process and the process is observed on an equidistant time-grid.
 The model in  Fasen \cite{Fasen2014}
  is similar but the stationary part is an MCARMA process  and the process is observed on a high-frequency time grid.

The aim of this paper is to investigate  QML estimators  for $C_1,B_1$ and the parameters of the stationary process $Y_{st}$ from the discrete-time observations $Y(h),\ldots,Y(nh)$ where $h>0$ is fixed.
The parameters of $C_1$ are the long-run parameters, whereas the other parameters are the short-run parameters.
Although there exist results on QML for discrete-time processes they can unfortunately not directly  applied to the sampled process
for the following reasons.

MCARMA processes sampled equidistantly belong to the class
of  ARMA processes  (see Thornton and Chambers~\cite{Thornton:Chambers:2017} and
 Chambers, McCrorie and Thornton \cite{ChambersMcCrorieThornton2017}). But identification problems arise from employing the
 ARMA structure for the estimation of MCARMA parameters. That is until now an unsolved problem (see as well the overview article Chambers, McCrorie and Thornton \cite{ChambersMcCrorieThornton2017}).
 Moreover, in this representation the innovations are only uncorrelated and not iid (independent and identically distributed). However, statistical inference for cointegrated ARMA models is
done only for an iid noise elsewise even
a Gaussian white noise, see, e.g., the monographs of Johansen \cite{Johansen1991},  Lütkepohl~\cite{Luetkepohl2005} and Reinsel~\cite{Reinsel},
and cannot be used for estimation of Lévy driven MCARMA processes.

Another attempt is to use the representation of the sampled continuous-time state space model as discrete-time state space model (see Zadrozny~\cite{Zadrozny1988}).
That is what we do in this paper.
Sampling $Y$ with distance $h>0$ results in
 $Y^{(h)}:=(Y(kh))_{k\in\N_0}=:(Y_k^{(h)})_{k\in\N_0}$, a cointegrated solution of the discrete time state-space model
 \begin{align} \label{e1}
\begin{array}{rcl}
    X_k^{(h)}& = & \e^{Ah}X_k^{(h)} + \xi_k^{(h)},\\
    Y^{(h)}_k &=& C X^{(h)}_k,
\end{array}
\end{align}
where $(\xi_k^{(h)})_{k\in\N_0}:=(\int_{(k-1)h}^{kh}\e^{A(kh-t)}B \dif L(t))_{k\in\N_0}$ is an iid  sequence.
For cointegrated solutions of discrete-time state space models of the form
 \begin{align} \label{SS}
\begin{array}{rcl}
    X_k& = & AX_k + \epsilon_k,\\
    Y_k &=& C X_k+ \epsilon_k,
\end{array}
\end{align}
where $(\epsilon_k)_{k\in\N_0}$ is a white noise, asymptotic properties of the QML estimator
were investigated in   the unpublished work of Bauer and Wagner \cite{BauerWagner2002}.
An essential difference between the state space model \eqref{e1} and \eqref{SS} is that in \eqref{e1}
the noise is only going into the state equation, whereas in \eqref{SS} it is going into both the state
and the observation equation. That is an essential difference. Because an advantage of model \eqref{SS} over our state space model is that it is already in innovation form, i.e.,
 the white noise $(\epsilon_k)_{k\in\N_0}$ can be represented
 by finitely many past values of $(Y_k)_{k\in\N_0}$ due to
 \beam \label{stern}
    \epsilon_k=Y_k-C(A-BC)^kX_0-C\sum_{j=1}^k(A-BC)^{j-1}BY_{k-j}.
 \eeam
But in our model  \eqref{e1} it is not possible to write the noise $(\xi_k^{(h)})_{k\in\N_0}$
by the past of $(Y_k^{(h)})_{k\in\N_0}$.  Therefore, we are not able to apply the asymptotic results of  Bauer and Wagner~\cite{BauerWagner2002}
to the setting of this paper.


We use the Kalman-filter to calculate the linear innovations
and to construct an error correction form  (see Fasen-Hartmann and Scholz~\cite[Proposition 5.4 and Theorem 5.7]{FasenScholz1}). However,  the linear innovations and the error correction form use infinitely many past values
in contrast to the usual finite order form for VARMA models and discrete-time state space models  as, e.g., in~ Lütkepohl and Claessen~\cite{LuetkepohlClaessen}, Saikkonen~\cite{Saikkonen92}, Yap and Reinsel~\cite{YapReinsel95a}  and respectively, Aoki~\cite{Aoki90}, Bauer and Wagner~\cite{BauerWagner2002} (see \eqref{stern}).
Indeed,
the linear innovations are stationary, but in general it is not possible to say anything about their mixing properties. Hence, standard limit results for stationary mixing processes cannot be applied.
 For more details in the case of stationary MCARMA models we refer to
Schlemm and Stelzer~\cite{SchlemmStelzer2012}.

 The representation of the innovations motivates the definition of the pseudo-innovations
and hence, the pseudo-Gaussian likelihood function.
The term pseudo reflects in the first case that we do not use the real innovations and in the second case that we do not have a Gaussian model.
This approach is standard for stationary models (see~Schlemm and Stelzer~\cite{SchlemmStelzer2012a}) but not so well investigated for non-stationary models.
In  our model, the pseudo-innovations   are  as well non-stationary  and hence,  classical methods for QML estimation for stationary models do not work, e.g., the convergence of the quasi-maximum-likelihood function by a law of large numbers or an ergodic theorem.

Well-known achievements on ML estimation for integrated and cointegrated processes in discrete time
are Saikkonen \cite{Saikkonen1993,Saikkonen1995}. Under the constraint that the ML estimator is consistent and the long-run parameter estimator
satisfies some appropriate order of consistency condition, the papers present stochastic equicontinuity criteria for the standardized score vector and the standardized Hessian matrix such
that the asymptotic distribution of the ML estimator can be calculated. The main contributions of these papers are the derivation of stochastic equicontinuity and
weak convergence results of various first and second order sample moments from integrated processes. The concepts are applied to a ML estimator in a simple
regression model with integrated and stationary regressors.

In this paper, we follow the ideas of Saikkonen \cite{Saikkonen1995} to derive the asymptotic distribution of the QML estimator by providing evidence that these three criteria are satisfied.
However, our model does not satisfy the stochastic equicontinuity conditions of Saikkonen \cite{Saikkonen1993,Saikkonen1995} such that the weak convergence results
 of these papers
cannot be applied directly. But we use a similar approach. In the derivation of the consistency of the QML estimator   we even require local Lipschitz continuity
  for some parts of the likelihood-function which is stronger than local stochastic equicontinuity. For this reason we pay our attention in this paper to local Lipschitz continuity instead of stochastic equicontinuity.
%

Although Saikkonen \cite{Saikkonen1993,Saikkonen1995} presents no general conditions for the analysis of the consistency and the order of consistency of a ML estimator in an integrated or cointegrated
model,  the verification of the consistency of the ML estimator in the regression example of Saikkonen~\cite{Saikkonen1995} suggests, how to proceed in more general models. That is done by a stepwise approach:
In the first step, we prove the consistency of the long-run parameter estimator and in the second step its consistency rate; the
long-run parameter estimator is super-consistent. In the third step, we are
able to prove the consistency of the short-run parameter estimator.
 However,   important for the  proofs is, as in Saikkonen~\cite{Saikkonen1995}, the appropriate division of the likelihood-function where
 one part of the likelihood-function  depends only on the short-run parameters and is based on stationary processes. This decomposition is not obvious
and presumes as well a  splitting of the pseudo-innovations in a non-stationary and a stationary part  depending only on the
short-run parameters.

The paper is structured on the following way.
An introduction into  QML estimation for cointegrated continuous-time linear state space models is given in \Cref{Section:3}.
 First, we state in \Cref{sec:parametric model} the assumptions about our parametric family of cointegrated output processes $Y$. Then, we define the pseudo-innovations for the QML estimation by the Kalman filter in \Cref{Section:3:subsection:0}. Based on the pseudo-innovations we calculate the pseudo-Gaussian log-likelihood function in \Cref{Section:3:subsection:1}. 
 In \Cref{Section:3:subsection:2} we introduce some identifiability conditions to get a unique minimum of the likelihood function.
The main results of this paper are given  in \Cref{sec:4} and \Cref{sec:5}.
First, we show  the consistency of the QML estimator in \Cref{sec:4}.
Next, we calculate the asymptotic distribution of the QML estimator in \Cref{sec:5}. The short-run QML estimator is asymptotically normally distributed
 and mimics the properties of QML estimators for stationary models. In contrast,  the long-run QML estimator is asymptotically mixed normally distributed
 with a convergence rate of $n$ instead of $\sqrt{n}$ as occurring in stationary models.
 Finally, in Section~\ref{Section: Simulation study} we show the performance of our estimator in a simulation study,
 and in \Cref{sec:conclusion} we give some conclusions. Eventually,
   in \Cref{Appendix: Auxiliary results} we present some asymptotic results and local Lipschitz continuity conditions which we use throughout the paper. Because of their technicality and to keep the paper readable,  they are moved to the appendix.

\subsubsection*{Notation} \vspace*{-0.3cm}

We use as norms the Euclidean norm $ \lVert\cdot \rVert$ in $\R^d$
and the Frobenius norm $ \lVert\cdot \rVert$ for
matrices, which is submultiplicative. $0_{d\times s}$ denotes the  zero matrix in $\R^{d\times s}$ and $I_{d}$ is the
identity matrix in $\R^{d\times d}$.
 For a matrix $A\in\R^{d\times d}$ we denote by $A^T$ its transpose, $\tr(A)$ its trace,  $\det(A)$ its determinant,  $\rank~A$ its rank,  $\lambda_{\text{min}}(A)$ its smallest eigenvalue and  $\sigma_{\text{min}}(A)$ its smallest singular value. If $A$ is symmetric and positive semi-definite, we write $A^{\frac12}$ for the principal square root, i.e., $A^{\frac12}$ is a symmetric, positive semi-definite matrix satisfying $A^\frac12 A^\frac12 = A$.
For a matrix  $A \in \R^{d \times s}$ with $\rank~A=s$, $A^{\perp}$ is a
$d\times (d-s)$-dimensional matrix with rank $(d-s)$ satisfying $A^\mathsf{T}A^{\perp}=0_{s\times(d-s)}$ and $A^{\perp \mathsf{T}}A=0_{(d-s)\times s}$.
For two matrices $A \in \R^{d \times s}$ and $B \in \R^{r \times n}$, we denote by $A \otimes B$ the Kronecker product which is an element of $\R^{dr \times sn}$, by
$\text{vec}(A)$ the operator which converts the matrix $A$ into a column vector and by $\text{vech}(A)$  the operator which converts a symmetric matrix $A$ into a
column vector by vectorizing only the lower triangular part of $A$.
We write $\partial_i$ for the partial derivative operator with respect to the $i^{th}$ coordinate and $\partial_{i,j}$
for the second partial derivative operator
with respect to the $i^{th}$ and $j^{th}$ coordinate.
Further, for a matrix function $f(\vt)$ in $\R^{d\times m}$ with $\vt\in\R^s$ the gradient with respect to the parameter vector $\vt$
 is denoted by $\nabla_{\vartheta} f(\vt)=\frac{\partial\text{vec}(f(\vt))}{\partial\vartheta^T}\in\R^{dm\times s}$.
Let $\xi=(\xi_k)_{k\in\N}$ and $\eta=(\eta_k)_{k\in\N}$ be $d$-dimensional stochastic processes then
$\Gamma_{\xi,\eta}(l)=\Cov(\xi_1,\eta_{1+l})$ and $\Gamma_{\xi}(l)=\Cov(\xi_1,\xi_{1+l})$, $l\in\N_0$, are the covariance functions.
Finally,
we denote with $\cid$ weak convergence and with $\cip$ convergence in probability. In general $\mathfrak{C}$ denotes a constant which may change from line to line.

\section{Step-wise quasi-maximum likelihood estimation} \label{Section:3}

\subsection{Parametric model} \label{sec:parametric model}

Let $\Theta\subset\R^s$, $s\in\N$, be a parameter space.
We assume that we have a parametric family $(Y_\vt)_{\vt\in\Theta}$  of solutions of continuous-time cointegrated linear state space models of the form
\begin{align}
\label{eqRepContCointSSMSepartheta}
    Y_{\vt}(t)=C_{1,\vt}Z +C_{1,\vt}B_{1,\vt}L_{\vt}(t) +Y_{st,\vt}(t), \quad t\geq 0,
\end{align}
where $Z$ is a random starting vector, $L_\vt=(L_\vt(t))_{t\geq 0}$ is a Lévy process and  $Y_{st,\vt}=(Y_{st,\vt}(t))_{t\geq 0}$ is a  stationary  solution of the  state-space model 
\begin{align}   \label{statespacestationary}
\begin{array}{rl}
    \dif X_{st,\vt}(t)& =  A_{2,\vt} X_{st,\vt}(t) \dif t + B_{2,\vt} \dif L_{\vt}(t), \\
    Y_{st,\vt}(t) &= C_{2,\vt} X_{st,\vt}(t),
    \end{array}
\end{align}
with $ A_{2,\vartheta}\in \R^{(N-c)\times (N-c)} $, $B_{1,\vartheta}\in \R^{c\times m}$, $B_{2,\vartheta}\in \R^{(N-c)\times m}$, $C_{1,\vartheta}\in \R^{d\times c}$ and $C_{2,\vartheta}\in \R^{d\times (N-c)}$ where \linebreak $c\leq  \min(d,m)\leq N$. In the  parameterization of the L\'evy process $L_\vartheta$
only the covariance matrix $\Sigma_\vt^L$ of  $L_\vartheta$ is parameterized.

The parameter vector of the underlying process $Y$ is denoted by $\vartheta^0$, i.e., \linebreak
$(A_{2},B_{1},B_{2},C_{1},C_{2},L)=(A_{2,\vt^0},B_{1,\vt^0},B_{2,\vt^0},C_{1,\vt^0},C_{2,\vt^0},L_{\vt^0})$ where $Y_{st}$
is a stationary solution of the  state space model $(A_2,B_2,C_2,L)$. Throughout the paper, we
shortly write $(A_{2,\vt},B_{1,\vt},B_{2,\vt},C_{1,\vt},C_{2,\vt},L_\vt)$ for the  cointegrated state space model with solution $Y_\vt$ as defined in \eqref{eqRepContCointSSMSepartheta}. To be more precise we have the following assumptions on our model.


\begin{assumptionletter}
\label{AssMBrownMot}
For any $\vartheta \in\Theta$ the cointegrated state space model  $(A_{2,\vartheta},B_{1,\vartheta},B_{2,\vartheta},C_{1,\vartheta},C_{2,\vartheta},L_\vartheta)$ satisfies the following conditions:
\begin{enumerate}
\item[(A1)]
The parameter space $\Theta$ is a compact subset of $\R^s$.
\item[(A2)]
The true parameter vector $\vartheta^0$ lies in the interior of the parameter space $\Theta$.
\item[(A3)]   The L\'evy process $L_\vartheta$ has mean zero and non-singular covariance matrix $\Sigma^L_\vartheta=\E [ L_\vartheta(1) L_\vartheta(1)^\mathsf{T}]$.
Moreover, there exists a $\delta>0$ such that $\E \Vert L_{\vartheta}(1)\Vert^{4+\delta}<\infty$ for any $\vartheta\in\Theta$.

\item[(A4)]
The eigenvalues of $A_{2,\vartheta}$ have strictly negative real parts.
\item[(A5)] The triplet $(A_{2,\vt},B_{2,\vt},C_{2,\vt})$ is minimal with McMillan degree $N-c$  (see Hannan and Deistler~\cite[Chapter 4.2]{hannandeistler2012} for the definition of McMillan degree).
\item[(A6)] The matrices $B_{1,\vartheta}\in\R^{c\times m}$ and $C_{1,\vartheta}\R^{d\times c}$ have full rank $c\leq \min(d,m)$.
\item[(A7)] The $c$-dimensional starting random vector $Z$  does not depend on $\vartheta$, $\E\|Z\|^2<\infty$ and $Z$ is independent of $L_\vartheta$.
\item[(A8)] The functions $\vt\mapsto A_{2,\vt}$, $\vt\mapsto B_{i,\vt}$, $\vt\mapsto C_{i,\vt}$ for $i\in\{1,2\}$, $\vt\mapsto \Sigma^L_{\vt}$
 and  $\vt_1\mapsto C_{1,\vartheta_1}^{\perp}$ are three times continuously differentiable, where $C_{1,\vt}^{\perp}$ is the unique
  lower triangular matrix with  $C_{1,\vt}^{\perp \mathsf{T}} C_{1,\vt}^{\perp}=I_{d-c}$ and
   $C_{1,\vt}^{\perp \mathsf{T}} C_{1,\vt}=0_{(d-c)\times c}$.
\item[(A9)] $A_{\vt}:=\diag(0_{c\times c},A_{2,\vt})\in\R^{N\times N}$, $B_{\vt}:=(B_{1,\vt}^\mathsf{T},B_{2,\vt}^\mathsf{T})^\mathsf{T}\in\R^{N\times m}$, $C_{\vt}:=(C_{1,\vt}, C_{2,\vt})\in\R^{d\times N}$. Moreover, $C_{\vt}$
    has full rank $d\leq N$.
\item[(A10)] For any $\lambda,\lambda'\in\sigma(A_\vt)=\sigma(A_{2,\vt})\cup\{0\}$ and any $k\in\Z\backslash\{0\}$: $\lambda-\lambda'\not=2\pi k/h$ (Kalman-Bertram criterion).
\end{enumerate}
\end{assumptionletter}


\begin{remark} $\mbox{}$
\begin{enumerate}
    \item[(i)] (A1) and (A2) are standard assumptions for QML estimation.
    \item[(ii)] Assumption (A3)-(A4) are sufficient assumptions to guarantee that there exists a stationary solution $Y_{st,\vt}$ of
    the state space model \eqref{statespacestationary} (see Marquardt and Stelzer~\cite{MarquardtStelzer2007}).
    \item[(iii)] Due to the assumption (A5) the state space representation of $Y_{st,\vt}$ in \eqref{statespacestationary}
    with  $A_{2,\vartheta}\in \R^{(N-c)\times (N-c)} $, $B_{2,\vartheta}\in \R^{(d-c)\times m}$  and $C_{2,\vartheta}\in \R^{d\times (N-c)}$ is unique  up to a change of basis.
    \item[(iv)] We require that $c$ respectively the cointegration rank $r=d-c$ is known in advance to be able to estimate the model adequately. In reality, it is necessary to estimate first the cointegration rank $r$ and obtain from this $c=d-r$. Possibilities to do this is via information criteria.
    \item[(v)] Using the notation in (A9) it is possible to show that $Y_\vt$ is the solution of the state space model $(A_\vt,B_\vt,C_\vt,L_\vt)$.
    Furthermore, on account of (A5) and (A6), the state space model $(A_\vt,B_\vt,C_\vt)$ is minimal with McMillan degree $N$ (see Fasen-Hartmann and Scholz~\cite[Lemma 2.4]{FasenScholz1})
    and hence, as well unique up to a change of basis. That in combination with (A10) is sufficient that
    $Y_\vt^{(h)}:=(Y_\vt^{(h)}(k))_{k\in\N_0}:=(Y_\vt(kh))_{k\in\N_0}$ is a solution of a discrete-time state space model with McMillan degree $N$ as well.
\end{enumerate}
\end{remark}

Furthermore, we assume that the parameter space $\Theta$ is a product space of the form $\Theta=\Theta_1\times\Theta_2$ with $\Theta_1\subset \R^{s_1}$ and $\Theta_2\subset \R^{s_2}$, $s=s_1+s_2$. The vector  $\vartheta=(\vartheta_1^\mathsf{T},\vartheta_2^\mathsf{T})^\mathsf{T}\in\Theta$ is a  $s$-dimensional parameter vector  where $\vartheta_1\in\Theta_1$ and $\vartheta_2\in\Theta_2$.
The idea is that $\vt_1$ is the $s_1$-dimensional  vector of long-run parameters  modelling the cointegration space and hence, responsible for the cointegration of $Y_\vt$. Whereas $\vartheta_2$ is the $s_2$-dimensional vector of short-run parameters which has no influence on the cointegration of the model.
 Since the matrix $C_{1,\vartheta}$ is responsible for the cointegration property (see Fasen-Hartmann and Scholz~\cite[Theorem 3.2]{FasenScholz1}) 
 we parameterize $C_{1,\vartheta}$ with the sub-vector $\vt_1$ and use for all the other matrices $\vt_2$.  In summary, we parameterize the matrices with the following sub-vectors
$(A_{2,\vartheta_2},B_{1,\vartheta_2},B_{2,\vartheta_2},C_{1,\vartheta_1},C_{2,\vartheta_2},L_{\vartheta_2})$ for $(\vt_1,\vt_2)\in\Theta_1\times\Theta_2=\Theta$.

\subsection{Linear and pseudo-innovations}
\label{Section:3:subsection:0}

In this section, we define the pseudo-innovations which are essential to define the QML function.
Sampling at distance $h>0$ maps the class of continuous-time  state space models to discrete-time  state space models.
That class of state space models is not in innovation form and hence, we use a result from Fasen-Hartmann and Scholz \cite{FasenScholz1} to calculate the linear innovations 
by the Kalman filter. The Kalman  filter constructs the linear innovations as
$\varepsilon_\vt^*(k)=Y_{\vt}(kh)-P_{k-1}Y_\vt(kh)$
where $P_k$ is the orthogonal projection onto $\overline{\text{span}}\{Y_\vt(lh):-\infty<l\leq k\}$  where the closure is taken in the Hilbert space of square-integrable random variables with inner product $(Z_1,Z_2)\mapsto \E (Z_1^{\mathsf{T}}Z_2)$.
Thus, $\varepsilon_\vt^*(k)$ is orthogonal to the Hilbert space generated by $\overline{\text{span}}\{Y_\vt(lh),-\infty<l<k\}$. In our setting, the linear innovations are as follows.

\begin{proposition}[Fasen-Hartmann and Scholz~\cite{FasenScholz1}]
\label{psin}
Let $\Omega_\vt^{(h)}$ be the unique solution of the discrete-time algebraic Riccati equation \begin{align*}
\Omega_\vt^{(h)}=&\mathrm{e}^{A_\vt h}\Omega_\vt^{(h)} \mathrm{e}^{A_\vt^\mathsf{T} h}-\e^{A_\vt h}\Omega_\vt^{(h)} C_\vt^\mathsf{T}\big(C_\vt\Omega_\vt^{(h)} C_\vt^\mathsf{T}\big)^{-1}C_\vt\Omega_\vt^{(h)} \mathrm{e}^{A_\vt^\mathsf{T} h}+{\Sigma}_\vt^{(h)},
\end{align*}
where
\begin{eqnarray*}
    \Sigma^{(h)}_\vt=\int_0^h\left(\begin{array}{cc}
        B_{1,\vt}\Sigma_\vt^LB_{1,\vt}^T & \e^{A_{2,\vt}u}B_{2,\vt}\Sigma_\vt^LB_{1,\vt}^T \\
        B_{1,\vt}\Sigma_\vt^{L}B_{2,\vt}^T\e^{A_{2,\vt}^Tu} & \e^{A_{2,\vt}u}B_{2,\vt}\Sigma_\vt^L B_{2,\vt}^T\e^{A_{2,\vt}^Tu}
    \end{array}\right)\,du,
\end{eqnarray*}
and $K_\vt^{(h)}=\mathrm{e}^{A_\vt h}\Omega_\vt^{(h)} C_\vt^\mathsf{T}\big(C_\vt\Omega_\vt^{(h)} C_\vt^\mathsf{T}\big)^{-1}
$ be the steady-state Kalman gain matrix. Then, the \textbf{linear innovations} $\varepsilon_\vt^*=(\varepsilon_\vt^*(k))_{k\in\N}$ of $Y_\vt^{(h)}:=(Y_\vt^{(h)}(k))_{k\in\N}:=(Y_\vt(kh))_{k\in\N}$ are the unique stationary solution
of the state space equation
\beam \label{2.3}
\begin{array}{rll}
     \varepsilon_{\vt}^*(k)&=Y_{\vt}^{(h)}(k)-C_\vt X^{*}_\vt(k),  & \text{ where }\\
     X_{\vt}^{*}(k)&=(\e^{A_\vt h}-K_\vt^{(h)} C_\vt) X^{*}_{\vt}(k-1)+K_\vt^{(h)}Y_{\vt}^{(h)}(k-1).
\end{array}
\eeam
Moreover, $
V_\vartheta^{(h)}=\E(\varepsilon_\vt^*(1)\varepsilon_\vt^*(1)^{\mathsf{T}})=C_\vartheta\Omega_\vartheta^{(h)} C_\vartheta^\mathsf{T}$ is the prediction covariance matrix of the Kalman filter.
\end{proposition}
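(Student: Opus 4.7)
The plan is to recognize the sampled process as the output of a discrete-time state space model with iid state noise and no observation noise, apply the Kalman filter, and pass to the steady state; the main subtlety will be dealing with the non-stationarity of $Y_\vt$.

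First, sampling the state equation of $(A_\vt, B_\vt, C_\vt, L_\vt)$ at times $kh$ yields
$$X_\vt(kh) = \mathrm{e}^{A_\vt h} X_\vt((k-1)h) + \xi_\vt^{(h)}(k), \qquad Y_\vt^{(h)}(k) = C_\vt X_\vt(kh),$$
where $\xi_\vt^{(h)}(k) := \int_{(k-1)h}^{kh} \mathrm{e}^{A_\vt(kh-s)} B_\vt\, \mathrm{d}L_\vt(s)$ is iid by the independent-increment property of $L_\vt$. A direct computation via the Itô isometry for Lévy integrals, together with the block-diagonal structure $A_\vt = \diag(0_{c\times c}, A_{2,\vt})$ in (A9) and the partition $B_\vt = (B_{1,\vt}^\mathsf{T}, B_{2,\vt}^\mathsf{T})^\mathsf{T}$, then identifies $\Cov(\xi_\vt^{(h)}(1)) = \Sigma_\vt^{(h)}$ as displayed (the upper-left block uses $\mathrm{e}^{0\cdot u}=I_c$, hence is constant in $u$).

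Next, I would apply the discrete-time Kalman filter to this model. Existence and uniqueness of a positive semidefinite stabilizing solution $\Omega_\vt^{(h)}$ of the discrete algebraic Riccati equation, together with the contraction property that $\mathrm{e}^{A_\vt h} - K_\vt^{(h)} C_\vt$ has spectral radius strictly less than one, are classical consequences of detectability of $(\mathrm{e}^{A_\vt h}, C_\vt)$ and stabilizability of the noise input. Detectability is obtained from minimality (A5), (A6), (A9) of the continuous-time triple $(A_\vt,B_\vt,C_\vt)$ combined with the Kalman--Bertram condition (A10), which ensures that observability is preserved under equidistant sampling with step $h$; stabilizability follows from non-singularity of $\Sigma_\vt^L$ in (A3) and from (A5). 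Invertibility of $C_\vt \Omega_\vt^{(h)} C_\vt^\mathsf{T}$ is then a consequence of $\rank C_\vt = d$ from (A9) together with the positivity of $\Omega_\vt^{(h)}$ on the controllable subspace.

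The main obstacle is that $Y_\vt$ itself is non-stationary, so the usual stationary Kalman-filter argument does not apply verbatim to a process defined on $\N_0$. To handle this, I would use the decomposition \eqref{eqRepContCointSSMSepartheta} to write $Y_\vt^{(h)}$ as a random-walk part plus the stationary $Y_{st,\vt}^{(h)}$, and verify that the filter output $X_\vt^*$ defined by \eqref{2.3} (with the appropriate initialization absorbing the unit-root component of $Y_\vt^{(h)}$) makes $\varepsilon_\vt^*(k) := Y_\vt^{(h)}(k) - C_\vt X_\vt^*(k)$ stationary; the cancellation of the random-walk component exploits the co-integrating structure encoded in $C_{1,\vt}$. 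Membership of $\varepsilon_\vt^*(k)$ in $\overline{\mathrm{span}}\{Y_\vt(lh): l \leq k\}$ is immediate by iterating \eqref{2.3}, while orthogonality to $\overline{\mathrm{span}}\{Y_\vt(lh): l < k\}$ follows from the Kalman projection identity combined with the iid property of $\xi_\vt^{(h)}$ and its independence from the past of $Y_\vt^{(h)}$. Finally, $V_\vt^{(h)} = C_\vt \Omega_\vt^{(h)} C_\vt^\mathsf{T}$ is a direct consequence of the definition of $\Omega_\vt^{(h)}$ as the one-step state prediction error covariance.
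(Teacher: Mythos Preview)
The paper does not prove this proposition; it is quoted verbatim from Fasen-Hartmann and Scholz~\cite{FasenScholz1}, as the attribution in the statement indicates, so there is no in-paper argument to compare against. Your outline is the standard route for such a result and matches what the cited reference does: discretize the continuous-time state equation to get an iid-noise discrete state-space model, compute $\Sigma_\vt^{(h)}$ from the block-diagonal form of $A_\vt$, appeal to detectability and stabilizability (minimality together with the Kalman--Bertram condition (A10)) for existence and uniqueness of a stabilizing DARE solution, and then treat the non-stationarity of $Y_\vt^{(h)}$ through the cointegration decomposition. The post-proposition discussion in the present paper (extending $Y_\vt^{(h)}$ to negative indices and writing down the initial condition $X_\vt^*(1)$ explicitly) is precisely the elaboration of what you flag as the ``main obstacle''. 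The one place I would tighten is the invertibility of $C_\vt\Omega_\vt^{(h)}C_\vt^\mathsf{T}$: with zero observation noise the DARE theory does not automatically give $\Omega_\vt^{(h)}$ positive definite, only positive semidefinite and stabilizing, so you need a separate argument (controllability of the noise into all modes, using full rank of $B_{1,\vt}$ and (A3), (A5)) to rule out degeneracy on the range of $C_\vt^\mathsf{T}$.
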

%
We obtain recursively from \eqref{2.3}
\beao
   \varepsilon_{\vt}^*(k)=Y _{\vt}^{(h)}(k)-C_\vt(\e^{A_\vt h}-K_\vt^{(h)} C_\vt)^{k-1} X_\vt^{*}(1)- \sum_{j=1}^{k-1} C_\vt(\e^{A_\vt h}-K_\vt^{(h)} C_\vt)^{j-1}K_\vt^{(h)}
     Y_{\vt}^{(h)}(k-j).
\eeao
However, the question arises which choice of $X_\vt^{*}(1)$ of the Kalman recursion results in the stationary $(\varepsilon_{\vt}^*(k))_{k\in\N}$. This we want to elaborate in the
following.

Since all eigenvalues of $(\e^{A_\vt h}-K_\vt^{(h)} C_\vt)$ lie inside the unit circle
(see Scholz~\cite[Lemma 4.6.7]{Scholz}) the matrix function $\mathsf{l}(z,\vartheta):=I_d-C_\vt\sum_{j=1}^\infty\big(\mathrm{e}^{A_\vt h}-K_\vt^{(h)}C_\vt\big)^{j-1}K_\vt^{(h)}z^j$ for $z\in\C$ is well-defined and
due to Fasen-Hartmann and Scholz \cite[Lemma 5.6]{FasenScholz1}  has the representation
as
\beao
    \mathsf{l}(z,\vartheta)=-\alpha(\vt)C_{1,\vt}^{\perp\,\mathsf{T}}z+\mathsf{k}(z,\vartheta)(1-z)
\eeao
for the linear filter $\mathsf{k}(z,\vartheta):=I_d-\sum_{j=1}^\infty {\mathsf{k}}_j(\vartheta)z^{j}$ with ${\mathsf{k}}_j(\vartheta):=\sum_{i=j}^{\infty} C_\vt(\e^{A_\vt h}-K_\vt^{(h)} C_\vt)^{i}K_\vt^{(h)}\in\R^{d\times d}$ and a matrix $\alpha(\vt)\in \R^{d\times(d-c)}$ with full rank $d-c$. This representation of $\mathsf{l}(z,\vartheta)$
helps  us to choose the initial condition  $X_\vt^{*}(1)$ in the Kalman recursion appropriate so that the linear innovations $(\varepsilon_{\vt}^*(k))_{k\in\N}$ are really stationary.
Therefore, it is important to know that the stationary process $Y_{st,\vt}$ can be defined on $\R$ as $Y_{st,\vt}(t)=\int_{-\infty}^tf_{st,\vt}(t-s)\,dL_\vt(s)$, $t\in\R$, with
$f_{st,\vt}(u)=C_{2,\vt}\e^{A_{2,\vt}u}B_{2,\vt}\1_{\left[0,\infty\right)}(u)$ and the Levy process
 $(L_\vt(t))_{t\in\R}$ is defined on the negative real-line as $L_\vt(t)=\wt L_\vt(-t-)$ for $t<0$ with an independent copy $(\wt L_\vt(t))_{t\geq 0}$ of $(L_\vt(t))_{t\geq 0}$.
Then, we have an adequate definition of $\Delta Y_\vt^{(h)}(k):=Y_{\vt}^{(h)}(k)-Y_{\vt}^{(h)}(k-1)$ for negative values as well as
 $\Delta Y_\vt^{(h)}(k)=\int_{-\infty}^{kh}f_{\Delta,\vt}(kh-s)\,d L_\vt(s)$, $k\in\Z$,
with $f_{\Delta,\vt}(u)=f_{st,\vt}(u)-f_{st,\vt}(u-h)+C_{1,\vt}B_{1,\vt}\1_{\left[0,h\right)}(u)$.
As notation, we use $\mathsf{B}$ for the backshift operator satisfying $\mathsf{B}Y_\vt^{(h)}(k)=Y_\vt^{(h)}({k-1})$.

\begin{lemma}
\label{LemInnovaSeqProp}
Let \autoref{AssMBrownMot} hold.
Then, \begin{align*}
    \varepsilon_{\vt}^*(k)&=-\Pi(\vartheta)Y_{\vt}^{(h)}(k-1)+ {k}(\mathsf{B},\vt)\Delta Y_{\vt}^{(h)}(k),
    \quad k\in\N,
    \end{align*}
     where $\Pi(\vt)=\alpha(\vt)C_{1,\vt}^{{\perp\,\mathsf{T}}}$ and  $\mathsf{k}(\mathsf{B},\vt)\Delta Y_{\vt}^{(h)}(k)=\Delta Y_{\vt}^{(h)}(k)-\sum_{j=1}^\infty {\mathsf{k}}_j(\vt)\Delta Y_{\vt}^{(h)}(k-j)$.
    The matrix sequence $(\mathsf{k}_j(\vartheta))_{j\in\N }$ is uniformly exponentially bounded, i.e. there exist   constants $\mathfrak{C}>0$ and $0<\rho<1$ such that $\sup_{\vartheta\in\Theta}\|\mathsf{k}_j(\vartheta)\|\leq \mathfrak{C}\rho^j$, $j\in\N$.
\end{lemma}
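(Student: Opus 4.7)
The plan is to unravel the Kalman recursion from \Cref{psin}, recognise the resulting expression as a linear filter $\mathsf{l}(\mathsf{B},\vt)$ applied to $Y_\vt^{(h)}$, and then plug in the factorisation of $\mathsf{l}(z,\vt)$ recalled from Fasen-Hartmann and Scholz just before the lemma.

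First I would iterate the recursion $X_\vt^{*}(k)=(\e^{A_\vt h}-K_\vt^{(h)}C_\vt)X_\vt^{*}(k-1)+K_\vt^{(h)}Y_\vt^{(h)}(k-1)$ from \eqref{2.3}. Since all eigenvalues of $\e^{A_\vt h}-K_\vt^{(h)}C_\vt$ lie strictly inside the unit circle (Scholz, Lemma 4.6.7) and $Y_\vt^{(h)}$ has uniformly bounded second moments on $\N_0$, the unique stationary solution is obtained by extending $Y_\vt^{(h)}$ to $\Z$ via the two-sided representation $Y_{st,\vt}(t)=\int_{-\infty}^{t}f_{st,\vt}(t-s)\,dL_\vt(s)$ already set up in the excerpt, and writing
\begin{align*}
X_\vt^{*}(k)=\sum_{j=1}^{\infty}\bigl(\e^{A_\vt h}-K_\vt^{(h)}C_\vt\bigr)^{j-1}K_\vt^{(h)}Y_\vt^{(h)}(k-j),\qquad k\in\N,
\end{align*}
with the series converging absolutely in $L^2$. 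Substituting into $\varepsilon_\vt^{*}(k)=Y_\vt^{(h)}(k)-C_\vt X_\vt^{*}(k)$ yields $\varepsilon_\vt^{*}(k)=\mathsf{l}(\mathsf{B},\vt)\,Y_\vt^{(h)}(k)$ with the $\mathsf{l}(z,\vt)$ defined in the paragraph preceding the lemma.

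Next I would apply the factorisation $\mathsf{l}(z,\vt)=-\alpha(\vt)C_{1,\vt}^{\perp\,\mathsf{T}}z+\mathsf{k}(z,\vt)(1-z)$ (cited from Fasen-Hartmann and Scholz, Lemma 5.6). Since $(1-\mathsf{B})Y_\vt^{(h)}(k)=\Delta Y_\vt^{(h)}(k)$, one immediately obtains
\begin{align*}
\varepsilon_\vt^{*}(k)=-\alpha(\vt)C_{1,\vt}^{\perp\,\mathsf{T}}Y_\vt^{(h)}(k-1)+\mathsf{k}(\mathsf{B},\vt)\Delta Y_\vt^{(h)}(k)=-\Pi(\vt)Y_\vt^{(h)}(k-1)+\mathsf{k}(\mathsf{B},\vt)\Delta Y_\vt^{(h)}(k),
\end{align*}
with $\Pi(\vt)=\alpha(\vt)C_{1,\vt}^{\perp\,\mathsf{T}}$ as required. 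Here one should check that the interchange of the backshift operator with the infinite series underlying $\mathsf{k}(z,\vt)$ is legitimate in $L^2$, which follows once the exponential bound on $\mathsf{k}_j(\vt)$ below is established and since $\Delta Y_\vt^{(h)}$ is stationary with finite second moments.

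For the uniform exponential bound on $\mathsf{k}_j(\vt)=\sum_{i=j}^{\infty}C_\vt(\e^{A_\vt h}-K_\vt^{(h)}C_\vt)^{i}K_\vt^{(h)}$, I would argue as follows: by \autoref{AssMBrownMot}(A8) together with the continuous dependence of the Riccati solution $\Omega_\vt^{(h)}$ on $\vt$, the map $\vt\mapsto\e^{A_\vt h}-K_\vt^{(h)}C_\vt$ is continuous on the compact set $\Theta$. The spectral radius $\vt\mapsto\rho\bigl(\e^{A_\vt h}-K_\vt^{(h)}C_\vt\bigr)$ is therefore upper semicontinuous and, by Scholz's lemma, strictly less than $1$ everywhere, so it attains a maximum $\rho_0<1$. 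A standard compactness/continuity argument (pick an $\rho\in(\rho_0,1)$ and use Gelfand's formula together with continuity of operator norms) delivers a constant $\mathfrak{C}>0$ independent of $\vt$ such that $\sup_{\vt\in\Theta}\|(\e^{A_\vt h}-K_\vt^{(h)}C_\vt)^{i}\|\leq\mathfrak{C}\rho^{i}$; geometric summation and the continuous-hence-bounded factors $\|C_\vt\|$, $\|K_\vt^{(h)}\|$ then yield $\sup_{\vt\in\Theta}\|\mathsf{k}_j(\vt)\|\leq\widetilde{\mathfrak{C}}\rho^{j}$.

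The main obstacle is not any single algebraic identity but the uniform-in-$\vt$ spectral-radius bound: one has to pass from a pointwise statement about eigenvalues to a uniform norm bound on matrix powers. The bookkeeping for the shift of $Y_\vt^{(h)}$ to negative indices and the interchange of the Kalman recursion with the $\mathsf{l}(z,\vt)$ factorisation is then essentially algebraic.
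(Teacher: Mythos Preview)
Your proposal is correct and follows essentially the same route as the paper: the representation formula is obtained from the Kalman recursion together with the factorisation $\mathsf{l}(z,\vt)=-\alpha(\vt)C_{1,\vt}^{\perp\,\mathsf{T}}z+\mathsf{k}(z,\vt)(1-z)$ already quoted from Fasen-Hartmann and Scholz, and the uniform exponential bound comes from compactness of $\Theta$, continuity of $\vt\mapsto\e^{A_\vt h}-K_\vt^{(h)}C_\vt$, and the fact that its spectral radius is strictly less than one everywhere. The paper's own proof simply says ``it remains to show the exponential bound'' and defers to Schlemm and Stelzer, Lemma~2.6, whose argument is precisely the compactness/Gelfand reasoning you spell out.

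One small slip: you write that $Y_\vt^{(h)}$ has uniformly bounded second moments on $\N_0$, but this is false since $Y_\vt^{(h)}$ contains the integrated piece $C_{1,\vt}B_{1,\vt}L_\vt(kh)$ and hence $\E\|Y_\vt^{(h)}(k)\|^2$ grows linearly in $k$. This does not damage your argument, since the exponential decay of $\|(\e^{A_\vt h}-K_\vt^{(h)}C_\vt)^{j}\|$ still dominates the polynomial growth and the series converges in $L^2$; the paper handles the same issue by extending $Y_\vt^{(h)}$ to negative indices and noting after the lemma that the factored form $-\Pi(\vt)Y_{\vt,st}^{(h)}(k-1)+\mathsf{k}(\mathsf{B},\vt)\Delta Y_\vt^{(h)}(k)$ is manifestly stationary.
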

\begin{proof}  It remains to show that $({\mathsf{k}}_j(\vartheta))_{j\in\N }$ is uniformly exponentially bounded.
The proof follows in the same line as Schlemm and Stelzer \cite[Lemma 2.6]{SchlemmStelzer2012a} using that all eigenvalues
of $(\e^{A_\vt h}-K_\vt^{(h)} C_\vt)$ lie inside the unit circle (see Scholz~\cite[Lemma 4.6.7]{Scholz}).
\end{proof}
Due to $\Pi(\vartheta)Y_{\vt}^{(h)}(k-1)=\Pi(\vartheta)Y_{\vt,st}^{(h)}(k-1)$ we receive from Lemma~\ref{LemInnovaSeqProp}
\beao
\varepsilon_{\vt}^*(k)=-\Pi(\vartheta)Y_{\vt,st}^{(h)}(k-1)+ {\mathsf{k}}(\mathsf{B},\vt)\Delta Y_{\vt}^{(h)}(k).
\eeao
From this representation  we see nicely that $(\varepsilon_{\vt}^*(k))_{k\in\N}$ is indeed a stationary process.
Defining $Y_\vt^{(h)}$ on the negative integers as
\beao
    Y_\vt^{(h)}(-k)=C_{1,\vt}Z+Y_{st,\vt}(0)-\sum_{j=0}^{k-1}\Delta Y_{\vt}^{(h)}(-j)
        =C_{1,\vt}Z+L_\vt(-kh)+Y_{st,\vt}^{(h)}(-k), \quad  k\in\N_0,
\eeao
the initial condition in the Kalman recursion is $X^{*}_\vt(1):=\sum_{j=0}^{\infty} (\e^{A_\vt h}-K_\vt^{(h)} C_\vt)^{j}K_\vt^{(h)}
     Y_{\vt}^{(h)}(-j)$ so that
\beao
\varepsilon_{\vt}^*(k)&= Y_{\vt}^{(h)}(k)-\sum_{j=1}^{\infty} C_\vt(\e^{A_\vt h}-K_\vt^{(h)} C_\vt)^{j-1}K_\vt^{(h)}
     Y_{\vt}^{(h)}(k-j).
\eeao

The representation of the linear innovations in Lemma~\ref{LemInnovaSeqProp} motivates the definition of the pseudo-innovations which are going in the likelihood function.

\begin{definition}
  The \textbf{pseudo-innovations} are defined for $k\in\N$ as
  \begin{align*}
    \varepsilon_k^{(h)}({\vt})=-\Pi(\vartheta)Y_{k-1}^{(h)}+ {k}(\mathsf{B},\vt)\Delta Y_{ k}^{(h)}
        =Y_k^{(h)}-\sum_{j=1}^{\infty} C_\vt(\e^{A_\vt h}-K_\vt^{(h)} C_\vt)^{j-1}K_\vt^{(h)}
     Y_{k-j}^{(h)}.
    \end{align*}
\end{definition}
The main difference of the linear innovations and the pseudo-innovations is that in the linear innovation $Y_\vt^{(h)}$ is going in, whereas
in the pseudo-innovations $Y^{(h)}$ is going in.
For $\vt=\vt^0$ the pseudo-innovations $(\varepsilon_k^{(h)}({\vt^0}))_{k\in\N}$ are the linear-innovations $(\varepsilon_{\vt^0}^*(k))_{k\in\N}$.
In Appendix~\ref{Section: Properties linear innovation} we present some probabilistic properties of the pseudo-innovations which we use in the paper.
In particular, we see that the pseudo-innovations are three times differentiable.

\subsection{Quasi-maximum likelihood estimation}
\label{Section:3:subsection:1}
We estimate the model parameters via an adapted quasi-maximum likelihood estimation method.
Minus two over $n$ times the logarithm of the pseudo-Gaussian likelihood function is given by
 \begin{align*}
\mathcal{L}_n^{(h)}(\vartheta)
= \frac{1}{n}\sum_{k=1}^n\left[d\log 2\pi +\log \det V_\vt^{(h)}+\varepsilon_k^{(h)}(\vartheta)^\mathsf{T}\big(V_\vt^{(h)}\big)^{-1}\varepsilon_k^{(h)}(\vartheta)\right].
\end{align*}
The pseudo-innovations $\varepsilon_k^{(h)}(\vartheta)$ are constructed by the infinite past $\{Y^{(h)}(l):-\infty<l<k\}$. However, the infinite past is not known, we only have the finite observations
$Y_1^{(h)},\ldots,Y_n^{(h)}$. Therefore, we have to approximate the pseudo-innovations and the likelihood-function.
For a starting value $\widehat X_1^{(h)}(\vt)$, which is usually a deterministic constant, we define recursively based on \eqref{2.3} the approximate pseudo-innovations
as
\beao
     \widehat X^{(h)}_k(\vt)&=&(\e^{A_\vt h}-K_\vt^{(h)} C_\vt) \widehat X_{k-1}^{(h)}(\vt)+K_\vt^{(h)}Y_{k-1}^{(h)},\quad \\
     \widehat \varepsilon_{k}^{(h)}(\vt)&=&Y^{(h)}_k-C_\vt \widehat X_k^{(h)}(\vt),
\eeao
and the approximate likelihood-function as
\begin{align*}
\mathcal{\widehat{L}}_n^{(h)}(\vartheta)
= \frac{1}{n}\sum_{k=1}^n\left[d\log 2\pi +\log \det V_\vt^{(h)}+\widehat\varepsilon_k^{(h)}(\vartheta)^\mathsf{T}\big(V_\vt^{(h)}\big)^{-1}\widehat\varepsilon_k^{(h)}(\vartheta)\right].
\end{align*}
Then, the QML estimator 
$$
\widehat{\vartheta}_n:=(\widehat{\vt}_{n,1}^{\mathsf{T}},\widehat{\vt}_{n,2}^{\mathsf{T}})^{\mathsf{T}}:
=\text{argmin}_{\vartheta\in\Theta}{\mathcal{\widehat{L}}_{n}^{(h)}}(\vartheta)
$$
is defined as the minimizer of the pseudo-Gaussian log-likelihood function $\mathcal{\widehat{L}}_{n}^{(h)}(\vartheta)$.
The estimator $\widehat{\vt}_{n,1}$ estimates the long-run parameter $\vt_1$ and the estimator $\widehat{\vt}_{n,2}$ estimates the short-run parameter $\vt_2$.
However, for our asymptotic results it does not matter if we use $\widehat{\mathcal{L}}_n^{(h)}(\vartheta)$ or ${\mathcal{L}}_n^{(h)}(\vartheta)$ as a conclusion of the next proposition. However, for that proposition to hold, we require the following \autoref{Ass:int} which assumes
a uniform bound on the second moment of the starting value $\widehat X_1^{(h)}(\vt)$ of the Kalman recursion and its partial derivatives.

\begin{assumptionletter}\label{Ass:int}
For every $u,v\in\{1,\ldots,s\}$  we assume that
\beao
  \E\left(\sup_{\vt\in\Theta}\|\widehat X_1^{(h)}(\vt)\|^2\right)<\infty, \quad  \E\left(\sup_{\vt\in\Theta}\|\partial_u\widehat X_1^{(h)}(\vt)\|^2\right)<\infty \quad \text{ and }\quad \E\left(\sup_{\vt\in\Theta}\|\partial_{u,v}\widehat X_1^{(h)}(\vt)\|^2\right)<\infty
\eeao
and $\widehat X_1^{(h)}(\vt)$ is independent of $(L_\vt(t))_{t\geq0}$.
\end{assumptionletter}

This assumption is not very restrictive, e.g., if $\widehat X_1^{(h)}(\vt)=\widehat X_1^{(h)}(\vt^0)$ for any $\vt\in\Theta$ and $\widehat X_1^{(h)}(\vt^0)$ is a deterministic vector, which we usually have in practice, \autoref{Ass:int}
is automatically satisfied.

\begin{proposition}  \label{Lemma 2.8}
Let \autoref{AssMBrownMot} and \ref{Ass:int} hold. Moreover, let $\gamma<1$ and $u,v\in\{1,\ldots,s\}$. Then, \begin{itemize}
    \item[(a)] $n^{\gamma}\sup_{\vt\in\Theta}|\mathcal{\widehat{L}}_n^{(h)}(\vartheta)-\mathcal{{L}}_n^{(h)}(\vartheta)|\cip 0$,
    \item[(b)] $n^{\gamma}\sup_{\vt\in\Theta}|\partial_u\mathcal{\widehat{L}}_n^{(h)}(\vartheta)-\partial_u\mathcal{{L}}_n^{(h)}(\vartheta)|\cip 0$,
    \item[(c)] $n^{\gamma}\sup_{\vt\in\Theta}|\partial_{u,v}\mathcal{\widehat{L}}_n^{(h)}(\vartheta)-\partial_{u,v}\mathcal{{L}}_n^{(h)}(\vartheta)|\cip 0$.
\end{itemize}
\end{proposition}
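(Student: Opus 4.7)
The plan is to exploit the fact that the only difference between $\widehat\varepsilon_k^{(h)}(\vt)$ and $\varepsilon_k^{(h)}(\vt)$ comes from using the deterministic (or at least bounded-in-$L^2$) starting value $\widehat X_1^{(h)}(\vt)$ in place of the stationary starting value $X_\vt^*(1)=\sum_{j=0}^{\infty}(\mathrm{e}^{A_\vt h}-K_\vt^{(h)}C_\vt)^{j}K_\vt^{(h)}Y_{\vt}^{(h)}(-j)$ in the Kalman recursion, and that this error is damped geometrically by the Schur-stable matrix $\mathrm{e}^{A_\vt h}-K_\vt^{(h)}C_\vt$. First I would iterate the recursion to obtain the exact identity
\begin{align*}
\widehat X_k^{(h)}(\vt)-X^{*}_\vt(k)=(\mathrm{e}^{A_\vt h}-K_\vt^{(h)}C_\vt)^{k-1}\bigl(\widehat X_1^{(h)}(\vt)-X^{*}_\vt(1)\bigr),
\end{align*}
and therefore $\widehat\varepsilon_k^{(h)}(\vt)-\varepsilon_k^{(h)}(\vt)=-C_\vt(\mathrm{e}^{A_\vt h}-K_\vt^{(h)}C_\vt)^{k-1}(\widehat X_1^{(h)}(\vt)-X^{*}_\vt(1))$. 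Because all eigenvalues of $\mathrm{e}^{A_\vt h}-K_\vt^{(h)}C_\vt$ lie inside the unit circle and $\Theta$ is compact with continuous dependence on $\vt$ (Assumption A8), there exist $\mathfrak{C}>0$ and $0<\rho<1$ so that $\sup_{\vt\in\Theta}\|(\mathrm{e}^{A_\vt h}-K_\vt^{(h)}C_\vt)^{k-1}\|\leq \mathfrak{C}\rho^{k-1}$, exactly as in the proof of Lemma~\ref{LemInnovaSeqProp}.

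For part (a), I would write the integrand difference via the polarization-type identity $a^{\mathsf{T}}Ma-b^{\mathsf{T}}Mb=(a-b)^{\mathsf{T}}M(a-b)+2b^{\mathsf{T}}M(a-b)$ with $M=(V_\vt^{(h)})^{-1}$. By the Cauchy--Schwarz inequality, the summand at index $k$ is uniformly (in $\vt$) bounded by $\mathfrak{C}\rho^{k-1}\,\Delta_k+\mathfrak{C}\rho^{2(k-1)}\Delta_k^2$, where $\Delta_k:=\sup_{\vt\in\Theta}(\|\widehat X_1^{(h)}(\vt)-X^{*}_\vt(1)\|\cdot(1+\|\varepsilon_k^{(h)}(\vt)\|))$. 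Using Assumption~B together with the uniform $L^2$ bound on $X^*_\vt(1)$ and on $\varepsilon_k^{(h)}(\vt)$ (these follow from the representation in Lemma~\ref{LemInnovaSeqProp}, the uniform exponential bound on $\mathsf{k}_j(\vt)$, and $\E\|L_\vt(1)\|^{4+\delta}<\infty$), one obtains $\sup_k\E(\Delta_k)\leq \mathfrak{C}'$. Consequently
\begin{align*}
n^{\gamma}\,\E\!\left[\sup_{\vt\in\Theta}\bigl|\mathcal{\widehat{L}}_n^{(h)}(\vt)-\mathcal{{L}}_n^{(h)}(\vt)\bigr|\right]\leq \frac{n^{\gamma}}{n}\sum_{k=1}^{n}\mathfrak{C}\rho^{k-1}\leq \mathfrak{C}''\,n^{\gamma-1}\longrightarrow 0,
\end{align*}
and Markov's inequality gives convergence in probability. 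The supremum in $\vt$ is absorbed because all constants above were chosen uniformly over the compact set $\Theta$.

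For parts (b) and (c), I would differentiate the Kalman recursion once and twice in $\vt$. Assumption A8 (three times continuous differentiability of the model matrices), Assumption B (analogous moment bounds on the derivatives of $\widehat X_1^{(h)}(\vt)$), together with compactness of $\Theta$, imply that $\partial_u K_\vt^{(h)}$, $\partial_u(\mathrm{e}^{A_\vt h}-K_\vt^{(h)}C_\vt)$, and their second derivatives are bounded uniformly on $\Theta$. Iterating the differentiated recursion produces terms of the form $k^{p}(\mathrm{e}^{A_\vt h}-K_\vt^{(h)}C_\vt)^{k-1-j}(\cdots)(\mathrm{e}^{A_\vt h}-K_\vt^{(h)}C_\vt)^{j-1}(\cdots)$ with $p\leq 2$, but by picking any $\tilde\rho\in(\rho,1)$ the polynomial factor $k^{p}$ can be absorbed to yield a uniform bound $\mathfrak{C}\tilde\rho^{k-1}$. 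After differentiating the quadratic form in the likelihood (where only cross terms with $\widehat\varepsilon-\varepsilon$ or its derivatives survive in the difference), the same geometric-series-plus-Cauchy--Schwarz argument as above applies and produces the rate $n^{\gamma-1}$.

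The main obstacle I anticipate is the uniformity in $\vt$: one must check that $X^{*}_\vt(1)$ and its derivatives possess a uniform-in-$\vt$ second moment, and that all the constants stemming from the Schur stability of $\mathrm{e}^{A_\vt h}-K_\vt^{(h)}C_\vt$ can indeed be chosen independently of $\vt\in\Theta$. Both are standard for stationary Kalman filters on compact parameter spaces (compare Schlemm and Stelzer~\cite{SchlemmStelzer2012a}), but handling the derivatives cleanly, in particular keeping the polynomial prefactors $k^{p}$ under control when passing to derivatives of the exponentially decaying matrix powers, is the delicate bookkeeping step in the proof.
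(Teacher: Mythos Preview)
Your overall strategy matches the paper's: use the identity $\widehat\varepsilon_k^{(h)}(\vt)-\varepsilon_k^{(h)}(\vt)=-C_\vt(\mathrm{e}^{A_\vt h}-K_\vt^{(h)}C_\vt)^{k-1}(\widehat X_1^{(h)}(\vt)-X^{*}_\vt(1))$, exploit the uniform Schur stability to get a $\mathfrak{C}\rho^{k-1}$ factor, and sum against moment bounds. The derivative parts (b),(c) are handled exactly as you outline.

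There is, however, a genuine gap in your moment bound. You assert a uniform $L^2$ bound on $\varepsilon_k^{(h)}(\vt)$ and conclude $\sup_k\E(\Delta_k)\leq\mathfrak{C}'$. This is false in the cointegrated setting. From Lemma~\ref{LemInnovaSeqProp} you have $\varepsilon_k^{(h)}(\vt)=-\Pi(\vt)Y_{k-1}^{(h)}+\mathsf{k}(\mathsf{B},\vt)\Delta Y_k^{(h)}$; the second summand is stationary, but $\Pi(\vt)Y_{k-1}^{(h)}=\alpha(\vt)C_{1,\vt_1}^{\perp\mathsf{T}}(C_1Z+C_1B_1L((k-1)h)+Y_{st,k-1}^{(h)})$ contains the random-walk piece $C_{1,\vt_1}^{\perp\mathsf{T}}C_1B_1L((k-1)h)$, which vanishes only when $\vt_1=\vt_1^0$. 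Hence $\E\bigl(\sup_{\vt\in\Theta}\|\varepsilon_k^{(h)}(\vt)\|^2\bigr)$ grows linearly in $k$, and so does the corresponding bound for $\widehat\varepsilon_k^{(h)}(\vt)$. Your appeal to the stationary Kalman-filter argument of Schlemm and Stelzer does not transfer here; this is precisely the ``essential difference'' flagged after the statement of the proposition.

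The repair is easy and is what the paper does: show $\E\bigl(\sup_{\vt}\|\varepsilon_k^{(h)}(\vt)\|^2\bigr)\leq\mathfrak{C}(1+k)$ (using $\E\|Y_j^{(h)}\|^2\leq\mathfrak{C}(1+|j|)$ and the exponentially bounded coefficients), apply Cauchy--Schwarz to separate $\zeta:=\sup_\vt\|\widehat X_1^{(h)}(\vt)-X_\vt^{*}(1)\|$ (which has $\E\zeta^2<\infty$ by Assumption~B and the analogous bound for $X_\vt^{*}(1)$) from the $k$-dependent factor, and then observe $\sum_{k\geq1}\rho^{k}(1+k)^{1/2}<\infty$. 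This gives $n\,\E\sup_\vt|\widehat{\mathcal L}_n^{(h)}-\mathcal L_n^{(h)}|=O(1)$, hence the claim for any $\gamma<1$ by Markov.
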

The proof of this proposition is similarly to the proof of Schlemm and Stelzer~\cite[Lemma 2.7 and Lemma 2.15]{SchlemmStelzer2012}. However, they are some essential differences since
in their paper $(Y_k^{(h)})_{k\in\N}$ and $(\varepsilon_k^{(h)}(\vt))_{k\in\N}$ are stationary sequences where in our setup they are non-stationary. Furthermore, we require
different convergence rates. A detailed proof can be found in \Cref{Appendix B}.

 We split now the pseudo-innovation sequence based on the decomposition  $\vartheta=(\vt_1^{\mathsf{T}},\vt_2^{\mathsf{T}})^{\mathsf{T}}$ so that
 one part is stationary and depends only on $\vt_2$:
\begin{align}
\label{eqInnovSequSeperated}
\varepsilon_k^{(h)}(\vartheta)=&~\varepsilon_{k,1}^{(h)}(\vartheta)+\varepsilon_{k,2}^{(h)}(\vartheta), \notag
\\\text{where }\qquad
\varepsilon_{k,1}^{(h)}(\vartheta):=&-\left[\Pi(\vartheta_1,\vartheta_2) -\Pi(\vartheta_1^0,\vartheta_2)\right]Y_{k-1}^{(h)}+\left[{\mathsf{k}}(\mathsf{B},\vartheta_1,\vartheta_2)-{\mathsf{k}}(\mathsf{B},\vartheta_1^0,\vartheta_2)\right]\Delta Y_k^{(h)}
\\\text{and }\qquad
\notag
\varepsilon_{k,2}^{(h)}(\vartheta):=&~\varepsilon_{k,2}^{(h)}(\vartheta_2)
=-\Pi(\vartheta_1^0,\vartheta_2)Y_{k-1}^{(h)}+{\mathsf{k}}(\mathsf{B},\vartheta_1^0,\vartheta_2)\Delta Y_k^{(h)}.
\end{align}
Due to similar calculations as in \eqref{vareps} we receive that $\Pi(\vartheta_1^0,\vartheta_2)Y_{k-1}^{(h)}=\Pi(\vartheta_1^0,\vartheta_2)Y_{st,k-1}^{(h)}$  and hence,
\beam \label{3.3b}
    \varepsilon_{k,2}^{(h)}(\vartheta_2)
=-\Pi(\vartheta_1^0,\vartheta_2)Y_{st,k-1}^{(h)}+{\mathsf{k}}(\mathsf{B},\vartheta_1^0,\vartheta_2)\Delta Y_k^{(h)}, \quad k\in\N,
\eeam
is indeed stationary. Moreover,  $\varepsilon_{k,1}^{(h)}(\vartheta_1^0,\vartheta_2)=0$ for any $\vartheta_2\in\Theta_2$ and $k\in\N$. \label{page7}
Finally, we separate the log-likelihood function $\mathcal{L}_n^{(h)}(\vt)$  in  \label{pageref1}
 \begin{align*}
\notag
\mathcal{L}_n^{(h)}(\vt)&=\mathcal{L}_{n,1}^{(h)}(\vartheta)+\mathcal{L}_{n,2}^{(h)}(\vartheta_2),
\intertext{where}
\notag
\mathcal{L}_{n,1}^{(h)}(\vartheta)&:=\mathcal{L}_{n}^{(h)}(\vartheta_1,\vartheta_2)-\mathcal{L}_{n}^{(h)}(\vartheta_1^0,\vartheta_2) \\ \notag
&=\log\det V_{\vt}^{(h)}-\log \det V_{\vt_1^0,\vt_2}^{(h)}+
\frac{1}{n}\sum_{k=1}^n \varepsilon_{k,1}^{(h)}(\vartheta)^\mathsf{T}\big(V_{\vt}^{(h)}\big)^{-1}\varepsilon_{k,1}^{(h)}(\vartheta)
\\\notag
&\quad+\frac{2}{n}\sum_{k=1}^n\varepsilon_{k,1}^{(h)}(\vartheta)^\mathsf{T}\big(V_{\vt}^{(h)}\big)^{-1}\varepsilon_{k,2}^{(h)}(\vartheta_2) + \frac{1}{n}\sum_{k=1}^n \varepsilon_{k,2}^{(h)}(\vartheta_2)^\mathsf{T}\big(V_{\vt}^{(h)}\big)^{-1}\varepsilon_{k,2}^{(h)}(\vartheta_2)
\\&\quad-\frac{1}{n}\sum_{k=1}^n \varepsilon_{k,2}^{(h)}(\vartheta_2)^\mathsf{T}\big(V_{\vt_1^0,\vt_2}^{(h)}\big)^{-1}\varepsilon_{k,2}^{(h)}(\vartheta_2),\\
\mathcal{L}_{n,2}^{(h)}(\vartheta_2)
&:=\mathcal{L}_{n}^{(h)}(\vartheta_1^0,\vartheta_2) =d\log 2\pi +\log \det V_{\vt_1^0,\vt_2}^{(h)}+\frac{1}{n}\sum_{k=1}^n\varepsilon_{k,2}^{(h)}(\vartheta_2)^\mathsf{T} \big(V_{\vt_1^0,\vt_2}^{(h)}\big)^{-1}\varepsilon_{k,2}^{(h)}(\vartheta_2).
\end{align*}
Obviously, $\mathcal{L}_{n,2}^{(h)}(\vartheta_2)$ depends only on the short-run parameters, whereas $\mathcal{L}_{n,1}^{(h)}(\vartheta)$ depends on all parameters. Furthermore, we have the following relations:
\beam \label{3.6a}
 \mathcal{L}_{n,1}^{(h)}(\vartheta_1^0,\vartheta_2)=0~\quad\text{ and }~\quad \mathcal{L}_{n}^{(h)}(\vartheta_1^0,\vartheta_2)=\mathcal{L}_{n,2}^{(h)}(\vartheta_2) \quad ~\text{ for any }~\vt_2\in\Theta_2.
\eeam
This immediately implies $\mathcal{L}_{n}^{(h)}(\vartheta^0)=\mathcal{L}_{n,2}^{(h)}(\vartheta^0_2)$.
In the remaining of the paper, we will see that the asymptotic properties of $\widehat \vt_{n,1}$  are determined by
$\mathcal{L}_{n,1}^{(h)}(\vartheta)$, whereas the asymptotic properties of  $\widehat \vt_{n,2}$  are completely determined by
$\mathcal{L}_{n,2}^{(h)}(\vartheta_2)$. Since $\mathcal{L}_{n,2}^{(h)}(\vartheta_2)$ is based only on stationary processes it is not surprising that
 $\widehat \vt_{n,2}$ exhibits the same asymptotic properties as QML estimators for stationary processes.

%

\subsection{Identifiability}
\label{Section:3:subsection:2}

In order to properly estimate our model, we need a unique minimum of the likelihood function and therefore we need some identifiability criteria
for the family of stochastic processes $(Y_\vartheta,\vartheta\in\Theta)$. 
The first assumption guarantees the uniqueness of the long-run parameter $\vartheta_1^0$.

\begin{assumptionletter}
\label{AssMUniquePosTriForm} \label{AssMPosDefN1}
 There exists a constant $\mathfrak{C}^*>0$ so that $\|C_{1,\vartheta_1}^{\perp\mathsf{T}}C_1\|\geq \mathfrak{C}^*\|\vt_1-\vt_1^0\|$\, for $\vt\in\Theta$.
\end{assumptionletter}

\begin{remark} \label{Remark:id} $\mbox{}$
\begin{enumerate}
          \item[(i)] Without \autoref{AssMUniquePosTriForm} we have only that $\|C_{1,\vartheta_1}^{\perp\mathsf{T}}C_1\|$ has a zero in $\vt_1^0$ but not  that $\|C_{1,\vartheta_1}^{\perp\mathsf{T}}C_1\|\not=0$
            for $\vt_1\not=\vt_1^0$.
     In particular, $\|C_{1,\vartheta_1}^{\perp\mathsf{T}}C_1\|\not=0$ for $\vt_1\not=\vt_1^0$
           implies that the space spanned by $C_1$ and $C_{1,\vartheta_1}$ are not the same.
      \item[(ii)] Due to the Lipschitz-continuity of $C_{1,\vartheta_1}^{\perp\mathsf{T}}$  and $C_{1,\vartheta_1^0}^{\perp\mathsf{T}}C_1=0_{(d-c)\times c}$ the upper bound $\|C_{1,\vartheta_1}^{\perp\mathsf{T}}C_1\|\leq \mathfrak{C}\|\vt_1-\vt_1^0\|$ for some constant $\mathfrak{C}>0$ is valid as well.
    \item[(iii)] Note that \autoref{AssMUniquePosTriForm} implies that $\Vert \Pi(\vartheta) C_1B_1\Vert=\Vert \alpha(\vartheta)C_{1,\vartheta_1}^{\perp\mathsf{T}} C_1B_1\Vert >0$ for $\vt_1^0\not=\vt_1$ since  $\alpha(\vt)$ and $B_1$ have full rank,
and thus, the process $\big(\varepsilon_{k,1}^{(h)}(\vartheta)\big)_{k\in\N}$ is indeed non-stationary for all long-run parameters $\vartheta_1\not=\vt_1^0$.
    \item[(iv)] The matrix function $\alpha(\vt)$ is continuous and has full column rank $d-c$ so that necessarily $\inf_{\vt\in\Theta}\sigma_{\min}(\alpha(\vt))>0$.
    Applying Bernstein~\cite[Corollary 9.6.7]{Bernstein2009}  gives for some constant $\mathfrak{C}>0$:
    \beao
        \|\Pi(\vt)C_1\|\geq \inf_{\vt\in\Theta}\left\{\sigma_{\min}(\alpha(\vt))\right\}\|C_{1,\vartheta_1}^{\perp\mathsf{T}}C_1\|\geq \mathfrak{C}\|\vt_1-\vt_1^0\|.
    \eeao
\end{enumerate}
\end{remark}

The next assumption guarantees the uniqueness of the short-run parameter $\vartheta_2^0$.

\begin{assumptionletter}
\label{Identifiability} \label{AssMIdentSpectDens}
For any $\vartheta_2^0\not=\vartheta_2\in\Theta_2$ there exists a $z\in\C$ such that either
\begin{align*}
C_{\vt_1^0,\vt_2}\left[I_N-\big(\e^{A_{\vt_2} h}-K_{\vt_1^0,\vt_2}^{(h)} C_{\vt_1^0,\vt_2}\big)z\right]^{-1}K_{\vt_1^0,\vt_2}^{(h)}\not=&~C\left[I_N-\big(\e^{A h}-K^{(h)} C\big)z\right]^{-1}K^{(h)}
 ~\text{ or }~
 V_{\vt_1^0,\vt_2}^{(h)}\not=~V^{(h)}.
 \end{align*}
\end{assumptionletter}

\begin{lemma}
\label{LemUniqueMin}
Let \autoref{AssMBrownMot}  and \ref{Identifiability} hold. The function $\mathbfcal{L}_{2}^{(h)}:\Theta_2\to\R$ defined by
\begin{align}
\label{l2h}
\mathbfcal{L}_{2}^{(h)}(\vartheta_2):=d\log(2\pi)+\log\det V_{\vt_1^0,\vt_2}^{(h)}+\E\left( \varepsilon_{1,2}^{(h)}(\vartheta_2)^\mathsf{T}\big(V_{\vt_1^0,\vt_2}^{(h)}\big)^{-1} \varepsilon_{1,2}^{(h)}(\vartheta_2)\right)
\end{align} has a unique global minimum at $\vt_2^0$.
\end{lemma}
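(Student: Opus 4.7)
The plan is to show $\mathbfcal{L}_2^{(h)}(\vt_2) \geq \mathbfcal{L}_2^{(h)}(\vt_2^0)$ with equality forcing $\vt_2 = \vt_2^0$ via \autoref{Identifiability}. At the true parameter, by construction $\varepsilon_{k,2}^{(h)}(\vt_2^0) = \varepsilon_k^{(h)}(\vt^0) = \varepsilon_{\vt^0}^*(k)$ (the pseudo-innovation at $\vt^0$ is the linear innovation of \Cref{psin}), which has covariance $V^{(h)}$, so that $\mathbfcal{L}_2^{(h)}(\vt_2^0) = d\log(2\pi) + \log\det V^{(h)} + d$. For general $\vt_2$ I would decompose $\varepsilon_{1,2}^{(h)}(\vt_2) = \varepsilon_{\vt^0}^*(1) + \Delta(\vt_2)$, where $\Delta(\vt_2) := \varepsilon_{1,2}^{(h)}(\vt_2) - \varepsilon_{\vt^0}^*(1)$. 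Since the $Y_1^{(h)}$ contribution cancels in both Kalman representations, $\Delta(\vt_2)$ is a causal linear combination of $\{Y_l^{(h)}: l \leq 0\}$, and is therefore orthogonal in $L^2$ to $\varepsilon_{\vt^0}^*(1)$ by the innovation property.

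This orthogonality decouples the quadratic form,
\[
\E\bigl[\varepsilon_{1,2}^{(h)}(\vt_2)^\mathsf{T}(V_{\vt_1^0,\vt_2}^{(h)})^{-1}\varepsilon_{1,2}^{(h)}(\vt_2)\bigr] = \tr\bigl((V_{\vt_1^0,\vt_2}^{(h)})^{-1} V^{(h)}\bigr) + \E\bigl[\Delta(\vt_2)^\mathsf{T}(V_{\vt_1^0,\vt_2}^{(h)})^{-1}\Delta(\vt_2)\bigr],
\]
and consequently
\begin{align*}
\mathbfcal{L}_2^{(h)}(\vt_2) - \mathbfcal{L}_2^{(h)}(\vt_2^0)
&= \Bigl[\log\det V_{\vt_1^0,\vt_2}^{(h)} - \log\det V^{(h)} + \tr\bigl((V_{\vt_1^0,\vt_2}^{(h)})^{-1} V^{(h)}\bigr) - d\Bigr] \\
&\quad + \E\bigl[\Delta(\vt_2)^\mathsf{T}(V_{\vt_1^0,\vt_2}^{(h)})^{-1}\Delta(\vt_2)\bigr].
\end{align*}
Both summands are nonnegative: the first by the classical Gaussian--KL inequality $\log\det \Sigma_1 + \tr(\Sigma_1^{-1}\Sigma_2) \geq \log\det\Sigma_2 + d$ for positive-definite $d\times d$ matrices (equality iff $\Sigma_1=\Sigma_2$), the second trivially, with equality iff $\Delta(\vt_2) = 0$ almost surely.

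Equality $\mathbfcal{L}_2^{(h)}(\vt_2) = \mathbfcal{L}_2^{(h)}(\vt_2^0)$ therefore forces both $V_{\vt_1^0,\vt_2}^{(h)} = V^{(h)}$ and $\varepsilon_{k,2}^{(h)}(\vt_2) = \varepsilon_{\vt^0}^*(k)$ a.s.\ for every $k\in\N$ (extending from $k=1$ by stationarity of \eqref{3.3b}). Rewriting the latter identity via the Kalman representation shows that the two causal linear filters
\[
C_{\vt_1^0,\vt_2}\bigl[I_N - (\e^{A_{\vt_2} h} - K^{(h)}_{\vt_1^0,\vt_2} C_{\vt_1^0,\vt_2}) z\bigr]^{-1} K^{(h)}_{\vt_1^0,\vt_2} \quad\text{and}\quad C\bigl[I_N - (\e^{Ah} - K^{(h)} C) z\bigr]^{-1} K^{(h)}
\]
produce the same output when applied to $(Y_k^{(h)})_{k\in\Z}$. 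Using \eqref{3.3b} together with the non-singularity of $\Sigma^L_{\vt^0}$ from (A3) and minimality of the state space model ((A5)--(A6)), a standard spectral argument then shows that this output identity forces the two rational transfer functions above to agree identically in $z$. Combined with $V_{\vt_1^0,\vt_2}^{(h)} = V^{(h)}$, this contradicts \autoref{Identifiability} unless $\vt_2 = \vt_2^0$. The main obstacle is precisely this last step---turning an almost-sure equality of two filtered outputs into a pointwise identity of their rational transfer functions; the orthogonal decomposition and the log-det/trace inequality are routine.
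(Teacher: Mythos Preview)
Your proposal is correct and takes essentially the same approach as the paper, which simply defers to Lemma~2.10 of Schlemm and Stelzer~\cite{SchlemmStelzer2012a}; that lemma is proved by exactly the orthogonal decomposition of the pseudo-innovation into the true innovation plus a past-measurable remainder, followed by the log-det/trace inequality and the identifiability assumption, as you outline. The only point worth sharpening is the final step: rather than a spectral argument, the cleanest way to pass from the almost-sure output identity $\sum_{j\geq 1}[c_j(\vt_1^0,\vt_2)-c_j(\vt^0)]Y_{k-j}^{(h)}=0$ to equality of the transfer functions is the inductive projection onto successive innovations $\varepsilon_{k-1}^{(h)}(\vt^0),\varepsilon_{k-2}^{(h)}(\vt^0),\ldots$ using the non-singularity of $V^{(h)}$, exactly as in Step~2 of the proof of \Cref{PropConvergHessianMat}.
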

\begin{proof}
The proof is analogous to the proof of Lemma 2.10 in Schlemm and Stelzer \cite{SchlemmStelzer2012a}.
\end{proof}

 Without the additional  \autoref{Identifiability} we obtain only that $\mathbfcal{L}_{2}^{(h)}(\vartheta_2)$ has  a minimum in $\vt_2^0$
    but not that the minimum is unique. 

Due to Fasen-Hartmann and Scholz~\cite[Theorem 3.1]{FasenScholz1} a canonical form for cointegrated state space processes already exists and can be used to construct
a model class satisfying \autoref{AssMPosDefN1} and \autoref{AssMIdentSpectDens}. Further details are presented in Fasen-Hartmann and Scholz~\cite{FasenScholz17}.
Moreover, criteria to overcome the aliasing effect
(see Blevins~\cite{Blevins2017}, Hansen and Sargent~\cite{HansenSargent1983},  McCrorie~\cite{McCrorie2003,McCrorie2009}, Phillips~\cite{Phillips1973,Phillips1991},
Schlemm and Stelzer~\cite{SchlemmStelzer2012a}) are given there.

\section{Consistency of the QML estimator}
\label{sec:4}

\label{pageref2}
 In order to show the consistency of the QML estimator, we follow the ideas of Saikkonen \cite{Saikkonen1995} in the simple regression model. Thus, we prove the consistency in three steps. In the first step, we prove the consistency of the long-run QML estimator $\widehat{\vt}_{n,1}$ and next we determine its consistency rate. Thirdly, we prove the consistency of the short-run QML estimator $\widehat{\vt}_{n,2}$  by making use of the consistency rate of the long-run QML estimator.
Throughout the rest of this paper, we assume that \autoref{AssMBrownMot} - \ref{AssMIdentSpectDens} always hold. Furthermore, we denote by
 $(W(r))_{0\leq r\leq 1}=((W_1(r)^{\mathsf{T}},W_2(r)^{\mathsf{T}},W_3(r)^{\mathsf{T}})^{\mathsf{T}})_{0\leq r\leq 1}$  a $(2d+m)$-dimensional Brownian motion  with
covariance matrix
\begin{eqnarray} \label{Cov B}
    \Sigma_W=\psi(1)\int_0^h\left(\begin{array}{cc}
        \Sigma_L &  \Sigma_L\e^{A_2^Tu} \\
        \e^{A_2u}B_2\Sigma_L & \e^{A_2u}B_2\Sigma_L B_2^T\e^{A_2^Tu}
    \end{array}\right)\,du\psi(1)^{\mathsf{T}},
\end{eqnarray}
where
\beam \label{def psi}
       && \psi_0:= \left( \begin{array}{cc}
        C_1B_1 & C_2\\
        0_{d\times m}      & C_2\\
        I_{m\times m} & 0_{m\times N-c}
    \end{array}
    \right),  \quad\quad
    \psi_j= \left( \begin{array}{cc}
      0_{d\times m}   & C_2(\e^{A_2hj}-\e^{A_2h(j-1)})\\
        0_{d\times m}      & C_2\e^{A_2hj}\\
        I_{m\times m} & 0_{m\times N-c}
    \end{array}
    \right), \quad j\geq 1, \quad
    \text{  } \notag\\
    &&  \psi(z)=\sum_{j=0}^\infty\psi_jz^j, \quad z\in\C,
\eeam
 $(W_i(r))_{0\leq r\leq 1}$, $i=1,2$,
are $d$-dimensional Brownian motions and $(W_3(r))_{0\leq r\leq 1}$ is an $m$-dimensional Brownian motion. \label{Brownian motion}

\subsection{Consistency of the long-run QML estimator} \label{sec:4.1}
To show the consistency for the long-run parameter, Saikkonen~\cite[p. 903]{Saikkonen1995} suggests in his example that it is sufficient to show the following theorem, where
 $\mathcal{B}(\vartheta_1^0,\delta):= \{\vartheta_1\in\Theta_1:\|\vartheta_1-\vartheta_1^0\|\leq\delta\}$ denotes
 the closed ball with radius $\delta$ around $\vt_1^0$, and
$\overline{\mathcal{B}}(\vartheta_{1}^0,\delta):=\Theta_1\backslash {\mathcal{B}}(\vartheta_{1}^0,\delta)$ denotes its complement.

\begin{theorem}
\label{eqSuffCondforConsa}
 For any $\delta>0$ we have
\begin{align*}
\lim_{n\to\infty}\P\bigg(\inf_{\vartheta\in\overline{\mathcal{B}}(\vartheta_{1}^0,\delta)\times\Theta_2}
\hspace{-3pt}
\mathcal{\widehat{L}}_n^{(h)}(\vartheta)-\mathcal{\widehat{L}}_n^{(h)}(\vartheta^0)>0\bigg)=1.
\end{align*}
\end{theorem}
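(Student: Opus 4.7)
The plan is to first invoke \Cref{Lemma 2.8}(a) with $\gamma$ close to $1$, which gives $\sup_{\vartheta\in\Theta}\vert\mathcal{\widehat{L}}_n^{(h)}(\vartheta)-\mathcal{L}_n^{(h)}(\vartheta)\vert\cip 0$ at a rate that is negligible compared with anything below, so it suffices to prove the assertion with $\mathcal{L}_n^{(h)}$ in place of $\mathcal{\widehat{L}}_n^{(h)}$. Combining \eqref{3.6a} with $\mathcal{L}_{n,1}^{(h)}(\vartheta_1^0,\vartheta_2)=0$ yields the decomposition
\begin{align*}
\mathcal{L}_n^{(h)}(\vartheta)-\mathcal{L}_n^{(h)}(\vartheta^0)
=\mathcal{L}_{n,1}^{(h)}(\vartheta)+\bigl[\mathcal{L}_{n,2}^{(h)}(\vartheta_2)-\mathcal{L}_{n,2}^{(h)}(\vartheta_2^0)\bigr].
\end{align*}
The second bracket depends only on the stationary ergodic pseudo-innovation sequence $(\varepsilon_{k,2}^{(h)}(\vartheta_2))_{k\in\N}$ from \eqref{3.3b}. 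A uniform law of large numbers, combining pointwise ergodic convergence with the local Lipschitz estimates for the pseudo-innovations available in the appendix and the compactness of $\Theta_2$, gives $\sup_{\vartheta_2\in\Theta_2}\vert\mathcal{L}_{n,2}^{(h)}(\vartheta_2)-\mathbfcal{L}_{2}^{(h)}(\vartheta_2)\vert\cip 0$, and then \Cref{LemUniqueMin} delivers $\inf_{\vartheta_2\in\Theta_2}[\mathcal{L}_{n,2}^{(h)}(\vartheta_2)-\mathcal{L}_{n,2}^{(h)}(\vartheta_2^0)]\geq -o_P(1)$.

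The heart of the argument is to show that the first summand $\mathcal{L}_{n,1}^{(h)}(\vartheta)$ diverges at rate $n$ uniformly on $\overline{\mathcal{B}}(\vartheta_{1}^0,\delta)\times\Theta_2$. Substituting $Y_{k-1}^{(h)}=C_1 Z+C_1 B_1 L((k-1)h)+Y_{st,k-1}^{(h)}$ into \eqref{eqInnovSequSeperated} and using $\Pi(\vartheta_1^0,\vartheta_2)C_1=0$ shows that the genuinely non-stationary part of $\varepsilon_{k,1}^{(h)}(\vartheta)$ equals $-\Pi(\vartheta)[C_1 Z+C_1 B_1 L((k-1)h)]$, a random walk of order $\sqrt{k}$, the remainder being stationary. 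Thus the dominant contribution to $\mathcal{L}_{n,1}^{(h)}(\vartheta)$ will be
\begin{align*}
Q_n(\vartheta):=\frac{1}{n}\sum_{k=1}^{n}L((k-1)h)^{\mathsf{T}}B_1^{\mathsf{T}}C_1^{\mathsf{T}}\Pi(\vartheta)^{\mathsf{T}}\bigl(V_\vartheta^{(h)}\bigr)^{-1}\Pi(\vartheta)C_1B_1\,L((k-1)h).
\end{align*}
Invoking the functional central limit theorem $n^{-1/2}L(\lfloor n\,\cdot\,\rfloor h)\cid \sqrt{h}\,\mathcal{W}(\cdot)$ for a Brownian motion $\mathcal{W}$ with covariance $\Sigma^L$, followed by the continuous mapping theorem, will give
\begin{align*}
\frac{Q_n(\vartheta)}{n}\cid h\int_0^1 \mathcal{W}(r)^{\mathsf{T}}B_1^{\mathsf{T}}C_1^{\mathsf{T}}\Pi(\vartheta)^{\mathsf{T}}\bigl(V_\vartheta^{(h)}\bigr)^{-1}\Pi(\vartheta)C_1B_1\mathcal{W}(r)\,dr.
\end{align*}
Because $\int_0^1 \mathcal{W}(r)\mathcal{W}(r)^{\mathsf{T}}\,dr$ is $\P$-almost surely positive definite, \Cref{Remark:id}(iv) gives $\|\Pi(\vartheta)C_1\|\geq\mathfrak{C}\|\vartheta_1-\vartheta_1^0\|\geq\mathfrak{C}\delta$ on the admissible set, and $B_1$ has full row rank, this limit is $\P$-a.s.\ bounded away from zero uniformly in $\vartheta$ on the compact set $\overline{\mathcal{B}}(\vartheta_1^0,\delta)\times\Theta_2$.

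All remaining contributions to $\mathcal{L}_{n,1}^{(h)}(\vartheta)$ are of strictly lower order uniformly in $\vartheta$: the log-determinant difference $\log\det V_\vartheta^{(h)}-\log\det V_{\vartheta_1^0,\vartheta_2}^{(h)}$ is bounded by continuity on a compact; the cross-term $\frac{2}{n}\sum_k\varepsilon_{k,1}^{(h)}(\vartheta)^{\mathsf{T}}(V_\vartheta^{(h)})^{-1}\varepsilon_{k,2}^{(h)}(\vartheta_2)$ is $O_P(1)$ by conditioning on $L$ (each term has conditional mean zero, and the conditional variance of the sum is $O(n)$, so normalization by $1/n$ yields $O_P(1)$) combined with a uniform LLN for the stationary-stationary piece; and the difference of the two stationary quadratics in $\varepsilon_{k,2}^{(h)}$ is again $O_P(1)$ uniformly by a uniform LLN. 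Assembling everything produces
\begin{align*}
\inf_{\vartheta\in\overline{\mathcal{B}}(\vartheta_{1}^0,\delta)\times\Theta_2}\bigl[\mathcal{L}_n^{(h)}(\vartheta)-\mathcal{L}_n^{(h)}(\vartheta^0)\bigr]\geq \eta\,n-o_P(n)
\end{align*}
for some $\P$-a.s.\ positive random variable $\eta$, from which the theorem follows. The main obstacle throughout is promoting the pointwise weak convergence of $Q_n(\vartheta)/n$ to a uniform-in-$\vartheta$ lower bound on $\overline{\mathcal{B}}(\vartheta_{1}^0,\delta)\times\Theta_2$; the route will be to combine the local Lipschitz continuity of the relevant quadratic forms in $\vartheta$ (granted by the smoothness assumption (A8) and the Lipschitz estimates in the appendix) with a finite cover of the compact parameter set, thereby exchanging $\inf$ and $\cid$ on each piece of the cover.
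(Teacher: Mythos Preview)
Your overall strategy matches the paper's proof: replace $\widehat{\mathcal{L}}_n^{(h)}$ by $\mathcal{L}_n^{(h)}$ via \Cref{Lemma 2.8}, use the split $\mathcal{L}_n^{(h)}(\vartheta)-\mathcal{L}_n^{(h)}(\vartheta^0)=\mathcal{L}_{n,1}^{(h)}(\vartheta)+[\mathcal{L}_{n,2}^{(h)}(\vartheta_2)-\mathcal{L}_{n,2}^{(h)}(\vartheta_2^0)]$, dispose of the second bracket via the uniform LLN and \Cref{LemUniqueMin}, and show that $n^{-1}\mathcal{L}_{n,1}^{(h)}(\vartheta)$ converges weakly to $\int_0^1\|(V_\vartheta^{(h)})^{-1/2}\Pi(\vartheta)C_1B_1 W_3(r)\|^2\,dr$, which is bounded below by a strictly positive random variable on $\overline{\mathcal{B}}(\vartheta_1^0,\delta)\times\Theta_2$ thanks to $\|\Pi(\vartheta)C_1\|\geq\mathfrak{C}\delta$ and the a.s.\ positive definiteness of $B_1\int_0^1 W_3 W_3^{\mathsf T}\,dr\,B_1^{\mathsf T}$. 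Your $Q_n$ is exactly the paper's $\mathcal{L}_{n,1,1}^{(h)}$.

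Two points deserve correction. First, your conditioning argument for the cross term $\frac{2}{n}\sum_k\varepsilon_{k,1}^{(h)}(\vartheta)^{\mathsf T}(V_\vartheta^{(h)})^{-1}\varepsilon_{k,2}^{(h)}(\vartheta_2)$ does not work: $\varepsilon_{k,2}^{(h)}(\vartheta_2)$ is a measurable functional of the \emph{same} L\'evy process $L$ (through $Y_{st}^{(h)}$ and $\Delta Y^{(h)}$), so conditioning on $L$ makes everything deterministic and certainly not mean-zero. The paper handles all residual terms---the log-determinant difference, the cross terms, and the stationary quadratics---in one stroke by showing $|\mathcal{L}_{n,1}^{(h)}(\vartheta)-Q_n(\vartheta)|\leq\mathfrak{C}\|\vartheta_1-\vartheta_1^0\|\,U_n$ with $U_n=\mathcal{O}_p(1)$; this is precisely the Lipschitz machinery in the appendix you already allude to, so use it directly rather than the conditioning heuristic. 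Second, the uniformity you flag as the ``main obstacle'' is in fact trivial here and no finite cover is needed: in $n^{-1}Q_n(\vartheta)=\tr\big((V_\vartheta^{(h)})^{-1/2}\Pi(\vartheta)C_1B_1\cdot n^{-2}\sum_k L_{k-1}^{(h)}L_{k-1}^{(h)\mathsf T}\cdot B_1^{\mathsf T}C_1^{\mathsf T}\Pi(\vartheta)^{\mathsf T}(V_\vartheta^{(h)})^{-1/2}\big)$ the random matrix $n^{-2}\sum_k L_{k-1}^{(h)}L_{k-1}^{(h)\mathsf T}$ does not depend on $\vartheta$ and converges weakly, while the $\vartheta$-dependence enters only through continuous deterministic matrices on the compact $\Theta$. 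Hence weak convergence in $C(\Theta)$ is immediate, and the continuous mapping theorem applied to the infimum finishes the job.
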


\begin{corollary}
In particular, $
\widehat{\vt}_{n,1}-\vt_1^0=o_p(1)$.
\end{corollary}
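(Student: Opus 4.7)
The corollary is a direct consequence of Theorem~\ref{eqSuffCondforConsa} via the standard ``unique argmin implies consistency'' argument. The only inputs I need are: (i) $\widehat{\vt}_n$ is a minimiser of $\mathcal{\widehat{L}}_n^{(h)}$ over all of $\Theta=\Theta_1\times\Theta_2$; (ii) the theorem rules out, asymptotically, the minimum's first coordinate lying outside $\mathcal{B}(\vartheta_1^0,\delta)$ for any fixed $\delta>0$; (iii) the definition $\overline{\mathcal{B}}(\vartheta_1^0,\delta)=\Theta_1\setminus\mathcal{B}(\vartheta_1^0,\delta)$ implies that the event $\{\|\widehat{\vt}_{n,1}-\vt_1^0\|>\delta\}$ coincides with $\{\widehat{\vt}_{n,1}\in\overline{\mathcal{B}}(\vartheta_1^0,\delta)\}$.

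Fix an arbitrary $\delta>0$. By the very definition of $\widehat{\vt}_n=(\widehat{\vt}_{n,1}^{\mathsf{T}},\widehat{\vt}_{n,2}^{\mathsf{T}})^{\mathsf{T}}$ as a minimiser of $\mathcal{\widehat{L}}_n^{(h)}$ over $\Theta$, the deterministic inequality $\mathcal{\widehat{L}}_n^{(h)}(\widehat{\vt}_n)\leq\mathcal{\widehat{L}}_n^{(h)}(\vartheta^0)$ holds for every $n$. On the event $\{\|\widehat{\vt}_{n,1}-\vt_1^0\|>\delta\}$ the point $\widehat{\vt}_n$ lies in $\overline{\mathcal{B}}(\vartheta_1^0,\delta)\times\Theta_2$, whence
$$
\inf_{\vartheta\in\overline{\mathcal{B}}(\vartheta_{1}^0,\delta)\times\Theta_2}\mathcal{\widehat{L}}_n^{(h)}(\vartheta)\;\leq\;\mathcal{\widehat{L}}_n^{(h)}(\widehat{\vt}_n)\;\leq\;\mathcal{\widehat{L}}_n^{(h)}(\vartheta^0),
$$
so that on this event $\inf_{\vartheta\in\overline{\mathcal{B}}(\vartheta_{1}^0,\delta)\times\Theta_2}\mathcal{\widehat{L}}_n^{(h)}(\vartheta)-\mathcal{\widehat{L}}_n^{(h)}(\vartheta^0)\leq 0$.

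Monotonicity of probability then yields
$$
\P\!\left(\|\widehat{\vt}_{n,1}-\vt_1^0\|>\delta\right)\;\leq\; \P\!\left(\inf_{\vartheta\in\overline{\mathcal{B}}(\vartheta_{1}^0,\delta)\times\Theta_2}\mathcal{\widehat{L}}_n^{(h)}(\vartheta)-\mathcal{\widehat{L}}_n^{(h)}(\vartheta^0)\leq 0\right),
$$
and the right-hand side tends to $0$ as $n\to\infty$ by Theorem~\ref{eqSuffCondforConsa}, which asserts that the complementary event (strict positivity of the infimum-difference) has probability tending to $1$. Since $\delta>0$ was arbitrary, this is exactly $\widehat{\vt}_{n,1}\cip \vt_1^0$, i.e.\ $\widehat{\vt}_{n,1}-\vt_1^0=o_p(1)$. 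There is essentially no obstacle here: all substantive work is concentrated in Theorem~\ref{eqSuffCondforConsa} itself, and the corollary is a one-line event-containment deduction; the only bookkeeping point is recognising that the ``bad set'' $\overline{\mathcal{B}}(\vartheta_1^0,\delta)$ of the theorem is precisely the complement chosen so that its indicator bounds $\mathds{1}\{\|\widehat{\vt}_{n,1}-\vt_1^0\|>\delta\}$.
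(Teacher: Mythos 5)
Your proof is correct and is exactly the standard argmin/event-containment argument that the paper implicitly relies on (the paper states the corollary without proof as an immediate consequence of Theorem~\ref{eqSuffCondforConsa}). The chain of inequalities on the event $\{\|\widehat{\vt}_{n,1}-\vt_1^0\|>\delta\}$ and the identification of that event with $\{\widehat{\vt}_{n,1}\in\overline{\mathcal{B}}(\vartheta_1^0,\delta)\}$ are both handled properly, so nothing is missing.
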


\subsubsection{Proof of \Cref{eqSuffCondforConsa}}

The following lemmata are important for the proof of the theorem.

\begin{lemma} \label{Lemma 3.5a}
 Let $L^{(h)}:=(L_{k}^{(h)})_{k\in\Z}:=(L(kh))_{k\in\Z}$ and define
\begin{eqnarray*}
    \mathcal{L}_{n,1,1}^{(h)}(\vartheta)&:=&\frac{1}{n}\sum_{k=1}^n\left[\Pi(\vartheta)C_1B_1L_{k-1}^{(h)}\right]^\mathsf{T}\big(V_{\vt}^{(h)}\big)^{-1}
     \Pi(\vartheta)C_1B_1L_{k-1}^{(h)},\\
    \mathcal{L}_{n,1,2}^{(h)}(\vartheta)&:=&\mathcal{L}_{n,1}^{(h)}(\vartheta)-\mathcal{L}_{n,1,1}^{(h)}(\vartheta).
\end{eqnarray*}
Then,  $ |\mathcal{L}_{n,1,2}^{(h)}(\vartheta)|\leq \mathfrak{C}\|\vt_1-\vt_1^0\| U_n $ for $ \vt\in\Theta$
with  $U_n=1+V_n+Q_n=\mathcal{O}_p(1)$, and $V_n$ and $Q_n$ are defined as in Proposition~\ref{Proposition3.3}.
\end{lemma}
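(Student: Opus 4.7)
The plan is to decompose $\mathcal{L}_{n,1,2}^{(h)}(\vartheta)=\mathcal{L}_{n,1}^{(h)}(\vartheta)-\mathcal{L}_{n,1,1}^{(h)}(\vartheta)$ into a handful of pieces and bound each one by $\mathfrak{C}\|\vartheta_1-\vartheta_1^0\|\,U_n$ with $U_n$ of the stated form. The starting point is to substitute the cointegration representation $Y_{k-1}^{(h)}=C_1 Z + C_1 B_1 L_{k-1}^{(h)}+Y_{st,k-1}^{(h)}$ into the expression $\varepsilon_{k,1}^{(h)}(\vartheta)=-[\Pi(\vartheta)-\Pi(\vartheta_1^0,\vartheta_2)]Y_{k-1}^{(h)}+[\mathsf{k}(\mathsf{B},\vartheta)-\mathsf{k}(\mathsf{B},\vartheta_1^0,\vartheta_2)]\Delta Y_k^{(h)}$. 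Using the identity $\Pi(\vartheta_1^0,\vartheta_2)C_1=\alpha(\vartheta_1^0,\vartheta_2)C_{1,\vartheta_1^0}^{\perp\mathsf{T}}C_1=0$, the only non-stationary contribution is $-\Pi(\vartheta)C_1 B_1 L_{k-1}^{(h)}$, so I write $\varepsilon_{k,1}^{(h)}(\vartheta)=-\Pi(\vartheta)C_1 B_1 L_{k-1}^{(h)}+R_k(\vartheta)$ where $R_k(\vartheta)$ collects the terms involving $Z$, $Y_{st,k-1}^{(h)}$ and the filtered $\Delta Y_{k-j}^{(h)}$'s. By \autoref{AssMBrownMot}\,(A8), Remark~3.3\,(ii) and the uniform exponential decay of $(\mathsf{k}_j(\vartheta))_{j\in\N}$ from \Cref{LemInnovaSeqProp}, each prefactor satisfies $\|\Pi(\vartheta)-\Pi(\vartheta_1^0,\vartheta_2)\|+\|\mathsf{k}_j(\vartheta)-\mathsf{k}_j(\vartheta_1^0,\vartheta_2)\|\leq\mathfrak{C}\rho^{\,j}\|\vartheta_1-\vartheta_1^0\|$, so that $R_k(\vartheta)$ is a zero-order-in-$n$ stationary process with the desired Lipschitz factor: $\|R_k(\vartheta)\|\leq\mathfrak{C}\|\vartheta_1-\vartheta_1^0\|\,\eta_k$ with $\eta_k$ stationary and $\E\eta_k^2<\infty$.

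Next I split $\mathcal{L}_{n,1,2}^{(h)}(\vartheta)$ into: $T_1(\vartheta)=\log\det V_\vartheta^{(h)}-\log\det V_{\vartheta_1^0,\vartheta_2}^{(h)}$; the cross term $T_2(\vartheta)=-\frac{2}{n}\sum_k[\Pi(\vartheta)C_1 B_1 L_{k-1}^{(h)}]^{\mathsf{T}}(V_\vartheta^{(h)})^{-1}R_k(\vartheta)$; the pure remainder $T_3(\vartheta)=\frac{1}{n}\sum_k R_k(\vartheta)^{\mathsf{T}}(V_\vartheta^{(h)})^{-1}R_k(\vartheta)$; the $\varepsilon_{k,1}\varepsilon_{k,2}$ cross term $T_4(\vartheta)=\frac{2}{n}\sum_k \varepsilon_{k,1}^{(h)}(\vartheta)^{\mathsf{T}}(V_\vartheta^{(h)})^{-1}\varepsilon_{k,2}^{(h)}(\vartheta_2)$; and the covariance-mismatch term $T_5(\vartheta)=\frac{1}{n}\sum_k\varepsilon_{k,2}^{(h)}(\vartheta_2)^{\mathsf{T}}[(V_\vartheta^{(h)})^{-1}-(V_{\vartheta_1^0,\vartheta_2}^{(h)})^{-1}]\varepsilon_{k,2}^{(h)}(\vartheta_2)$. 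The first, $T_1(\vartheta)$, is bounded by $\mathfrak{C}\|\vartheta_1-\vartheta_1^0\|$ uniformly on $\Theta$ because $\vartheta\mapsto V_\vartheta^{(h)}$ inherits $C^1$-dependence from (A8) through the discrete-time algebraic Riccati equation in \Cref{psin}, and $V_{\vartheta_1^0,\vartheta_2}^{(h)}$ is bounded away from singular. For $T_5(\vartheta)$ the same Lipschitz bound on $V_\vartheta^{(h)}$ together with $\frac{1}{n}\sum_k\|\varepsilon_{k,2}^{(h)}(\vartheta_2)\|^{2}=\mathcal{O}_p(1)$ (by \eqref{3.3b} and ergodicity, uniformly in $\vartheta_2$ via Proposition~3.3) gives $|T_5|\leq\mathfrak{C}\|\vartheta_1-\vartheta_1^0\|$. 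For $T_3$ the bound $\|R_k(\vartheta)\|^2\leq\mathfrak{C}\|\vartheta_1-\vartheta_1^0\|^2\eta_k^2$ and $\|\vartheta_1-\vartheta_1^0\|\leq\operatorname{diam}(\Theta_1)$ yield $|T_3|\leq\mathfrak{C}\|\vartheta_1-\vartheta_1^0\|\cdot\|\vartheta_1-\vartheta_1^0\|\,\frac{1}{n}\sum\eta_k^2=\mathfrak{C}\|\vartheta_1-\vartheta_1^0\|\cdot\mathcal{O}_p(1)$.

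The main obstacle is the two mixed-order terms. For $T_2(\vartheta)$ I factor out $\|\Pi(\vartheta)C_1 B_1\|=\|\alpha(\vartheta)C_{1,\vartheta_1}^{\perp\mathsf{T}}C_1 B_1\|\leq\mathfrak{C}\|\vartheta_1-\vartheta_1^0\|$ by Remark~3.3 and are left with controlling $\frac{1}{n}\sum_k L_{k-1}^{(h)}\otimes R_k(\vartheta)^{\mathsf{T}}$ uniformly in $\vartheta$; this is a standard mixed integrated--stationary sum which, after normalising $n^{-1/2}L_{[n\cdot]}^{(h)}\Rightarrow$ Brownian motion (compare \eqref{Cov B}), converges weakly to a stochastic integral against a functional of $(W_i)_{i=1,2,3}$ and is therefore $\mathcal{O}_p(1)$. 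The uniformity in $\vartheta$ is obtained from the local Lipschitz continuity of $\vartheta\mapsto R_k(\vartheta)/\|\vartheta_1-\vartheta_1^0\|$ and $\vartheta\mapsto(V_\vartheta^{(h)})^{-1}$ via the auxiliary results in \Cref{Appendix: Auxiliary results}, and this contribution is absorbed into $Q_n$. The term $T_4(\vartheta)$ is split along $\varepsilon_{k,1}^{(h)}(\vartheta)=-\Pi(\vartheta)C_1 B_1 L_{k-1}^{(h)}+R_k(\vartheta)$: the $R_k$-part already carries the factor $\|\vartheta_1-\vartheta_1^0\|$ and couples with the stationary $\varepsilon_{k,2}^{(h)}$ to give an ergodic $\mathcal{O}_p(1)$ contribution, while the $L$-part factors as $\|\Pi(\vartheta)C_1 B_1\|\,\frac{1}{n}\sum L_{k-1}^{(h)}\varepsilon_{k,2}^{(h)}(\vartheta_2)^{\mathsf{T}}$ which is again of the $Q_n$-type. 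Adding the five bounds and setting $U_n=1+V_n+Q_n$ produces the claimed estimate. The fact that $V_n$ and $Q_n$ are $\mathcal{O}_p(1)$ is exactly the content of Proposition~3.3.
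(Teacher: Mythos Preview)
Your proof is correct and follows essentially the same strategy as the paper: split $\varepsilon_{k,1}^{(h)}(\vartheta)$ into the non-stationary piece $-\Pi(\vartheta)C_1B_1L_{k-1}^{(h)}$ and a stationary remainder (the paper's $\varepsilon_{k,1,2}^{(h)}(\vartheta)$, your $R_k(\vartheta)$), then bound the resulting cross terms and the $\log\det$ and covariance-mismatch terms via the Lipschitz properties of $\Pi$, $V_\vartheta^{(h)}$ and $(\mathsf{k}_j)$, invoking Proposition~\ref{Proposition3.3} for the uniform $\mathcal{O}_p(1)$ control.

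One small bookkeeping remark: you attribute the mixed integrated--stationary sums (your $T_2$ and the $L$-part of $T_4$) to $Q_n$, but in the paper's convention those are precisely the $V_n$-type terms from \eqref{EQ3}; the $Q_n$ of \eqref{EQ5} is reserved for the purely stationary products. Since the final bound is stated in terms of $U_n=1+V_n+Q_n$ this does not affect the conclusion, but it is worth keeping the roles straight. The paper also passes through the trace/Frobenius inequality (Bernstein~\cite[(2.2.27) and Corollary~9.3.9]{Bernstein2009}) to cleanly extract the matrix factors before applying \eqref{EQ3} and \eqref{EQ5}, which is slightly tidier than factoring out $\|\Pi(\vartheta)C_1B_1\|$ first and then arguing uniform $\mathcal{O}_p(1)$ for the residual sum.
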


To conclude $\mathcal{L}_{n,1,2}^{(h)}(\cdot,\vartheta_2)$ is local Lipschitz continuous in $\vartheta_1^0$.

\begin{proof}[Proof of Lemma~\ref{Lemma 3.5a}]
Define
\begin{eqnarray*}
    \varepsilon_{k,1,1}^{(h)}(\vartheta)&:=&-\left[\Pi(\vartheta_1,\vartheta_2) -\Pi(\vartheta_1^0,\vartheta_2)\right]C_1B_1L_{k-1}^{(h)}=-\Pi(\vartheta_1,\vartheta_2)C_1B_1L_{k-1}^{(h)},\\
    \varepsilon_{k,1,2}^{(h)}(\vartheta)&:=&\varepsilon_{k,1}^{(h)}(\vartheta)-\varepsilon_{k,1,1}^{(h)}(\vartheta)\\
        &=&-\left[\Pi(\vartheta_1,\vartheta_2) -\Pi(\vartheta_1^0,\vartheta_2)\right]Y_{st,k-1}^{(h)}+\left[{\mathsf{k}}(\mathsf{B},\vartheta_1,\vartheta_2)-{\mathsf{k}}(\mathsf{B},\vartheta_1^0,\vartheta_2)\right]\Delta Y_k^{(h)}.
\end{eqnarray*}
Then, $(\varepsilon_{k,1,2}^{(h)}(\vartheta))_{k\in\N}$ is a stationary sequence and $\varepsilon_{k,1}^{(h)}(\vartheta)=\varepsilon_{k,1,1}^{(h)}(\vartheta)+\varepsilon_{k,1,2}^{(h)}(\vartheta)$.
First, note that
\begin{eqnarray*}
\mathcal{L}_{n,1,2}^{(h)}(\vartheta)&=&\log\det V_{\vt_1,\vt_2}^{(h)}-\log \det V_{\vt_1^0,\vt_2}^{(h)}
  + \frac{2}{n}\sum_{k=1}^n\varepsilon_{k,1,1}^{(h)}(\vartheta)^\mathsf{T}\big(V_{\vt}^{(h)}\big)^{-1}[\varepsilon_{k,2}^{(h)}(\vartheta_2)
 +\varepsilon_{k,1,2}^{(h)}(\vartheta)]\\
&& + \frac{1}{n}\sum_{k=1}^n\varepsilon_{k,1,2}^{(h)}(\vartheta)^\mathsf{T}\big(V_{\vt}^{(h)}\big)^{-1}[2\varepsilon_{k,2}^{(h)}(\vartheta_2)
 + \varepsilon_{k,1,2}^{(h)}(\vartheta)]\\
 &&+ \frac{1}{n}\sum_{k=1}^n \varepsilon_{k,2}^{(h)}(\vartheta_2)^\mathsf{T}\left(\big(V_{\vt}^{(h)}\big)^{-1}-\big(V_{\vt_1^0,\vt_2}^{(h)}\big)^{-1}\right)\varepsilon_{k,2}^{(h)}(\vartheta_2).
\end{eqnarray*}
In the following, we use Bernstein \cite[(2.2.27) and Corollary 9.3.9]{Bernstein2009}  to get the upper bound
\begin{eqnarray*}
\lefteqn{\left|\frac{1}{n}\sum_{k=1}^n\varepsilon_{k,1,1}^{(h)}(\vartheta)^\mathsf{T}\big(V_{\vt}^{(h)}\big)^{-1}[\varepsilon_{k,2}^{(h)}(\vartheta)+\varepsilon_{k,1,2}^{(h)}(\vartheta)]\right|}\\
    &&=\left|\frac{1}{n}\sum_{k=1}^n\tr\left(\varepsilon_{k,1,1}^{(h)}(\vartheta)^\mathsf{T}\big(V_{\vt}^{(h)}\big)^{-1}[\varepsilon_{k,2}^{(h)}(\vartheta)+\varepsilon_{k,1,2}^{(h)}(\vartheta)]\right)\right|\\
    &&=\left|\tr\left(\big(V_{\vt}^{(h)}\big)^{-1}\frac{1}{n}\sum_{k=1}^n[\varepsilon_{k,2}^{(h)}(\vartheta)+\varepsilon_{k,1,2}^{(h)}(\vartheta)]\varepsilon_{k,1,1}^{(h)}(\vartheta)^\mathsf{T}\right)\right|\\
    &&\leq\|(V_{\vt}^{(h)}\big)^{-1}\|\left\|\frac{1}{n}\sum_{k=1}^n[\varepsilon_{k,2}^{(h)}(\vartheta)+\varepsilon_{k,1,2}^{(h)}(\vartheta)]\varepsilon_{k,1,1}^{(h)}(\vartheta)^\mathsf{T}\right\|.
\end{eqnarray*}
Similarly we find upper bounds for the other terms.
Moreover, due to Lemma~\ref{Lemma 2.3}~(b)
\begin{eqnarray*}
|\mathcal{L}_{n,1,2}^{(h)}(\vartheta)|&\leq&|\log\det V_{\vt_1,\vt_2}^{(h)}-\log \det V_{\vt_1^0,\vt_2}^{(h)}|
    +\mathfrak{C}\left\|\frac{1}{n}\sum_{k=1}^n[\varepsilon_{k,2}^{(h)}(\vartheta)+\varepsilon_{k,1,2}^{(h)}(\vartheta)]\varepsilon_{k,1,1}^{(h)}(\vartheta)^\mathsf{T}\right\|\\
&&  + \mathfrak{C}\left\|\frac{1}{n}\sum_{k=1}^n[2\varepsilon_{k,2}^{(h)}(\vartheta)+\varepsilon_{k,1,2}^{(h)}(\vartheta)]\varepsilon_{k,1,2}^{(h)}(\vartheta)^\mathsf{T}\right\|\\
  &&+ \mathfrak{C}\left\|\big(V_{\vt_1,\vt_2}^{(h)}\big)^{-1}-\big(V_{\vt_1^0,\vt_2}^{(h)}\big)^{-1}\right\|\left\|\frac{1}{n}\sum_{k=1}^n \varepsilon_{k,2}^{(h)}(\vartheta_2)\varepsilon_{k,2}^{(h)}(\vartheta_2)^{\mathsf{T}}\right\|.
\end{eqnarray*}
Since $V_\vt^{-1}$ and $\log \det V_\vt$ are Lipschitz continuous by Lemma~\ref{Lemma 2.3}(a),
we obtain
\begin{eqnarray} \label{C.7}
|\mathcal{L}_{n,1,2}^{(h)}(\vartheta)|&\leq&\mathfrak{C}\left(\|\vt_1-\vt_1^0\|  +\left\|\frac{1}{n}\sum_{k=1}^n[\varepsilon_{k,2}^{(h)}(\vartheta)+\varepsilon_{k,1,2}^{(h)}(\vartheta)]\varepsilon_{k,1,1}^{(h)}(\vartheta)^\mathsf{T}\right\|\right.\\
&& \hspace*{-4cm}\left. + \left\|\frac{1}{n}\sum_{k=1}^n[2\varepsilon_{k,2}^{(h)}(\vartheta)+\varepsilon_{k,1,2}^{(h)}(\vartheta)]\varepsilon_{k,1,2}^{(h)}(\vartheta)^\mathsf{T}\right\|
+ \|\vt_1-\vt_1^0\|\left\|\frac{1}{n}\sum_{k=1}^n \varepsilon_{k,2}^{(h)}(\vartheta_2)\varepsilon_{k,2}^{(h)}(\vartheta_2)^{\mathsf{T}}\right\|\right).\nonumber
\end{eqnarray}
Moreover, $\Pi(\vt)$ is Lipschitz continuous as well (see  Lemma~\ref{Lemma 2.3}(a)) and
the sequence of matrix functions  $(\mathsf{k}_j(\vt))_{j\in\N}$ and $(\nabla_\vt \mathsf{k}_j(\vt))_{j\in\N}$ are exponentially bounded (see Lemma~\ref{LemInnovaSeqProp} and Lemma~\ref{LemDerInnovaSeqProp}).
Due to \eqref{EQ3} and $\varepsilon_{k,1,1}^{(h)}(\vartheta_1^0,\vt_2)=0$ we receive
\beam
    \left\|\frac{1}{n}\sum_{k=1}^n[\varepsilon_{k,2}^{(h)}(\vartheta)+\varepsilon_{k,1,2}^{(h)}(\vartheta)]\varepsilon_{k,1,1}^{(h)}(\vartheta)^\mathsf{T}\right\|\leq \mathfrak{C}\|\vt_1-\vt_1^0\|V_n.
\eeam
Due to \eqref{EQ5} and $\varepsilon_{k,1,2}^{(h)}(\vartheta_1^0,\vt_2)=0$ we get
\beam
    \left\|\frac{1}{n}\sum_{k=1}^n[2\varepsilon_{k,2}^{(h)}(\vartheta)+\varepsilon_{k,1,2}^{(h)}(\vartheta)]\varepsilon_{k,1,2}^{(h)}(\vartheta)^\mathsf{T}\right\|
        \leq \mathfrak{C}\|\vt_1-\vt_1^0\|Q_n.
\eeam
Finally,
\beam \label{C.9}
    \left\|\frac{1}{n}\sum_{k=1}^n \varepsilon_{k,2}^{(h)}(\vartheta_2)\varepsilon_{k,2}^{(h)}(\vartheta_2)^{\mathsf{T}}\right\|\leq \mathfrak{C} Q_n
\eeam
as well. Then, \eqref{C.7}-\eqref{C.9} result in the upper bound $|\mathcal{L}_{n,1,2}^{(h)}(\vartheta)|\leq \mathfrak{C}\|\vt_1-\vt_1^0\|(1+V_n+Q_n)$.
A direct consequence of Proposition~\ref{Proposition3.3} is $U_n=1+V_n+Q_n=\mathcal{O}_p(1)$.
\end{proof}

\begin{lemma}
\label{propConvEps2} $\mbox{}$
\begin{itemize}
\item[(a)] 
$
\sup_{\vt_2\in\Theta_2}|\mathcal{L}_{n,2}^{(h)}(\vartheta_2)- \mathbfcal{L}_{2}^{(h)}(\vartheta_2)|\ccip 0 $   as  $n\to\infty
$.
\item[(b)]
$
    \frac{1}{n}\mathcal{L}_{n,1}^{(h)}(\vartheta)    \ccid 
     \int_0^1 \|(V_\vt^{(h)})^{-1/2}\Pi(\vartheta)C_1B_1W_3(r)\|^2\,\dif r
$
and the convergence holds in the space of continuous functions on $\Theta$ with the supremum norm.
\end{itemize}
\end{lemma}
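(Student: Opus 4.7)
\textbf{Proof plan for Lemma \ref{propConvEps2}.}

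For part (a), the key observation is that $\varepsilon_{k,2}^{(h)}(\vartheta_2)$ as given by \eqref{3.3b} is a causal measurable functional of the two-sided Lévy process $(L(t))_{t\in\R}$, hence strictly stationary and ergodic in $k$ with uniformly (in $\vartheta_2$) bounded fourth moments thanks to \autoref{AssMBrownMot} (A3) and the exponential decay of the filter coefficients $\mathsf{k}_j(\vartheta)$ from Lemma \ref{LemInnovaSeqProp}. For each fixed $\vartheta_2$, Birkhoff's ergodic theorem gives pointwise convergence of
\begin{align*}
\tfrac{1}{n}\sum_{k=1}^n \varepsilon_{k,2}^{(h)}(\vartheta_2)^\mathsf{T}\bigl(V_{\vt_1^0,\vt_2}^{(h)}\bigr)^{-1}\varepsilon_{k,2}^{(h)}(\vartheta_2) \ccip \E\bigl(\varepsilon_{1,2}^{(h)}(\vartheta_2)^\mathsf{T}\bigl(V_{\vt_1^0,\vt_2}^{(h)}\bigr)^{-1}\varepsilon_{1,2}^{(h)}(\vartheta_2)\bigr).
\end{align*}
To upgrade to uniform convergence on the compact set $\Theta_2$, I would invoke a standard uniform law of large numbers: the Lipschitz continuity of the maps $\vartheta_2\mapsto V_{\vt_1^0,\vt_2}^{(h)},\, (V_{\vt_1^0,\vt_2}^{(h)})^{-1},\, \log\det V_{\vt_1^0,\vt_2}^{(h)}$ (Lemma \ref{Lemma 2.3}) combined with the uniform exponential bound on the coefficients $\mathsf{k}_j(\vartheta)$ gives an integrable Lipschitz modulus for the summands. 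This is exactly the uniform ergodic-theorem argument of Schlemm and Stelzer \cite[Lemma 3.14]{SchlemmStelzer2012a}, which transfers directly to our setting since $\varepsilon_{k,2}^{(h)}(\vartheta_2)$ has the same structural form as the stationary pseudo-innovations considered there.

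For part (b), the starting point is the decomposition $\mathcal{L}_{n,1}^{(h)}(\vartheta)=\mathcal{L}_{n,1,1}^{(h)}(\vartheta)+\mathcal{L}_{n,1,2}^{(h)}(\vartheta)$ introduced in Lemma \ref{Lemma 3.5a}. Since that lemma provides $|\mathcal{L}_{n,1,2}^{(h)}(\vartheta)|\leq \mathfrak{C}\|\vt_1-\vt_1^0\|U_n$ with $U_n=\mathcal{O}_p(1)$, compactness of $\Theta_1$ immediately yields
\begin{align*}
\sup_{\vartheta\in\Theta}\bigl|\tfrac{1}{n}\mathcal{L}_{n,1,2}^{(h)}(\vartheta)\bigr|\leq \tfrac{\mathfrak{C}}{n}U_n = o_p(1),
\end{align*}
so the remainder is asymptotically negligible uniformly in $\vartheta$. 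It therefore suffices to analyse $\tfrac{1}{n}\mathcal{L}_{n,1,1}^{(h)}(\vartheta)=\tfrac{1}{n^2}\sum_{k=1}^{n}\tr\bigl(M(\vartheta)\,L_{k-1}^{(h)}L_{k-1}^{(h),\mathsf{T}}\bigr)$ with $M(\vartheta):=B_1^\mathsf{T}C_1^\mathsf{T}\Pi(\vartheta)^\mathsf{T}(V_\vt^{(h)})^{-1}\Pi(\vartheta)C_1B_1$.

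The core convergence is a functional central limit theorem for the sampled Lévy process. Under the $(4+\delta)$-moment assumption (A3), Donsker's invariance principle gives $n^{-1/2}L_{\lfloor n\cdot\rfloor}^{(h)}\cid W_3(\cdot)$ in the Skorokhod space $D([0,1],\R^m)$, where $W_3$ is the $m$-dimensional Brownian motion component identified through the covariance $\Sigma_W$ in \eqref{Cov B}. The continuous mapping theorem applied to the functional $\omega\mapsto \int_0^1 \omega(r)\omega(r)^{\mathsf{T}}\,\dif r$ on $D([0,1],\R^m)$ yields
\begin{align*}
\tfrac{1}{n^2}\sum_{k=1}^{n}L_{k-1}^{(h)}L_{k-1}^{(h),\mathsf{T}}\ccid \int_0^1 W_3(r)W_3(r)^{\mathsf{T}}\,\dif r,
\end{align*}
and taking the trace against $M(\vartheta)$ produces the claimed pointwise limit. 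To promote this to weak convergence in $C(\Theta)$, I would establish stochastic equicontinuity via Lipschitz continuity of $M(\cdot)$, which follows from (A8) together with the smoothness of $\vartheta\mapsto\Pi(\vartheta)$ and $\vartheta\mapsto V_\vt^{(h)}$: for $\vartheta,\vartheta'\in\Theta$,
\begin{align*}
\bigl|\tfrac{1}{n}\mathcal{L}_{n,1,1}^{(h)}(\vartheta)-\tfrac{1}{n}\mathcal{L}_{n,1,1}^{(h)}(\vartheta')\bigr|\leq \mathfrak{C}\|\vartheta-\vartheta'\|\cdot \tfrac{1}{n^2}\sum_{k=1}^{n}\|L_{k-1}^{(h)}\|^2,
\end{align*}
and the sample quadratic average on the right is $\mathcal{O}_p(1)$ by the same FCLT. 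Combined with marginal weak convergence on a countable dense subset of $\Theta$, this equicontinuity in probability delivers joint tightness and hence the stated functional weak convergence.

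The main obstacle is the joint tightness in $C(\Theta)$ for part (b), since mere pointwise (Kronecker-type) convergence of $\tfrac{1}{n^2}\sum L_{k-1}^{(h)}L_{k-1}^{(h),\mathsf{T}}$ is not sufficient for the subsequent consistency argument; controlling the Lipschitz constants of $\Pi(\vartheta)^{\mathsf{T}}(V_\vt^{(h)})^{-1}\Pi(\vartheta)$ uniformly on $\Theta$ is where assumption (A8) does the decisive work. Part (a), by contrast, is routine given the uniform ergodic machinery already present in the stationary MCARMA literature.
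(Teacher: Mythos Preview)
Your proposal is correct and follows essentially the same strategy as the paper. For (a) both arguments are uniform laws of large numbers for the stationary summands; the paper packages this as Proposition~\ref{PropUniformConvRes1}(a) (proved via Saikkonen's results), while you cite the Schlemm--Stelzer uniform ergodic theorem directly. For (b) both use the decomposition $\mathcal{L}_{n,1}^{(h)}=\mathcal{L}_{n,1,1}^{(h)}+\mathcal{L}_{n,1,2}^{(h)}$ from Lemma~\ref{Lemma 3.5a} and dispose of the remainder identically.

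One point where the paper's route is slightly cleaner: you flag ``joint tightness in $C(\Theta)$'' as the main obstacle and propose to handle it through a stochastic-equicontinuity estimate. This works, but it is more than is needed. The paper observes that $\tfrac{1}{n}\mathcal{L}_{n,1,1}^{(h)}(\vartheta)=\tr\bigl(M(\vartheta)\,R_n\bigr)$ where $R_n=n^{-2}\sum_{k=1}^n L_{k-1}^{(h)}L_{k-1}^{(h),\mathsf{T}}$ is a \emph{$\vartheta$-free} random matrix and $M(\cdot)$ is a deterministic continuous function on the compact set $\Theta$. Since the map $A\mapsto\bigl(\vartheta\mapsto\tr(M(\vartheta)A)\bigr)$ from $\R^{m\times m}$ into $C(\Theta)$ is continuous, a single application of the continuous mapping theorem to $R_n\cid\int_0^1 W_3(r)W_3(r)^{\mathsf{T}}\,\dif r$ (Proposition~\ref{PropUniformConvRes1}(b)) delivers convergence in $C(\Theta)$ immediately. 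So the ``obstacle'' dissolves once you notice the $\vartheta$-dependence factors out deterministically; your Lipschitz-equicontinuity argument is a valid but roundabout way of establishing the same continuity of that map.
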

\begin{proof} $\mbox{}$\\
(a) \,
is a consequence of  Proposition~\ref{PropUniformConvRes1}(a) and the continuous mapping theorem.  \\[2mm]
(b) \, First, $\sup_{\vt\in\Theta}|\frac{1}{n}\mathcal{L}_{n,1,2}(\vt)|=o_p(1)$ due to Lemma~\ref{Lemma 3.5a} and $\Theta$ compact.
Second, a   conclusion of Proposition~\ref{PropUniformConvRes1}(b) and the continuous mapping theorem is that
\beao
    \frac{1}{n}\mathcal{L}_{n,1,1}^{(h)}(\vartheta)&=&\tr\left(\big(V_{\vt}^{(h)}\big)^{-1/2}\Pi(\vartheta)C_1B_1\left(\frac{1}{n^2}\sum_{k=1}^n
    L_{k-1}^{(h)}L_{k-1}^{(h)^\mathsf{T}}\right)B_1^\mathsf{T}C_1^\mathsf{T}\Pi(\vartheta)^\mathsf{T}\big(V_{\vt}^{(h)}\big)^{-1/2}\right) \\
       &\ccid& \tr\left(\big(V_{\vt}^{(h)}\big)^{-1/2}\Pi(\vartheta)C_1B_1\left(\int_0^1W_3(r)W_3(r)^\mathsf{T}\,dr\right) B_1^\mathsf{T}C_1^\mathsf{T}\Pi(\vartheta)^\mathsf{T}\big(V_{\vt}^{(h)}\big)^{-1/2}\right)\\ 
     &=&\int_0^1 \|(V_\vt^{(h)})^{-1/2}\Pi(\vartheta)C_1B_1W_3(r)\|^2\,\dif r,
\eeao
and the convergence holds in the space of continuous functions on $\Theta$ with the supremum norm due to the continuity of $\Pi(\vt)$ and
$(V_\vt^{(h)})^{-1}$ (see Lemma~\ref{Lemma 2.3}(a)).
In the first and in the last equality we used  Bernstein \cite[2.2.27]{Bernstein2009} which allows us to permutate matrices in the trace.
\end{proof}

\begin{proof}[Proof of \Cref{eqSuffCondforConsa}]
On the one hand,
 due to Proposition~\ref{Lemma 2.8}
 \beao
 \lefteqn{\inf_{\vartheta\in\overline{\mathcal{B}}(\vartheta_{1}^0,\delta)\times\Theta_2}
        \mathcal{\widehat{L}}_n^{(h)}(\vartheta)-\mathcal{\widehat{L}}_n^{(h)}(\vartheta^0) }\\
        &&\geq \inf_{\vartheta\in\overline{\mathcal{B}}(\vartheta_{1}^0,\delta)\times\Theta_2}
        (\mathcal{{L}}_n^{(h)}(\vartheta)-\mathcal{{L}}_n^{(h)}(\vartheta^0))
         -2\sup_{\vt\in\Theta}|\mathcal{\widehat{L}}_n^{(h)}(\vartheta)-\mathcal{{L}}_n^{(h)}(\vartheta)|\\
         &&=\inf_{\vartheta\in\overline{\mathcal{B}}(\vartheta_{1}^0,\delta)\times\Theta_2}
        \left(\mathcal{{L}}_n^{(h)}(\vartheta)-\mathcal{{L}}_n^{(h)}(\vartheta^0)\right)+o_p(1).
 \eeao
 On the other hand,  due to Lemma~\ref{LemUniqueMin} and  Lemma~\ref{propConvEps2}(a)
\beao
    \lefteqn{\left|\inf_{\vartheta_2\in\Theta_2}\mathcal{L}_{n,2}^{(h)}(\vartheta_2) -\mathcal{L}_{n,2}^{(h)}(\vartheta_2^0)\right|}\\
    &\leq& \sup_{\vt_2\in\Theta_2}|\mathcal{L}_{n,2}^{(h)}(\vartheta_2)-\mathbfcal{L}_{2}^{(h)}(\vartheta_2)|+
    \left|\inf_{\vt_2\in\Theta_2}\mathbfcal{L}_{2}^{(h)}(\vartheta_2)-\mathbfcal{L}_{2}^{(h)}(\vartheta_2^0)\right|
    +|\mathbfcal{L}_{2}^{(h)}(\vartheta_2^0)-\mathcal{L}_{n,2}^{(h)}(\vartheta_2^0)|
    =o_p(1).
\eeao
 Using \eqref{3.6a} and the above results we receive
\begin{align*}
\inf_{\vartheta\in\overline{\mathcal{B}}(\vartheta_{1}^0,\delta)\times\Theta_2}
\hspace{-3pt}
\mathcal{\widehat{L}}_n^{(h)}(\vartheta)-\mathcal{\widehat{L}}_n^{(h)}(\vartheta^0) &\geq \inf_{\vartheta\in\overline{\mathcal{B}}(\vartheta_{1}^0,\delta)\times\Theta_2}
        \left(\mathcal{{L}}_n^{(h)}(\vartheta)-\mathcal{{L}}_n^{(h)}(\vartheta^0)\right)+o_p(1)\\
&\geq\inf_{\vartheta\in\overline{\mathcal{B}}(\vartheta_{1}^0,\delta)\times\Theta_2}
\hspace{-3pt}
\mathcal{L}_{n,1}^{(h)}(\vartheta)
+\inf_{\vartheta\in\Theta_2}\big(\mathcal{L}_{n,2}^{(h)}(\vartheta_2)-\mathcal{L}_{n,2}^{(h)}(\vartheta_2^0)\big)+o_p(1)
\notag\\
&=\inf_{\vartheta\in\overline{\mathcal{B}}(\vartheta_{1}^0,\delta)\times\Theta_2}
\hspace{-3pt}
\mathcal{L}_{n,1}^{(h)}(\vartheta)+o_p(1).
\end{align*}
Hence,  it suffices to show that for any $\tau>0$
\begin{eqnarray}  \label{eqQgeqc}
\limn \P\bigg(\inf_{\vartheta\in\overline{\mathcal{B}}(\vartheta_{1}^0,\delta)\times\Theta_2}
\hspace{-3pt}
\mathcal{L}_{n,1}^{(h)}(\vartheta)> \tau\bigg)=1.
\end{eqnarray}
An application of Lemma~\ref{propConvEps2}(b) and the continuous mapping theorem yield
\begin{eqnarray} \label{C.1}
    \inf_{\vartheta\in\overline{\mathcal{B}}(\vartheta_{1}^0,\delta)\times\Theta_2}\frac{1}{n}\mathcal{L}_{n,1}^{(h)}(\vartheta) &\xrightarrow[]{ ~w~ }&
    \inf_{\vartheta\in\overline{\mathcal{B}}(\vartheta_{1}^0,\delta)\times\Theta_2}\int_0^1 \|(V_\vt^{(h)})^{-1/2}\Pi(\vartheta)C_1B_1W_3(r)\|^2\,\dif r.
\end{eqnarray}
Due to Bernstein \cite[Corollary 9.6.7]{Bernstein2009}
\begin{eqnarray} \label{C.2}
   \int_0^1\|(V_\vt^{(h)})^{-1/2}\Pi(\vartheta)C_1B_1W_3(r)\|^2\,dr \geq \sigma_{\min}((V_\vt^{(h)})^{-1})\int_0^1\|\Pi(\vartheta)C_1B_1W_3(r)\|^2\,dr.
\end{eqnarray}
Moreover,
\begin{eqnarray*}
        \int_0^1 \|\Pi(\vartheta)C_1B_1W_3(r)\|^2\,dr\,&=&
       \int_0^1  \tr\left([B_1W_3(r)]^{\mathsf{T}}[\Pi(\vt)C_1]^{\mathsf{T}}[\Pi(\vartheta)C_1][B_1W_3(r)]\right)\,dr\\
        &=&  \tr\left([\Pi(\vt)C_1]^{\mathsf{T}}[\Pi(\vartheta)C_1]\int_0^1[B_1W_3(r)][B_1W_3(r)]^{\mathsf{T}}\,dr\right)
\end{eqnarray*}
where we used Bernstein \cite[2.2.27]{Bernstein2009} to permutate the matrices in the trace. 
 The random matrix $\int_0^1[B_1W_3(r)]^{\mathsf{T}}[B_1W_3(r)]\,dr$ is $\P$-a.s. positive definite since $B_1$ and the covariance matrix
 of $W_3$ have full rank.
 Hence, there exists an $m\times m$-dimensional symmetric positive random matrix $W^*$
with $\int_0^1[B_1W_3(r)][B_1W_3(r)]^{\mathsf{T}}\,dr=W^{*}W^{*\mathsf{T}}$. Then, we obtain similarly as above with Bernstein \cite[2.2.27]{Bernstein2009}
\begin{eqnarray*}
        \int_0^1 \|\Pi(\vartheta)C_1B_1W_3(r)\|^2\,dr=\tr\left([W^*]^{\mathsf{T}}[\Pi(\vt)C_1]^{\mathsf{T}}[\Pi(\vartheta)C_1]W^*\right)
        =\|\Pi(\vt)C_1W^*\|^2.
\end{eqnarray*}
Again an application of  Bernstein \cite[Corollary 9.6.7]{Bernstein2009} and \eqref{C.2} yields
\begin{eqnarray} \label{C11}
        \int_0^1 \|(V_\vt^{(h)})^{-1/2}\Pi(\vartheta)C_1B_1W_3(r)\|^2\,\dif r
       \hspace*{-0.2cm}&\geq&\hspace*{-0.2cm} \sigma_{\min}((V_\vt^{(h)})^{-1})\int_0^1\|\Pi(\vartheta)C_1B_1W_3(r)\|^2\,dr\\
        &=&\hspace*{-0.2cm} \sigma_{\min}((V_\vt^{(h)})^{-1})\|\Pi(\vt)C_1W^{*}\|^2 \nonumber\\
        &\geq&\hspace*{-0.2cm} \sigma_{\min}((V_\vt^{(h)})^{-1})
        \sigma_{\min}\left(W^{*}W^{*\mathsf{T}}\right)\|\Pi(\vt)C_1\|^2 \nonumber\\
        &=&\hspace*{-0.2cm} \sigma_{\min}((V_\vt^{(h)})^{-1})
        \sigma_{\min}\left(\int_0^1B_1W_3(r)[B_1W_3(r)]^{\mathsf{T}}\,dr\right)\|\Pi(\vt)C_1\|^2. \nonumber
\end{eqnarray}
Since $B_1\int_0^1W_3(r)W_3(r)^{\mathsf{T}}\dif r B_1^{\mathsf{T}}$ is $\P$-a.s. positive definite  $\sigma_{\min}\left(B_1\int_0^1W_3(r)W_3(r)^T\dif rB_1^{\mathsf{T}}\right)>0$ $\p$-a.s.
On the one hand, $\inf_{\vartheta\in\overline{\mathcal{B}}(\vartheta_{1}^0,\delta)\times\Theta_2}\sigma_{\min}((V_\vt^{(h)})^{-1})>0$ due Lemma~\ref{Lemma 2.3}(c).
On the other hand, \autoref{AssMPosDefN1} (see Remark~\ref{Remark:id}) implies that
$\inf_{\vartheta\in\overline{\mathcal{B}}(\vartheta_{1}^0,\delta)\times\Theta_2}\Vert \Pi(\vartheta) C_1 \Vert^2 >\mathfrak{C}^2\delta^2>0$. To conclude
\begin{eqnarray*}
    \inf_{\vartheta\in\overline{\mathcal{B}}(\vartheta_{1}^0,\delta)\times\Theta_2}\int_0^1 \|(V_\vt^{(h)})^{-1/2}\Pi(\vartheta)C_1B_1W_3(r)\|^2\,\dif r>0 \quad \P\text{-a.s.,}
\end{eqnarray*}
which finally gives with \eqref{C.1} that
$
    \inf_{\vartheta\in\overline{\mathcal{B}}(\vartheta_{1}^0,\delta)\times\Theta_2}\mathcal{L}_{n,1}^{(h)}(\vartheta) \xrightarrow[]{ ~p~ }\infty
$ and thus, \eqref{eqQgeqc} is proven.
\end{proof}

\subsection{Super-consistency of the long-run QML estimator} \label{Sec:cons:longrun}

From the previous section we already know that the QML estimator $\widehat\vt_{n,1}$ for the long-run parameter is consistent. In the following,
we will calculate its consistency rate.
For $0\leq \gamma< 1$  define the set
\begin{align} \label{N}
N_{n,\gamma}(\vartheta_{1}^0,\delta):=\left\{\vartheta_1\in\Theta_1:\|\vartheta_1-\vartheta_1^0\|\leq\delta n^{-\gamma}\right\},\quad  n\in\N,
\end{align}
 and $\overline{N}_{n,\gamma}(\vartheta_{1}^0,\delta):=\Theta_1\backslash N_{n,\gamma}(\vartheta_{1}^0,\delta)$ as its complement.
 As Saikkonen \cite[eq. (26)]{Saikkonen1995} we receive the consistency rate from the next statement.

\begin{theorem}
\label{eqSuffCondforCons}
Let $0\leq \gamma< 1$.  For any $\delta>0$ we have
\begin{align*}
\lim_{n\to\infty}\P\bigg(\inf_{\vartheta\in\overline{N}_{n,\gamma}(\vartheta_{1}^0,\delta)\times\Theta_2}
\hspace{-3pt}
\mathcal{\widehat{L}}_n^{(h)}(\vartheta)-\mathcal{\widehat{L}}_n^{(h)}(\vartheta^0)>0\bigg)=1.
\end{align*}
\end{theorem}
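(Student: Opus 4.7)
The plan is to mimic the argument for Theorem 4.1 (consistency) but to track the $r:=\|\vartheta_1-\vartheta_1^0\|$-dependence of every contribution quantitatively. By \Cref{Lemma 2.8} the approximate and exact log-likelihoods differ by $o_p(1)$ uniformly on $\Theta$, so it is enough to prove the corresponding statement for $\mathcal{L}_n^{(h)}$. Using the decomposition \eqref{3.3b}--\eqref{3.6a},
\begin{equation*}
\mathcal{L}_n^{(h)}(\vartheta)-\mathcal{L}_n^{(h)}(\vartheta^0)=\mathcal{L}_{n,1}^{(h)}(\vartheta)+\bigl[\mathcal{L}_{n,2}^{(h)}(\vartheta_2)-\mathcal{L}_{n,2}^{(h)}(\vartheta_2^0)\bigr].
\end{equation*}
I would then split the index set: choose any fixed $\rho>0$ and write $\overline{N}_{n,\gamma}(\vartheta_1^0,\delta)=\overline{\mathcal{B}}(\vartheta_1^0,\rho)\cup\{\delta n^{-\gamma}\leq r\leq \rho\}$. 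On the first, "far" piece, \Cref{eqSuffCondforConsa} applies directly. The whole work happens on the shrinking annulus.

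On the annulus, the crucial point is that $\mathcal{L}_{n,1,1}^{(h)}$ admits a quadratic lower bound of the form
\begin{equation*}
\mathcal{L}_{n,1,1}^{(h)}(\vartheta)\geq c_n\cdot n\cdot r^2,
\end{equation*}
with $c_n$ bounded below by a positive constant with probability tending to one. I would obtain this by writing $\mathcal{L}_{n,1,1}^{(h)}$ as a trace, factoring out $\lambda_{\min}\bigl(\tfrac1n\sum_{k=1}^n L_{k-1}^{(h)}L_{k-1}^{(h)\mathsf T}\bigr)$, which is of exact order $n$ thanks to the weak convergence $\tfrac{1}{n^2}\sum L_{k-1}^{(h)}L_{k-1}^{(h)\mathsf T}\Rightarrow\int_0^1 W_3(r)W_3(r)^{\mathsf T}\,dr$ and the $\P$-a.s.\ positive definiteness of the limit, then applying Lemma~\ref{Lemma 2.3}(c) to control $\sigma_{\min}((V_\vartheta^{(h)})^{-1/2})$ uniformly, the singular-value inequality $\|\Pi(\vartheta)C_1 B_1\|^2\geq\sigma_{\min}(B_1)^2\|\Pi(\vartheta)C_1\|^2$, and finally \Cref{Remark:id}(iv) to get $\|\Pi(\vartheta)C_1\|\geq\mathfrak{C}r$. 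Together with the Lipschitz estimate $|\mathcal{L}_{n,1,2}^{(h)}(\vartheta)|\leq \mathfrak{C}U_n r$ with $U_n=O_p(1)$ from \Cref{Lemma 3.5a}, this yields
\begin{equation*}
\mathcal{L}_{n,1}^{(h)}(\vartheta)\geq c_n n r^2-\mathfrak{C}U_n r=r\bigl(c_n n r-\mathfrak{C}U_n\bigr),
\end{equation*}
and for $r\geq\delta n^{-\gamma}$ with $\gamma<1$ the factor $c_n n r\geq c_n\delta n^{1-\gamma}$ diverges, so eventually $c_n n r-\mathfrak{C}U_n\geq c_n n r/2$ with probability tending to one. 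Hence $\mathcal{L}_{n,1}^{(h)}(\vartheta)\geq \tfrac12 c_n\delta^2 n^{1-2\gamma}$ uniformly on the annulus.

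It remains to check that this positive main term dominates the residual from the stationary block. The naive bound $\inf_{\vartheta_2\in\Theta_2}[\mathcal{L}_{n,2}^{(h)}(\vartheta_2)-\mathcal{L}_{n,2}^{(h)}(\vartheta_2^0)]\geq -2\sup_{\vartheta_2}|\mathcal{L}_{n,2}^{(h)}(\vartheta_2)-\mathbfcal{L}_2^{(h)}(\vartheta_2)|=o_p(1)$ from \Cref{propConvEps2}(a) only suffices when $\gamma\leq 1/2$, since for $\gamma>1/2$ the lower bound $n^{1-2\gamma}$ itself vanishes. To close the gap and reach the full range $\gamma<1$, I would sharpen this to
\begin{equation*}
\inf_{\vartheta_2\in\Theta_2}\bigl[\mathcal{L}_{n,2}^{(h)}(\vartheta_2)-\mathcal{L}_{n,2}^{(h)}(\vartheta_2^0)\bigr]=O_p(n^{-1})
\end{equation*}
by a second-order Taylor expansion of $\mathcal{L}_{n,2}^{(h)}$ around $\vartheta_2^0$: the score is $O_p(n^{-1/2})$ by a standard CLT for stationary functionals (available under Assumption A3), the Hessian converges to the positive definite Fisher-type matrix of the stationary MCARMA model, and the auxiliary argmin $\widetilde{\vartheta}_{n,2}$ of $\mathcal{L}_{n,2}^{(h)}$ alone is $\sqrt n$-consistent by the arguments of Schlemm and Stelzer~\cite{SchlemmStelzer2012,SchlemmStelzer2012a}. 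Since $n^{1-2\gamma}/n^{-1}=n^{2-2\gamma}\to\infty$ for every $\gamma<1$, the main term strictly dominates and the infimum is positive with probability tending to one. The principal obstacle is precisely this refined $O_p(n^{-1})$ rate, which must be established without circularly invoking consistency of the joint short-run estimator $\widehat{\vartheta}_{n,2}$, since that is the output of Section 4.3 and relies on the present theorem as input.
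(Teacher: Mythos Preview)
Your proposal is correct and follows essentially the same route as the paper: the paper multiplies through by $n$ and shows $\inf n\mathcal{L}_{n,1}^{(h)}(\vartheta)\to\infty$ via the same quadratic-versus-linear comparison (Lemma~\ref{Lemma 3.5a} and Lemma~\ref{Lemma 3.5b}) on the annulus $\overline{M}_{n,\gamma}=\overline{N}_{n,\gamma}\cap\mathcal{B}(\vartheta_1^0,\delta_1)$, while $n\bigl[\inf_{\vartheta_2}\mathcal{L}_{n,2}^{(h)}(\vartheta_2)-\mathcal{L}_{n,2}^{(h)}(\vartheta_2^0)\bigr]=O_p(1)$ is obtained exactly as you suggest, by Taylor expanding around the infeasible stationary minimizer $\widehat{\vartheta}_{n,2}^{st}:=\argmin_{\vartheta_2}\mathcal{L}_{n,2}^{(h)}(\vartheta_2)$ and invoking the $\sqrt n$-rate from the purely stationary theory. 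Your circularity worry is therefore resolved precisely as you propose: the paper explicitly notes that the needed asymptotics for $\widehat{\vartheta}_{n,2}^{st}$ are ``special and easier calculations as in \Cref{Sec.NormalShort}'' that do not rely on the joint estimator $\widehat{\vartheta}_{n,2}$.
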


\begin{corollary}  \label{super-consist}
In particular, $\widehat{\vartheta}_{n,1}-\vartheta_1^0=o_p(n^{-\gamma})$ for $0\leq \gamma <1$.
\end{corollary}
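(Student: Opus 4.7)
The corollary is a direct consequence of \Cref{eqSuffCondforCons}: since $\widehat{\vartheta}_n = \mathrm{argmin}_\vartheta \widehat{\mathcal{L}}_n^{(h)}(\vartheta)$, the strict inequality $\widehat{\mathcal{L}}_n^{(h)}(\vartheta) > \widehat{\mathcal{L}}_n^{(h)}(\vartheta^0)$ valid uniformly on $\overline{N}_{n,\gamma}(\vartheta_1^0,\delta)\times\Theta_2$ (with probability tending to one) forces $\widehat{\vartheta}_n \in N_{n,\gamma}(\vartheta_1^0,\delta)\times\Theta_2$ with probability tending to one, hence $\P(\|\widehat{\vartheta}_{n,1}-\vartheta_1^0\| \le \delta n^{-\gamma}) \to 1$ for every $\delta>0$, which is precisely $\widehat{\vartheta}_{n,1} - \vartheta_1^0 = o_p(n^{-\gamma})$. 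The substantive task is therefore \Cref{eqSuffCondforCons}, which I would prove by adapting the three-step architecture of \Cref{eqSuffCondforConsa} to the shrinking region $\overline{N}_{n,\gamma}(\vartheta_1^0,\delta)$ in place of the fixed complement $\overline{\mathcal{B}}(\vartheta_1^0,\delta)$.

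Steps~1 and~2 mirror the consistency proof. \Cref{Lemma 2.8}(a) lets me pass from $\widehat{\mathcal{L}}_n^{(h)}$ to $\mathcal{L}_n^{(h)}$ with a uniform approximation error of order $o_p(n^{-\gamma'})$ for any $\gamma'<1$. The splitting $\mathcal{L}_n^{(h)}=\mathcal{L}_{n,1}^{(h)}+\mathcal{L}_{n,2}^{(h)}$ together with \Cref{LemUniqueMin} and \Cref{propConvEps2}(a) reduces the problem to the analysis of $\mathcal{L}_{n,1}^{(h)}$, producing
\begin{align*}
\inf_{\vartheta\in\overline{N}_{n,\gamma}\times\Theta_2}\bigl(\widehat{\mathcal{L}}_n^{(h)}(\vartheta)-\widehat{\mathcal{L}}_n^{(h)}(\vartheta^0)\bigr) \ge \inf_{\vartheta\in\overline{N}_{n,\gamma}\times\Theta_2}\mathcal{L}_{n,1}^{(h)}(\vartheta) - r_n,
\end{align*}
with a stochastic remainder $r_n$ whose order I must track carefully. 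For Step~3 I use \Cref{Lemma 3.5a} to split $\mathcal{L}_{n,1}^{(h)}=\mathcal{L}_{n,1,1}^{(h)}+\mathcal{L}_{n,1,2}^{(h)}$, where $|\mathcal{L}_{n,1,2}^{(h)}(\vartheta)|\le \mathfrak{C}\|\vartheta_1-\vartheta_1^0\|\,U_n$ with $U_n=\mathcal{O}_p(1)$, and combine the chain of inequalities \eqref{C11} with \Cref{propConvEps2}(b) and Remark~\ref{Remark:id}(iv) (the latter under \autoref{AssMPosDefN1}) to obtain
\begin{align*}
\mathcal{L}_{n,1}^{(h)}(\vartheta)\ge u\,\bigl(\mathfrak{C}_1\,n u\,M_n - \mathfrak{C}_2\,U_n\bigr),\qquad u:=\|\vartheta_1-\vartheta_1^0\|,
\end{align*}
where $M_n$ is bounded below in probability by $\sigma_{\min}\!\bigl(\int_0^1 B_1W_3(r)W_3(r)^\mathsf{T} B_1^\mathsf{T}\,dr\bigr)\cdot\inf_\vartheta\sigma_{\min}((V_\vartheta^{(h)})^{-1}) > 0$ $\P$-a.s.

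On $\overline{N}_{n,\gamma}$ we have $u\ge\delta n^{-\gamma}$, so $nu\ge\delta n^{1-\gamma}\to\infty$ because $\gamma<1$; the quadratic term dominates the linear perturbation and yields $\inf_{\overline{N}_{n,\gamma}\times\Theta_2}\mathcal{L}_{n,1}^{(h)}(\vartheta)\ge \delta^2\mathfrak{C}_1\,n^{1-2\gamma}M_n-\delta\mathfrak{C}_2\,n^{-\gamma}U_n$. Here lies the main obstacle: a naive bound $r_n=o_p(1)$ (sufficient for \Cref{eqSuffCondforConsa}) only works for $\gamma<1/2$, whereas the theorem demands $\gamma<1$. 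To bridge the gap I would sharpen the remainder. The approximation-error part is already $o_p(n^{-\gamma'})$ for any $\gamma'<1$ by \Cref{Lemma 2.8}, and the short-run contribution $\inf_{\vartheta_2\in\Theta_2}[\mathcal{L}_{n,2}^{(h)}(\vartheta_2)-\mathcal{L}_{n,2}^{(h)}(\vartheta_2^0)]$ should be bounded below by $-\mathcal{O}_p(n^{-1})$ via the standard second-order Taylor expansion of $\mathcal{L}_{n,2}^{(h)}$ around $\vartheta_2^0$ (positive-definite limiting Hessian together with $\sqrt{n}$-bounded score, which underpin the asymptotic-normality analysis of $\widehat{\vartheta}_{n,2}$ in \Cref{sec:5}). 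With $r_n=\mathcal{O}_p(n^{-1})$ against the dominant term $n^{1-2\gamma}M_n$, the inequality $1-2\gamma>-1$, equivalent to $\gamma<1$, closes the argument.
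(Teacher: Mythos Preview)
Your proposal is correct and follows essentially the same route as the paper. The corollary is indeed an immediate consequence of \Cref{eqSuffCondforCons}, and your sketch of that theorem matches the paper's proof: pass to $\mathcal{L}_n^{(h)}$ via \Cref{Lemma 2.8}, split $\mathcal{L}_n^{(h)}=\mathcal{L}_{n,1}^{(h)}+\mathcal{L}_{n,2}^{(h)}$, control the short-run piece by a Taylor expansion around $\vartheta_2^0$ to get $\inf_{\vartheta_2}[\mathcal{L}_{n,2}^{(h)}(\vartheta_2)-\mathcal{L}_{n,2}^{(h)}(\vartheta_2^0)]=\mathcal{O}_p(n^{-1})$, and then combine the quadratic lower bound on $\mathcal{L}_{n,1,1}^{(h)}$ with the Lipschitz bound on $\mathcal{L}_{n,1,2}^{(h)}$ from \Cref{Lemma 3.5a}. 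The paper packages the quadratic lower bound as a separate lemma (\Cref{Lemma 3.5b}) and carries an explicit factor of $n$ throughout rather than tracking the rate of the remainder, but the substance is identical; in particular, your identification of the obstacle (needing $r_n=\mathcal{O}_p(n^{-1})$ rather than merely $o_p(1)$ to reach all $\gamma<1$) and its resolution via the $\sqrt{n}$-score/$\sqrt{n}$-consistency of the stationary minimizer is exactly what the paper does.
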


\subsubsection{Proof of \Cref{eqSuffCondforCons}}

The proof uses the next lemma.

\begin{lemma} \label{Lemma 3.5b}
Let the notation of Lemma~\ref{Lemma 3.5a} hold. Then, \begin{itemize}
\item[(a)]
$\mathcal{L}_{n,1,1}^{(h)}(\vartheta)\geq \mathfrak{C} \sigma_{\min}((V_\vt^{(h)})^{-1})\|\vt_1-\vt_1^0\|^2\sigma_{\min}\left(\frac{1}{n}\sum_{k=1}^nB_1L_{k-1}^{(h)}
[B_1L_{k-1}^{(h)}]^{\mathsf{T}}\right).
$
\item[(b)] $\mathcal{L}_{n,1,1}^{(h)}(\vartheta)
\leq  \mathfrak{C}\|(V_\vt^{(h)})^{-1}\|\|\vt_1-\vt_1^0\|^2\tr\left(\frac{1}{n}\sum_{k=1}^nB_1L_{k-1}^{(h)}[B_1L_{k-1}^{(h)}]^{\mathsf{T}}\right)$.
\end{itemize}
\end{lemma}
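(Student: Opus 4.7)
The plan is to recognize $\mathcal{L}_{n,1,1}^{(h)}(\vartheta)$ as a single trace of a product of two symmetric positive semi-definite matrices and then sandwich it between the two standard trace inequalities
\[
\sigma_{\min}(B)\,\tr(A)\;\leq\;\tr(AB)\;\leq\;\|A\|_{\mathrm{op}}\,\tr(B)\qquad(A,B\text{ symmetric positive semi-definite}),
\]
combined with the two-sided norm bounds on $\|\Pi(\vt)C_1\|$ already furnished by \Cref{Remark:id}.

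Concretely, since each summand in the definition of $\mathcal{L}_{n,1,1}^{(h)}$ is scalar, cyclicity of the trace (Bernstein~\cite[2.2.27]{Bernstein2009}) yields
\[
\mathcal{L}_{n,1,1}^{(h)}(\vt)=\tr\!\bigl(M_\vt\,S_n\bigr),\quad
M_\vt:=[\Pi(\vt)C_1]^{\mathsf{T}}\bigl(V_\vt^{(h)}\bigr)^{-1}\Pi(\vt)C_1,\quad
S_n:=\tfrac{1}{n}\sum_{k=1}^n B_1 L_{k-1}^{(h)}\bigl[B_1 L_{k-1}^{(h)}\bigr]^{\mathsf{T}},
\]
where both factors are symmetric positive semi-definite (the first is a symmetric congruence of the positive definite $(V_\vt^{(h)})^{-1}$, the second is a sum of rank-one positive semi-definite matrices).

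For part (a), I would first invoke the lower trace inequality above with $A=M_\vt$ and $B=S_n$ to obtain $\tr(M_\vt S_n)\geq \sigma_{\min}(S_n)\tr(M_\vt)$. Applying the same inequality once more to $\tr(M_\vt)=\tr\bigl((V_\vt^{(h)})^{-1}\,\Pi(\vt)C_1\,[\Pi(\vt)C_1]^{\mathsf{T}}\bigr)$ (after one cyclic rotation) produces $\tr(M_\vt)\geq \sigma_{\min}((V_\vt^{(h)})^{-1})\|\Pi(\vt)C_1\|^2$. The final step is to substitute the lower bound $\|\Pi(\vt)C_1\|\geq \mathfrak{C}\|\vt_1-\vt_1^0\|$ from \Cref{Remark:id}(iv), which itself rests on \autoref{AssMUniquePosTriForm} together with continuity of $\alpha(\vt)$ on the compact space~$\Theta$.

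For part (b), I would apply the upper trace inequality $\tr(M_\vt S_n)\leq \|M_\vt\|\tr(S_n)$, dominate $\|M_\vt\|\leq \|(V_\vt^{(h)})^{-1}\|\|\Pi(\vt)C_1\|^2$ via submultiplicativity of the Frobenius norm, and close the argument with the matching Lipschitz upper bound $\|\Pi(\vt)C_1\|\leq \mathfrak{C}\|\vt_1-\vt_1^0\|$ recorded in \Cref{Remark:id}(ii). I do not expect any genuine obstacle here; the only point worth flagging is the harmless passage from the operator norm (which the upper trace inequality strictly requires) to the Frobenius norm adopted throughout the paper, justified by $\|\cdot\|_{\mathrm{op}}\leq \|\cdot\|_F$.
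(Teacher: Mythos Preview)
Your proposal is correct and follows essentially the same route as the paper. The paper writes $\mathcal{L}_{n,1,1}^{(h)}(\vartheta)=\frac{1}{n}\sum_{k}\|(V_\vt^{(h)})^{-1/2}\Pi(\vt)C_1B_1L_{k-1}^{(h)}\|^2$ and then, for (a), applies Bernstein~\cite[Corollary~9.6.7]{Bernstein2009} together with a Cholesky-type factorization of $S_n$ exactly as in \eqref{C11}, while for (b) it uses submultiplicativity of the Frobenius norm directly on the squared-norm representation; your trace-inequality packaging $\sigma_{\min}(B)\tr(A)\leq\tr(AB)\leq\|A\|_{\mathrm{op}}\tr(B)$ is a slightly more compact way of encoding the same two steps, and the final appeal to \Cref{Remark:id}(ii),(iv) for the two-sided bound on $\|\Pi(\vt)C_1\|$ is identical (the paper phrases the upper bound via Lipschitz continuity of $\Pi$ and $\Pi(\vt_1^0,\vt_2)C_1=0$, which amounts to the same thing).
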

\begin{proof}$\mbox{}$\\
(a) \, Several applications of Bernstein \cite[Corollary 9.6.7]{Bernstein2009} give similarly  as in \eqref{C11}
\begin{eqnarray*}
    \mathcal{L}_{n,1,1}^{(h)}(\vartheta)&=&\frac{1}{n}\sum_{k=1}^n\|(V_\vt^{(h)})^{-1/2}\Pi(\vartheta)C_1B_1L_{k-1}^{(h)}\|^2 \\
    &\geq& \sigma_{\min}((V_\vt^{(h)})^{-1}) \sigma_{\min}\left(\frac{1}{n}\sum_{k=1}^nB_1L_{k-1}^{(h)}[B_1L_{k-1}^{(h)}]^{\mathsf{T}}\right)\|\Pi(\vt)C_1\|^2.
\end{eqnarray*}
An application of \autoref{AssMPosDefN1} (see Remark~\ref{Remark:id}) yields (a).\\[2mm]
(b) \, The submultiplicativity of the norm gives
\begin{eqnarray*}
     \mathcal{L}_{n,1,1}^{(h)}(\vartheta)&=&\frac{1}{n}\sum_{k=1}^n\|(V_\vt^{(h)})^{-1/2}\Pi(\vartheta)C_1B_1L_{k-1}^{(h)}\|^2 \\
     &\leq& \|(V_\vt^{(h)})^{-1/2}\|^2\|\Pi(\vt)C_1\|^2\frac{1}{n}\sum_{k=1}^n\|B_1L_{k-1}^{(h)}\|^2\\
        &=&\|(V_\vt^{(h)})^{-1/2}\|^2\|\Pi(\vt)C_1\|^2\tr\left(\frac{1}{n}\sum_{k=1}^n [B_1L_{k-1}^{(h)}][B_1L_{k-1}^{(h)}]^{\mathsf{T}}\right).
\end{eqnarray*}
In the last line we applied Bernstein \cite[2.2.27]{Bernstein2009}. Due to  $\Pi(\vt_1^0,\vt_2)C_1=0_{d\times c}$ we have
\beao
    \mathcal{L}_{n,1,1}^{(h)}(\vartheta)\leq    \|(V_\vt^{(h)})^{-1}\|\|\Pi(\vt)C_1-\Pi(\vt_1^0,\vt_2)C_1\|^2\tr\left(\frac{1}{n}\sum_{k=1}^nB_1L_{k-1}^{(h)}[B_1L_{k-1}^{(h)}]^{\mathsf{T}}\right).
\eeao
Finally, the Lipschitz continuity of $\Pi(\vt)$ and hence, of $\Pi(\vt)C_1$ yield the statement.
\end{proof}

A conclusion of Lemma~\ref{Lemma 3.5a} and Lemma~\ref{Lemma 3.5b} is the local Lipschitz continuity of $\mathcal{L}_{n,1}^{(h)}(\cdot,\vartheta_2)$ in $\vartheta_1^0$.
Essential for the proof of Theorem~\ref{eqSuffCondforCons} is as well the local Lipschitz continuity of
$\mathcal{L}_{n,1,2}^{(h)}(\cdot,\vartheta_2)$  in $\vartheta_1^0$.

\begin{proof}[Proof of \Cref{eqSuffCondforCons}]
Due to Proposition~\ref{Lemma 2.8} the lower bound
\begin{align*}
\inf_{\vartheta\in\overline{N}_{n,\gamma}(\vartheta_{1}^0,\delta)\times\Theta_2}
\hspace{-3pt}
n \big(\mathcal{\widehat{L}}_{n}^{(h)}(\vartheta)-\mathcal{\widehat{L}}_{n}^{(h)}(\vartheta^0) \big)
&\geq \inf_{\vartheta\in\overline{N}_{n,\gamma}(\vartheta_{1}^0,\delta)\times\Theta_2}
\hspace{-3pt}
n \big(\mathcal{L}_{n}^{(h)}(\vartheta)-\mathcal{L}_{n}^{(h)}(\vartheta^0) \big)+o_p(1)\\
&\hspace*{-3cm}\geq \inf_{\vartheta\in\overline{N}_{n,\gamma}(\vartheta_{1}^0,\delta)\times\Theta_2}
\hspace{-3pt}
n \mathcal{L}_{n,1}^{(h)}(\vartheta)
+\inf_{\vartheta\in\Theta_2}n\big(\mathcal{L}_{n,2}^{(h)}(\vartheta_2)-\mathcal{L}_{n,2}^{(h)}(\vartheta_2^0)\big)+o_p(1)
\end{align*}
holds. %
We investigate now the second term. Note that $\mathcal{L}_{n,2}^{(h)}(\vartheta_2)$ depends only on the short-run parameters. Therefore, we take the infeasible estimator $\widehat{\vt}_{n,2}^{st}:=\argmin_{\vt_2\in\Theta_2} \mathcal{L}_{n,2}^{(h)}(\vartheta_2) $ for the short-run parameter
$\vt_2^0$  minimizing $\mathcal{L}_{n,2}^{(h)}(\vartheta_2)$.  For this reason, we can interpret this as a \glqq classical\grqq\ stationary estimation problem. Applying a
Taylor-expansion of $n\mathcal{L}_{n,2}^{(h)}$ around $\vt_2^0$
yields $$n\cdot\big(\mathcal{L}_{n,2}^{(h)}(\widehat{\vt}_{n,2}^{st})
 -\mathcal{L}_{n,2}^{(h)}(\vartheta_2^0)\big)
 =\big(\sqrt{n}\nabla_{\vt_2}\mathcal{L}_{n,2}^{(h)}(\underline{\vartheta}_{n,2}) \big)\cdot \big(\sqrt{n}(\widehat{\vt}_{n,2}^{st}-\vt_2^0)\big)$$ for an appropriate intermediate value $\underline{\vartheta}_{n,2}\in\Theta_2$ with $\Vert \underline{\vartheta}_{n,2}-\vt_2^0\Vert\leq \Vert \widehat{\vt}_{n,2}^{st}-\vt_2^0\Vert $.
 Since $\sqrt{n}\nabla_{\vt_2}\mathcal{L}_{n,2}^{(h)}(\underline{\vartheta}_{n,2}) $ and $\sqrt{n}(\widehat{\vt}_{n,2}^{st}-\vt_2^0)$
are asymptotically normally distributed
 (these are special and easier calculations as in \Cref{Sec.NormalShort}) we can conclude $n\cdot\big(\mathcal{L}_{n,2}^{(h)}(\widehat{\vt}_{n,2}^{st})
 -\mathcal{L}_{n,2}^{(h)}(\vartheta_2^0)\big)=\mathcal{O}_p(1)$.
 Finally,
 \begin{align*}
&\inf_{\vartheta\in\overline{N}_{n,\gamma}(\vartheta_{1}^0,\delta)\times\Theta_2}
\hspace{-3pt}
n\cdot \big(\mathcal{\widehat{L}}_{n}^{(h)}(\vartheta)-\mathcal{\widehat{L}}_{n}^{(h)}(\vartheta^0) \big)\geq\inf_{\vartheta\in\overline{N}_{n,\gamma}(\vartheta_{1}^0,\delta)\times\Theta_2}
\hspace{-3pt}
 n\cdot \mathcal{L}_{n,1}^{(h)}(\vartheta)+\mathcal{O}_p(1).
\end{align*}
Thus, if we can show that
\begin{eqnarray} \label{eqConsistRateTheta1}
 \sup_{\vartheta\in\overline{N}_{n,\gamma}(\vartheta_{1}^0,\delta)\times \Theta_2 }n\mathcal{L}_{n,1}^{(h)}(\vartheta)\cip\infty,
\end{eqnarray}
then for any $\tau>0$
\begin{align*}
&\hspace*{-1cm}\lim_{n\to\infty}\P\bigg(\inf_{\vartheta\in\overline{N}_{n,\gamma}(\vartheta_{1}^0,\delta)\times\Theta_2}
\mathcal{\widehat{L}}_n^{(h)}(\vartheta)-\mathcal{\widehat{L}}_n^{(h)}(\vartheta^0)>0\bigg)\\
&\geq
\lim_{n\to\infty}\P\bigg(\inf_{\vartheta\in\overline{N}_{n,\gamma}(\vartheta_{1}^0,\delta)\times\Theta_2}
n\left(\mathcal{\widehat{L}}_n^{(h)}(\vartheta)-\mathcal{\widehat{L}}_n^{(h)}(\vartheta^0)\right)>\tau\bigg)=1.
\end{align*}
Before we prove \eqref{eqConsistRateTheta1} we first note that due to \eqref{eqQgeqc} we only have to
consider the set
\begin{align*}
\overline{M}_{n,\gamma}(\vartheta_{1}^0,\delta_1):= \overline{N}_{n,\gamma}(\vartheta_{1}^0,\delta_1)\cap \mathcal{B}(\vartheta_{1}^0,\delta_1)\subseteq \Theta_1\cap \mathcal{B}(\vartheta_{1}^0,\delta_1)
\end{align*}
for $n$ large enough instead of the whole set $\overline{N}_{n,\gamma}(\vartheta_{1}^0,\delta_1)$ in the infimum.
Note that  $\inf_{\vt\in\Theta}\sigma_{\min}((V_\vt^{(h)})^{-1})>0$ by Lemma~\ref{Lemma 2.3}(c).
Then, Lemma~\ref{Lemma 3.5a} and Lemma~\ref{Lemma 3.5b} give the lower bound
\begin{eqnarray*}
    \mathcal{L}_{n,1}^{(h)}(\vartheta)&\geq& \mathcal{L}_{n,1,1}^{(h)}(\vartheta)-|\mathcal{L}_{n,1,2}^{(h)}(\vartheta)|\\
        &\geq & \mathfrak{C}\|\vt_1-\vt_1^0\|^2\sigma_{\min}\left(\frac{1}{n}\sum_{k=1}^nB_1L_{k-1}^{(h)}
[B_1L_{k-1}^{(h)}]^{\mathsf{T}}\right)-\mathfrak{C}\|\vt_1-\vt_1^0\| U_n\\
        &\geq & \mathfrak{C} n\|\vt_1-\vt_1^0\|^2\underbrace{\left(\sigma_{\min}\left(\frac{1}{n^2}\sum_{k=1}^nB_1L_{k-1}^{(h)}
[B_1L_{k-1}^{(h)}]^{\mathsf{T}}\right)-\frac{1}{n \|\vt_1-\vt_1^0\|} U_n\right)}_{=:Z_n(\vt)}.
\end{eqnarray*}
Finally,
\begin{eqnarray*}
    \inf_{\vartheta\in\overline{M}_{n,\gamma}(\vartheta_{1}^0,\delta)\times \Theta_2 }n\mathcal{L}_{n,1}^{(h)}(\vartheta)
        &\geq& \left(\inf_{\vartheta \in\overline{M}_{n,\gamma}(\vartheta_{1}^0,\delta)\times \Theta_2 } \mathfrak{C} n^2\|\vt_1-\vt_1^0\|^2 \right) \left(\inf_{\vartheta\in\overline{M}_{n,\gamma}(\vartheta_{1}^0,\delta)\times \Theta_2 }Z_n(\vt)\right)\\
        &\geq& \mathfrak{C} n^{2-2\gamma}\inf_{\vartheta\in\overline{M}_{n,\gamma}(\vartheta_{1}^0,\delta)\times \Theta_2 }Z_n(\vt).
\end{eqnarray*}
Due to Proposition~\ref{PropUniformConvRes1}(b) and Lemma~\ref{Lemma 3.5a}, we receive
$$\inf_{\vartheta\in\overline{M}_{n,\gamma}(\vartheta_{1}^0,\delta)\times \Theta_2 }Z_n(\vt)\ccid \sigma_{\min}\left(B_1\int_0^1W_3(r)W_3(r)^{\mathsf{T}}\dd r B_1^{\mathsf{T}}\right)>0\quad \P\text{-a.s.}$$ Thus,
finally  $ \sup_{\vartheta\in\overline{M}_{n,\gamma}(\vartheta_{1}^0,\delta)\times \Theta_2 }n\mathcal{L}_{n,1}^{(h)}(\vartheta)\cip\infty$ for $0\leq\gamma<1$.
\end{proof}

\subsection{Consistency of the short-run QML estimator}
\label{sec4:sub2}
 Next, we consider the consistency of the short-run parameter estimator  $\widehat{\vartheta}_{n,2}$  with the help of the order
 of consistency of the long-run parameter estimator  $\widehat{\vartheta}_{n,1}$  which we determined in Corollary~\ref{super-consist}.
 Similarly to Saikkonen~\cite[eq. (31)]{Saikkonen1995} we show a sufficient condition given by the next theorem.
 Therefore, define for $\delta>0$ the set $
 \mathcal{B}(\vartheta_2^0,\delta):= \{\vartheta_2\in\Theta_2:\|\vartheta_2-\vartheta_2^0\|\leq\delta\}
$
as closed ball with radius $\delta$ around $\vt_2^0$ and
$\overline{\mathcal{B}}(\vartheta_{2}^0,\delta):=\Theta_2\backslash {\mathcal{B}}(\vartheta_{2}^0,\delta)$ as its complement.

 \begin{theorem}
 \label{eqConsistRateTheta2} \label{ball}
Then,  for any $\delta>0$ we have
\begin{align*}
\lim_{n\to\infty}\P \bigg(
\inf_{\vartheta\in \Theta_1\times \overline{\mathcal{B}}(\vartheta_2^0,\delta)}
\hspace{-3pt}
\mathcal{\widehat{L}}_{n}^{(h)}(\vartheta)-\mathcal{\widehat{L}}_{n}^{(h)}(\vartheta^0)> 0\bigg)=1.
\end{align*}
\end{theorem}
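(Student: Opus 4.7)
My plan is to combine the super-consistency of the long-run estimator from Theorem~\ref{eqSuffCondforCons} with a \emph{stationary} argument applied to the short-run piece $\mathcal{L}_{n,2}^{(h)}$. First, by Proposition~\ref{Lemma 2.8}(a) it suffices to work with $\mathcal{L}_n^{(h)}$ in place of $\mathcal{\widehat L}_n^{(h)}$, since the replacement error is $o_p(1)$ uniformly in $\vartheta$. Using the decomposition $\mathcal{L}_n^{(h)}(\vartheta)=\mathcal{L}_{n,1}^{(h)}(\vartheta)+\mathcal{L}_{n,2}^{(h)}(\vartheta_2)$ together with the identity $\mathcal{L}_n^{(h)}(\vartheta^0)=\mathcal{L}_{n,2}^{(h)}(\vartheta_2^0)$ from \eqref{3.6a}, the object to analyze reduces to
\begin{align*}
\mathcal{L}_n^{(h)}(\vartheta)-\mathcal{L}_n^{(h)}(\vartheta^0)=\mathcal{L}_{n,1}^{(h)}(\vartheta)+\big[\mathcal{L}_{n,2}^{(h)}(\vartheta_2)-\mathcal{L}_{n,2}^{(h)}(\vartheta_2^0)\big].
\end{align*}

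Second, I would fix any $0<\gamma<1$ and some $\delta_1>0$ and split the infimum over $\Theta_1\times\overline{\mathcal{B}}(\vartheta_2^0,\delta)$ according to whether $\vartheta_1\in\overline{N}_{n,\gamma}(\vartheta_1^0,\delta_1)$ or $\vartheta_1\in N_{n,\gamma}(\vartheta_1^0,\delta_1)$. Since $\overline{N}_{n,\gamma}(\vartheta_1^0,\delta_1)\times\overline{\mathcal{B}}(\vartheta_2^0,\delta)\subseteq\overline{N}_{n,\gamma}(\vartheta_1^0,\delta_1)\times\Theta_2$, Theorem~\ref{eqSuffCondforCons} immediately gives that, with probability tending to one, $\mathcal{\widehat L}_n^{(h)}(\vartheta)-\mathcal{\widehat L}_n^{(h)}(\vartheta^0)>0$ on this \emph{far} region.

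Third, on the \emph{near} region $N_{n,\gamma}(\vartheta_1^0,\delta_1)\times\overline{\mathcal{B}}(\vartheta_2^0,\delta)$, I would combine a uniform lower bound on the long-run piece with a uniform lower bound on the short-run piece. For the long-run piece, I use $\mathcal{L}_{n,1,1}^{(h)}(\vartheta)\geq 0$ (it is a quadratic form) and the local Lipschitz bound $|\mathcal{L}_{n,1,2}^{(h)}(\vartheta)|\leq\mathfrak{C}\|\vartheta_1-\vartheta_1^0\|U_n$ with $U_n=\mathcal{O}_p(1)$ from Lemma~\ref{Lemma 3.5a}, yielding
\begin{align*}
\inf_{\vartheta_1\in N_{n,\gamma}(\vartheta_1^0,\delta_1)}\mathcal{L}_{n,1}^{(h)}(\vartheta)\geq -\mathfrak{C}\,\delta_1\, n^{-\gamma}U_n=o_p(1).
\end{align*}
For the short-run piece, the uniform convergence $\sup_{\vartheta_2\in\Theta_2}|\mathcal{L}_{n,2}^{(h)}(\vartheta_2)-\mathbfcal{L}_2^{(h)}(\vartheta_2)|\cip 0$ from Lemma~\ref{propConvEps2}(a), together with the uniqueness of the global minimum of the continuous limit $\mathbfcal{L}_2^{(h)}$ (Lemma~\ref{LemUniqueMin}) on the compact set $\overline{\mathcal{B}}(\vartheta_2^0,\delta)$, produces a constant $\eta>0$ with
\begin{align*}
\inf_{\vartheta_2\in\overline{\mathcal{B}}(\vartheta_2^0,\delta)}\big[\mathcal{L}_{n,2}^{(h)}(\vartheta_2)-\mathcal{L}_{n,2}^{(h)}(\vartheta_2^0)\big]\geq \eta-o_p(1).
\end{align*}
Adding both bounds yields $\inf_{N_{n,\gamma}(\vartheta_1^0,\delta_1)\times\overline{\mathcal{B}}(\vartheta_2^0,\delta)}[\mathcal{L}_n^{(h)}(\vartheta)-\mathcal{L}_n^{(h)}(\vartheta^0)]\geq \eta/2$ with probability tending to one, and together with the far-region estimate this proves the claim.

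The main obstacle, and the only delicate point, is reconciling the two regions: the local Lipschitz bound on $\mathcal{L}_{n,1,2}^{(h)}$ is only \emph{linear} in $\|\vartheta_1-\vartheta_1^0\|$, so its negative contribution cannot be absorbed by the $O(1)$ gap $\eta$ produced by the short-run part unless we restrict to a shrinking neighborhood of $\vartheta_1^0$. This is precisely why Theorem~\ref{eqSuffCondforCons} (with any $0<\gamma<1$) is used to rule out the complement of such a neighborhood before the short-run analysis can be carried out uniformly in $\vartheta_2$. Everything else is a routine combination of compactness of $\Theta_2$, continuity of $\mathbfcal{L}_2^{(h)}$, and the uniform convergence results already established.
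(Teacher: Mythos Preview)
Your proof is correct and follows essentially the same strategy as the paper's: split $\Theta_1$ into the far region $\overline{N}_{n,\gamma}(\vartheta_1^0,\delta_1)$ (handled by Theorem~\ref{eqSuffCondforCons}) and the near region $N_{n,\gamma}(\vartheta_1^0,\delta_1)$, on which the long-run piece $\mathcal{L}_{n,1}^{(h)}$ is asymptotically negligible and the short-run piece $\mathcal{L}_{n,2}^{(h)}$ is bounded below by a positive constant via Lemma~\ref{propConvEps2}(a) and Lemma~\ref{LemUniqueMin}. Your use of $\mathcal{L}_{n,1,1}^{(h)}\geq 0$ for the lower bound is a mild simplification over the paper's Lemma~\ref{eqConsistRateTheta2Part1}, which bounds $|\mathcal{L}_{n,1}^{(h)}|$ and therefore requires $\tfrac12<\gamma<1$, whereas your one-sided bound works for any $0<\gamma<1$.
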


\begin{corollary} \label{corollary:3.9}
In particular, $
\widehat{\vt}_{n,2}-\vt_2^0=o_p(1).
$
\end{corollary}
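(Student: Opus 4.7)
The plan is to reduce this short-run consistency statement to three ingredients that are already at our disposal: the additive decomposition $\mathcal{L}_n^{(h)}(\vt)-\mathcal{L}_n^{(h)}(\vt^0)=\mathcal{L}_{n,1}^{(h)}(\vt)+\big(\mathcal{L}_{n,2}^{(h)}(\vt_2)-\mathcal{L}_{n,2}^{(h)}(\vt_2^0)\big)$ from Section~\ref{Section:3:subsection:1}; the super-consistency of $\widehat{\vt}_{n,1}$ at any rate $n^{-\gamma}$ with $\gamma<1$ from Corollary~\ref{super-consist}; and the uniform convergence of $\mathcal{L}_{n,2}^{(h)}$ to the limit $\mathbfcal{L}_2^{(h)}$ (Lemma~\ref{propConvEps2}(a)) together with the unique-minimum property of $\mathbfcal{L}_2^{(h)}$ at $\vt_2^0$ (Lemma~\ref{LemUniqueMin}). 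The very first move is to pass from $\mathcal{\widehat L}_n^{(h)}$ to $\mathcal{L}_n^{(h)}$ at the cost of a uniform $o_p(1)$ via Proposition~\ref{Lemma 2.8}(a), after which I argue entirely with $\mathcal{L}_n^{(h)}$.

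Next I would fix some $\gamma\in(1/2,1)$ and a small $\delta_1>0$ and split the infimum domain into the two pieces $S_n^{\rm near}:=N_{n,\gamma}(\vt_1^0,\delta_1)\times\overline{\mathcal{B}}(\vt_2^0,\delta)$ and $S_n^{\rm far}:=\overline{N}_{n,\gamma}(\vt_1^0,\delta_1)\times\overline{\mathcal{B}}(\vt_2^0,\delta)$. On the far piece the claim is immediate: Theorem~\ref{eqSuffCondforCons} already delivers $\inf(\mathcal{\widehat L}_n^{(h)}(\vt)-\mathcal{\widehat L}_n^{(h)}(\vt^0))>0$ with probability tending to one on the even larger set $\overline{N}_{n,\gamma}(\vt_1^0,\delta_1)\times\Theta_2$, which contains $S_n^{\rm far}$.

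The real work is on $S_n^{\rm near}$, where $\|\vt_1-\vt_1^0\|\leq \delta_1 n^{-\gamma}$. Here Lemma~\ref{Lemma 3.5a} gives $|\mathcal{L}_{n,1,2}^{(h)}(\vt)|\leq \mathfrak{C}\|\vt_1-\vt_1^0\|\cdot U_n = O_p(n^{-\gamma})=o_p(1)$, and Lemma~\ref{Lemma 3.5b}(b) together with $\frac{1}{n^2}\sum_{k=1}^n B_1 L_{k-1}^{(h)}[B_1 L_{k-1}^{(h)}]^{\mathsf{T}}=O_p(1)$ from Proposition~\ref{PropUniformConvRes1}(b) gives $0\leq \mathcal{L}_{n,1,1}^{(h)}(\vt)\leq \mathfrak{C}\|\vt_1-\vt_1^0\|^2\cdot n\cdot O_p(1) = O_p(n^{1-2\gamma})=o_p(1)$ because $\gamma>1/2$. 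Hence $\sup_{S_n^{\rm near}}|\mathcal{L}_{n,1}^{(h)}(\vt)|=o_p(1)$, so on this set
\begin{align*}
\mathcal{L}_n^{(h)}(\vt)-\mathcal{L}_n^{(h)}(\vt^0) = \mathcal{L}_{n,2}^{(h)}(\vt_2)-\mathcal{L}_{n,2}^{(h)}(\vt_2^0)+o_p(1).
\end{align*}
Applying Lemma~\ref{propConvEps2}(a) uniformly in $\vt_2\in\Theta_2$ I may replace the right-hand side by $\mathbfcal{L}_2^{(h)}(\vt_2)-\mathbfcal{L}_2^{(h)}(\vt_2^0)+o_p(1)$. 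Since $\mathbfcal{L}_2^{(h)}$ is continuous on the compact set $\Theta_2$ and has $\vt_2^0$ as its unique global minimum (Lemma~\ref{LemUniqueMin}), the constant $\eta:=\inf_{\vt_2\in\overline{\mathcal{B}}(\vt_2^0,\delta)}\mathbfcal{L}_2^{(h)}(\vt_2)-\mathbfcal{L}_2^{(h)}(\vt_2^0)$ is strictly positive, so $\inf_{S_n^{\rm near}}(\mathcal{\widehat L}_n^{(h)}(\vt)-\mathcal{\widehat L}_n^{(h)}(\vt^0))\geq \eta/2$ with probability tending to one.

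The main obstacle is precisely the near regime $S_n^{\rm near}$. The non-stationary summand $\mathcal{L}_{n,1}^{(h)}(\vt)$ is driven by the quadratic variation of a random walk, which grows like $n^2$, so a naive bound on $n^2\|\vt_1-\vt_1^0\|^2$ would explode; the only way to keep it $o_p(1)$ is to localize $\vt_1$ inside an $n^{-\gamma}$-neighbourhood with $\gamma>1/2$, and this localization is available only by virtue of the super-consistency result of Section~\ref{Sec:cons:longrun}. The three-step ordering \emph{consistency of $\widehat{\vt}_{n,1}$} $\Rightarrow$ \emph{super-consistency of $\widehat{\vt}_{n,1}$} $\Rightarrow$ \emph{consistency of $\widehat{\vt}_{n,2}$} pursued in this section is therefore not merely convenient but essential.
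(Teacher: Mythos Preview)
Your proof is correct and follows essentially the same approach as the paper's proof of Theorem~\ref{eqConsistRateTheta2}, from which the corollary is immediate: split $\Theta_1$ into near and far regions at scale $n^{-\gamma}$ with $\gamma\in(1/2,1)$, dispose of the far region via Theorem~\ref{eqSuffCondforCons}, and on the near region bound $|\mathcal{L}_{n,1}^{(h)}|$ using Lemmas~\ref{Lemma 3.5a} and~\ref{Lemma 3.5b}(b) (the paper packages this step as Lemma~\ref{eqConsistRateTheta2Part1}) while the $\mathcal{L}_{n,2}^{(h)}$ gap is controlled by Lemma~\ref{propConvEps2}(a) together with the unique-minimum Lemma~\ref{LemUniqueMin} (the paper's Lemma~\ref{eqConsistRateTheta2Part2}). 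One minor clarification: in your closing paragraph the localization to $N_{n,\gamma}(\vt_1^0,\delta_1)$ is a restriction of the infimum domain, not a use of the super-consistency of $\widehat{\vt}_{n,1}$ itself; what is actually invoked from Section~\ref{Sec:cons:longrun} is Theorem~\ref{eqSuffCondforCons} on the complementary far region.
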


\subsubsection{Proof of \Cref{eqConsistRateTheta2}}

Again we prove some auxiliary results before we state the proof of the theorem. Lemma~\ref{eqConsistRateTheta2Part1} corresponds
to Saikkonen~\cite[eq. (32)]{Saikkonen1995} and Lemma~\ref{eqConsistRateTheta2Part2} to Saikkonen~\cite[eq. (33)]{Saikkonen1995} for the regression model.

\begin{lemma} \label{eqConsistRateTheta2Part1}
 For $\frac{1}{2}<\gamma< 1$,  $\delta_1>0$ and $\tau>0$ we have
\begin{eqnarray*}
\lim_{n\to\infty}\P \bigg(\sup_{\vartheta\in N_{n,\gamma}(\vartheta_{1}^0,\delta_1)\times \Theta_2}
\hspace{-3pt}
\vert \mathcal{L}_{n,1}^{(h)}(\vartheta)\vert \leq \tau \, \bigg)=1.
\end{eqnarray*}
\end{lemma}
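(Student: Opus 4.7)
The plan is to exploit the decomposition $\mathcal{L}_{n,1}^{(h)}(\vartheta) = \mathcal{L}_{n,1,1}^{(h)}(\vartheta) + \mathcal{L}_{n,1,2}^{(h)}(\vartheta)$ introduced in Lemma~\ref{Lemma 3.5a} and combine it with the shrinking neighborhood condition $\|\vt_1-\vt_1^0\|\leq \delta_1 n^{-\gamma}$. The key observation is that on $N_{n,\gamma}(\vartheta_1^0,\delta_1)$ the factor $\|\vt_1-\vt_1^0\|$ compensates exactly for the non-stationary growth of $\sum_{k=1}^n B_1 L_{k-1}^{(h)}[B_1L_{k-1}^{(h)}]^{\mathsf{T}}$, provided the consistency rate $\gamma$ is strictly above $1/2$. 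Thus the bound will be uniform over $\vt_2\in\Theta_2$ because the constants produced in Lemmas~\ref{Lemma 3.5a}--\ref{Lemma 3.5b} are uniform in $\vartheta$ (the Lipschitz constant of $\Pi(\cdot)C_1$, the upper bound on $\|(V_\vt^{(h)})^{-1}\|$ from Lemma~\ref{Lemma 2.3}(a), and the uniform exponential decay of $(\mathsf{k}_j(\vt))_{j\in\N}$).

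First I would control the stationary-type remainder $\mathcal{L}_{n,1,2}^{(h)}(\vartheta)$: Lemma~\ref{Lemma 3.5a} gives $|\mathcal{L}_{n,1,2}^{(h)}(\vartheta)|\leq \mathfrak{C}\|\vt_1-\vt_1^0\|U_n$ with $U_n=\mathcal{O}_p(1)$ by Proposition~\ref{Proposition3.3}. Consequently, on $N_{n,\gamma}(\vartheta_1^0,\delta_1)\times\Theta_2$,
\begin{align*}
\sup_{\vartheta\in N_{n,\gamma}(\vartheta_1^0,\delta_1)\times\Theta_2}|\mathcal{L}_{n,1,2}^{(h)}(\vartheta)|\leq \mathfrak{C}\delta_1 n^{-\gamma}U_n = \mathcal{O}_p(n^{-\gamma}) = o_p(1),
\end{align*}
since $\gamma>0$. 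This handles the cross-term and the part of $\mathcal{L}_{n,1}^{(h)}$ not captured by the dominant non-stationary quadratic form.

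Next I would handle the quadratic non-stationary part $\mathcal{L}_{n,1,1}^{(h)}(\vartheta)$ using Lemma~\ref{Lemma 3.5b}(b):
\begin{align*}
\mathcal{L}_{n,1,1}^{(h)}(\vartheta)\leq \mathfrak{C}\|(V_\vt^{(h)})^{-1}\|\,\|\vt_1-\vt_1^0\|^2\,\tr\!\left(\tfrac{1}{n}\sum_{k=1}^nB_1L_{k-1}^{(h)}[B_1L_{k-1}^{(h)}]^{\mathsf{T}}\right).
\end{align*}
By Proposition~\ref{PropUniformConvRes1}(b), $n^{-2}\sum_{k=1}^n B_1L_{k-1}^{(h)}[B_1L_{k-1}^{(h)}]^{\mathsf{T}}$ converges weakly to $\int_0^1 B_1 W_3(r)W_3(r)^{\mathsf{T}}B_1^{\mathsf{T}}\,dr$, so the trace in the bound is $\mathcal{O}_p(n)$. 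Since $\|(V_\vt^{(h)})^{-1}\|$ is uniformly bounded on $\Theta$ (Lemma~\ref{Lemma 2.3}(a)), we obtain
\begin{align*}
\sup_{\vartheta\in N_{n,\gamma}(\vartheta_1^0,\delta_1)\times\Theta_2}\mathcal{L}_{n,1,1}^{(h)}(\vartheta)\leq \mathfrak{C}\,\delta_1^2 n^{-2\gamma}\cdot \mathcal{O}_p(n) = \mathcal{O}_p(n^{1-2\gamma}),
\end{align*}
which is $o_p(1)$ precisely because $\gamma>\tfrac{1}{2}$. Combining the two bounds via the triangle inequality yields $\sup_{\vartheta\in N_{n,\gamma}(\vartheta_1^0,\delta_1)\times\Theta_2}|\mathcal{L}_{n,1}^{(h)}(\vartheta)|=o_p(1)$, which is stronger than the $\le\tau$ assertion of the lemma.

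No real obstacle is expected here — the lemma is essentially a bookkeeping consequence of Lemmas~\ref{Lemma 3.5a}--\ref{Lemma 3.5b} once the condition $\gamma>1/2$ is combined with the order $\mathcal{O}_p(n)$ of the random-walk quadratic form. The only point that needs care is uniformity in $\vt_2$, but this is automatic since all constants appearing in the two lemmas (Lipschitz constant of $\Pi(\cdot)C_1$, bound on $\|(V_\vt^{(h)})^{-1}\|$, and the constant in Lemma~\ref{Lemma 3.5a}) are taken as suprema over the compact set $\Theta$.
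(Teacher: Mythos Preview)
Your proposal is correct and follows essentially the same route as the paper: split $\mathcal{L}_{n,1}^{(h)}=\mathcal{L}_{n,1,1}^{(h)}+\mathcal{L}_{n,1,2}^{(h)}$, bound the first piece by Lemma~\ref{Lemma 3.5b}(b) together with Proposition~\ref{PropUniformConvRes1}(b) to get $\mathcal{O}_p(n^{1-2\gamma})$, and bound the second piece by Lemma~\ref{Lemma 3.5a} to get $\mathcal{O}_p(n^{-\gamma})$. The only cosmetic slip is that the uniform bound $\sup_{\vt\in\Theta}\|(V_\vt^{(h)})^{-1}\|\leq\mathfrak{C}$ is Lemma~\ref{Lemma 2.3}(b), not (a).
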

\begin{proof}
Due to Lemma~\ref{Lemma 3.5a} and Lemma~\ref{Lemma 3.5b} we have the
upper bound
\begin{eqnarray*}
    |\mathcal{L}_{n,1}^{(h)}(\vartheta)|\hspace*{-0.2cm}&\leq&\hspace*{-0.2cm} |\mathcal{L}_{n,1,1}^{(h)}(\vartheta)|+|\mathcal{L}_{n,1,2}^{(h)}(\vartheta)|\\
        &\leq&\hspace*{-0.2cm} \mathfrak{C}\|(V_\vt^{(h)})^{-1}\|\|\vt_1-\vt_1^0\|^2\tr\left(\frac{1}{n}\sum_{k=1}^nB_1L_{k-1}^{(h)}
[B_1L_{k-1}^{(h)}]^{\mathsf{T}}\right)+\mathfrak{C}\|\vt_1-\vt_1^0\| U_n.
\end{eqnarray*}
Then,  using Lemma~\ref{Lemma 2.3}(b) results in 
    \begin{eqnarray} \label{A1}
    \sup_{\vartheta\in N_{n,\gamma}(\vartheta_{1}^0,\delta_1)\times \Theta_2}\mathcal{L}_{n,1}^{(h)}(\vartheta)
     \leq \mathfrak{C}\delta_1^2n^{1-2\gamma}\tr\left(\frac{1}{n^2}\sum_{k=1}^nB_1L_{k-1}^{(h)}
[B_1L_{k-1}^{(h)}]^{\mathsf{T}}\right)+\mathfrak{C}\delta_1n^{-\gamma} U_n.
\end{eqnarray}
Since $U_n=\mathcal{O}_p(1)$ by Lemma~\ref{Lemma 3.5a} and
\beao
    \tr\left(\frac{1}{n^2}\sum_{k=1}^nB_1L_{k-1}^{(h)}
[B_1L_{k-1}^{(h)}]^{\mathsf{T}}\right)\ccid\tr\left(B_1\int_0^1W_3(r)W_3(r)^{\mathsf{T}}\dd r B_1^{\mathsf{T}}\right)>0 \quad\P\text{-a.s.}
\eeao
 by Proposition~\ref{PropUniformConvRes1}(b) and the continuous mapping theorem, the right hand side of \eqref{A1} converges to $0$ in probability if $\frac{1}{2}<\gamma< 1$.
 This proves the lemma.
\end{proof}

\begin{lemma} \label{eqConsistRateTheta2Part2}
For any $\delta>0$ and $\tau>0$ we have
\begin{eqnarray*}
\lim_{n\to\infty}\P \bigg(\inf_{\vartheta_2\in\overline{\mathcal{B}}(\vartheta_2^0,\delta)}\mathcal{L}_{n,2}^{(h)}(\vartheta_2)-\mathcal{L}_{n,2}^{(h)}(\vartheta_2^0)> \tau\bigg)=1.
\end{eqnarray*}
\end{lemma}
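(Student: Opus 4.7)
The plan is to reduce the statement to the already-established uniform convergence in Lemma~\ref{propConvEps2}(a) together with the identifiability-based uniqueness of the minimum in Lemma~\ref{LemUniqueMin}. Since $\mathcal{L}_{n,2}^{(h)}(\vartheta_2)$ is built only from the stationary pseudo-innovations $\varepsilon_{k,2}^{(h)}(\vartheta_2)$ in \eqref{3.3b}, the usual stationary QML argument applies to it directly, so the consistency of $\widehat{\vt}_{n,2}$ will follow in the standard way.

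First, I would verify that $\mathbfcal{L}_{2}^{(h)}:\Theta_2\to\R$, as defined in \eqref{l2h}, is continuous in $\vartheta_2$. This uses \autoref{AssMBrownMot}(A8) (smooth parameterisation), the fact that $V_{\vt_1^0,\vt_2}^{(h)}$ is positive definite and continuous in $\vt_2$ (Lemma~\ref{Lemma 2.3}), and that $\varepsilon_{1,2}^{(h)}(\vt_2)$ admits the stationary MA-type representation $\mathsf{k}(\mathsf{B},\vt_1^0,\vt_2)\Delta Y_1^{(h)} - \Pi(\vt_1^0,\vt_2)Y_{st,0}^{(h)}$ with exponentially decaying coefficients (Lemma~\ref{LemInnovaSeqProp}), which together with the existence of fourth moments (A3) gives continuity of the expectation in \eqref{l2h}.

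Next I would exploit compactness. Since $\Theta_2$ is compact (a closed subset of the compact $\Theta$ in (A1)), the set $\overline{\mathcal{B}}(\vartheta_2^0,\delta) = \Theta_2\setminus\mathcal{B}(\vt_2^0,\delta)$ is compact as well. Lemma~\ref{LemUniqueMin} tells us that $\vt_2^0$ is the \emph{unique} global minimiser of the continuous function $\mathbfcal{L}_{2}^{(h)}$ on $\Theta_2$, so the infimum of $\mathbfcal{L}_{2}^{(h)}$ over the compact complement $\overline{\mathcal{B}}(\vartheta_2^0,\delta)$ is attained at some point strictly away from $\vt_2^0$, giving
\begin{align*}
2\tau^*:=\inf_{\vartheta_2\in\overline{\mathcal{B}}(\vartheta_2^0,\delta)}\mathbfcal{L}_{2}^{(h)}(\vartheta_2)-\mathbfcal{L}_{2}^{(h)}(\vartheta_2^0)>0.
\end{align*}

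Finally, I would combine the two ingredients. By Lemma~\ref{propConvEps2}(a) we have $\sup_{\vt_2\in\Theta_2}|\mathcal{L}_{n,2}^{(h)}(\vartheta_2)-\mathbfcal{L}_{2}^{(h)}(\vartheta_2)|\cip 0$, so on an event of probability tending to one this supremum is smaller than $(\tau^*-\tau)/2$ (for any prescribed $\tau<\tau^*$; for larger $\tau$ the infimum may have to be worked out by repeating the argument with a suitably smaller $\delta$, but the statement is phrased as \glqq for any $\tau>0$\grqq{} with the implicit understanding that $\tau$ may be taken small). Then on this event
\begin{align*}
\inf_{\vt_2\in\overline{\mathcal{B}}(\vt_2^0,\delta)}\mathcal{L}_{n,2}^{(h)}(\vt_2)-\mathcal{L}_{n,2}^{(h)}(\vt_2^0)
&\ge \inf_{\vt_2\in\overline{\mathcal{B}}(\vt_2^0,\delta)}\mathbfcal{L}_{2}^{(h)}(\vt_2)-\mathbfcal{L}_{2}^{(h)}(\vt_2^0)\\
&\quad -2\sup_{\vt_2\in\Theta_2}|\mathcal{L}_{n,2}^{(h)}(\vt_2)-\mathbfcal{L}_{2}^{(h)}(\vt_2)|\ >\ \tau,
\end{align*}
which establishes the claim. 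The only slightly delicate point is the continuity of $\mathbfcal{L}_{2}^{(h)}$; once that is in hand the rest is the standard M-estimator compactness-plus-uniform-convergence argument, which is routine here because all objects involved depend only on the stationary, exponentially mixing innovations $(\varepsilon_{k,2}^{(h)}(\vt_2))_{k\in\N}$.
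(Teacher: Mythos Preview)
Your proof is correct and follows essentially the same route as the paper: add and subtract the deterministic limit $\mathbfcal{L}_2^{(h)}$, invoke Lemma~\ref{propConvEps2}(a) for uniform convergence of $\mathcal{L}_{n,2}^{(h)}$ to $\mathbfcal{L}_2^{(h)}$, and use Lemma~\ref{LemUniqueMin} together with compactness of $\overline{\mathcal{B}}(\vt_2^0,\delta)$ to obtain a strictly positive gap $\inf_{\overline{\mathcal{B}}(\vt_2^0,\delta)}\big(\mathbfcal{L}_2^{(h)}(\vt_2)-\mathbfcal{L}_2^{(h)}(\vt_2^0)\big)>0$. The only addition you make over the paper is spelling out continuity of $\mathbfcal{L}_2^{(h)}$, which the paper uses implicitly.

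One small correction to your parenthetical: your concern about large $\tau$ is legitimate---both your argument and the paper's own proof really only establish the claim for $\tau$ below the deterministic gap $2\tau^*$, which is all that the subsequent proof of \Cref{eqConsistRateTheta2} requires---but your proposed remedy of ``repeating the argument with a suitably smaller $\delta$'' goes the wrong way: decreasing $\delta$ \emph{enlarges} $\overline{\mathcal{B}}(\vt_2^0,\delta)$ and therefore can only \emph{decrease} $\tau^*$. The honest reading is simply that the lemma holds for all sufficiently small $\tau>0$, and that suffices for its application.
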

\begin{proof}
We have
\begin{eqnarray*}
\lefteqn{\inf_{\vartheta_2\in\overline{\mathcal{B}}(\vartheta_2^0,\delta)}
\hspace{-3pt}
\left(\mathcal{L}_{n,2}^{(h)}(\vartheta_2)-\mathcal{L}_{n,2}^{(h)}(\vartheta_2^0)\right)}\\
&&\quad\geq\inf_{\vartheta_2\in\overline{\mathcal{B}}(\vartheta_2^0,\delta)} \left(\mathcal{L}_{n,2}^{(h)}(\vartheta_2)-\mathbfcal{L}_{2}^{(h)}(\vartheta_2)\right)
+\inf_{\vartheta_2\in\overline{\mathcal{B}}(\vartheta_2^0,\delta)}\left(-\mathcal{L}_{n,2}^{(h)}
(\vartheta_2^0)+\mathbfcal{L}_{2}^{(h)}(\vartheta_2^0)\right)
\notag
\\
&&\quad
\quad+\inf_{\vartheta_2\in\overline{\mathcal{B}}(\vartheta_2^0,\delta)}\left(\mathbfcal{L}_{2}^{(h)}(\vartheta_2)-\mathbfcal{L}_{2}^{(h)}(\vartheta_2^0)\right).
\end{eqnarray*}
On the one hand, the first two terms converge to zero in probability, due to Lemma~\ref{propConvEps2}(a) and the continuous mapping theorem.
On the other hand, $\inf_{\vartheta_2\in\overline{\mathcal{B}}(\vartheta_2^0,\delta)} \big(\mathbfcal{L}_{2}^{(h)}(\vartheta_2) -\mathbfcal{L}_{2}^{(h)}(\vartheta_2^0)\big)>0$ since $\mathbfcal{L}_{2}^{(h)}(\vartheta_2)$ has a unique minimum in $\vartheta_2^0$ by Lemma~\ref{LemUniqueMin}.
\end{proof}

\begin{proof}[Proof of \Cref{eqConsistRateTheta2}]
Let us assume that $\frac{1}{2}<\gamma<1$. Apparently, the parameter subspace $\Theta_1$ is the union of  $\Theta_1=\overline{N}_{n,\gamma}(\vartheta_{1}^0,\delta_1)\cup N_{n,\gamma}(\vartheta_{1}^0,\delta_1)$ and thus, we have already shown \Cref{eqConsistRateTheta2} for the set $\overline{N}_{n,\gamma}(\vartheta_{1}^0,\delta_1)\times\overline{\mathcal{B}}(\vartheta_2^0,\delta)$ instead of $\Theta_1\times\overline{\mathcal{B}}(\vartheta_2^0,\delta)$  in \Cref{eqSuffCondforCons}. It remains to investigate ${N}_{n,\gamma}(\vartheta_{1}^0,\delta_1)\times\overline{\mathcal{B}}(\vartheta_2^0,\delta)$.
 For any $\delta_1>0$ we obtain by Proposition~\ref{Lemma 2.8}
\begin{align*}
&\hspace*{-0.5cm}
\lim_{n\to\infty}
\P \bigg(
\inf_{\vartheta\in N_{n,\gamma}(\vartheta_{1}^0,\delta_1)\times \overline{\mathcal{B}}(\vartheta_2^0,\delta)}
\hspace{-3pt}
\big(\mathcal{\widehat{L}}_{n}^{(h)}(\vartheta)-\mathcal{\widehat{L}}_{n}^{(h)}(\vartheta^0)\big)> 0\bigg)
\\
&=
\lim_{n\to\infty}
\P \bigg(
\inf_{\vartheta\in N_{n,\gamma}(\vartheta_{1}^0,\delta_1)\times \overline{\mathcal{B}}(\vartheta_2^0,\delta)}
\hspace{-3pt}
\big(\mathcal{L}_{n}^{(h)}(\vartheta)-\mathcal{L}_{n}^{(h)}(\vartheta^0)\big)> 0\bigg)
\\
&
\geq
\lim_{n\to\infty}
\P \bigg(
\inf_{\vartheta\in N_{n,\gamma}(\vartheta_{1}^0,\delta_1)\times \overline{\mathcal{B}}(\vartheta_2^0,\delta)}
\hspace{-3pt}
 \mathcal{L}_{n,1}^{(h)}(\vartheta)
+\inf_{\vartheta_2\in\overline{\mathcal{B}}(\vartheta_2^0,\delta)}
\hspace{-3pt}
\big(\mathcal{L}_{n,2}^{(h)}(\vartheta_2)-\mathcal{L}_{n,2}^{(h)}(\vartheta_2^0)\big)> 0\bigg)
\\
&
\geq
\lim_{n\to\infty}
\P \bigg(
\sup_{\vartheta\in N_{n,\gamma}(\vartheta_{1}^0,\delta_1)\times \overline{\mathcal{B}}(\vartheta_2^0,\delta)}
\hspace{-3pt}
\vert \mathcal{L}_{n,1}^{(h)}(\vartheta)\vert \leq \tau \, ; \,\inf_{\vartheta_2\in\overline{\mathcal{B}}(\vartheta_2^0,\delta)} \mathcal{L}_{n,2}^{(h)}(\vartheta_2)-\mathcal{L}_{n,2}^{(h)}(\vartheta_2^0)> \tau\bigg) .
\end{align*}
Then, a consequence of Lemma~\ref{eqConsistRateTheta2Part1} and Lemma~\ref{eqConsistRateTheta2Part2} is
\begin{align*}
\lim_{n\to\infty}
\P \bigg(
\inf_{\vartheta\in N_{n,\gamma}(\vartheta_{1}^0,\delta_1)\times \overline{\mathcal{B}}(\vartheta_2^0,\delta)}
\big(\mathcal{\widehat{L}}_{n}^{(h)}(\vartheta)-\mathcal{\widehat{L}}_{n}^{(h)}(\vartheta^0)\big)> 0\bigg)\geq 1,
\end{align*}
which proves in combination with  \Cref{eqSuffCondforCons}  the claim.
\end{proof}

\section{Asymptotic distributions of the QML estimator}
\label{sec:5}
The aim of this section is to derive the asymptotic distributions of the  long-run parameter estimator  $\widehat{\vartheta}_{n,1}$ and the short-run parameter  estimator $\widehat{\vartheta}_{n,2}$. These two estimators have a different asymptotic behavior and a different convergence rate. On the one hand, we  prove the asymptotic normality of the short-run QML estimator and on the other hand, we show that the long-run QML estimator is asymptotically mixed normally distributed.

\subsection{Asymptotic distribution of the long-run parameter estimator}
\label{sec:5:sub:1}

We derive in this section the asymptotic distribution of the long-run QML estimator $\widehat\vt_{n,1}$.
From Corollary~\ref{super-consist} we already know that
${\widehat{\vt}_{n,1}}-\vt_{1}^0=o_p(n^{-\gamma})$, for  $0\leq\gamma<1$.
 Since the true parameter $\vt^0=((\vt_{1}^0)^\mathsf{T},({\vt}_{2}^0)^\mathsf{T})^\mathsf{T}$ is an element of the interior of the compact parameter space $\Theta=\Theta_1\times\Theta_2$ due to \autoref{AssMBrownMot},
 the estimator $\widehat{\vt}_{n,1}$ is at some point also an element of the interior of $\Theta_1$ with probability one.
Because the parametrization is assumed to be threefold continuously differentiable, we can find the minimizing $
\widehat{\vt}_{n}=({\widehat{\vt}_{n,1}}^{\mathsf{T}},{\widehat{\vt}_{n,2}}^{\mathsf{T}})^{\mathsf{T}}$ via the first order condition $\nabla_{\vt_1}\mathcal{\widehat{L}}_n^{(h)}({\widehat{\vt}_{n,1}},{\widehat{\vt}_{n,2}})=0_{s_1}$.
We apply a Taylor-expansion of the score vector around the point $(\vt_1^0,{\widehat{\vt}_{n,2}})$ resulting in
\begin{align}
\label{eqTaylorExpansionDer1}
0_{s_1}=\nabla_{\vt_1} \mathcal{\widehat{L}}_n^{(h)}(\vt_1^0,\widehat\vt_{n,2})+n^{-1}\nabla_{\vt_1}^2 \underline{\mathcal{\widehat{L}}}_n^{(h)}(\underline{\vt}_{n,1},\widehat{\vt}_{n,2})n(\widehat{\vt}_{n,1}-\vt_1^0),
\end{align}
where $\nabla_{\vt_1}^2 \underline{\mathcal{\widehat{L}}}_n^{(h)}(\underline{\vt}_{n,1},\widehat{\vt}_{n,2})$ denotes the matrix whose $i^{th}$ row, $i=1,\ldots,s_1$, is equal to the $i^{th}$ row of $\nabla_{\vt_1}^2 \mathcal{\widehat{L}}_n^{(h)}(\underline{\vt}_{n,1}^i,\widehat{\vt}_{n,2})$ with $\underline{\vartheta}_{n,1}^i\in\Theta_1$ such that $\Vert \underline{\vartheta}_{n,1}^i-\vartheta_1^0\Vert\leq \Vert \widehat{\vartheta}_{n,1}-\vartheta_1^0\Vert$.
In the case \linebreak $\nabla_{\vt_1}^2 \underline{\mathcal{\widehat{L}}}_n^{(h)}(\underline{\vt}_{n,1},\widehat{\vt}_{n,2})$ is non-singular we receive
$$
n(\widehat{\vt}_{n,1}-\vt_1^0)=-\Big(n^{-1}\nabla_{\vt_1}^2 \underline{\mathcal{\widehat{L}}}_n^{(h)}(\underline{\vt}_{n,1},\widehat\vt_{n,2})\Big)^{-1}\nabla_{\vt_1} \mathcal{\widehat{L}}_n^{(h)}(\vt_1^0,\widehat\vt_{n,2}).
$$
Thus, we have to consider the asymptotic behavior of the score vector $\nabla_{\vt_1} \mathcal{\widehat{L}}_n^{(h)}(\vt) $ and the Hessian matrix
$\nabla_{\vt_1}^2 \mathcal{\widehat{L}}_n^{(h)}(\vt)$. Based on Proposition~\ref{Lemma 2.8} it is sufficient to consider $\nabla_{\vt_1}^2 \mathcal{{L}}_n^{(h)}(\vt) $
and $\nabla_{\vt_1} \mathcal{{L}}_n^{(h)}(\vt)$, respectively.


\subsubsection{Asymptotic behavior of the score vector}
First, we  show the convergence of the gradient with respect to the long-run parameter $\vt_1$. For this, we consider the partial derivatives with respect to the ${i}^{th}$-component of the parameter vector $\vartheta$, ${i}=1,\ldots,s_1$, of the log-likelihood function. These partial derivatives are given due to differentiation rules for matrix functions (see, e.g., L\"utkepohl \cite[Appendix A.13]{Luetkepohl2005})
by
\begin{align}
\label{eqDerQ}
\partial_{i}\mathcal{L}_n^{(h)}(\vartheta)&=\tr\left(\big(V_\vt^{(h)}\big)^{-1} \partial_{i}V_\vt^{(h)}\right)
  -\frac{1}{n}\sum_{k=1}^n \tr\left(\big(V_\vt^{(h)}\big)^{-1} \varepsilon_k^{(h)}(\vartheta)\varepsilon_k^{(h)}(\vartheta)^\mathsf{T}\big(V_\vt^{(h)}\big)^{-1}\partial_{i}V_\vt^{(h)} \right)
  \notag
  \\
  &\quad
   +\frac{2}{n}\sum_{k=1}^n \big(\partial_{i}\varepsilon_k^{(h)}(\vartheta)^\mathsf{T}\big) \big(V_\vt^{(h)}\big)^{-1} \varepsilon_k^{(h)}(\vartheta).
\end{align}
From Section~\ref{Section: Properties linear innovation} we already know that the pseudo-innovations are indeed three times differentiable.

For reasons of brevity, we write $\partial_i^1:=\frac{\partial}{\partial \vt_{1i}}$ for the partial derivatives  with respect to the $i^{th}$-component of the long-run parameter vector $\vt_1\in\Theta_1$, $i\in\{1,\ldots,s_1\}$, and similarly  $\partial_j^{st}:=\frac{\partial}{\partial \vt_{2j}}$ for the partial derivatives with respect to the $j^{th}$-component of the short-run parameter vector $\vt_2\in\Theta_2$, $j\in\{1,\ldots,s_2\}$. Analogously we define $\partial_{i,j}^{1}$ and  $\partial_{i,j}^{st}$, respectively for the second partial derivatives.

\begin{proposition}
\label{PropConvergScoreVec2}
The score vector with respect to the long-run parameters $\vt_1$ satisfies
\begin{align*}
\nabla_{\vartheta_1}\mathcal{L}_{n}^{(h)}(\vartheta^0)\cid \mathcal{J}_1(\vartheta^0):=\big(
\mathcal{J}_1^{(1)}(\vartheta^0)
\quad
\cdots
\quad
\mathcal{J}_1^{(s_1)}(\vartheta^0)
\big)^\mathsf{T},
\end{align*}
 where
 \begin{align*}
 \mathcal{J}_1^{(i)}(\vartheta^0)\,=\, &\,2\tr\bigg[\big(V_{\vt^0}^{(h)}\big)^{-1}\left(-\partial_i^1\Pi(\vartheta^0),0_{d\times d}\right) \int_0^1 W^{\#}(r)\,\dif W^{\#}(r)^\mathsf{T}
\left(\begin{array}{c}{\mathsf{k}}(1,\vartheta^0)\\ -\Pi(\vartheta^0)\end{array} \right)\bigg]\\\notag
&
+2\tr\left[\big(V_{\vt^0}^{(h)}\big)^{-1} \left(\Gamma_{\partial_i^1{\mathsf{k}}(\mathsf{B},\vartheta^0)\Delta Y^{(h)},\varepsilon^{(h)}(\vt^0)}(0)+
    \sum_{j=1}^\infty\Gamma_{-\partial_i^1\Pi(\vartheta^0)\Delta Y^{(h)},\varepsilon^{(h)}(\vt^0)}(j)\right) \right]
\end{align*}
and $(W^{\#}(r))_{0\leq r\leq 1}=((W_1(r)^{\mathsf{T}},W_2(r)^{\mathsf{T}})^{\mathsf{T}})_{0\leq r\leq 1}$ as defined on p.~\pageref{Brownian motion}.
\end{proposition}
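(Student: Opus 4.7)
\emph{Plan.} The plan is to reduce the score to a single ``innovation'' term, handle its stationary part by the ergodic theorem, and its non-stationary part by a joint FCLT combined with a convergence-to-stochastic-integrals theorem. By Proposition~\ref{Lemma 2.8}(b) it suffices to analyze $\nabla_{\vt_1}\mathcal{L}_n^{(h)}(\vt^0)$. Using \eqref{eqDerQ} and the trace identity $a^{\mathsf T}Mb=\tr(Mba^{\mathsf T})$, the $i$-th component of the score can be written as
\begin{align*}
\partial_i^1\mathcal{L}_n^{(h)}(\vt^0)=\tr\!\left[(V_{\vt^0}^{(h)})^{-1}(V_{\vt^0}^{(h)}-\hat V_n)(V_{\vt^0}^{(h)})^{-1}\partial_i^1V_{\vt^0}^{(h)}\right]+\frac{2}{n}\sum_{k=1}^n(\partial_i^1\varepsilon_k^{(h)}(\vt^0))^{\mathsf T}(V_{\vt^0}^{(h)})^{-1}\varepsilon_k^{(h)}(\vt^0),
\end{align*}
where $\hat V_n:=n^{-1}\sum_k\varepsilon_k^{(h)}(\vt^0)\varepsilon_k^{(h)}(\vt^0)^{\mathsf T}$. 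Since $(\varepsilon_k^{(h)}(\vt^0))$ is stationary ergodic with $4{+}\delta$ moments (inherited from (A3) and (A7)), a CLT gives $\hat V_n-V_{\vt^0}^{(h)}=O_p(n^{-1/2})$, so the determinantal bracket is $o_p(1)$ and the full limit is carried by $S_n:=\tfrac{2}{n}\sum_k(\partial_i^1\varepsilon_k^{(h)}(\vt^0))^{\mathsf T}(V_{\vt^0}^{(h)})^{-1}\varepsilon_k^{(h)}(\vt^0)$.

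By Lemma~\ref{LemInnovaSeqProp}, $\partial_i^1\varepsilon_k^{(h)}(\vt^0)=-\partial_i^1\Pi(\vt^0)Y^{(h)}_{k-1}+(\partial_i^1\mathsf k(\mathsf B,\vt^0))\Delta Y^{(h)}_k$, so $S_n=S_n^{ns}+S_n^{st}$ with
\begin{align*}
S_n^{st}&=2\tr\!\Bigl[(V_{\vt^0}^{(h)})^{-1}\tfrac{1}{n}\textstyle\sum_k\varepsilon_k^{(h)}(\vt^0)[\partial_i^1\mathsf k(\mathsf B,\vt^0)\Delta Y^{(h)}_k]^{\mathsf T}\Bigr],\\
S_n^{ns}&=2\tr\!\Bigl[(V_{\vt^0}^{(h)})^{-1}(-\partial_i^1\Pi(\vt^0))\tfrac{1}{n}\textstyle\sum_k Y^{(h)}_{k-1}\varepsilon_k^{(h)}(\vt^0)^{\mathsf T}\Bigr].
\end{align*}
The sample covariance in $S_n^{st}$ is between two jointly stationary ergodic sequences; the ergodic theorem converges it in probability to $\Gamma_{\partial_i^1\mathsf k(\mathsf B,\vt^0)\Delta Y^{(h)},\varepsilon^{(h)}(\vt^0)}(0)$, yielding the first $\Gamma$-contribution in $\mathcal J_1^{(i)}(\vt^0)$. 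For $S_n^{ns}$, the $(2d+m)$-dimensional stationary process $(\Delta Y^{(h)}_k,Y^{(h)}_{st,k},\Delta L^{(h)}_k)$ is a causal linear filter of the L\'evy increments with coefficient matrices encoded by $\psi(\cdot)$ of \eqref{def psi}, so an FCLT yields $n^{-1/2}\sum_{k=1}^{\lfloor nr\rfloor}(\Delta Y^{(h)}_k,Y^{(h)}_{st,k},\Delta L^{(h)}_k)^{\mathsf T}\Rightarrow W(r)$ with covariance $\Sigma_W$ of \eqref{Cov B}. Telescoping $Y^{(h)}_{k-1}-C_1Z=\sum_{i=1}^{k-1}\Delta Y^{(h)}_i$ gives $n^{-1/2}Y^{(h)}_{\lfloor nr\rfloor}\Rightarrow W_1(r)$, while the identity $\varepsilon_k^{(h)}(\vt^0)=-\Pi(\vt^0)Y^{(h)}_{st,k-1}+\mathsf k(\mathsf B,\vt^0)\Delta Y^{(h)}_k$ shows that the partial sums of $\varepsilon_k^{(h)}(\vt^0)$ have BM limit $(\mathsf k(1,\vt^0),-\Pi(\vt^0))W^{\#}(r)$ with $W^{\#}=(W_1^{\mathsf T},W_2^{\mathsf T})^{\mathsf T}$. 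A convergence-to-stochastic-integrals theorem for stationary linear processes with exponentially summable coefficients then yields
\begin{align*}
\frac{1}{n}\sum_{k=1}^n Y^{(h)}_{k-1}\,\varepsilon_k^{(h)}(\vt^0)^{\mathsf T}\cid\int_0^1 W_1(r)\,\dif W^{\#}(r)^{\mathsf T}\begin{pmatrix}\mathsf k(1,\vt^0)^{\mathsf T}\\ -\Pi(\vt^0)^{\mathsf T}\end{pmatrix}+\sum_{j=1}^\infty\Gamma_{\Delta Y^{(h)},\varepsilon^{(h)}(\vt^0)}(j).
\end{align*}
Rewriting $W_1=(I_d,0_{d\times d})W^{\#}$ and pulling $-\partial_i^1\Pi(\vt^0)$ inside the trace recovers both the stochastic-integral term and the $\sum_{j\geq1}$ correction in $\mathcal J_1^{(i)}(\vt^0)$.

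The crux is the convergence-to-stochastic-integrals step: establishing the joint FCLT and controlling the additive one-sided long-run covariance correction. Its hypotheses require moment control of order $4{+}\delta$ for $L$ (by (A3)) together with uniform exponential summability of the Kalman filter coefficients $(\mathsf k_j(\vt^0))$ from Lemma~\ref{LemInnovaSeqProp}; the algebraic matching of the correction to $\sum_{j\geq1}\Gamma_{-\partial_i^1\Pi(\vt^0)\Delta Y^{(h)},\varepsilon^{(h)}(\vt^0)}(j)$, rather than a correlation involving $Y^{(h)}$ or $Y^{(h)}_{st}$ directly, is a consequence of the telescoping $Y^{(h)}_{k-1}-Y^{(h)}_0=\sum_{i=1}^{k-1}\Delta Y^{(h)}_i$ together with the cyclic invariance of the trace.
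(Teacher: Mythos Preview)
Your argument is correct and follows essentially the same route as the paper: reduce to the cross term $I_{n,3}=\tfrac{2}{n}\sum_k(\partial_i^1\varepsilon_k^{(h)}(\vt^0))^{\mathsf T}(V_{\vt^0}^{(h)})^{-1}\varepsilon_k^{(h)}(\vt^0)$, split $\partial_i^1\varepsilon_k^{(h)}(\vt^0)$ into its non-stationary and stationary parts, and handle the resulting pieces by the ergodic theorem and a convergence-to-stochastic-integrals result. The only cosmetic differences are that the paper dispatches $I_{n,1}+I_{n,2}$ via Birkhoff's ergodic theorem (your CLT is unnecessary overkill, and the reference to Proposition~\ref{Lemma 2.8}(b) is superfluous since the statement is already about $\mathcal L_n^{(h)}$), and that the paper further splits $\varepsilon_k^{(h)}(\vt^0)=-\Pi(\vt^0)Y_{st,k-1}^{(h)}+\mathsf{k}(\mathsf B,\vt^0)\Delta Y_k^{(h)}$ before applying its packaged FCLT/stochastic-integral result (Proposition~\ref{PropLimitResultsFuncofDYY}(a,c)) to each piece, whereas you keep $\varepsilon_k^{(h)}(\vt^0)$ intact and invoke the abstract theorem directly.
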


\begin{proof}
Equation \eqref{eqDerQ} implies for $i=1,\ldots,s_1$ that
\begin{align*}
\partial_i^1 \mathcal{L}_{n}^{(h)}(\vartheta^0)
=&
\tr\bigg(\big(V_{\vt^0}^{(h)}\big)^{-1} \partial_{i}^1V_{\vt^0}^{(h)}\bigg)
  - \tr\bigg( \big(V_{\vt^0}^{(h)}\big)^{-1}\big(\partial_{i}^1V_{\vt^0}^{(h)}\big)\big(V_{\vt^0}^{(h)}\big)^{-1} \frac1n \sum_{k=1}^n\varepsilon_k^{(h)}(\vt^0)\varepsilon_k^{(h)}(\vt^0)^\mathsf{T} \bigg)
  \notag
  \\
  &
   +2\cdot\tr\bigg(\big(V_{\vt^0}^{(h)}\big)^{-1} \frac1n \sum_{k=1}^n  \big(\partial_{i}^1\varepsilon_k^{(h)}(\vt^0)\big)
   \varepsilon_k^{(h)}(\vt^0)^\mathsf{T}\bigg)=:I_{n,1}+I_{n,2}+I_{n,3}.
\end{align*}
Note that the second term $I_{n,2}$ converges due to the ergodicity of $( \varepsilon_k^{(h)}(\vt^0))_{k\in\N}$, \linebreak
 $\E(\varepsilon_k^{(h)}(\vt^0)\varepsilon_k^{(h)}(\vt^0)^\mathsf{T})=V_{\vt^0}^{(h)}$ (see Lemma~\ref{Lemma 2.5}(a,e)) and Birkhoff's Ergodic Theorem (see Bradley \cite[2.3 Ergodic
Theorem]{Bradley2007}) so that
\begin{align*}
I_{n,2}
&\cas
-\tr\left( \big(V_{\vt^0}^{(h)}\big)^{-1}\big(\partial_{i}^1V_{\vt^0}^{(h)}\big)\big(V_{\vt^0}^{(h)}\big)^{-1} V_{\vt^0}^{(h)} \right)=-\tr\left( \big(V_{\vt^0}^{(h)}\big)^{-1}\partial_{i}^1V_{\vt^0}^{(h)} \right).
\end{align*}
Hence, $I_{n,1}+I_{n,2}\cas 0$.
Thus, it only remains to show the convergence of the last term $I_{n,3}$. We obtain with  Proposition~\ref{PropLimitResultsFuncofDYY}(a,c)
and the continuous mapping theorem
\begin{align}
& \hspace*{-1cm} \frac1n \sum_{k=1}^n \big(\partial_{i}^1\varepsilon_k^{(h)}(\vt^0)\big)\varepsilon_k^{(h)}(\vt^0)^\mathsf{T} \notag
\\\notag
=&-
\frac1n \sum_{k=1}^n
[\left(\partial_i^1\Pi(\vartheta^0)\right)Y_{k-1}^{(h)}]
[{\mathsf{k}}(\mathsf{B},\vartheta^0)\Delta Y_k^{(h)}]^\mathsf{T}
+
\frac1n \sum_{k=1}^n
[\left(\partial_i^1\Pi(\vartheta^0)\right)Y_{k-1}^{(h)}]
[\Pi(\vartheta^0)Y_{st,k-1}^{(h)}]^\mathsf{T}
\\\notag
&
+
\frac1n \sum_{k=1}^n
[\left(\partial_i^1{\mathsf{k}}(\mathsf{B},\vartheta^0)\right)\Delta Y_k^{(h)}]
\varepsilon_k^{(h)}(\vt^0)^\mathsf{T}
\\\notag
%
%
\cid &
-\left(\partial_i^1\Pi(\vartheta^0)\right)\int_0^1 W_1(r)\dif W_1(r)^\mathsf{T} {\mathsf{k}}(1,\vartheta^0)^\mathsf{T}
-\sum_{j=1}^\infty\Gamma_{\partial_i^1\Pi(\vartheta^0)\Delta Y^{(h)},{\mathsf{k}}(\mathsf{B},\vartheta^0)\Delta Y^{(h)}}(j)
\\\notag
&+\left(\partial_i^1 \Pi(\vartheta^0)\right)
\int_0^1 W_1(r)\,\dif W_2(r)^\mathsf{T}\Pi(\vartheta^0)^\mathsf{T}
+\sum_{j=1}^\infty\Gamma_{\partial_i^1\Pi(\vartheta^0)\Delta Y^{(h)}, \Pi(\vartheta^0)Y_{st}^{(h)}}(j)
\\
&+ \Gamma_{\partial_i^1{\mathsf{k}}(\mathsf{B},\vartheta^0)\Delta Y^{(h)},\varepsilon^{(h)}(\vt^0)}(0). \label{5.2}
\end{align}
Then, the continuous mapping theorem results in $I_{n,3}\cid  \mathcal{J}_1^{(i)}(\vartheta^0)$ which concludes the proof.
\end{proof}

\subsubsection{Asymptotic behavior of the Hessian matrix}

The second partial derivatives of the log-likelihood function $\mathcal{L}_n^{(h)}(\vartheta)$ are given
by
\begin{align}
\partial_{i,j}\mathcal{L}_n^{(h)}(\vartheta)&=
 \tr\left(\big(V_\vt^{(h)}\big)^{-1} \partial_{i,j}^2V_\vt^{(h)}-\big(V_\vt^{(h)}\big)^{-1} \big(\partial_{i}V_\vt^{(h)}\big)\big(V_\vt^{(h)}\big)^{-1} \big(\partial_{j}V_\vt^{(h)}\big)\right)
 \notag
 \\
 &\quad-\frac{1}{n}\sum_{k=1}^n \tr\left(\big(V_\vt^{(h)}\big)^{-1} \varepsilon_k^{(h)}(\vartheta)\varepsilon_k^{(h)}(\vartheta)^\mathsf{T}\big(V_\vt^{(h)}\big)^{-1}\partial_{i,j}^2V_\vt^{(h)} \right)
 \notag
 \\
 &\quad+\frac{1}{n}\sum_{k=1}^n \tr\left(\big(V_\vt^{(h)}\big)^{-1} \big(\partial_{j}V_\vt^{(h)}\big)\big(V^{(h)}\big)^{-1} \varepsilon_k^{(h)}(\vartheta)\varepsilon_k^{(h)}(\vartheta)^\mathsf{T} \big(V_\vt^{(h)}\big)^{-1}\partial_{i}V_\vt^{(h)} \right)
 \notag
  \\
 &\quad+\frac{1}{n}\sum_{k=1}^n \tr\left(\big(V_\vt^{(h)}\big)^{-1} \varepsilon_k^{(h)}(\vartheta)\varepsilon_k^{(h)}(\vartheta)^\mathsf{T} \big(V_\vt^{(h)}\big)^{-1}\big(\partial_{j}V_\vt^{(h)}\big)\big(V_\vt^{(h)}\big)^{-1} \partial_{i}V_\vt^{(h)} \right)
 \notag
 \\
  &\quad-\frac{1}{n}\sum_{k=1}^n \tr\left(\big(V_\vt^{(h)}\big)^{-1} \big(\partial_{j}\varepsilon_k^{(h)}(\vartheta)\varepsilon_k^{(h)}(\vartheta)^\mathsf{T}\big) \big(V_\vt^{(h)}\big)^{-1} \partial_{i}V_\vt^{(h)} \right)
 \notag
 \\
  &\quad
   +\frac{2}{n}\sum_{k=1}^n \left(\partial_{i,j}\varepsilon_k^{(h)}(\vartheta)^\mathsf{T}\right) \big(V_\vt^{(h)}\big)^{-1} \varepsilon_k^{(h)}(\vartheta)
   \notag
 \\
  &\quad-\frac{2}{n}\sum_{k=1}^n\tr\left(\big(V_\vt^{(h)}\big)^{-1} \varepsilon_k^{(h)}(\vartheta)
 \big(\partial_{i}\varepsilon_k^{(h)}(\vartheta)^\mathsf{T}\big) \big(V_\vt^{(h)}\big)^{-1}\partial_{j} V_\vt^{(h)}\right)
   \notag
   \\
 &\quad
 +\frac{2}{n}\sum_{k=1}^n\left(\partial_{i}\varepsilon_k^{(h)}(\vartheta)^\mathsf{T}\right) \big(V_\vt^{(h)}\big)^{-1} \left(\partial_{j}\varepsilon_k^{(h)}(\vartheta)\right) \notag\\
& =:\sum_{j=1}^8I_{n,j}.
\label{eqDerDerQ}
\end{align}

Since the Hessian matrix should be asymptotically positive definite we need an additional assumption.

\begin{assumptionletter}
 \label{AssMPosDefN}
The $((d-c)c\times s_1)$- dimensional gradient matrix $\left.\nabla_{\vartheta_1} \left(C_{1,\vartheta_1}^{\perp\mathsf{T}}C_1\right)\right|_{\vt_1=\vt_1^0}$ is of full column rank $s_1$.
\end{assumptionletter}

The asymptotic distribution of the Hessian matrix is given in the next proposition.
\begin{proposition}
\label{PropConvergHessianMat2}
Let \autoref{AssMPosDefN} additionally hold.
 Define the $(s_1\times s_1)$-dimensional random matrix $Z_1(\vt^0)$ as
\beao
[Z_1(\vt^0)]_{i,j}:=2\cdot\tr\bigg( \big(V^{(h)}_{\vt^0}\big)^{-1}\partial^1_i\Pi(\vartheta^0) \int_0^1 W_1(r)W_1(r)^\mathsf{T}\,\dif r\,\left(\partial^1_j\Pi(\vartheta^0) \right)^\mathsf{T}\bigg)
\eeao
for $i,j=1,\ldots,s_1$.
Then, $Z_1(\vt^0)$  is almost surely positive definite and 
$$n^{-1}\nabla_{\vt_1}^2\mathcal{L}_{n}^{(h)}(\vt^0)\cid Z_1(\vt^0).$$
\end{proposition}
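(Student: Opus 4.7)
The plan is to work directly from the decomposition \eqref{eqDerDerQ} of the $(i,j)$-entry $[\nabla^2_{\vt_1}\mathcal{L}_n^{(h)}(\vt^0)]_{i,j}=I_{n,1}+\cdots+I_{n,8}$ and to show that under the normalization $n^{-1}$ only the quadratic term $I_{n,8}$, and within it only its non-stationary component, contributes to the weak limit. First I would dispose of $I_{n,1}+I_{n,2}+I_{n,3}+I_{n,4}$: the term $I_{n,1}$ is deterministic and $O(1)$, while $I_{n,2},I_{n,3},I_{n,4}$ are sample averages of stationary ergodic functionals of the linear innovation sequence $(\varepsilon^*_{\vt^0}(k))_{k\in\N}$ (using $\E(\varepsilon^*_{\vt^0}(1)\varepsilon^*_{\vt^0}(1)^\mathsf{T})=V_{\vt^0}^{(h)}$ from \Cref{psin}) and converge almost surely by Birkhoff's ergodic theorem, exactly as in the treatment of the corresponding term in the proof of \Cref{PropConvergScoreVec2}; in particular, each of these four terms is $O(1)$, so $n^{-1}(I_{n,1}+\cdots+I_{n,4})=o(1)$.

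The remaining five terms involve derivatives of the pseudo-innovations, for which the key splitting is
\begin{align*}
\partial^1_i\varepsilon_k^{(h)}(\vt^0)\;=\;-(\partial^1_i\Pi(\vt^0))\,Y_{k-1}^{(h)}+(\partial^1_i\mathsf{k}(\mathsf{B},\vt^0))\,\Delta Y_k^{(h)},
\end{align*}
into a non-stationary and a stationary part. In $I_{n,5},I_{n,6},I_{n,7}$ the derivative factor appears exactly once, paired with a stationary factor involving $\varepsilon^*_{\vt^0}(k)$; the non-stationary contribution is therefore of the form $\tfrac{1}{n}\sum_{k=1}^{n}(\partial^1_i\Pi(\vt^0))Y_{k-1}^{(h)}\varepsilon^*_{\vt^0}(k)^\mathsf{T}$, which is $O_p(1)$ by \Cref{PropLimitResultsFuncofDYY} (as used in \eqref{5.2}), whereas the stationary contribution converges almost surely by the ergodic theorem. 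Dividing by $n$ therefore yields $n^{-1}(I_{n,5}+I_{n,6}+I_{n,7})\cip 0$.

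For the leading term I would expand $\tfrac{1}{n}I_{n,8}=\tfrac{2}{n^2}\sum_{k=1}^n(\partial^1_i\varepsilon_k^{(h)}(\vt^0))^{\mathsf{T}}(V_{\vt^0}^{(h)})^{-1}(\partial^1_j\varepsilon_k^{(h)}(\vt^0))$ into four pieces: the two cross terms, each of the form $\tfrac{1}{n^2}\sum Y_{k-1}^{(h)}\cdot[\text{stationary}_k]^\mathsf{T}$, are $o_p(1)$ because the unnormalized sums are $O_p(n)$ by \Cref{PropLimitResultsFuncofDYY}; the purely stationary contribution is $O_p(n^{-1})$ by the ergodic theorem; and the non-stationary quadratic piece equals, after cycling the trace,
\begin{align*}
2\,\tr\!\left(\bigl(V_{\vt^0}^{(h)}\bigr)^{-1}(\partial^1_i\Pi(\vt^0))\Bigl[\tfrac{1}{n^2}\textstyle\sum_{k=1}^n Y_{k-1}^{(h)}Y_{k-1}^{(h)\mathsf{T}}\Bigr](\partial^1_j\Pi(\vt^0))^\mathsf{T}\right).
\end{align*}
The functional limit theorem underlying \Cref{PropLimitResultsFuncofDYY} (with $n^{-1/2}Y^{(h)}_{[n\cdot]}\cid W_1$ in the Skorokhod space) then gives $\tfrac{1}{n^2}\sum_{k=1}^n Y_{k-1}^{(h)}Y_{k-1}^{(h)\mathsf{T}}\cid\int_0^1 W_1(r)W_1(r)^\mathsf{T}\,\dif r$ jointly with all earlier weak limits, and entrywise convergence $n^{-1}[\nabla^2_{\vt_1}\mathcal{L}_n^{(h)}(\vt^0)]_{i,j}\cid[Z_1(\vt^0)]_{i,j}$ follows by the continuous mapping theorem; joint convergence of the whole matrix is automatic since all limits are measurable functionals of the same Brownian motion $W$.

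For the almost sure positive definiteness of $Z_1(\vt^0)$, since $\int_0^1 W_1(r)W_1(r)^\mathsf{T}\,\dif r$ is $\P$-a.s.\ positive definite (the covariance of $W_1$ being non-singular by the block structure of $\Sigma_W$ in \eqref{Cov B}) and $(V_{\vt^0}^{(h)})^{-1}$ is symmetric positive definite, it suffices that the Jacobian $\vt_1\mapsto\vecc(\Pi(\vt_1,\vt_2^0))$ has full column rank $s_1$ at $\vt_1^0$. Using $\Pi(\vt)=\alpha(\vt)C_{1,\vt_1}^{\perp\mathsf{T}}$, full column rank of $\alpha(\vt^0)$, and full column rank of $C_1$, this reduces precisely to \autoref{AssMPosDefN}. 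The main obstacle will be the uniform order bookkeeping for the mixed non-stationary/stationary cross pieces appearing in $I_{n,5}$--$I_{n,8}$: each of them individually involves products of the integrated process $Y_{k-1}^{(h)}$ with stationary stochastic factors, and the relevant orders $O_p(1)$ or $O_p(n^{-1})$ must be extracted from the functional central limit theorem in \Cref{PropLimitResultsFuncofDYY} rather than from naive moment bounds. Finally, the joint convergence of the Hessian and of the score vector from \Cref{PropConvergScoreVec2} must be established with respect to the same underlying Brownian motion $W$, which will become essential in deriving the mixed-normal asymptotic distribution of $n(\widehat\vt_{n,1}-\vt_1^0)$ in the next subsection.
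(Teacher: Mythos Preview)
Your convergence argument (Steps for $I_{n,1}$ through $I_{n,8}$) is correct and matches the paper's approach: dispose of the deterministic and stationary-ergodic terms as $O_p(1)$ and keep only the purely non-stationary quadratic contribution in $I_{n,8}$, then apply \Cref{PropLimitResultsFuncofDYY}(b) and the continuous mapping theorem.

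There is, however, a genuine gap in your positive-definiteness argument. The claim that the covariance of $W_1$ is non-singular is false: from \eqref{Cov B}--\eqref{def psi} the first $d$ rows of $\psi(1)$ equal $(C_1B_1,\,0_{d\times(N-c)})$ (the $C_2$-part telescopes to zero), so in fact $W_1=C_1B_1W_3$. Since $C_1B_1\in\R^{d\times m}$ has rank $c<d$, the covariance of $W_1$ has rank $c$, and therefore $\int_0^1W_1(r)W_1(r)^\mathsf{T}\,\dif r$ is almost surely singular. Consequently it is \emph{not} sufficient that $\nabla_{\vt_1}\vecc(\Pi(\vt_1,\vt_2^0))$ have full column rank at $\vt_1^0$: the Kronecker middle matrix $\int_0^1W_1W_1^\mathsf{T}\,\dif r\otimes(V_{\vt^0}^{(h)})^{-1}$ has a nontrivial kernel, and you would still have to show that the columns of $\nabla_{\vt_1}\vecc(\Pi(\vt^0))$ avoid it.

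The paper fixes this by exploiting $W_1=C_1B_1W_3$ explicitly: then only $\partial_i^1\Pi(\vt^0)\,C_1$ enters the trace, the matrix $M:=B_1\int_0^1W_3W_3^\mathsf{T}\,\dif r\,B_1^\mathsf{T}\in\R^{c\times c}$ \emph{is} a.s.\ positive definite, and the product rule together with $C_{1,\vt_1^0}^{\perp\mathsf{T}}C_1=0$ gives $\partial_i^1\Pi(\vt^0)\,C_1=\alpha(\vt^0)\,\partial_i^1\big(C_{1,\vt_1}^{\perp\mathsf{T}}C_1\big)\big|_{\vt_1=\vt_1^0}$. This yields
\[
Z_1(\vt^0)=2\big[\nabla_{\vt_1}(C_{1,\vt_1}^{\perp\mathsf{T}}C_1)\big]_{\vt_1=\vt_1^0}^{\mathsf{T}}\Big(M\otimes\alpha(\vt^0)^{\mathsf{T}}(V_{\vt^0}^{(h)})^{-1}\alpha(\vt^0)\Big)\big[\nabla_{\vt_1}(C_{1,\vt_1}^{\perp\mathsf{T}}C_1)\big]_{\vt_1=\vt_1^0},
\]
which is positive definite by \autoref{AssMPosDefN}. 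Your mention of ``full column rank of $C_1$'' suggests you may have had this mechanism in mind, but as written the argument rests on the false premise that $W_1$ has non-singular covariance.
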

\begin{proof} First, we prove the asymptotic behavior of the score vector and then, in the next step, that the limit is almost surely positive definite.\\
\textbf{Step 1:}
The first term $\frac{1}{n}I_{n,1}$ in \eqref{eqDerDerQ} converges to zero due to the additional normalizing rate of $n^{-1}$.
Due to Proposition~\ref{PropUniformConvRes1}~(a,c) we have for $j=2,\ldots,7$ that $I_{n,j}=\mathcal{O}_p(1)$
(see  exemplarily \eqref{5.2} for $I_{n,5}$) and hence,
$\frac{1}{n}\sum_{j=2}^7I_{n,j}$ converges in probability to zero.
 To summarize,
\begin{align*}
n^{-1}\partial^1_{i,j}\mathcal{L}_{n}^{(h)}({{\vartheta}^0})&=\frac{1}{n}I_{n,8}+o_p(1)=2\cdot\tr\bigg(\big(V^{(h)}_{\vt^0}\big)^{-1}\frac{1}{n^2} \sum_{k=1}^n\partial^1_{i}\varepsilon_k^{(h)}({{\vartheta}^0})\partial^1_{j}\varepsilon_k^{(h)}({{\vartheta}^0})^\mathsf{T}\bigg)+o_p(1).
\intertext{
Due to  Lemma~\ref{LemDerInnovaSeqProp} and Proposition~\ref{PropUniformConvRes1}~(a,c)  we receive
}
n^{-1}\partial^1_{i,j}\mathcal{L}_{n}^{(h)}({{\vartheta}^0})&=2\cdot\tr\bigg(\big(V^{(h)}_{\vt^0}\big)^{-1}\frac{1}{n^2} \sum_{k=1}^n\partial^1_{i}\Pi({{\vartheta}^0})Y_{k-1}^{(h)}Y_{k-1}^{(h)\mathsf{T}}\partial^1_{j}\Pi({{\vartheta}^0})^\mathsf{T}\bigg)+o_p(1).
\intertext{Then, Proposition~\ref{PropLimitResultsFuncofDYY}(b) and the continuous mapping theorem result in}
n^{-1}\partial^1_{i,j}\mathcal{L}_{n}^{(h)}({{\vartheta}^0})&\cid 2\cdot\tr\bigg( \big(V^{(h)}_{\vt^0}\big)^{-1}\partial^1_i\Pi(\vartheta^0) \int_0^1 W_1(r)W_1(r)^\mathsf{T}\,\dif r\,\left(\partial^1_j\Pi(\vartheta^0) \right)^\mathsf{T}\bigg).
\end{align*}
In particular, we have also the joint convergence of the partial derivatives.\\[2mm]
\textbf{Step 2:}
Note that we can take $W_1=C_1B_1W_3$ and define $M:=B_1\int_0^1 W_3(r)W_3(r)^\mathsf{T}\,\dif r B_1^\mathsf{T}$, which is a $\P$-a.s. positive definite $c\times c$ matrix. We apply the Cholesky decomposition $M=M_*M_*^\mathsf{T}$. By using properties of the $\text{vec}$ operator and the Kronecker product (see Bernstein~\cite[Chapter 7.1]{Bernstein2009}) we have
\begin{align} \label{KronVecMat}
\notag
[Z_1(\vt^0)]_{i,j}&=2\tr\Big(\big(V^{(h)}_{\vt^0}\big)^{-\frac12}\partial^1_i\Pi(\vartheta^0) C_1 M  \Big(\big(V^{(h)}_{\vt^0}\big)^{-\frac12}\partial^1_j\Pi(\vartheta^0)C_1\Big)^\mathsf{T}\Big)
\\
\notag
&=2\text{vec}\Big(\big(V^{(h)}_{\vt^0}\big)^{-\frac12}\alpha(\vt^0)\partial^1_iC_{1,\vartheta_1^0}^{\perp\mathsf{T}} C_1 M_*\Big)^\mathsf{T}\text{vec}\Big(\big(V^{(h)}_{\vt^0}\big)^{-\frac12}\alpha(\vt^0)\partial^1_j C_{1,\vartheta_1^0}^{\perp\mathsf{T}} C_1 M_*\Big)
\\
&=2\text{vec}\left(\partial^1_i C_{1,\vartheta_1^0}^{\perp\mathsf{T}} C_1\right)^\mathsf{T} \Big(M\otimes \left(\alpha(\vt^0)^{\mathsf{T}}\big(V^{(h)}_{\vt^0}\big)^{-1}\alpha(\vt^0)\right) \Big)
\text{vec}\left(\partial^1_j C_{1,\vartheta_1^0}^{\perp\mathsf{T}} C_1\right).
\end{align}
Furthermore,  $\rank\big(M\otimes \left(\alpha(\vt^0)^{\mathsf{T}}\big(V^{(h)}_{\vt^0}\big)^{-1}\alpha(\vt^0)\right)\big)=\rank(M)\cdot\rank\big(\alpha(\vt^0)^{\mathsf{T}}\big(V^{(h)}_{\vt^0}\big)^{-1}\alpha(\vt^0)\big)$ due to Bernstein~\cite[Fact 7.4.23]{Bernstein2009} and thus, $M\otimes \left(\alpha(\vt^0)^{\mathsf{T}}\big(V^{(h)}_{\vt^0}\big)^{-1}\alpha(\vt^0)\right)$  has full rank $c\cdot (d-c)$ a.s.
Now, if we consider the Hessian matrix $Z_1(\vt^0)$, we have 
\begin{align*}
Z_1(\vt^0)=
2\left[\nabla_{\vartheta_1} \left(C_{1,\vartheta_1}^{\perp\mathsf{T}} C_1\right)^\mathsf{T}\right]_{\vt_1=\vt_1^0} \left(M\otimes \left(\alpha(\vt^0)^{\mathsf{T}}\big(V^{(h)}_{\vt^0}\big)^{-1} \alpha(\vt^0) \right)\right)\left[\nabla_{\vartheta_1} \left(C_{1,\vartheta_1}^{\perp\mathsf{T}} C_1\right)\right]_{\vt_1=\vt_1^0}.
\end{align*}
Due to \autoref{AssMPosDefN} the $((d-c)c\times s_1)$-dimensional matrix $\nabla_{\vartheta_1}\left( C_{1,\vartheta_1^0}^{\perp\mathsf{T}} C_1\right)$
is of full column rank and hence, the product has full rank $s_1$. Therefore, we have  the positive definiteness almost surely.
\end{proof}

\subsubsection{Asymptotic mixed normality of the long-run QML estimator}

We are able now to show the weak convergence of the long-run QML estimator and thus, we have one main result.
\begin{theorem}
\label{thmAsymDisEstimators1}
Let \autoref{AssMPosDefN} additionally hold.
 Then, we have as $n\to\infty$
\begin{align*}
n(\widehat{\vartheta}_{n,1}-\vartheta_{1}^0)
&\cid
-Z_1(\vt^0)^{-1}\cdot \mathcal{J}_1(\vt^0),
\end{align*}
where $\mathcal{J}_1(\vartheta^0)$ is defined as in Proposition~\ref{PropConvergScoreVec2} and
$Z_1(\vt^0)$ as in Proposition~\ref{PropConvergHessianMat2}, respectively.

\end{theorem}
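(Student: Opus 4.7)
The plan is to start from the Taylor expansion already displayed in \eqref{eqTaylorExpansionDer1} and combine it with Proposition~\ref{PropConvergScoreVec2} and Proposition~\ref{PropConvergHessianMat2}. Writing
\begin{align*}
n(\widehat{\vt}_{n,1}-\vt_1^0)=-\Big(n^{-1}\nabla_{\vt_1}^2 \underline{\mathcal{\widehat{L}}}_n^{(h)}(\underline{\vt}_{n,1},\widehat{\vt}_{n,2})\Big)^{-1}\nabla_{\vt_1} \mathcal{\widehat{L}}_n^{(h)}(\vt_1^0,\widehat{\vt}_{n,2}),
\end{align*}
the first step is to replace $\mathcal{\widehat{L}}_n^{(h)}$ by the true log-likelihood $\mathcal{L}_n^{(h)}$: by Proposition~\ref{Lemma 2.8}(b)--(c) we have $\sup_{\vt\in\Theta}\|\nabla_{\vt_1}\mathcal{\widehat{L}}_n^{(h)}(\vt)-\nabla_{\vt_1}\mathcal{L}_n^{(h)}(\vt)\|=o_p(n^{-\gamma})$ for any $\gamma<1$, and the analogous uniform statement for second derivatives, so these approximation errors are asymptotically negligible at the rates that will appear.

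The second and main step is to transfer the limit results of Propositions~\ref{PropConvergScoreVec2} and \ref{PropConvergHessianMat2}, which are stated at the fixed parameter $\vt^0$, to the data-dependent parameters $(\vt_1^0,\widehat{\vt}_{n,2})$ and $(\underline{\vt}_{n,1},\widehat{\vt}_{n,2})$. For the score I plan to write
\begin{align*}
\nabla_{\vt_1}\mathcal{L}_n^{(h)}(\vt_1^0,\widehat{\vt}_{n,2})=\nabla_{\vt_1}\mathcal{L}_n^{(h)}(\vt^0)+[\nabla_{\vt_1}\mathcal{L}_n^{(h)}(\vt_1^0,\widehat{\vt}_{n,2})-\nabla_{\vt_1}\mathcal{L}_n^{(h)}(\vt^0)],
\end{align*}
and to control the bracketed term by a local Lipschitz/stochastic equicontinuity estimate in $\vt_2$ around $\vt_2^0$, relying on the same types of asymptotic bounds (via the auxiliary results in the appendix and in particular Proposition~\ref{PropUniformConvRes1} and Lemma~\ref{LemDerInnovaSeqProp}) that were already used in Proposition~\ref{PropConvergScoreVec2}. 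Combined with the consistency $\widehat{\vt}_{n,2}\cip\vt_2^0$ from Corollary~\ref{corollary:3.9}, this difference vanishes in probability, so $\nabla_{\vt_1}\mathcal{L}_n^{(h)}(\vt_1^0,\widehat{\vt}_{n,2})\cid\mathcal{J}_1(\vt^0)$. The analogous argument for the Hessian uses $\widehat{\vt}_{n,2}\cip\vt_2^0$ together with the super-consistency $\widehat{\vt}_{n,1}-\vt_1^0=o_p(n^{-\gamma})$ from Corollary~\ref{super-consist} (and thus $\|\underline{\vt}_{n,1}-\vt_1^0\|=o_p(n^{-\gamma})$) to conclude that $n^{-1}\nabla_{\vt_1}^2\underline{\mathcal{L}}_n^{(h)}(\underline{\vt}_{n,1},\widehat{\vt}_{n,2})\cid Z_1(\vt^0)$ jointly with the score.

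The third step is the conclusion. Since $Z_1(\vt^0)$ is $\mathbb{P}$-a.s.\ positive definite by Proposition~\ref{PropConvergHessianMat2}, the event that $n^{-1}\nabla_{\vt_1}^2\underline{\mathcal{L}}_n^{(h)}(\underline{\vt}_{n,1},\widehat{\vt}_{n,2})$ is invertible has probability tending to one, so the inverse is well defined with probability tending to one. The continuous mapping theorem together with joint weak convergence of the score and the Hessian then gives
\begin{align*}
n(\widehat{\vt}_{n,1}-\vt_1^0)\cid -Z_1(\vt^0)^{-1}\mathcal{J}_1(\vt^0).
\end{align*}
The main obstacle I expect is the second step: establishing that the score and Hessian converge \emph{at} the random parameters $(\vt_1^0,\widehat{\vt}_{n,2})$ and $(\underline{\vt}_{n,1},\widehat{\vt}_{n,2})$ rather than at $\vt^0$. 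Because $\mathcal{L}_n^{(h)}$ contains non-stationary pseudo-innovations, one cannot simply invoke ergodic uniform laws of large numbers; instead, one has to combine the stationary/non-stationary decomposition $\varepsilon_k^{(h)}(\vt)=\varepsilon_{k,1}^{(h)}(\vt)+\varepsilon_{k,2}^{(h)}(\vt_2)$ of \eqref{eqInnovSequSeperated} with the local Lipschitz/equicontinuity bounds supplied by the appendix, and then exploit the consistency rates of $\widehat{\vt}_{n,1}$ and $\widehat{\vt}_{n,2}$ to absorb the resulting error terms into $o_p(1)$.
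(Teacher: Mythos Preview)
Your proposal is correct and follows essentially the same route as the paper: the paper also starts from the Taylor expansion \eqref{eqTaylorExpansionDer1}, passes from $\mathcal{\widehat{L}}_n^{(h)}$ to $\mathcal{L}_n^{(h)}$ via Proposition~\ref{Lemma 2.8}, and then transfers the limits of Propositions~\ref{PropConvergScoreVec2} and~\ref{PropConvergHessianMat2} from $\vt^0$ to the random points $(\vt_1^0,\widehat{\vt}_{n,2})$ and $(\underline{\vt}_{n,1},\widehat{\vt}_{n,2})$ by a local stochastic equicontinuity lemma (the paper's Lemma~\ref{Proposition 6.1}), whose proof is precisely the Lipschitz-type argument you outline based on the decomposition $\mathcal{L}_{n,1}^{(h)}=\mathcal{L}_{n,1,1}^{(h)}+\mathcal{L}_{n,1,2}^{(h)}$ together with the bounds \eqref{EQ1}--\eqref{EQ5} and the consistency rates from Corollaries~\ref{super-consist} and~\ref{corollary:3.9}. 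The joint convergence and the a.s.\ positive definiteness of $Z_1(\vt^0)$ then close the argument via the continuous mapping theorem, exactly as you describe.
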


\begin{proof}
From \eqref{eqTaylorExpansionDer1} we know that
\begin{align}
\label{eqTaylorExpansionDer12}
0_{s_1}=\nabla_{\vt_1} \mathcal{\widehat{L}}_n^{(h)}(\vt_1^0,\widehat\vt_{n,2})+n^{-1}\nabla_{\vt_1}^2 \underline{\mathcal{\widehat{L}}}_n^{(h)}(\underline{\vt}_{n,1},\widehat{\vt}_{n,2})n(\widehat{\vt}_{n,1}-\vt_1^0).
\end{align}
In Proposition~\ref{PropConvergScoreVec2} we already derived the asymptotic behavior of the score vector $\nabla_{\vt_1} \mathcal{L}_n^{(h)}(\vt_1^0,\vt_{2}^0)$ and in Proposition~\ref{PropConvergHessianMat2}
the asymptotic behavior of the Hessian matrix $n^{-1}\nabla_{\vt_1}^2 \mathcal{L}_n^{(h)}({\vt}_{1}^0,{\vt}_{2}^0)$.
However, for the proof of \Cref{thmAsymDisEstimators1} we require now  the asymptotic behavior of
$\nabla_{\vt_1} \mathcal{L}_n^{(h)}(\vt_1^0,\widehat\vt_{n,2})$ and $n^{-1}\nabla_{\vt_1}^2 \underline{\mathcal{L}}_n^{(h)}(\underline{\vt}_{n,1},\widehat{\vt}_{n,2})$.
 Therefore, we use a local stochastic equicontinuity condition on the family
 $\nabla_{\vt_1}\mathcal{L}_n^{(h)}(\vartheta_1^0,\cdot)$ in $\vt_2^0$ ($n\in\N$) and on the family
 $n^{-1}\nabla^2_{\vt_1}\mathcal{L}_n^{(h)}(\cdot)$ in $\vt^0$ ($n\in\N$).  

\begin{lemma} \label{Proposition 6.1}
For every $\tau>0$ and every $\eta>0$, there exist an integer $n(\tau,\eta)$ and  real numbers $\delta_1,\delta_2>0$ such that
for $\frac{1}{2}<\gamma<1$,
\begin{itemize}
\item[(a)]
 $ \P\bigg(\sup_{\vartheta_2\in  \mathcal{B}(\vartheta_{2}^0,\delta_2)}\|\nabla_{\vt_1}\mathcal{L}_n^{(h)}(\vartheta_1^0,\vt_2)-\nabla_{\vt_1}\mathcal{L}_n^{(h)}(\vartheta^0_1,\vartheta_2^0)\|>\tau\bigg)\leq\eta \quad \text{ for } n\geq n(\tau,\eta),$
\item[(b)] $ \P\bigg(\sup_{\vartheta\in N_{n,\gamma}(\vartheta_{1}^0,\delta_1)\times \mathcal{B}(\vartheta_{2}^0,\delta_2)}\|n^{-1}\nabla^2_{\vt_1}\mathcal{L}_n^{(h)}(\vartheta)-n^{-1}\nabla^2_{\vt_1}\mathcal{L}_n^{(h)}(\vartheta^0)\|>\tau\bigg)\leq\eta \quad \text{ for } n\geq n(\tau,\eta).$
\end{itemize}
\end{lemma}
    The stochastic equicontinuity conditions $SE$ and  $SE_o$ in Saikkonen~\cite{Saikkonen1995} are global conditions where Lemma~\ref{thmAsymDisEstimators1} is weaker and
    presents only  a local
    stochastic equicontinuity condition for the standardized score  in $\vt^0_2$ and for the standardized Hessian matrix in $\vt^0$.

\begin{proof}[Proof of Lemma~\ref{Proposition 6.1}]
 $\mbox{}$\\
 (a) \,Note that on the one hand, $\nabla_{\vt_1}\mathcal{L}_{n,1,1}^{(h)}(\vartheta_1^0,\vt_2)=0$ since $\varepsilon_{n,1,1}^{(h)}(\vt_1^0,\vt_2)=0$
 and on the other hand,  $\nabla_{\vt_1}\mathcal{L}_{n,2}^{(h)}(\vt_2)=0$ clearly. Hence,
 \beao
   \lefteqn{\sup_{\vartheta_2\in  \mathcal{B}(\vartheta_{2}^0,\delta_2)} \|\nabla_{\vt_1}\mathcal{L}_n^{(h)}(\vartheta_1^0,\vt_2)-\nabla_{\vt_1}\mathcal{L}_n^{(h)}(\vartheta^0_1,\vartheta_2^0)\|}\\
   &&=\sup_{\vartheta_2\in  \mathcal{B}(\vartheta_{2}^0,\delta_2)} \|\nabla_{\vt_1}\mathcal{L}_{n,1,2}^{(h)}(\vartheta_1^0,\vt_2)-\nabla_{\vt_1}\mathcal{L}_{n,1,2}^{(h)}(\vartheta^0_1,\vartheta_2^0)\|.
 \eeao
 We can conclude with similar calculations as in Lemma~\ref{Lemma 3.5a}
applying \eqref{EQ3} and \eqref{EQ5} that
 \beao
    \sup_{\vartheta_2\in  \mathcal{B}(\vartheta_{2}^0,\delta_2)} \|\nabla_{\vt_1}\mathcal{L}_{n,1,2}^{(h)}(\vartheta_1^0,\vt_2)-\nabla_{\vt_1}\mathcal{L}_{n,1,2}^{(h)}(\vartheta^0_1,\vartheta_2^0)\|\leq \sup_{\vartheta_2\in  \mathcal{B}(\vartheta_{2}^0,\delta_2)} \mathfrak{C}\|\vt_2-\vt_2^0\|U_n\leq \mathfrak{C}\delta_2U_n.
 \eeao
 Since $U_n=\mathcal{O}_p(1)$ due to Lemma~\ref{Lemma 3.5a} we obtain the statement.\\
 (b) \, Due to $\nabla^2_{\vt_1}\mathcal{L}_{n,2}^{(h)}(\vt_2)=0$  we have
 \begin{align*}
    & \hspace*{-2cm} \sup_{\vartheta\in N_{n,\gamma}(\vartheta_{1}^0,\delta_1)\times \mathcal{B}(\vartheta_{2}^0,\delta_2)}\|n^{-1}\nabla^2_{\vt_1}\mathcal{L}_n^{(h)}(\vartheta)-n^{-1}\nabla^2_{\vt_1}\mathcal{L}_n^{(h)}(\vartheta^0)\|\\
    &\leq  \sup_{\vartheta\in N_{n,\gamma}(\vartheta_{1}^0,\delta_1)\times \mathcal{B}(\vartheta_{2}^0,\delta_2)}\|n^{-1}\nabla^2_{\vt_1}\mathcal{L}_{n,1,1}^{(h)}(\vartheta)-n^{-1}\nabla^2_{\vt_1}\mathcal{L}_{n,1,1}^{(h)}(\vartheta^0)\|\\
    & \quad\quad +\sup_{\vartheta\in N_{n,\gamma}(\vartheta_{1}^0,\delta_1)\times \mathcal{B}(\vartheta_{2}^0,\delta_2)}\|n^{-1}\nabla^2_{\vt_1}\mathcal{L}_{n,1,2}^{(h)}(\vartheta)-n^{-1}\nabla^2_{\vt_1}\mathcal{L}_{n,1,2}^{(h)}(\vartheta^0)\|.
\intertext{Then, the first term is bounded by \eqref{EQ1} and the second term by \eqref{EQ3} and \eqref{EQ5}, respectively. Hence, }
    & \leq \, \sup_{\vartheta\in N_{n,\gamma}(\vartheta_{1}^0,\delta_1)\times \mathcal{B}(\vartheta_{2}^0,\delta_2)}\left(\mathfrak{C}\|\vt-\vt^0\|\left\|\frac{1}{n^2}\sum_{k=1}^nL_{k-1}^{(h)}[L_{k-1}^{(h)}]^{\mathsf{T}}\right\| + \frac{1}{n}\mathfrak{C}\|\vt-\vt^0\|U_n\right)\\
    & \leq \, \mathfrak{C}\delta_2\left\|\frac{1}{n^2}\sum_{k=1}^nL_{k-1}^{(h)}[L_{k-1}^{(h)}]^{\mathsf{T}}\right\| + \frac{1}{n}\mathfrak{C}\delta_2U_n.
 \end{align*}
 Since $U_n=\mathcal{O}_p(1)$ due to Lemma~\ref{Lemma 3.5a} and $\frac{1}{n^2}\sum_{k=1}^nL_{k-1}^{(h)}[L_{k-1}^{(h)}]^{\mathsf{T}}=\mathcal{O}_p(1)$ due to
 Proposition~\ref{PropUniformConvRes1}(b), statement (b) follows.
\end{proof}

The weak convergence of  $\nabla_{\vt_1} \mathcal{\widehat{L}}_n^{(h)}(\vt_1^0,\widehat\vt_{n,2})$ to $\mathcal{J}_1(\vt^0)$  follows
then by  Proposition~\ref{Lemma 2.8}, Proposition~\ref{PropConvergScoreVec2} and  Lemma~\ref{Proposition 6.1}(a). Due to   Proposition~\ref{Lemma 2.8}, Proposition~\ref{PropConvergHessianMat2} and Lemma~\ref{Proposition 6.1}(b) we have that $n^{-1}\nabla_{\vt_1}^2 \underline{\mathcal{\widehat{L}}}_n^{(h)}(\underline{\vt}_{n,1},\widehat{\vt}_{n,2})$ converges weakly to the random matrix $Z_1(\vt^0)$.
In particular, Proposition~\ref{PropLimitResultsFuncofDYY} also guarantees the joint convergence of both terms. Finally,  the almost sure positive definiteness of $Z_1(\vt^0)$ allows us to take the inverse and reorder \eqref{eqTaylorExpansionDer1}  so that
\begin{align*}
n(\widehat{\vt}_{n,1}-\vt^0_1)&=-\Big(n^{-1}\nabla_{\vt_1}^2 \underline{\mathcal{\widehat{L}}}_n^{(h)}(\underline{\vt}_{n,1},\widehat\vt_{n,2})\Big)^{-1}\nabla_{\vt_1} \mathcal{\widehat{L}}_n^{(h)}(\vt_1^0,\widehat\vt_{n,2})\cid
-Z_1(\vt^0)^{-1}\cdot \mathcal{J}_1(\vt^0).
\end{align*}
\end{proof}

\subsection{Asymptotic distribution of the short-run parameter estimator} \label{Sec.NormalShort}
Lastly, we derive the asymptotic normality of the short-run QML estimator $\widehat\vt_{n,2}$ which we prove by using a Taylor-expansion of the QML-function
similarly as in \Cref{sec:5:sub:1}. Before we start the proof we want to derive some mixing property of the process $(Y_{st,k}^{(h)},\Delta Y^{(h)}_k)_{k\in\Z}$
because this will be used throughout this section.

\begin{lemma} \label{Lemma 4.5}
    The process $(Y_{st,k}^{(h)},\Delta Y^{(h)}_k)_{k\in\Z}$ is strongly mixing with mixing coefficients \linebreak $\alpha_{\Delta Y^{(h)},Y_{st}^{(h)}}(l)\leq \mathfrak{C}\rho^l$
    for some $0<\rho<1$. In particular, for any $\delta>0$,
    \beao
         \sum_{l=1}^\infty \alpha_{\Delta Y^{(h)},Y_{st}^{(h)}}(l)^{\frac{\delta}{2+\delta}}<\infty.
    \eeao
\end{lemma}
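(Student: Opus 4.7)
The plan is to reduce the claim to the exponential $\alpha$-mixing of the sampled stationary state process $X_{st}^{(h)}(k) := X_{st}(kh)$, $k\in\Z$. First I would apply variation of constants to \eqref{statespacestationary} and obtain
\[
X_{st}^{(h)}(k) = \e^{A_2 h} X_{st}^{(h)}(k-1) + \xi_k, \qquad \xi_k := \int_{(k-1)h}^{kh} \e^{A_2(kh-u)} B_2 \,\dif L(u),
\]
so that $(X_{st}^{(h)}(k))_{k\in\Z}$ is a stationary causal vector AR(1) with iid innovations $(\xi_k)_{k\in\Z}$. Assumption (A4) forces the spectral radius of $\e^{A_2 h}$ to be strictly less than one and (A3) gives $\E\|\xi_k\|^{4+\delta}<\infty$, so standard geometric ergodicity results for linear Markov chains with light-tailed iid noise (compare Schlemm and Stelzer~\cite{SchlemmStelzer2012a} for the analogous stationary MCARMA setting) will deliver $\alpha_{X_{st}^{(h)}}(l) \leq \mathfrak{C}\rho^l$ for some $0<\rho<1$.

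Next I would exhibit each pair $(Y_{st,k}^{(h)}, \Delta Y_k^{(h)})$ as an explicit measurable function of $X_{st}^{(h)}(k-1)$ and the single Lévy increment $\eta_k := L(kh) - L((k-1)h)$ on the window $[(k-1)h, kh]$; namely,
\[
Y_{st,k}^{(h)} = C_2 \e^{A_2 h} X_{st}^{(h)}(k-1) + C_2 \xi_k, \qquad \Delta Y_k^{(h)} = C_1 B_1 \eta_k + C_2(\e^{A_2 h} - I_{N-c}) X_{st}^{(h)}(k-1) + C_2 \xi_k,
\]
using that $\xi_k$ is itself a measurable functional of the Lévy path on $[(k-1)h, kh]$. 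This identification places the past $\sigma$-field $\mathcal{F}_{-\infty}^0$ of the joint process inside $\sigma(X_{st}^{(h)}(0), \{L(t): t\leq 0\})$ and the future $\sigma$-field $\mathcal{F}_l^\infty$ inside $\sigma(X_{st}^{(h)}(l-1), \{L(t) - L((l-1)h): t \geq (l-1)h\})$.

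Because the Lévy increments on the disjoint time sets $(-\infty, 0]$ and $[(l-1)h, \infty)$ are mutually independent and jointly independent of $X_{st}^{(h)}(l-1)$ conditional on $X_{st}^{(h)}(0)$, a routine $\sigma$-field manipulation reduces the past--future dependence of the joint process at lag $l$ to that of the chain $(X_{st}^{(h)}(k))_{k\in\Z}$ at lag $l-1$, yielding $\alpha_{\Delta Y^{(h)}, Y_{st}^{(h)}}(l) \leq \mathfrak{C}\rho^l$ after absorbing constants. The summability $\sum_l \alpha_{\Delta Y^{(h)}, Y_{st}^{(h)}}(l)^{\delta/(2+\delta)} < \infty$ is then immediate from the geometric decay. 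The main obstacle I expect is the careful separation of past and future into a Markov part driven by $X_{st}^{(h)}$ plus independent Lévy-increment blocks, ensuring that no common Lévy window is shared by the two $\sigma$-fields; once this bookkeeping is in place, invoking the classical mixing result for the stable AR(1) chain is essentially routine.
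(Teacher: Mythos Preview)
Your approach is correct and reaches the same conclusion, but the route differs from the paper's. The paper does not peel off L\'evy increments via $\sigma$-field arguments; instead it \emph{augments} the state to the joint Markov chain
\[
\begin{pmatrix}\Delta L_k^{(h)}\\ X_{st,k}^{(h)}\end{pmatrix}
=\begin{pmatrix}0 & 0\\ 0 & \e^{A_2 h}\end{pmatrix}
\begin{pmatrix}\Delta L_{k-1}^{(h)}\\ X_{st,k-1}^{(h)}\end{pmatrix}
+\int_{(k-1)h}^{kh}\begin{pmatrix}I_m\\ \e^{A_2(kh-u)}B_2\end{pmatrix}\dif L_u,
\]
applies Masuda's exponential $\beta$-mixing theorem directly to this enlarged chain, and then observes that $(\Delta Y_k^{(h)},Y_{st,k}^{(h)})$ is a fixed linear map of $(\Delta L_k^{(h)},X_{st,k}^{(h)},X_{st,k-1}^{(h)})$, so its $\alpha$-coefficients are bounded by those of the augmented chain at lag $l-1$. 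This is more modular: one theorem applied to the right object, followed by the trivial ``function of a finite window inherits mixing'' step.

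Your route---prove mixing only for $X_{st}^{(h)}$ and then handle the L\'evy blocks by independence---also works, but one point deserves care. The past $\sigma$-field you embed into, $\sigma(X_{st}^{(h)}(0),\{L(t):t\le 0\})$, is strictly larger than $\sigma(X_{st}^{(h)}(k):k\le 0)$, so the dependence you must control is not literally $\alpha_{X_{st}^{(h)}}(l-1)$ but rather $\alpha(\mathcal L_0,\sigma(X_{st}^{(h)}(l-1)))$ with $\mathcal L_0$ the full L\'evy past. The Markov property saves you here (the conditional law of $X_{st}^{(h)}(l-1)$ given $\mathcal L_0$ depends only on $X_{st}^{(h)}(0)$, so the bound reduces to the chain's $\beta$-coefficient), but this is precisely the step that is more than ``routine $\sigma$-field manipulation''; it is where the geometric ergodicity is actually consumed. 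The paper's augmentation sidesteps this subtlety entirely.
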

\begin{proof}
Due to \eqref{statespacestationary} the process $Y_{st}^{(h)}$ has the state space representation
\beao
    Y_{st,k}^{(h)}=C_2X_{st,k}^{(h)} \quad\text{ with } \quad X_{st,k}^{(h)}=\e^{A_2 h}X_{st,k-1}^{(h)}+\int_{(k-1)h}^{kh}\e^{A_2(kh-u)}B_2\,dL_u, \quad k\in\N.
\eeao
Masuda~\cite[Theorem 4.3]{Masuda2004} proved that  $(X_{st,k}^{(h)})_{k\in\N}$ is $\beta$-mixing with an exponentially rate since \linebreak $\E\|X_{st,k}^{(h)}\|^2<\infty$.
Having $\E\|\Delta L_k^{(h)}\|^2<\infty$ in mind as well we can conclude on the same way as in Masuda~\cite[Theorem 4.3]{Masuda2004} that the Markov process
\beao
    \left(\begin{array}{cc}
        \Delta L_k^{(h)} \\
        X_{st,k}^{(h)}
    \end{array}\right)=\left(\begin{array}{cc}
        0_{m\times m} & 0_{m\times (N-c)} \\
        0_{(N-c)\times m} & \e^{A_2h}
    \end{array}\right)\left(\begin{array}{cc}
        \Delta L_{k-1}^{(h)} \\
        X_{st,k-1}^{(h)}
    \end{array}\right)+\int_{(k-1)h}^{kh}\left(\begin{array}{cc}
        I_m  \\
        \e^{A_2(kh-u)} B_2
    \end{array}\right)\,dL_u
\eeao
is $\beta$-mixing with mixing coefficient $\beta_{\Delta L^{(h)}, X^{(h)}}(l)\leq \mathfrak{C}\rho_1^l$ for some $0<\rho_1<1$.  Hence, it is as well
$\alpha$-mixing with mixing coefficient  $\alpha_{\Delta L^{(h)}, X^{(h)}}(l)\leq \beta_{\Delta L^{(h)}, X^{(h)}}(l) \leq \mathfrak{C}\rho_1 ^l$.
Finally, it is obvious of the definition of $\alpha$-mixing that
\beao
    \left(\begin{array}{cc} \Delta Y_k^{(h)} \\ Y_{st,k}^{(h)}\end{array}\right)
        =\left(\begin{array}{ccc} C_1B_1 & C_2 & -C_2\\ 0_{d\times m} & C_2 & 0_{d\times N-c}\end{array}\right)
         \left(\begin{array}{cc}
        \Delta L_k^{(h)} \\
        X_{st,k}^{(h)}\\
        X_{st,k-1}^{(h)}
    \end{array}\right)
\eeao
is $\alpha$-mixing with $\alpha_{\Delta Y^{(h)},Y_{st}^{(h)}}(l)\leq \alpha_{\Delta L^{(h)}, X^{(h)}}(l-1)\leq \mathfrak{C}\rho_1^{l-1}$.
\end{proof}

\subsubsection{Asymptotic behavior of the score vector}
First, we prove that the partial derivatives have finite variance.
\begin{lemma}
\label{LemFinVarianceQ2}
 For any $\vartheta_2\in\Theta_2$, $n\in\N$, and $i=1,\ldots,s_2$ the random variable $\E|\partial_i^{st}\mathcal{L}_{n}^{(h)}(\vartheta^0)|^2<\infty$.
\end{lemma}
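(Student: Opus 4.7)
The plan is to decompose $\partial_i^{st}\mathcal{L}_n^{(h)}(\vt^0)$ according to \eqref{eqDerQ} into three summands and bound each in $L^2$.

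First I would observe that at $\vt=\vt^0$ the pseudo-innovations reduce to the stationary linear innovations: since $\varepsilon_{k,1}^{(h)}(\vt_1^0,\vt_2)=0$ for all $\vt_2\in\Theta_2$ and $k\in\N$ (see p.\,\pageref{page7}), we have $\varepsilon_k^{(h)}(\vt^0)=\varepsilon_{k,2}^{(h)}(\vt_2^0)$, and moreover $\partial_i^{st}\varepsilon_{k,1}^{(h)}(\vt_1^0,\vt_2^0)=0$, hence $\partial_i^{st}\varepsilon_k^{(h)}(\vt^0)=\partial_i^{st}\varepsilon_{k,2}^{(h)}(\vt_2^0)$. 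Both sequences are therefore strictly stationary, given by absolutely convergent linear filters of $(Y_{st,k-j}^{(h)},\Delta Y_{k-j}^{(h)})_{j\geq 0}$ whose coefficients decay exponentially by Lemma~\ref{LemInnovaSeqProp} and Lemma~\ref{LemDerInnovaSeqProp}. Combining this with \autoref{AssMBrownMot}(A3), which gives $\E\|L_{\vt^0}(1)\|^{4+\delta}<\infty$, a standard Minkowski/exponential-tail argument yields the uniform moment bound
\begin{align*}
\sup_{k\in\N}\E\|\varepsilon_k^{(h)}(\vt^0)\|^{4}<\infty \quad\text{and}\quad \sup_{k\in\N}\E\|\partial_i^{st}\varepsilon_k^{(h)}(\vt^0)\|^{4}<\infty.
\end{align*}

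Using \eqref{eqDerQ} with $\vt=\vt^0$, the first summand $\tr((V_{\vt^0}^{(h)})^{-1}\partial_i^{st}V_{\vt^0}^{(h)})$ is a deterministic constant. For the second and third summands, which are of the form $n^{-1}\sum_{k=1}^n Z_k$ with $Z_k$ a fixed smooth scalar function of $\varepsilon_k^{(h)}(\vt^0)$ and $\partial_i^{st}\varepsilon_k^{(h)}(\vt^0)$, Jensen's inequality gives
\begin{align*}
\E\bigg|\frac{1}{n}\sum_{k=1}^n Z_k\bigg|^2\leq \frac{1}{n}\sum_{k=1}^n\E|Z_k|^2\leq \sup_{k\in\N}\E|Z_k|^2.
\end{align*}
Using the submultiplicativity of the Frobenius norm together with Cauchy--Schwarz, each $\E|Z_k|^2$ is bounded by a constant multiple of $\E\|\varepsilon_k^{(h)}(\vt^0)\|^{4}$ (for the second summand) and of $(\E\|\partial_i^{st}\varepsilon_k^{(h)}(\vt^0)\|^{4})^{1/2}(\E\|\varepsilon_k^{(h)}(\vt^0)\|^{4})^{1/2}$ (for the third), both of which are finite by the preceding step. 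Putting the three pieces together proves the claim.

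The only nontrivial technical point is the uniform fourth moment bound on the derivative sequence $\partial_i^{st}\varepsilon_k^{(h)}(\vt^0)$; this will be the main obstacle, but it follows from the exponential decay of the derivative filter coefficients (Lemma~\ref{LemDerInnovaSeqProp}) in combination with the $(4+\delta)$-moment assumption on the driving L\'evy process, exactly as in the stationary case treated in Schlemm and Stelzer~\cite{SchlemmStelzer2012a}.
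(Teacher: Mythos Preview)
Your proposal is correct and follows essentially the same route as the paper: decompose $\partial_i^{st}\mathcal{L}_n^{(h)}(\vt^0)$ via \eqref{eqDerQ}, note that the first term is deterministic, and control the remaining two summands in $L^2$ by combining Cauchy--Schwarz with the fourth-moment bounds $\E\|\varepsilon_k^{(h)}(\vt^0)\|^4<\infty$ and $\E\|\partial_i^{st}\varepsilon_k^{(h)}(\vt^0)\|^4<\infty$. The only difference is packaging: the paper records these fourth-moment bounds once and for all in Lemma~\ref{Lemma 2.5}(b) (together with stationarity and ergodicity in part~(a)), whereas you re-derive them inline from the exponential decay of the filter coefficients and the $(4+\delta)$-moment assumption on $L$.
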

\begin{proof}
We have due to  Lemma~\ref{Lemma 2.5}~(b) and the Cauchy-Schwarz inequality
\begin{align*}
&\E\Big\vert-\tr\Big(\big(V_{\vt^0}^{(h)}\big)^{-1} \varepsilon_{k}^{(h)}(\vt^0)\varepsilon_{k}^{(h)}(\vt^0)^\mathsf{T} \big(V_{\vt^0}^{(h)}\big)^{-1}\partial_{i}^{st} V_{\vt^0}^{(h)} \Big)
+2\cdot\big(\partial_i^{st}\varepsilon_{k}^{(h)}(\vt^0)^\mathsf{T}\big) \big(V_{\vt^0}^{(h)}\big)^{-1} \varepsilon_{k}^{(h)}(\vt^0) \Big\vert^2
 \\
    &\leq
       \mathfrak{C}\cdot\E\Vert \varepsilon_{k}^{(h)}(\vartheta^0) \Vert^4+  \mathfrak{C} \cdot \Big(\E\Vert \varepsilon_{k}^{(h)}(\vartheta^0) \Vert^4\E\Vert \partial_{i}^{st}\varepsilon_{k}^{(h)}(\vartheta^0) \Vert^4\Big)^{\frac12}<\infty,
\end{align*}
so that the statement follows with \eqref{eqDerQ}.
\end{proof}
Now we can prove the convergence of the covariance matrix of the score vector where we plug in the true parameter.
\begin{lemma}
\label{LemmaConvergScoreVecAuxRes}
We have for all $\vartheta_2\in\Theta_2$ and $\ell_{k,2}^{(h)}(\vartheta_{2}):=\varepsilon_{k,2}^{(h)}(\vartheta_2)^\mathsf{T} \big(V_{\vt_1^0,\vt_2}^{(h)}\big)^{-1}\varepsilon_{k,2}^{(h)}(\vartheta_2)$ that
\begin{align}
\lim_{n\to\infty}\Var\big(\nabla_{\vartheta_2}\mathcal{L}_{n}^{(h)}(\vartheta^0)\big)= \Big[\sum_{l\in\Z}\Cov\left(\partial^{st}_i \ell_{1,2}^{(h)}(\vartheta_{2}^0),\partial^{st}_j\ell_{1+l,2}^{(h)}(\vartheta_{2}^0)\right)\Big]_{1\leq i,j\leq s_2}=:I(\vartheta_2^0).
\label{eqConvergScoreVecMat}
\end{align}
\end{lemma}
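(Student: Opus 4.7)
The plan is to reduce $\nabla_{\vartheta_2}\mathcal{L}_{n}^{(h)}(\vartheta^0)$ to a partial sum of a stationary, strongly mixing sequence and then apply the standard identity for the variance of such a partial sum together with dominated convergence. The main technical point is controlling the interchange of differentiation and the infinite MA filter defining $\varepsilon^{(h)}_{k,2}$, which is the crux of estimating autocovariances.

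First, I would invoke \eqref{3.6a}, which gives $\mathcal{L}_n^{(h)}(\vt_1^0,\vt_2)=\mathcal{L}_{n,2}^{(h)}(\vt_2)$ identically in $\vt_2 \in \Theta_2$; differentiating in $\vt_2$ and evaluating at $\vt_2^0$ yields $\nabla_{\vt_2}\mathcal{L}_n^{(h)}(\vt^0)=\nabla_{\vt_2}\mathcal{L}_{n,2}^{(h)}(\vt_2^0)$. Substituting the definition of $\mathcal{L}_{n,2}^{(h)}(\vt_2)$ and observing that $d\log 2\pi + \log\det V_{\vt_1^0,\vt_2}^{(h)}$ is deterministic, the covariance computation reduces to that of $\tfrac{1}{n}\sum_{k=1}^n\partial_i^{st}\ell_{k,2}^{(h)}(\vt_2^0)$, i.e.
\begin{align*}
\Cov\!\left(\partial_i^{st}\mathcal{L}_n^{(h)}(\vt^0),\partial_j^{st}\mathcal{L}_n^{(h)}(\vt^0)\right)
= \frac{1}{n^2}\sum_{k,l=1}^n \Cov\!\left(\partial_i^{st}\ell_{k,2}^{(h)}(\vt_2^0),\partial_j^{st}\ell_{l,2}^{(h)}(\vt_2^0)\right).
\end{align*}

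Second, I would show that $(\partial_i^{st}\ell_{k,2}^{(h)}(\vt_2^0))_{k\in\N}$ is strictly stationary and inherits the exponential $\alpha$-mixing of Lemma~\ref{Lemma 4.5}. From \eqref{3.3b}, $\varepsilon_{k,2}^{(h)}(\vt_2)$ is a measurable function of $\{(Y_{st,j}^{(h)},\Delta Y_j^{(h)}):j\leq k\}$; differentiating term-by-term in $\vt_2$ is legitimate thanks to the uniform exponential bounds on $\partial^{st}_i\mathsf{k}_j(\vt)$ supplied by Lemma~\ref{LemDerInnovaSeqProp}, so $\partial_i^{st}\ell_{k,2}^{(h)}(\vt_2^0)$ is again measurable with respect to the same $\sigma$-algebra. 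Combining the $(4+\delta)$-moment bounds on $\varepsilon_{k,2}^{(h)}(\vt_2^0)$ and $\partial_i^{st}\varepsilon_{k,2}^{(h)}(\vt_2^0)$ (obtained as in Lemma~\ref{LemFinVarianceQ2} via assumption (A3) together with the aforementioned exponential filter bounds) with Davydov's covariance inequality yields
\begin{align*}
\left|\Cov\!\left(\partial_i^{st}\ell_{1,2}^{(h)}(\vt_2^0),\partial_j^{st}\ell_{1+l,2}^{(h)}(\vt_2^0)\right)\right|\leq \mathfrak{C}\,\alpha_{\Delta Y^{(h)},Y_{st}^{(h)}}(l)^{\delta/(2+\delta)},
\end{align*}
which is summable in $l$ by Lemma~\ref{Lemma 4.5}.

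Third, I would use the standard rearrangement for a stationary sequence, $\sum_{k,l=1}^n \Cov(X_k,X_l)=\sum_{l=-(n-1)}^{n-1}(n-|l|)\Cov(X_1,X_{1+l})$, applied componentwise, which together with the absolute summability of the previous step and dominated convergence (with bound $|\Cov(\partial_i^{st}\ell_{1,2}^{(h)}(\vt_2^0),\partial_j^{st}\ell_{1+l,2}^{(h)}(\vt_2^0))|$) yields the stated limit $I(\vt_2^0)$ (modulo the appropriate $n$-normalization absorbed into the definition of $\nabla_{\vt_2}\mathcal{L}_n^{(h)}$). The hardest part will not be the final algebraic passage to the limit, but the verification that $\partial_i^{st}\ell_{k,2}^{(h)}(\vt_2^0)$ has enough integrability to invoke Davydov's inequality; this requires dominating the infinite sums in $\varepsilon_{k,2}^{(h)}$ and its derivative by their uniform exponential coefficient bounds and using the $(4+\delta)$-moments of $L_{\vt^0}$ from (A3) together with Minkowski's inequality to propagate the moment bound through the infinite linear filter.
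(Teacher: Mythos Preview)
Your overall outline---reduce to the stationary sequence $\partial_i^{st}\ell_{k,2}^{(h)}(\vt_2^0)$, show absolute summability of its autocovariances, then use the Ces\`aro rearrangement plus dominated convergence---matches the paper's strategy exactly. The gap is in your Step~2.

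You assert that $(\partial_i^{st}\ell_{k,2}^{(h)}(\vt_2^0))_{k\in\N}$ ``inherits the exponential $\alpha$-mixing of Lemma~\ref{Lemma 4.5}'' and then apply Davydov's inequality directly. This is not justified: $\partial_i^{st}\ell_{k,2}^{(h)}(\vt_2^0)$ is an \emph{infinite-memory} functional of $(Y_{st,j}^{(h)},\Delta Y_j^{(h)})_{j\leq k}$ (through the filter $\mathsf{k}(\mathsf{B},\vt^0)$ and its derivative), and $\alpha$-mixing is in general not preserved under such transformations. The paper itself flags this point in the introduction: ``the linear innovations are stationary, but in general it is not possible to say anything about their mixing properties.'' Davydov's inequality, as you invoke it, requires the two variables to be measurable with respect to $\sigma$-algebras separated by a gap of length $l$; here both $\partial_i^{st}\ell_{1,2}^{(h)}(\vt_2^0)$ and $\partial_j^{st}\ell_{1+l,2}^{(h)}(\vt_2^0)$ depend on the entire past, so no such separation exists.

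The fix---which the paper obtains by citing Schlemm and Stelzer~\cite[Lemma~2.14]{SchlemmStelzer2012a}---is a truncation argument: replace the infinite filter by its first $\mathsf{M}$ terms to produce a finite-memory approximant, apply the covariance inequality to that piece (now legitimately, with mixing coefficient $\alpha_{\Delta Y^{(h)},Y_{st}^{(h)}}(\max\{0,l-\mathsf{M}\})$), and control the remainder in $L^2$ via the exponential bounds on $\mathsf{k}_j(\vt^0)$ and $\partial_i^{st}\mathsf{k}_j(\vt^0)$ from Lemmas~\ref{LemInnovaSeqProp} and~\ref{LemDerInnovaSeqProp}. Taking $\mathsf{M}$ of order $l/2$ then yields a summable covariance bound. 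You already list all the needed ingredients (the exponential filter bounds and the $(4+\delta)$-moments); what is missing is the truncation step that bridges them to a valid use of the covariance inequality.
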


\begin{proof}
We can derive the result in a similar way as in Schlemm and Stelzer \cite[Lemma 2.14]{SchlemmStelzer2012a}. Hence, we only sketch the proof to show the differences.  A detailed proof can be found in Scholz \cite[Section 5.9]{Scholz}.
It is sufficient to show that for all $\vartheta_2\in\Theta_2$ and all $i,j=1,\ldots,s_2$ the sequence
\begin{align}
\label{eqDefInij}
[\Var\big(\nabla_{\vartheta_2}\mathcal{L}_{n}^{(h)}(\vartheta^0)\big)]_{i,j}=n^{-1}\sum^n_{k_1=1} \sum^n_{k_2=1} \Cov\left(\partial^{st}_i \ell_{k_1,2}^{(h)}(\vartheta_{2}^0),\partial^{st}_j\ell_{k_2,2}^{(h)}(\vartheta_{2}^0)\right)=:I^{(i,j)}_n(\vartheta_2^0)
\end{align}
converges as $n\to\infty$. By the representation of the partial derivatives in \eqref{eqDerQ} and \eqref{vareps} the sequence
\beao
    \partial^{st}_i \ell_{k,2}^{(h)}(\vartheta_{2}^0)
\hspace*{-0.3cm}&=&\hspace*{-0.3cm}-\tr\left(\big(V_{\vt^0}^{(h)}\big)^{-1} \varepsilon_{k,2}^{(h)}(\vartheta_2^0)\varepsilon_{k,2}^{(h)}(\vartheta_2^0)^\mathsf{T}\big(V_{\vt^0}^{(h)}\big)^{-1}\partial_{i}^{st}V_{\vt^0}^{(h)} \right)
+\big(\partial_{i}^{st}\varepsilon_{k,2}^{(h)}(\vartheta_2^0)^\mathsf{T}\big) \big(V_{\vt^0}^{(h)}\big)^{-1} \varepsilon_{k,2}^{(h)}(\vartheta_2^0)\\
&=&\hspace*{-0.3cm}-\tr\left(\big(V_{\vt^0}^{(h)}\big)^{-1} \varepsilon_{k}^{(h)}(\vartheta^0)\varepsilon_{k}^{(h)}(\vartheta^0)^\mathsf{T}\big(V_{\vt^0}^{(h)}\big)^{-1}\partial_{i}^{st}V_{\vt^0}^{(h)} \right)
+\big(\partial_{i}^{st}\varepsilon_{k}^{(h)}(\vartheta^0)^\mathsf{T}\big) \big(V_{\vt^0}^{(h)}\big)^{-1} \varepsilon_{k}^{(h)}(\vartheta^0)
\eeao
is stationary and the covariance in \eqref{eqDefInij} depends only on the difference $l=k_1-k_2$.
If we can show that
\begin{align}
\label{eqCovToShowAbsSumm}
\sum_{l\in\Z}\left|\Cov\left(\partial^{st}_i \ell_{1,2}^{(h)}(\vartheta_{2}^0),\partial^{st}_j\ell_{1+l,2}^{(h)}(\vartheta_{2}^0)\right)\right|<\infty,
\end{align}
 then the Dominated Convergence Theorem implies
\begin{eqnarray*}
I^{(i,j)}_n(\vartheta_2^0)&=&n^{-1}\sum_{l=-n}^{n}(n-\vert l\vert)\Cov\left(\partial^{st}_i \ell_{1,2}^{(h)}(\vartheta_{2}^0),\partial^{st}_j\ell_{1+l,2}^{(h)}(\vartheta_{2}^0)\right)
\\
&\cin& \sum_{l\in\Z}\Cov\left(\partial^{st}_i \ell_{1,2}^{(h)}(\vartheta_{2}^0),\partial^{st}_j\ell_{1+l,2}^{(h)}(\vartheta_{2}^0)\right).
\end{eqnarray*}
Due to Lemma~\ref{Lemma 4.5} and the uniformly exponentially bound of $(\mathsf{k}_j(\vt))$ and $(\partial_i^{st}\mathsf{k}_j(\vt))$  finding the dominant goes in the same vein as in the proof of
Schlemm and Stelzer~\cite[Lemma 2.14]{SchlemmStelzer2012a}.
\end{proof}
In the following, we derive the convergence of the score vector with respect to the short-run parameters by a truncation argument.
\begin{proposition}
\label{PropConvergScoreVec1}
For the gradient with respect to the short-run parameters the asymptotic behavior
\begin{align*}
\sqrt{n}\cdot \nabla_{\vartheta_2}\mathcal{L}_{n}^{(h)}(\vartheta^0)\cid\mathcal{N}(0,I(\vartheta_2^0))
\end{align*}
holds,
where $I(\vartheta_2^0)$ is the asymptotic covariance matrix given in \eqref{eqConvergScoreVecMat}.
\end{proposition}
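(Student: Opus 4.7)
The starting point is the decomposition $\mathcal{L}_n^{(h)}(\vartheta)=\mathcal{L}_{n,1}^{(h)}(\vartheta)+\mathcal{L}_{n,2}^{(h)}(\vartheta_2)$ together with the identity $\mathcal{L}_{n,1}^{(h)}(\vartheta_1^0,\vartheta_2)=0$ for every $\vartheta_2\in\Theta_2$ recorded in \eqref{3.6a}. Differentiating in $\vartheta_2$ gives $\nabla_{\vartheta_2}\mathcal{L}_{n,1}^{(h)}(\vartheta^0)=0_{s_2}$, so $\sqrt{n}\,\nabla_{\vartheta_2}\mathcal{L}_n^{(h)}(\vartheta^0)=\sqrt{n}\,\nabla_{\vartheta_2}\mathcal{L}_{n,2}^{(h)}(\vartheta_2^0)$. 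Writing $\ell_{k,2}^{(h)}(\vartheta_2)=\varepsilon_{k,2}^{(h)}(\vartheta_2)^{\mathsf{T}}(V^{(h)}_{\vt_1^0,\vt_2})^{-1}\varepsilon_{k,2}^{(h)}(\vartheta_2)$, the $i$-th component becomes
\begin{align*}
\sqrt{n}\,\partial_i^{st}\mathcal{L}_{n,2}^{(h)}(\vartheta_2^0)=\frac{1}{\sqrt{n}}\sum_{k=1}^n\zeta_{k,i},\qquad \zeta_{k,i}:=\partial_i^{st}\log\det V^{(h)}_{\vt^0}+\partial_i^{st}\ell_{k,2}^{(h)}(\vartheta_2^0),
\end{align*}
and by the representation \eqref{3.3b} the process $(\zeta_{k,i})_{k\in\Z}$ is a stationary functional of $(Y^{(h)}_{st,k},\Delta Y^{(h)}_k)_{k\in\Z}$.

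The next step is to verify that $\E[\zeta_{k,i}]=0$. At $\vartheta^0$ the pseudo-innovations coincide with the true linear innovations so $\E[\varepsilon_{k,2}^{(h)}(\vartheta_2^0)\varepsilon_{k,2}^{(h)}(\vartheta_2^0)^{\mathsf{T}}]=V^{(h)}_{\vt^0}$ and the cross term $\E[(\partial_i^{st}\varepsilon_{k,2}^{(h)}(\vartheta_2^0))^{\mathsf{T}}(V^{(h)}_{\vt^0})^{-1}\varepsilon_{k,2}^{(h)}(\vartheta_2^0)]$ vanishes because the derivative depends only on the past. Using $\partial_i^{st}\log\det V^{(h)}_{\vt^0}=\tr((V^{(h)}_{\vt^0})^{-1}\partial_i^{st}V^{(h)}_{\vt^0})$ this gives $\E[\zeta_{k,i}]=0$, and Lemma~\ref{LemFinVarianceQ2} together with Lemma~\ref{LemmaConvergScoreVecAuxRes} shows $\Var(\frac{1}{\sqrt n}\sum_{k=1}^n\zeta_k)\to I(\vartheta_2^0)$.

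By the Cramér-Wold device it is enough to prove a one-dimensional CLT for $S_n(t):=\frac{1}{\sqrt n}\sum_{k=1}^n t^{\mathsf{T}}\zeta_k$ for every $t\in\R^{s_2}$. Here I would introduce the truncation
\begin{align*}
\zeta_{k,i}^{(N)}:=\partial_i^{st}\log\det V^{(h)}_{\vt^0}+\partial_i^{st}\ell_{k,2,N}^{(h)}(\vartheta_2^0),
\end{align*}
where $\varepsilon_{k,2,N}^{(h)}(\vartheta_2^0):=-\Pi(\vartheta^0)Y^{(h)}_{st,k-1}+\Delta Y^{(h)}_k-\sum_{j=1}^N\mathsf{k}_j(\vartheta^0)\Delta Y^{(h)}_{k-j}$ uses only a finite window of $(Y^{(h)}_{st,\cdot},\Delta Y^{(h)}_\cdot)$; the corresponding partial derivative is truncated analogously. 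For every fixed $N$ the random vector $\zeta_k^{(N)}$ is a Borel function of $2N+2$ consecutive lags of the $\alpha$-mixing process from Lemma~\ref{Lemma 4.5}, so $(\zeta_k^{(N)})_{k\in\Z}$ is itself strongly mixing with exponentially decaying coefficients, is stationary, has mean zero, and possesses moments of order $2+\delta$ by (A3). Ibragimov's CLT for stationary $\alpha$-mixing sequences (see e.g.\ Bradley~\cite{Bradley2007}) therefore yields $S_n^{(N)}(t)\cid\mathcal{N}(0,\sigma_N^2(t))$ with $\sigma_N^2(t)\to t^{\mathsf{T}}I(\vartheta_2^0)t$ as $N\to\infty$.

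The remaining and main obstacle is to show that the truncation error is asymptotically negligible, i.e. $\lim_{N\to\infty}\limsup_{n\to\infty}\Var(S_n(t)-S_n^{(N)}(t))=0$. This I would control by expanding
\begin{align*}
\zeta_{k,i}-\zeta_{k,i}^{(N)}=\sum_{j>N}\mathcal{R}_{k,i,j}(\vartheta^0),
\end{align*}
where the $\mathcal{R}_{k,i,j}$ are bilinear expressions in $\Delta Y^{(h)}_{k-j}$, $Y^{(h)}_{st,\cdot}$ and $\varepsilon_{k,2}^{(h)}(\vartheta_2^0)$ with coefficients involving $\mathsf{k}_j(\vartheta^0)$ and $\partial_i^{st}\mathsf{k}_j(\vartheta^0)$. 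The uniform exponential bound on $\mathsf{k}_j$ from Lemma~\ref{LemInnovaSeqProp} and its derivative counterpart in Lemma~\ref{LemDerInnovaSeqProp} yield $\|\mathcal{R}_{k,i,j}\|_{L^2}\leq \mathfrak{C}\rho^j$, while Lemma~\ref{Lemma 4.5} together with the $(2+\delta)$-moment bound allows one to control the cross covariances by the mixing-based covariance inequality. Summing geometric series in $j$ then gives $\Var(S_n(t)-S_n^{(N)}(t))\leq\mathfrak{C}\rho^{2N}$ uniformly in $n$, and a standard three-series argument (cf.\ Schlemm and Stelzer~\cite[Lemma 2.17]{SchlemmStelzer2012a}) concludes the CLT with limiting variance $t^{\mathsf{T}}I(\vartheta_2^0)t$.
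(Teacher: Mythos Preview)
Your proposal is correct and follows essentially the same truncation-plus-mixing strategy as the paper's proof: reduce to stationary summands via \eqref{3.3b}, truncate the infinite moving-average so that the resulting process inherits $\alpha$-mixing from $(\Delta Y^{(h)},Y_{st}^{(h)})$ (Lemma~\ref{Lemma 4.5}), apply Ibragimov's CLT via Cram\'er--Wold, control the remainder by the exponential decay of $(\mathsf{k}_j)$ and $(\partial_i^{st}\mathsf{k}_j)$, and conclude with the standard three-series argument. One minor point: the truncated summand $\zeta_k^{(N)}$ is not automatically mean-zero (since $\varepsilon_{k,2,N}^{(h)}(\vartheta_2^0)$ is no longer the true innovation), so it must be centered---the paper does this explicitly by writing $(Y_{\mathsf{M},k}^{(i)}-\E Y_{\mathsf{M},k}^{(i)})$ throughout.
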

\begin{proof}
First, we realize that representation~\eqref{eqDerQ} and Lemma~\ref{Lemma 2.5}~(c,d) result in $\E\big(\nabla_{\vartheta_2}\mathcal{L}_{n}^{(h)}(\vartheta^0)\big)=0_{s_2}$.
%
%
Due to \eqref{vareps} we can rewrite \eqref{eqDerQ} for $\mathsf{M}\in\N$ as
\begin{align}
  \partial_{i}^{st}\mathcal{L}_n^{(h)}(\vartheta^0)=&\frac{1}{n}\sum_{k=1}^n \left(Y_{\mathsf{M},k}^{(i)}-\E Y_{\mathsf{M},k}^{(i)}\right)+ \frac{1}{n}\sum_{k=1}^n \left(Z_{\mathsf{M},k}^{(i)}-\E Z_{\mathsf{M},k}^{(i)}\right),
  \label{Verweis1}
\\\text{where}\quad
    Y_{\mathsf{M},k}^{(i)}:=&   \tr\left(\big(V_{\vt^0}^{(h)}\big)^{-1} \partial_{i}^{st}V_{\vt^0}^{(h)}\right)
    -
  \tr\left(\big(V_{\vt^0}^{(h)}\big)^{-1} \Pi(\vt^0)Y_{st,k-1}^{(h)}Y_{st,k-1}^{(h)\mathsf{T}} \Pi(\vt^0)^\mathsf{T} \big(V_{\vt^0}^{(h)}\big)^{-1}\partial_{i}^{st}V_{\vt^0}^{(h)} \right)
    \notag
  \\
  &
    +
  \sum_{\iota_1=0}^\mathsf{M}
  \tr\left(\big(V_{\vt^0}^{(h)}\big)^{-1} {\mathsf{k}}_{\iota_1}(\vt^0) \Delta Y_{k-\iota_1}^{(h)} Y_{st,k-1}^{(h)\mathsf{T}} \Pi(\vt^0)^\mathsf{T} \big(V_{\vt^0}^{(h)}\big)^{-1}\partial_{i}^{st}V_{\vt^0}^{(h)} \right)
  \notag
    \\
  &
    +
  \sum_{\iota_2=0}^\mathsf{M}
  \tr\left(\big(V_{\vt^0}^{(h)}\big)^{-1} \Pi(\vartheta^0)Y_{st,k-1}^{(h)}  \Delta Y_{k-\iota_2}^{(h)\mathsf{T}} {\mathsf{k}}_{\iota_2}(\vt^0)^\mathsf{T} \big(V_{\vt^0}^{(h)}\big)^{-1}\partial_{i}^{st}V_{\vt^0}^{(h)} \right)
  \notag
  \\
  &
  -
  \sum_{\iota_1,\iota_2=0}^\mathsf{M}
  \tr\left(\big(V_{\vt^0}^{(h)}\big)^{-1}
  {\mathsf{k}}_{\iota_1}(\vt^0) \Delta Y_{k-\iota_1}^{(h)}
 \Delta Y_{k-\iota_2}^{(h)\mathsf{T}} {\mathsf{k}}_{\iota_2}(\vt^0)^\mathsf{T} \big(V_{\vt^0}^{(h)}\big)^{-1}\partial_{i}^{st}V_{\vt^0}^{(h)} \right)
  \notag
 %
 %
  \\
  \notag
  &
   + 2\cdot\tr\left(
   \big(\partial_{i}^{st}\Pi(\vt^0)Y_{st,k-1}^{(h)}\big)
    \big(V_{\vt^0}^{(h)}\big)^{-1}
     Y_{st,k-1}^{(h)\mathsf{T}} \Pi(\vt^0)^\mathsf{T}\right)
     \\
     \notag
  &
   -2\cdot\sum_{\iota_1=0}^\mathsf{M} \tr\left(
   \big(\partial_{i}^{st}{\mathsf{k}}_{\iota_1}(\vt^0)  \Delta Y_{k-\iota_1}^{(h)}\big) \big(V_{\vt^0}^{(h)}\big)^{-1}
   Y_{st,k-1}^{(h)\mathsf{T}} \Pi(\vt^0)^\mathsf{T} \right)
     \\
     \notag
  &
   -2\cdot\sum_{\iota_2=0}^\mathsf{M}\tr\left(
   \big(\partial_{i}^{st}\Pi(\vt^0)Y_{st,k-1}^{(h)}\big) \big(V_{\vt^0}^{(h)}\big)^{-1}
   \Delta Y_{k-\iota_2}^{(h)\mathsf{T}}{\mathsf{k}}_{\iota_2}(\vt^0)^\mathsf{T}\right)
     \\
  &
   +2\cdot\sum_{\iota_1,\iota_2=0}^\mathsf{M} \tr\left(
   \big(\partial_{i}^{st}{\mathsf{k}}_{\iota_1}(\vt^0) \Delta Y_{k-\iota_1}^{(h)} \big) \big(V_{\vt^0}^{(h)}\big)^{-1}
   \Delta Y_{k-\iota_2}^{(h)\mathsf{T}}{\mathsf{k}}_{\iota_2}(\vt^0)^\mathsf{T}\right),
          %
          %
             \notag
   \\
   Z_{\mathsf{M},k}^{(i)} :=&V_{\mathsf{M},k}^{(i)}+U_{\mathsf{M},k}^{(i)},
      \notag
  %
  %
   \\\text{and}\quad
     %
  %
  \notag
    V_{\mathsf{M},k}^{(i)}:= &
  \sum_{\iota_1=\mathsf{M}+1}^\infty
  \tr\left(\big(V_{\vt^0}^{(h)}\big)^{-1} {\mathsf{k}}_{\iota_1}(\vt^0) \Delta Y_{k-\iota_1}^{(h)} Y_{st,k-1}^{(h)\mathsf{T}} \Pi(\vt^0)^\mathsf{T} \big(V_{\vt^0}^{(h)}\big)^{-1}\partial_{i}^{st}V_{\vt^0}^{(h)} \right)
  \notag
    \\
     \notag
    &- \sum_{\iota_1=\mathsf{M}+1}^\infty \sum_{\iota_2=0}^\infty
  \tr\left(\big(V_{\vt^0}^{(h)}\big)^{-1}
  {\mathsf{k}}_{\iota_1}(\vt^0) \Delta Y_{k-\iota_1}^{(h)}
 \Delta Y_{k-\iota_2}^{(h)\mathsf{T}} {\mathsf{k}}_{\iota_2}(\vt^0)^\mathsf{T} \big(V_{\vt^0}^{(h)}\big)^{-1}\partial_{i}^{st}V_{\vt^0}^{(h)} \right)
     \\
     \notag
       &
   -2\cdot\sum_{\iota_1=\mathsf{M}+1}^\infty \tr\left(
   \big(\partial_{i}^{st}{\mathsf{k}}_{\iota_1}(\vt^0)  \Delta Y_{k-\iota_1}^{(h)}\big) \big(V_{\vt^0}^{(h)}\big)^{-1}
   Y_{st,k-1}^{(h)\mathsf{T}} \Pi(\vt^0)^\mathsf{T} \right)
     \\
     \notag
  &
   +2\cdot  \sum_{\iota_1=\mathsf{M}+1}^\infty \sum_{\iota_2=0}^\infty
   \tr\left(
   \big(\partial_{i}^{st}{\mathsf{k}}_{\iota_1}(\vt^0) \Delta Y_{k-\iota_1}^{(h)} \big) \big(V_{\vt^0}^{(h)}\big)^{-1}
   \Delta Y_{k-\iota_2}^{(h)\mathsf{T}}{\mathsf{k}}_{\iota_2}(\vt^0)^\mathsf{T}\right), \notag\\
    U_{\mathsf{M},k}^{(i)}:=
  &
  \sum_{\iota_2=\mathsf{M}+1}^\infty
  \tr\left(\big(V_{\vt^0}^{(h)}\big)^{-1} \Pi(\vartheta)Y_{st,k-1}^{(h)}  \Delta Y_{k-\iota_2}^{(h)\mathsf{T}} {\mathsf{k}}_{\iota_2}(\vt^0)^\mathsf{T} \big(V_{\vt^0}^{(h)}\big)^{-1}\partial_{i}^{st}V_{\vt^0}^{(h)} \right)
  \notag
  \\
  &
  -
  \sum_{\iota_1=0}^\infty \sum_{\iota_2=\mathsf{M}+1}^\infty
  \tr\left(\big(V_{\vt^0}^{(h)}\big)^{-1}
  {\mathsf{k}}_{\iota_1}(\vt^0) \Delta Y_{k-\iota_1}^{(h)}
 \Delta Y_{k-\iota_2}^{(h)\mathsf{T}} {\mathsf{k}}_{\iota_2}(\vt^0)^\mathsf{T} \big(V_{\vt^0}^{(h)}\big)^{-1}\partial_{i}^{st}V_{\vt^0}^{(h)} \right)
  \notag
 %
 %
  \\
  \notag
  &
   -2\cdot\sum_{\iota_2=\mathsf{M}+1}^\infty\tr\left(
   \big(\partial_{i}^{st}\Pi(\vt^0)Y_{st,k-1}^{(h)}\big) \big(V_{\vt^0}^{(h)}\big)^{-1}
   \Delta Y_{k-\iota_2}^{(h)\mathsf{T}}{\mathsf{k}}_{\iota_2}(\vt^0)^\mathsf{T}\right)
     \\
     \notag
  &
   +2\cdot  \sum_{\iota_1=0}^\infty \sum_{\iota_2=\mathsf{M}+1}^\infty
   \tr\left(
   \big(\partial_{i}^{st}{\mathsf{k}}_{\iota_1}(\vt^0) \Delta Y_{k-\iota_1}^{(h)} \big) \big(V_{\vt^0}^{(h)}\big)^{-1}
   \Delta Y_{k-\iota_2}^{(h)\mathsf{T}}{\mathsf{k}}_{\iota_2}(\vt^0)^\mathsf{T}\right).
  \end{align}
We define
$
\mathcal{Y}_{\mathsf{M},k}:=(Y_{\mathsf{M},k}^{(1)\,\mathsf{T}}, \ldots, Y_{\mathsf{M},k}^{(s_2)\,\mathsf{T}})^\mathsf{T}$ as well as $\mathcal{Z}_{\mathsf{M},k}:=(Z_{\mathsf{M},k}^{(1)\,\mathsf{T}},\ldots, Z_{\mathsf{M},k}^{(s_2)\,\mathsf{T}})^\mathsf{T}$
and use a truncation argument analogous to Schlemm and Stelzer \cite[Lemma 2.16]{SchlemmStelzer2012a}.
The main difference to Schlemm and Stelzer~\cite{SchlemmStelzer2012a} is that in our case the definition of $Y_{\mathsf{M},k}^{(i)}$, $V_{\mathsf{M},k}^{(i)}$ and $U_{\mathsf{M},k}^{(i)}$ are more complex including the two stochastic
processes $\Delta Y^{(h)}$, $Y_{st}^{(h)}$ and additional summands.
We show the claim in three steps.
\newline
\textbf{Step 1:}
The process $\mathcal{Y}_{\mathsf{M},k}$ depends only on $\mathsf{M}+1$ past values of $\Delta Y^{(h)}$ and $Y_{st}^{(h)}$. Hence, it inherits the strong mixing property of $(\Delta Y^{(h)},Y_{st}^{(h)})$  and satisfies  $\alpha_{\mathcal{Y}_{\mathsf{M}}}(l)\leq\alpha_{\Delta Y^{(h)},Y_{st}^{(h)}}(\max\{0,l-\mathsf{M}-1\})$. Thus, by
Lemma~\ref{Lemma 4.5} we have $\sum_{l=1}^{\infty}\left(\alpha_{\mathcal{Y}_{\mathsf{M}}}(l)\right)^{\delta\slash (2+\delta)} <\infty$. Using the Cram\'er-Wold device and the univariate central limit theorem of Ibragimov \cite[Theorem 1.7]{Ibragimov1962} for strongly mixing random variables we obtain
\begin{align*}
\frac{1}{\sqrt{n}}\sum_{k=1}^n \left(\mathcal{Y}_{\mathsf{M},k}-\E \mathcal{Y}_{\mathsf{M},k}\right)\cid \mathcal{N}(0_{s_2},{I}_\mathsf{M}(\vt_2^0))
\end{align*}
as $n\to\infty$ where ${I}_\mathsf{M}(\vt_2^0):=\sum_{l\in\Z}\Cov(\mathcal{Y}_{\mathsf{M},1},\mathcal{Y}_{\mathsf{M},1+l})$.
Next, we need to show that
\begin{align}\label{eqAsymNormCov}
{I}_\mathsf{M}(\vt_2^0)\xrightarrow{\mathsf{M}\to\infty}I(\vartheta_2^0).
\end{align}
 Therefore, we first prove that $\Cov\big(Y_{\mathsf{M},k}^{(i)},Y_{\mathsf{M},k+l}^{(j)}\big) \to\Cov\big( \partial_{i}^{st} \ell_{1,2}^{(h)}(\vt^0),\partial_{j}^{st} \ell_{1+l,2}^{(h)}(\vt^0)\big)$ as $\mathsf{M}\to\infty$.
 Note that the bilinearity property of the covariance operator implies
 \begin{align}
 |\Cov\big(Y_{\mathsf{M},k}^{(i)},&Y_{\mathsf{M},k+l}^{(j)}\big) - \Cov\big( \partial_{i}^{st} \ell_{k,2}^{(h)}(\vt^0),\partial_{j}^{st} \ell_{k+l,2}^{(h)}(\vt^0)\big)|
 \notag
 \\
 &=
  |\Cov\big(Y_{\mathsf{M},k}^{(i)},Y_{\mathsf{M},k+l}^{(j)}-\partial_{j}^{st} \ell_{k+l,2}^{(h)}(\vt^0)\big) + \Cov\big(Y_{\mathsf{M},k}^{(i)}- \partial_{i}^{st} \ell_{k,2}^{(h)}(\vt^0),\partial_{j}^{st} \ell_{k+l,2}^{(h)}(\vt^0)\big)|\notag\\
  &\leq \Var\big(Y_{\mathsf{M},1}^{(i)}\big)^{1/2}\Var\big(Y_{\mathsf{M},1}^{(j)}-\partial_{j}^{st} \ell_{1,2}^{(h)}(\vt^0)\big)^{1/2}\notag\\
  &\quad  +\Var\big(Y_{\mathsf{M},1}^{(i)}- \partial_{i}^{st} \ell_{1,2}^{(h)}(\vt^0)\big)^{1/2}\Var\big(\partial_{j}^{st} \ell_{l,2}^{(h)}(\vt^0)\big)^{1/2},
\label{Verweis2}
\end{align}
where
\begin{align*}
Y_{\mathsf{M},k}^{(i)}- \partial_{i}^{st} \ell_{k,2}^{(h)}(\vt^0)=&-
 \sum_{\iota_1=\mathsf{M}+1}^\infty
  \tr\left(\big(V_{\vt^0}^{(h)}\big)^{-1} {\mathsf{k}}_{\iota_1}(\vt^0) \Delta Y_{k-\iota_1}^{(h)} Y_{st,k-1}^{(h)\mathsf{T}} \Pi(\vt^0)^\mathsf{T} \big(V_{\vt^0}^{(h)}\big)^{-1}\partial_{i}^{st}V_{\vt^0}^{(h)} \right)
  \notag
    \\
  &
    -
  \sum_{\iota_2=\mathsf{M}+1}^\infty
  \tr\left(\big(V_{\vt^0}^{(h)}\big)^{-1} \Pi(\vartheta^0)Y_{st,k-1}^{(h)}  \Delta Y_{k-\iota_2}^{(h)\mathsf{T}} {\mathsf{k}}_{\iota_2}(\vt^0)^\mathsf{T} \big(V_{\vt^0}^{(h)}\big)^{-1}\partial_{i}^{st}V_{\vt^0}^{(h)} \right)
  \notag
  \\
  &
  +
  \sum_{\substack{\iota_1,\iota_2\\\max\{\iota_1,\iota_2\}>\mathsf{M}}}
  \tr\left(\big(V_{\vt^0}^{(h)}\big)^{-1}
  {\mathsf{k}}_{\iota_1}(\vt^0) \Delta Y_{k-\iota_1}^{(h)}
 \Delta Y_{k-\iota_2}^{(h)\mathsf{T}} {\mathsf{k}}_{\iota_2}(\vt^0)^\mathsf{T} \big(V_{\vt^0}^{(h)}\big)^{-1}\partial_{i}^{st}V_{\vt^0}^{(h)} \right)
  \notag
 %
 %
  \\
  \notag
  &
   +2\cdot\sum_{\iota_1=\mathsf{M}+1}^\infty
   \tr\left(
   \big(\partial_{i}^{st}{\mathsf{k}}_{\iota_1}(\vt^0)  \Delta Y_{k-\iota_1}^{(h)}\big) \big(V_{\vt^0}^{(h)}\big)^{-1}
   Y_{st,k-1}^{(h)\mathsf{T}} \Pi(\vt^0)^\mathsf{T} \right)
     \\
     \notag
  &
   +2\cdot\sum_{\iota_2=\mathsf{M}+1}^\infty
   \tr\left(
   \big(\partial_{i}^{st}\Pi(\vt^0)Y_{st,k-1}^{(h)}\big) \big(V_{\vt^0}^{(h)}\big)^{-1}
   \Delta Y_{k-\iota_2}^{(h)\mathsf{T}}{\mathsf{k}}_{\iota_2}(\vt^0)^\mathsf{T}\right)
     \\
  &
   -2\cdot\sum_{\substack{\iota_1,\iota_2\\\max\{\iota_1,\iota_2\}>\mathsf{M}}}
   \tr\left(
   \big(\partial_{i}^{st}{\mathsf{k}}_{\iota_1}(\vt^0) \Delta Y_{k-\iota_1}^{(h)} \big) \big(V_{\vt^0}^{(h)}\big)^{-1}
   \Delta Y_{k-\iota_2}^{(h)\mathsf{T}}{\mathsf{k}}_{\iota_2}(\vt^0)^\mathsf{T}\right).
\end{align*}
We obtain with the Cauchy-Schwarz inequality, the exponentially decreasing coefficients $({\mathsf{k}}_{j}(\vt^0))_{j\in\N}$ and the finite $4$th-moment of $Y_{st}^{(h)}$
and $\Delta Y^{(h)}$ due to Assumption~\ref{AssMBrownMot} that for some $0<\rho<1$, $$\Var\big(Y_{\mathsf{M},1}^{(i)}- \partial_{i}^{st} \ell_{1,2}^{(h)}(\vt^0)\big)\leq \mathfrak{C} \rho^\mathsf{M}.$$
Moreover, by the proof of Lemma~\ref{LemFinVarianceQ2} we have $\Var\big(\partial_{j}^{st} \ell_{1,2}^{(h)}(\vt^0)\big)<\infty$ and then, \linebreak
 $\Var\big(Y_{\mathsf{M},1}^{(i)})\leq 2\E\big(Y_{\mathsf{M},1}^{(i)}- \partial_{i}^{st} \ell_{1,2}^{(h)}(\vt^0)\big)^2+2\E\big(\partial_{j}^{st} \ell_{1,2}^{(h)}(\vt^0)\big)^2<\infty $ as well. Thus,
\eqref{Verweis2}
converges uniformly in $l$ at an exponential rate to zero as $\mathsf{M}\to\infty$
and
$$\Cov\big(Y_{\mathsf{M},k}^{(i)},Y_{\mathsf{M},k+l}^{(j)}\big) \xrightarrow{\mathsf{M}\to\infty} \Cov\big( \partial_{i}^{st} \ell_{1,2}^{(h)}(\vt^0),\partial_{j}^{st} \ell_{1+l,2}^{(h)}(\vt^0)\big).$$
 Then, the same arguments as in  Schlemm and Stelzer~\cite[Lemma 2.16]{SchlemmStelzer2012a} guarantee that there exists a dominant  (see Scholz~\cite[Section 5.9]{Scholz}) so that dominated convergence results in \eqref{eqAsymNormCov} (see proof of Lemma~\ref{LemmaConvergScoreVecAuxRes}).
\newline
\textbf{Step 2:}
In this step, we show that $\frac{1}{\sqrt{n}}\sum_{k=1}^n \left(\mathcal{Z}_{\mathsf{M},k}-\E \mathcal{Z}_{\mathsf{M},k}\right)$ is asymptotically negligible.
We have
\begin{align}
  &\tr \left(\Var\bigg(\frac{1}{\sqrt{n}}\sum_{k=1}^n \mathcal{Z}_{\mathsf{M},k}\bigg)\right)
 \leq 2 \cdot  \tr \left(\Var \bigg(\frac{1}{\sqrt{n}}\sum_{k=1}^n \mathcal{U}_{\mathsf{M},k}\bigg)\right)+ 2\cdot \tr\left(\Var \bigg(\frac{1}{\sqrt{n}}\sum_{k=1}^n \mathcal{V}_{\mathsf{M},k}\bigg)\right),
  \label{eqIneASNVAR}
\end{align}
where $\mathcal{U}_{\mathsf{M},k}$ and $\mathcal{V}_{\mathsf{M},k}$ are defined in the same vein as $\mathcal{Z}_{\mathsf{M},k}$.
Since both terms can be treated similarly we consider only the first one
\begin{align}
   \tr \left(\Var \bigg(\frac{1}{\sqrt{n}}\sum_{k=1}^n \mathcal{U}_{\mathsf{M},k}\bigg)\right)
   =\frac{1}{n}\sum_{k,k^\prime=1}^n \tr\left(\Cov ( \mathcal{U}_{\mathsf{M},k}, \mathcal{U}_{\mathsf{M},k^\prime})\right)
 \leq  \sum_{i,j=1}^{s_2}\sum_{l=-\infty}^\infty\vert \Cov(U_{\mathsf{M},1}^{(i)},U_{\mathsf{M},1+l}^{(j)})\vert.
\label{eqIneASNVAR2}
\end{align}
With the same arguments as in Schlemm and Stelzer~\cite[Lemma 2.16]{SchlemmStelzer2012a} we obtain independent of $i$ and $j$ the upper bound
\begin{align*}
\sum_{l=0}^{\infty}\vert\Cov(U_{\mathsf{M},1}^{(i)},U_{\mathsf{M},1+l}^{(j)})\vert &\leq
\sum_{l=0}^{2\mathsf{M}}\vert\Cov(U_{\mathsf{M},1}^{(i)},U_{\mathsf{M},1+l}^{(j)})\vert+\sum_{l=2\mathsf{M}+1}^{\infty} \vert\Cov(U_{\mathsf{M},1}^{(i)},U_{\mathsf{M},1+l}^{(j)})\vert\\
&\leq \mathfrak{C} \rho^\mathsf{M} \bigg(\mathsf{M}+\sum_{l=0}^{\infty} \big[\alpha_{\Delta Y^{(h)},Y_{st}^{(h)}}\left( l \right)\big]^\frac{\delta}{\delta+2}\bigg),
\end{align*}
which implies $\tr \left(\Var \left(\frac{1}{\sqrt{n}}\sum_{k=1}^n \mathcal{U}_{\mathsf{M},k}\right)\right)\leq \mathfrak{C} \rho^\mathsf{M}(\mathsf{M}+ \overline{\mathfrak{C}})$, due to \eqref{eqIneASNVAR2}, for some constant $\overline{\mathfrak{C}}>0$.
With the same ideas one obtains an equivalent bound for $\tr \left(\Var \left(\frac{1}{\sqrt{n}}\sum_{k=1}^n \mathcal{V}_{\mathsf{M},k}\right)\right)$ and thus, we have with \eqref{eqIneASNVAR} that
\begin{align} \label{Step 2}
  \tr \left(\Var\bigg(\frac{1}{\sqrt{n}}\sum_{k=1}^n \mathcal{Z}_{\mathsf{M},k}\bigg)\right)\leq \mathfrak{C}\rho^\mathsf{M} (\mathsf{M}+ \overline{\mathfrak{C}}).
\end{align}
\textbf{Step 3:} With the multivariate Chebyshev inequality (see Schlemm \cite[Lemma 3.19]{SchlemmThesis2011}) and \eqref{Step 2} from Step 2 we obtain for every $\tau>0$ that
\begin{align*}
\lim_{\mathsf{M}\to\infty}&\limsup_{n\to\infty}\P\bigg(\bigg\Vert \sqrt{n}\nabla_{\vt_2} \mathcal{L}_{n}^{(h)}(\vartheta^0) - \frac{1}{\sqrt{n}}\sum_{k=1}^{n}\left[\mathcal{Y}_{\mathsf{M},k}-\E \mathcal{Y}_{\mathsf{M},k} \right]\bigg\Vert>\tau\bigg)
\\
&\leq \lim_{\mathsf{M}\to\infty}\limsup_{n\to\infty} \frac{s_2}{\tau^2}\tr\left(\Var\bigg(\frac{1}{\sqrt{n}}\sum_{k=1}^n \mathcal{Z}_{\mathsf{M},k}\bigg)\right)
\leq \lim_{\mathsf{M}\to\infty} \frac{s_2}{\tau^2}\mathfrak{C}\rho^\mathsf{M}(\mathsf{M}+ \overline{\mathfrak{C}})=0.
\end{align*}
 All in all, the results of Step 1 and Step 3 combined with Brockwell and Davis \cite[Proposition 6.3.9]{BrockwellDavis1998} yield the asymptotic normality in Lemma~\ref{PropConvergScoreVec1}.
\end{proof}

\subsubsection{Asymptotic behavior of the Hessian matrix}
We require an additional assumption for the Hessian matrix (with respect to the short-run parameters) to be positive definite. Therefore, we need some notation. We write shortly $F_\vt:=\mathrm{e}^{A_{\vt} h}-K_\vt^{(h)}C_\vt$. The function is similar to the function in Schlemm and Stelzer \cite[Assumption C11]{SchlemmStelzer2012a}. However, we define $F_\vt$  different since we do not have a moving average representation of $Y^{(h)}$ with respect to the innovations $\varepsilon^{(h)}$. Hence, we have to adapt the criterion in Schlemm and Stelzer~\cite{SchlemmStelzer2012a}  and define the function
\begin{align}
\label{eqdefPsi}
&\psi_{\vt,j}:=
\begin{pmatrix}
\left[I_{j+1}\otimes K_\vt^{(h)\,\mathsf{T}}\otimes C_\vt \right]\left[(\text{vec }I_N)^\mathsf{T} ~~~ (\text{vec }F_\vt)^\mathsf{T} ~\ldots~ (\text{vec }F_\vt^{j})^\mathsf{T}\right]^\mathsf{T}
\\
\text{vec }V_\vt^{(h)}
\end{pmatrix}.
\end{align}

\begin{assumptionletter}
\label{AssSSCov}
Let there exist a positive index $j_0$ such that the $[(j_0+2)d^2\times s_2]$ matrix $\nabla_{\vt_2}  \psi_{\vt^0,j_0}$
has rank $s_2$.
\end{assumptionletter}
\begin{proposition}
\label{PropConvergHessianMat}
Let \autoref{AssSSCov} additionally hold.
Then, $$\nabla_{\vt_2}^2\mathcal{L}_{n}^{(h)}({\vartheta}^0)\cip Z_{st}(\vt^0),$$
where the $(s_2\times s_2)$-dimensional matrix $Z_{st}(\vt^0)$ is given by
\begin{align*}
[Z_{st}(\vt^0)]_{i,j}=&\,2\E\big(\partial_{i}^{st}\varepsilon_1^{(h)}(\vartheta^0)^\mathsf{T}\big) \big(V^{(h)}_{\vartheta^0}\big)^{-1} \big(\partial_{j}^{st}\varepsilon_1^{(h)}(\vartheta^0)\big)
\\&+\tr\left(\big(V^{(h)}_{\vartheta^0}\big)^{-\frac12}\big(\partial_{i}^{st}V^{(h)}_{\vartheta^0}\big)\big(V^{(h)}_{\vartheta^0}\big)^{-1} \partial_{j}^{st}V^{(h)}_{\vartheta^0} \big(V^{(h)}_{\vartheta^0}\big)^{-\frac12}\right).
\end{align*}
  Moreover,   the limiting matrix $Z_{st}(\vt^0)$ is almost surely a non-singular deterministic matrix.
  \end{proposition}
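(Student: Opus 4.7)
The plan is to start from the expansion \eqref{eqDerDerQ} of $\partial^{st}_{i,j}\mathcal{L}_n^{(h)}(\vartheta^0)$, writing it as $\sum_{\ell=1}^{8} I_{n,\ell}$, and to pass to the limit term by term using Birkhoff's Ergodic Theorem. At $\vt=\vt^0$ the pseudo-innovations coincide with the linear innovations $\varepsilon_{\vt^0}^{*}$, which by Proposition~\ref{psin} form a stationary sequence with $\E[\varepsilon_k^{(h)}(\vt^0)\varepsilon_k^{(h)}(\vt^0)^\mathsf{T}]=V_{\vt^0}^{(h)}$ and, crucially, are orthogonal in $L^2$ to the closed linear span of $\{Y_l^{(h)}:l<k\}$. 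Since Lemma~\ref{LemDerInnovaSeqProp} represents $\partial^{st}_i\varepsilon_k^{(h)}(\vt^0)$ and $\partial^{st}_{i,j}\varepsilon_k^{(h)}(\vt^0)$ as exponentially convergent linear functionals of $\{Y_l^{(h)}:l<k\}$, the cross-expectations $\E[\partial^{st}_i\varepsilon_k^{(h)}(\vt^0)\varepsilon_k^{(h)}(\vt^0)^\mathsf{T}]$ and $\E[\partial^{st}_{i,j}\varepsilon_k^{(h)}(\vt^0)\varepsilon_k^{(h)}(\vt^0)^\mathsf{T}]$ vanish.

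The ergodicity needed to invoke Birkhoff is supplied by Lemma~\ref{Lemma 4.5}, which gives exponential strong mixing of $(Y_{st,k}^{(h)},\Delta Y_k^{(h)})_{k\in\Z}$; together with the uniform exponential decay of $(\mathsf{k}_j(\vt^0))$ and $(\partial^{st}_i\mathsf{k}_j(\vt^0))$ from Lemmata~\ref{LemInnovaSeqProp} and \ref{LemDerInnovaSeqProp}, this transfers to the extended sequence $(\varepsilon_k^{(h)}(\vt^0),\partial^{st}_i\varepsilon_k^{(h)}(\vt^0),\partial^{st}_{i,j}\varepsilon_k^{(h)}(\vt^0))_k$; the required $L^1$ integrability of the summands is ensured by the $4+\delta$ moment assumption (A3) combined with Cauchy--Schwarz. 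Consequently, $I_{n,1}+I_{n,2}\cas 0$ (the $\partial^2_{i,j}V_{\vt^0}^{(h)}$ contributions cancel after applying $\E[\varepsilon\varepsilon^\mathsf{T}]=V_{\vt^0}^{(h)}$), while by trace cyclicity $I_{n,3}$ and $I_{n,4}$ each converge a.s.\ to $\tr(V_{\vt^0}^{(h)-1}\partial^{st}_j V_{\vt^0}^{(h)}V_{\vt^0}^{(h)-1}\partial^{st}_i V_{\vt^0}^{(h)})$, partially cancelling with the second half of $I_{n,1}$ to leave exactly the stated trace term. The orthogonality property kills $I_{n,5}, I_{n,6}, I_{n,7}$, and finally $I_{n,8}$ converges a.s.\ to $2\E[(\partial^{st}_i\varepsilon_1^{(h)}(\vt^0))^\mathsf{T}V_{\vt^0}^{(h)-1}\partial^{st}_j\varepsilon_1^{(h)}(\vt^0)]$. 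Summing the limits yields $[Z_{st}(\vt^0)]_{i,j}$ as stated; convergence in probability follows from convergence almost surely.

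It remains to establish the non-singularity of $Z_{st}(\vt^0)$. Since both summands in $[Z_{st}(\vt^0)]_{i,j}$ are positive semi-definite quadratic forms in $(i,j)$, if $v\in\R^{s_2}$ satisfies $v^\mathsf{T} Z_{st}(\vt^0)v=0$ then simultaneously $\sum_{i=1}^{s_2} v_i\,\partial^{st}_i V_{\vt^0}^{(h)}=0$ and $\sum_{i=1}^{s_2} v_i\,\partial^{st}_i\varepsilon_1^{(h)}(\vt^0)=0$ $\P$-a.s. Writing $\varepsilon_k^{(h)}(\vt)=Y_k^{(h)}-\sum_{j\geq 1}C_\vt F_\vt^{j-1}K_\vt^{(h)}Y_{k-j}^{(h)}$ with $F_\vt=\e^{A_\vt h}-K_\vt^{(h)}C_\vt$, the a.s.\ vanishing of the second derivative, applied to the orthogonal components of $Y^{(h)}$ furnished by the minimal representation and the full-rank condition on $C_\vt$ (Assumption (A9)), forces $\sum_i v_i\,\partial^{st}_i(C_{\vt^0} F_{\vt^0}^{\,j}K_{\vt^0}^{(h)})=0$ for every $j\geq 0$. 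Combined with the identity $\sum_i v_i\,\partial^{st}_i V_{\vt^0}^{(h)}=0$, this is precisely the statement that $\nabla_{\vt_2}\psi_{\vt^0,j_0}\,v=0$ in the notation of \eqref{eqdefPsi}, for every $j_0$. By Assumption~\ref{AssSSCov}, for some $j_0$ the gradient $\nabla_{\vt_2}\psi_{\vt^0,j_0}$ has full column rank $s_2$, hence $v=0$.

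The main obstacle is the identifiability step in the final paragraph: translating the $L^2$-vanishing of the linear combination $\sum_i v_i\,\partial^{st}_i\varepsilon_1^{(h)}(\vt^0)$ into the coefficient-level vanishing of $\sum_i v_i\,\partial^{st}_i(C_{\vt^0}F_{\vt^0}^{\,j}K_{\vt^0}^{(h)})$ for each $j$. This mimics the route taken for the stationary case in Schlemm and Stelzer~\cite[Lemma 2.17]{SchlemmStelzer2012a}, but must be adapted to our non-stationary setting by first conditioning away the unit-root component using the cointegration decomposition \eqref{eqInnovSequSeperated} and then invoking the minimality from (A5)--(A6); once this reduction is made, the conclusion is essentially algebraic.
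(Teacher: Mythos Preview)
Your plan for the convergence part is essentially the paper's argument: apply Birkhoff's Ergodic Theorem to the eight summands of \eqref{eqDerDerQ} at $\vt=\vt^0$, using stationarity and ergodicity of $(\varepsilon_k^{(h)}(\vt^0),\partial_j^{st}\varepsilon_k^{(h)}(\vt^0),\partial_{i,j}^{st}\varepsilon_k^{(h)}(\vt^0))_k$ together with the orthogonality relations $\E[\partial^{st}_i\varepsilon_k\,\varepsilon_k^{\mathsf T}]=0$, $\E[\partial^{st}_{i,j}\varepsilon_k\,\varepsilon_k^{\mathsf T}]=0$ (these are Lemma~\ref{Lemma 2.5}(a,c,d) in the paper). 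One small slip in your bookkeeping: you write ``$I_{n,1}+I_{n,2}\cas 0$'', but $I_{n,1}$ carries both $\tr(V^{-1}\partial_{i,j}^2 V)$ and $-\tr(V^{-1}\partial_i V\,V^{-1}\partial_j V)$, and only the first of these cancels with the limit of $I_{n,2}$. You effectively fix this in the next clause by invoking ``the second half of $I_{n,1}$'', and the final sum $I_{n,1}+\dots+I_{n,4}\to \tr(V^{-1}\partial_i V\,V^{-1}\partial_j V)$ is correct; just tidy the sentence.

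For the non-singularity step the paper's argument is simpler than the route you sketch. You propose to ``condition away the unit-root component'' via \eqref{eqInnovSequSeperated} and then appeal to minimality (A5)--(A6) to pass from the a.s.\ vanishing of $\sum_i v_i\,\partial_i^{st}\varepsilon_1^{(h)}(\vt^0)$ to the coefficient identities. The paper instead works directly with the non-stationary $Y^{(h)}$ and the innovation structure: rewriting $(\nabla_{\vt_2}\varepsilon_k^{(h)}(\vt^0))b=0$ as
\[
\bigl(\nabla_{\vt_2}[C_{\vt^0}K_{\vt^0}^{(h)}Y_{k-1}^{(h)}]\bigr)b
= -\sum_{j\ge 2}\bigl(\nabla_{\vt_2}[C_{\vt^0}F_{\vt^0}^{\,j-1}K_{\vt^0}^{(h)}Y_{k-j}^{(h)}]\bigr)b,
\]
the right-hand side lies in $\overline{\operatorname{span}}\{Y_l^{(h)}:l\le k-2\}$, so for each row $i$ the vector $\bigl(\nabla_{\vt_2}[e_i^{\mathsf T}C_{\vt^0}K_{\vt^0}^{(h)}]b\bigr)^{\mathsf T}Y_{k-1}^{(h)}$ also lies there. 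By the very definition of the linear innovations this forces $\bigl(\nabla_{\vt_2}[e_i^{\mathsf T}C_{\vt^0}K_{\vt^0}^{(h)}]b\bigr)^{\mathsf T}\varepsilon_{k-1}^{(h)}(\vt^0)=0$ a.s., and non-singularity of $V_{\vt^0}^{(h)}$ then gives $\nabla_{\vt_2}(C_{\vt^0}K_{\vt^0}^{(h)})b=0_{d^2}$. Substituting back and iterating yields $\nabla_{\vt_2}(C_{\vt^0}F_{\vt^0}^{\,j}K_{\vt^0}^{(h)})b=0_{d^2}$ for every $j\ge 0$; together with $(\nabla_{\vt_2}V_{\vt^0}^{(h)})b=0$ from $Z_{st,2}$ this contradicts Assumption~\ref{AssSSCov}. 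No cointegration decomposition, no minimality, no full-rank condition on $C_\vt$ is needed here---the recursion runs entirely on the Hilbert-space orthogonality of the innovations and the non-singularity of $V_{\vt^0}^{(h)}$. Your adaptation-of-Schlemm--Stelzer route may well be made to work, but it is more circuitous than necessary.
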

\begin{proof}
 We proceed as in the proof of Proposition~\ref{PropConvergHessianMat2}.
\newline
\textbf{Step 1:}
%
Since $(\partial_{i,j}^{st}\varepsilon_k^{(h)}(\vartheta^0),\partial_{j}^{st}\varepsilon_k^{(h)}(\vartheta^0), \varepsilon_k^{(h)}(\vt^0))_{k\in\N}$ is a stationary and an ergodic sequence with finite absolute moment (see Lemma~\ref{Lemma 2.5}(a)) we obtain with
Birkhoff's Ergodic Theorem
\begin{align}
&\partial^{st}_{i,j}\mathcal{L}_{n}^{(h)}({{\vartheta}^{0}})
\cip
 \tr\left(\big(V^{(h)}_{\vartheta^0}\big)^{-1} \partial_{i,j}^{st}V_{\vartheta^0}^{(h)}-\big(V_{\vartheta^0}^{(h)}\big)^{-1} \big(\partial_{i}^{st}V_{\vartheta^0}^{(h)}\big)\big(V_{\vartheta^0}^{(h)}\big)^{-1} \big(\partial_{j}^{st}V_{\vartheta^0}^{(h)}\big)\right)
 \notag
 \\
 &\qquad\qquad\qquad\qquad
 -\tr\left(\big(V_{\vartheta^0}^{(h)}\big)^{-1}\E\left[\varepsilon_1^{(h)}(\vartheta^0)\varepsilon_1^{(h)}(\vartheta^0)^\mathsf{T}\right]\big(V_{\vartheta^0}^{(h)}\big)^{-1}\partial_{i,j}^{st}V_{\vartheta^0}^{(h)} \right)
 \notag
 \\
 &\qquad\qquad\qquad\qquad
 +\tr\left(\big(V_{\vartheta^0}^{(h)}\big)^{-1} \big(\partial_{j}^{st}V_{\vartheta^0}^{(h)}\big) \big(V_{\vartheta^0}^{(h)}\big)^{-1}\E\left[\varepsilon_1^{(h)}(\vartheta^0)\varepsilon_1^{(h)}(\vartheta^0)^\mathsf{T}\right] \big(V_{\vartheta^0}^{(h)}\big)^{-1}\partial_{i}^{st}V_{\vartheta^0}^{(h)} \right)
 \notag
  \\
   &\qquad\qquad\qquad\qquad
   +\tr\left(\big(V_{\vartheta^0}^{(h)}\big)^{-1} \E\left[\varepsilon_1^{(h)}(\vartheta^0)\varepsilon_1^{(h)}(\vartheta^0)^\mathsf{T}\right] \big(V_{\vartheta^0}^{(h)}\big)^{-1}\big(\partial_{j}^{st}V_{\vartheta^0}^{(h)}\big)\big(V_{\vartheta^0}^{(h)}\big)^{-1} \partial_{i}^{st}V_{\vartheta^0}^{(h)} \right)
 \notag
 \\
  &\qquad\qquad\qquad\qquad
  -\tr\left(\big(V_{\vartheta^0}^{(h)}\big)^{-1} \E\left[\partial_{j}^{st}\varepsilon_1^{(h)}(\vartheta^0)\varepsilon_1^{(h)}(\vartheta^0)^\mathsf{T}\right] \big(V_{\vartheta^0}^{(h)}\big)^{-1} \partial_{i}^{st}V_{\vartheta^0}^{(h)} \right)
 \notag
 \\
  &\qquad\qquad\qquad\qquad
   +2\cdot\tr\left( \big(V_{\vartheta^0}^{(h)}\big)^{-1} \E\left[\varepsilon_1^{(h)}(\vartheta^0)\left(\partial_{i,j}^{st}\varepsilon_1^{(h)}(\vartheta^0)^\mathsf{T}\right)\right]\right)
   \notag
   \\
 &\qquad\qquad\qquad\qquad
 -2\cdot\tr\left(\big(V_{\vartheta^0}^{(h)}\big)^{-1} \E\left[\varepsilon_1^{(h)}(\vartheta^0)\partial_{i}^{st}\varepsilon_1^{(h)}(\vartheta^0)^\mathsf{T}\right] \big(V_{\vartheta^0}^{(h)}\big)^{-1}\partial_{j}^{st} V_{\vartheta^0}^{(h)}\right)
    \notag
 \\
  &\qquad\qquad\qquad\qquad
   +2\cdot\E\left[\left(\partial_{i}^{st}\varepsilon_1^{(h)}(\vartheta^0)^\mathsf{T}\right) \big(V_{\vartheta^0}^{(h)}\big)^{-1} \left(\partial_{j}^{st}\varepsilon_1^{(h)}(\vartheta^0)\right)\right].
 \notag
\end{align}
 Combining this with Lemma~\ref{Lemma 2.5}~(c,d) results  in
\begin{align*}
\partial^{st}_{i,j}\mathcal{L}_{n}^{(h)}({{\vartheta}^{0}})
\cip\tr\left(\big(V_{\vartheta^0}^{(h)}\big)^{-1}\big(\partial_{i}^{st}V_{\vartheta^0}^{(h)}\big)\big(V_{\vartheta^0}^{(h)}\big)^{-1} \partial_{j}^{st}V_{\vartheta^0}^{(h)} \right)
 +2\cdot\E\left[\big(\partial_{i}^{st}\varepsilon_1^{(h)}(\vartheta^0)^\mathsf{T}\big) \big(V_{\vartheta^0}^{(h)}\big)^{-1} \big(\partial_{j}^{st}\varepsilon_1^{(h)}(\vartheta^0)\big)\right]
 .
\end{align*}
\textbf{Step 2:}
Next we check that $Z_{st}(\vt^0)$ is positive definite with probability one. That we show by contradiction similarly to the corresponding proofs in Schlemm and Stelzer \cite[Lemma 3.22]{SchlemmStelzer2012a} or Boubacar and Francq \cite[Lemma 4]{Boubacar2011}, respectively.
From Step 2 we know that
\begin{align}
\label{eqHessLimit}
&\nabla_{\vt_2}^2\mathcal{L}_{n}^{(h)}(\vt^0)\cip Z_{st}(\vt^0)=Z_{st,1}(\vt^0)+Z_{st,2}(\vt^0),
\\
\text{where}\quad
\notag
&Z_{st,1}(\vt^0):=2\cdot\left[\E\left(\partial_{i}^{st}\varepsilon_1^{(h)}(\vartheta^0)^\mathsf{T}\right) \big(V_{\vartheta^0}^{(h)}\big)^{-1} \left(\partial_{j}^{st}\varepsilon_1^{(h)}(\vartheta^0)\right)\right]_{1\leq i,j\leq s_2}
\\\text{and }\quad
\notag
&Z_{st,2}(\vt^0):=\left[\tr\left(\big(V_{\vartheta^0}^{(h)}\big)^{-\frac12}\big(\partial_{i}^{st}V_{\vartheta^0}^{(h)}\big)\big(V_{\vartheta^0}^{(h)}\big)^{-1} \partial_{j}^{st}V_{\vartheta^0}^{(h)} \big(V_{\vartheta^0}^{(h)}\big)^{-\frac12}\right)\right]_{1\leq i,j\leq s_2}.
\end{align}
We can factorize
$Z_{st,2}(\vt^0)$ in the following way:
\begin{align*}
Z_{st,2}(\vt^0)=&
\begin{pmatrix}
a_1
&
\ldots
&
a_{s_2}
\end{pmatrix}^\mathsf{T}
\begin{pmatrix}
a_1
&
\ldots
&
a_{s_2}
\end{pmatrix}
\quad\text{with}\quad
a_i:=\left(\big(V_{\vartheta^0}^{(h)}\big)^{-\frac12}\otimes \big(V_{\vartheta^0}^{(h)}\big)^{-\frac12}\right) \text{vec}\big(\partial_{i}^{st}V_{\vartheta^0}^{(h)}\big).
\end{align*}
Thus, $Z_{st,2}(\vt^0)$ is obviously positive semi-definite. Similarly, $Z_{st,1}(\vt^0)$ is positive semi-definite.
It remains to check that for any $b\in\R^{s_2}\setminus \{0_{s_2}\}$ we have $b^\mathsf{T}Z_{st,1}(\vt^0)b+b^\mathsf{T}Z_{st,2}(\vt^0)b>0$.
We assume for the sake of contradiction that there exists a vector $b\in\R^{s_2}\backslash\{0_{s_2}\}$ such that \begin{align}
\label{eqAssContrPosDef}
\tag{$\Diamond$}
b^\mathsf{T}Z_{st,1}(\vt^0)b+b^\mathsf{T}Z_{st,2}(\vt^0)b=0.
\end{align}
In order to be zero, each summand $b^\mathsf{T}Z_{st,1}(\vt^0)b$ and $ b^\mathsf{T}Z_{st,2}(\vt^0)b$ must be zero, since $Z_{st,1}(\vt^0)$ as well as $Z_{st,2}(\vt^0)$ are positive semi-definite.
But $b^{\mathsf{T}}Z_{st,1}(\vt^0)b=0$ is only possible if
\beao
    0_d=(\nabla_{\vt_2}\varepsilon_1^{(h)}(\vt^0))b=-\sum_{j=1}^\infty  \left(\nabla_{\vartheta_2}\left[ C_{\vt^0} F_{\vt^0}^{j-1}K_{\vt^0}^{(h)}Y_{k-j}^{(h)}\right]\right)b \quad \Pas
\eeao
Rewriting this equation yields
\beam \label{Rew}
    \left( \nabla_{\vartheta_2}\left[ C_{\vt^0} K_{\vt^0}^{(h)}Y_{k-1}^{(h)}\right]\right)b=-\sum_{j=2}^\infty \left(\nabla_{\vartheta_2} \left[
     C_{\vt^0} F_{\vt^0}^{j-1}K_{\vt^0}^{(h)} Y_{k-j}^{(h)}\right]\right)b \quad \Pas
\eeam
Hence, for every row $i=1,\ldots,d$ and $b=(b_1,\ldots,b_{s_2})^{\mathsf{T}}$ we obtain
\beao
    \sum_{u=1}^{s_2}\left[\sum_{l=1}^d \partial_u^{st} (C_{\vt^0} K_{\vt^0}^{(h)})_{i,l}Y_{k-1,l}^{(h)}\right]b_u=-\sum_{j=2}^\infty
    \sum_{u=1}^{s_2}\left[\sum_{l=1}^d \partial_u^{st}(C_{\vt^0} F_{\vt^0}^{j-1}K_{\vt^0}^{(h)})_{i,l} Y_{k-j,l}^{(h)}\right]b_u \quad \Pas,
\eeao
which is equivalent to
\beao
    \left(\nabla_{\vartheta_2}\left[e_i^{\mathsf{T}} C_{\vt^0} K_{\vt^0}^{(h)}\right]b\right)^{\mathsf{T}}Y_{k-1}^{(h)}=-\sum_{j=2}^\infty
    \left(\nabla_{\vartheta_2} \left[e_i^{\mathsf{T}}
     C_{\vt^0} F_{\vt^0}^{j-1}K_{\vt^0}^{(h)}\right]b\right)^{\mathsf{T}} Y_{k-j}^{(h)} \quad \Pas
\eeao
But then $\left(\nabla_{\vartheta_2}\left[e_i^{\mathsf{T}} C_{\vt^0} K_{\vt^0}^{(h)}\right]b\right)^{\mathsf{T}}Y_{k-1}^{(h)}$ lies in $\overline{\text{span}}\{Y_j^{(h)}:j\leq k-2\}$.
By the definition of the linear innovations, this is only possible if $\left(\nabla_{\vartheta_2}\left[e_i^{\mathsf{T}} C_{\vt^0} K_{\vt^0}^{(h)}\right]b\right)^{\mathsf{T}}\varepsilon_{k-1}^{(h)}=0$ $\Pas$ However, $V_{\vt^0}^{(h)}=\E(\varepsilon_{k-1}^{(h)}(\varepsilon_{k-1}^{(h)})^{\mathsf{T}})$ is non-singular due to Scholz~\cite[Lemma 5.9.1]{Scholz}
so that necessarily $\nabla_{\vartheta_2}\left[e_i^{\mathsf{T}} C_{\vt^0} K_{\vt^0}^{(h)}\right]b=0_d$ for $i=1,\ldots,d$. This is again equivalent
to $\nabla_{\vt_2}(C_{\vt^0} K_{\vt^0}^{(h)})b=0_{d^2}$. Plugging this in \eqref{Rew} gives
$$\nabla_{\vartheta_2} \left[
     C_{\vt^0} F_{\vt^0}^{}K_{\vt^0}^{(h)} Y_{k-2}^{(h)}\right]b=-\sum_{j=3}^\infty \nabla_{\vartheta_2} \left[
     C_{\vt^0} F_{\vt^0}^{j-1}K_{\vt^0}^{(h)} Y_{k-j}^{(h)}\right]b. $$ Then, we can show similarly
     $\nabla_{\vt_2}(C_{\vt^0} F_{\vt^0}K_{\vt^0}^{(h)})b=0_{d^2}$ and obtain recursively  that
\beam \label{5.17}
\nabla_{\vt_2}(C_{\vt^0} F_{\vt^0}^jK_{\vt^0}^{(h)})b=0_{d^2}, \quad \,j\in\N_0.
\eeam
On the other hand, we obtain due $b^\mathsf{T}Z_{st,2}(\vt^0)b=0$ under assumption \eqref{eqAssContrPosDef} that
\begin{align}
\label{5.18}
\big(\nabla_{\vt_2}V_{\vt^0}^{(h)}\big)b=0_{d^2}.
\end{align}
The definition of $\psi_{\vt,j}$ in \eqref{eqdefPsi}, \eqref{5.17} and \eqref{5.18} imply that $\big(\nabla_{\vt_2}\psi_{\vt^0,j}\big)b=0_{(j+2)d^2}$ holds for all $j\in\N$, which contradicts \autoref{AssSSCov}. Hence, $Z_{st}(\vt^0)$ is almost surely positive definite.
\end{proof}

\subsubsection{Asymptotic normality of the short-run QML estimator}

We conclude this section with the last main result of this paper, namely the asymptotic distribution of the short-run QML estimator.
\begin{theorem}
\label{thmAsymDisEstimators}
Let \autoref{AssSSCov} additionally hold.
Furthermore, suppose
\begin{align*}
I(\vt^0)= \lim_{n\to\infty}\Var \Big(\nabla_{\vt_2}\mathcal{L}_{n}^{(h)}(\vt^0)\Big)
\quad \text{and}\quad Z_{st}(\vt^0)=\lim_{n\to\infty}\nabla_{\vt_2}^2\mathcal{L}_n^{(h)}(\vt^0).
\end{align*} Then, as $n\to\infty$,
\begin{align}
\label{eqAsymDisTheta}
\sqrt{n}(\widehat{\vartheta}_{n,2}-\vartheta_{2}^0)
&\cid
 \mathcal{N}(0,Z_{st}(\vt^0)^{-1}I(\vt^0)Z_{st}(\vt^0)^{-1}).
\end{align}
\end{theorem}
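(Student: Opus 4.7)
The plan is to mimic the Taylor-expansion argument used in \Cref{thmAsymDisEstimators1}, but applied to the short-run parameter. Because $\widehat{\vartheta}_{n,2}\to\vartheta_2^0$ in probability by \Cref{corollary:3.9} and $\vartheta_2^0$ lies in the interior of $\Theta_2$ by (A2), the first-order condition $\nabla_{\vt_2}\mathcal{\widehat L}_n^{(h)}(\widehat\vt_{n,1},\widehat\vt_{n,2})=0_{s_2}$ holds with probability tending to one. A mean-value expansion in the $\vt_2$-direction around $(\widehat\vt_{n,1},\vt_2^0)$ then yields
\begin{align*}
0_{s_2}=\nabla_{\vt_2}\mathcal{\widehat L}_n^{(h)}(\widehat\vt_{n,1},\vt_2^0)+\nabla_{\vt_2}^2\underline{\mathcal{\widehat L}}_n^{(h)}(\widehat\vt_{n,1},\underline{\vt}_{n,2})(\widehat\vt_{n,2}-\vt_2^0),
\end{align*}
with $\|\underline\vt_{n,2}-\vt_2^0\|\leq\|\widehat\vt_{n,2}-\vt_2^0\|$. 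Provided the Hessian matrix at the intermediate point is eventually non-singular, we may invert to obtain
\begin{align*}
\sqrt{n}(\widehat\vt_{n,2}-\vt_2^0)=-\Big(\nabla_{\vt_2}^2\underline{\mathcal{\widehat L}}_n^{(h)}(\widehat\vt_{n,1},\underline{\vt}_{n,2})\Big)^{-1}\sqrt{n}\,\nabla_{\vt_2}\mathcal{\widehat L}_n^{(h)}(\widehat\vt_{n,1},\vt_2^0).
\end{align*}
It is then enough, by \Cref{Lemma 2.8}, to replace $\mathcal{\widehat L}_n^{(h)}$ by $\mathcal{L}_n^{(h)}$ and analyse the two factors on the right-hand side separately.

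For the score factor, the target limit is $\mathcal{N}(0,I(\vt^0))$, which we already have at the true parameter $\vt^0$ from \Cref{PropConvergScoreVec1}. Hence the key step is a local stochastic equicontinuity statement
\begin{align*}
\sqrt{n}\,\nabla_{\vt_2}\mathcal{L}_n^{(h)}(\widehat\vt_{n,1},\vt_2^0)-\sqrt{n}\,\nabla_{\vt_2}\mathcal{L}_n^{(h)}(\vt_1^0,\vt_2^0)=o_p(1),
\end{align*}
which I would establish in the same spirit as \Cref{Proposition 6.1}(a) by splitting $\mathcal{L}_n^{(h)}=\mathcal{L}_{n,1}^{(h)}+\mathcal{L}_{n,2}^{(h)}$. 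The stationary piece $\mathcal{L}_{n,2}^{(h)}$ does not depend on $\vt_1$ and therefore contributes nothing, while the non-stationary piece $\mathcal{L}_{n,1}^{(h)}(\vt)$ vanishes identically at $\vt_1=\vt_1^0$, so a first-order argument combined with the local Lipschitz bounds from \Cref{Appendix B} (exactly as in the proofs of \Cref{Lemma 3.5a} and \Cref{Proposition 6.1}) yields a bound of the order $\mathfrak{C}\|\widehat\vt_{n,1}-\vt_1^0\|\,U_n^{\ast}$ for some $U_n^{\ast}=\mathcal{O}_p(1)$; super-consistency $\widehat\vt_{n,1}-\vt_1^0=o_p(n^{-\gamma})$ from \Cref{super-consist}, applied with any $\gamma\in(1/2,1)$, then beats the $\sqrt n$-scaling and makes the whole expression $o_p(n^{1/2-\gamma})=o_p(1)$.

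For the Hessian factor, the target is the deterministic non-singular matrix $Z_{st}(\vt^0)$ from \Cref{PropConvergHessianMat}. Using the same decomposition and the local Lipschitz bounds collected in \Cref{Appendix B}, the difference $\nabla_{\vt_2}^2\mathcal{L}_n^{(h)}(\widehat\vt_{n,1},\underline\vt_{n,2})-\nabla_{\vt_2}^2\mathcal{L}_n^{(h)}(\vt^0)$ decomposes into a contribution bounded by $\|\widehat\vt_{n,1}-\vt_1^0\|$ times quantities of order $\mathcal{O}_p(1)$, and a contribution bounded by $\|\underline\vt_{n,2}-\vt_2^0\|$ times $\mathcal{O}_p(1)$; both vanish by the consistency statements in \Cref{corollary:3.9} and \Cref{super-consist}, so that $\nabla_{\vt_2}^2\mathcal{L}_n^{(h)}(\widehat\vt_{n,1},\underline\vt_{n,2})\cip Z_{st}(\vt^0)$. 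An application of Slutsky's lemma and the continuous mapping theorem to the inversion map (valid a.s.\ since $Z_{st}(\vt^0)$ is a.s.\ non-singular) then gives \eqref{eqAsymDisTheta}.

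The main obstacle will be the local equicontinuity bound on the score at the non-trivial scaling $\sqrt n$: unlike in the long-run step, the contamination coming from plugging $\widehat\vt_{n,1}$ in place of $\vt_1^0$ has to be controlled against the \emph{faster} normalisation $\sqrt n$ used for the short-run coordinate. This is precisely where the super-consistency rate $o_p(n^{-\gamma})$ with $\gamma$ arbitrarily close to $1$ from \Cref{super-consist} is essential, and it is the only place in the proof where the gap between $\sqrt n$-consistency of $\widehat\vt_{n,2}$ and the faster $n$-consistency of $\widehat\vt_{n,1}$ is genuinely exploited.
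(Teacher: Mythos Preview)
Your approach is essentially the same as the paper's: Taylor expand in $\vt_2$, replace $\mathcal{\widehat L}_n^{(h)}$ by $\mathcal{L}_n^{(h)}$ via \Cref{Lemma 2.8}, and combine \Cref{PropConvergScoreVec1}, \Cref{PropConvergHessianMat} with a local stochastic equicontinuity lemma (the paper's \Cref{Proposition 6.9}) to transfer the limits from $\vt^0$ to $(\widehat\vt_{n,1},\vt_2^0)$ and $(\widehat\vt_{n,1},\underline\vt_{n,2})$.

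There is, however, one genuine gap in your score argument. You claim a bound of the form $\mathfrak C\,\|\widehat\vt_{n,1}-\vt_1^0\|\,U_n^\ast$ with $U_n^\ast=\mathcal O_p(1)$ for $\sqrt n\,\nabla_{\vt_2}\mathcal L_{n,1}^{(h)}(\widehat\vt_{n,1},\vt_2^0)$, and hence that any $\gamma\in(1/2,1)$ suffices. This fails for the piece $\mathcal L_{n,1,1}^{(h)}$: it contains the sum $n^{-1}\sum_{k} L_{k-1}^{(h)}L_{k-1}^{(h)\mathsf T}$, which is $\mathcal O_p(n)$, not $\mathcal O_p(1)$. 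A naive linear bound in $\|\vt_1-\vt_1^0\|$ would therefore give $\sqrt n\cdot\mathcal O_p(n)\cdot\|\vt_1-\vt_1^0\|$, which cannot be killed with $\gamma<1$. The correct observation (used in the paper's proof of \Cref{Proposition 6.9}(a) via \eqref{EQ2}) is that \emph{both} $\Pi(\vt_1^0,\vt_2^0)C_1$ and $\nabla_{\vt_2}\big(\Pi(\vt_1^0,\vt_2^0)C_1\big)$ vanish, so $\nabla_{\vt_2}\mathcal L_{n,1,1}^{(h)}(\vt_1,\vt_2^0)$ is actually \emph{quadratic} in $\|\vt_1-\vt_1^0\|$, yielding a bound $\mathfrak C\,n^{3/2}\|\vt_1-\vt_1^0\|^2\cdot\mathcal O_p(1)$. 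This requires $\gamma>3/4$, not $\gamma>1/2$. Your concluding remark that one needs $\gamma$ ``arbitrarily close to $1$'' is in the right spirit, but the mechanism you describe (linear Lipschitz bound with bounded constant) is not what makes it work.
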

Again we need the following auxiliary result for the proof.
\begin{lemma} \label{Proposition 6.9}
For every $\tau>0$ and every $\eta>0$, there exist an integer $n(\tau,\eta)$ and  real numbers $\delta_1,\delta_2>0$ such that for $\frac{3}{4}<\gamma<1$,
\begin{itemize}
\item[(a)]
 $ \P\bigg(\sup_{\vartheta_1\in N_{n,\gamma}(\vartheta_{1}^0,\delta_1)}\|\sqrt{n}\nabla_{\vt_2}\mathcal{L}_n^{(h)}(\vartheta_1,\vt_2^0)-\sqrt{n}\nabla_{\vt_2}\mathcal{L}_n^{(h)}(\vartheta_1^0,\vt_2^0)\|>\tau\bigg)\leq\eta \quad \text{ for } n\geq n(\tau,\eta)$,
\item[(b)] $ \P\bigg(\sup_{\vartheta\in N_{n,\gamma}(\vartheta_{1}^0,\delta_1)\times \mathcal{B}(\vartheta_{2}^0,\delta_2)}\|\nabla^2_{\vt_2}\mathcal{L}_n^{(h)}(\vartheta)-\nabla^2_{\vt_2}\mathcal{L}_n^{(h)}(\vartheta^0)\|>\tau\bigg)\leq\eta \quad \text{ for } n\geq n(\tau,\eta).$
\end{itemize}
\end{lemma}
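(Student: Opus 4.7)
My plan is to mimic the structure of the proof of Lemma~\ref{Proposition 6.1} and rely on the decomposition $\mathcal{L}_n^{(h)}(\vartheta)=\mathcal{L}_{n,1}^{(h)}(\vartheta)+\mathcal{L}_{n,2}^{(h)}(\vartheta_2)$ from Section~\ref{Section:3:subsection:1}. The key observation is that $\mathcal{L}_{n,2}^{(h)}$ is $\vartheta_1$-free, while $\mathcal{L}_{n,1}^{(h)}(\vartheta_1^0,\vartheta_2)\equiv 0$ for every $\vartheta_2\in\Theta_2$; hence all $\vartheta_2$-derivatives of $\mathcal{L}_{n,1}^{(h)}$ vanish identically along the slice $\vartheta_1=\vartheta_1^0$, and the $\mathcal{L}_{n,2}^{(h)}$-contribution disappears whenever we form differences at fixed $\vartheta_2$.

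For (a), these two observations reduce the supremum to
\begin{align*}
\sqrt{n}\,\nabla_{\vartheta_2}\mathcal{L}_n^{(h)}(\vartheta_1,\vartheta_2^0)-\sqrt{n}\,\nabla_{\vartheta_2}\mathcal{L}_n^{(h)}(\vartheta^0)
=\sqrt{n}\,\nabla_{\vartheta_2}\mathcal{L}_{n,1,1}^{(h)}(\vartheta_1,\vartheta_2^0)+\sqrt{n}\,\nabla_{\vartheta_2}\mathcal{L}_{n,1,2}^{(h)}(\vartheta_1,\vartheta_2^0).
\end{align*}
Using the identity $\Pi(\vartheta_1^0,\vartheta_2)C_1=0_{d\times c}$ together with Taylor's theorem in $\vartheta_1$, $\partial_{\vartheta_2}^r[\Pi(\vartheta)C_1]=\mathcal{O}(\|\vartheta_1-\vartheta_1^0\|)$ for $r\in\{0,1,2\}$. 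Plugging this into the appendix bounds (the counterparts of the estimates used in the proof of Lemma~\ref{Proposition 6.1}) I would obtain
\begin{align*}
\|\nabla_{\vartheta_2}\mathcal{L}_{n,1,1}^{(h)}(\vartheta)\|\leq \mathfrak{C}\|\vartheta_1-\vartheta_1^0\|^2\cdot\tfrac{1}{n}\textstyle\sum_{k=1}^n\|L_{k-1}^{(h)}\|^2, \quad \|\nabla_{\vartheta_2}\mathcal{L}_{n,1,2}^{(h)}(\vartheta)\|\leq \mathfrak{C}\|\vartheta_1-\vartheta_1^0\|\,U_n,
\end{align*}
with $U_n=\mathcal{O}_p(1)$ by Lemma~\ref{Lemma 3.5a} and $\tfrac{1}{n^2}\sum_{k=1}^n\|L_{k-1}^{(h)}\|^2=\mathcal{O}_p(1)$ by Proposition~\ref{PropUniformConvRes1}(b). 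On $N_{n,\gamma}(\vartheta_1^0,\delta_1)$, the $\sqrt{n}$-rescaled bound becomes of order $\delta_1^2 n^{3/2-2\gamma}+\delta_1 n^{1/2-\gamma}$ in probability, which vanishes exactly because $\gamma>3/4$, with a prefactor that can be made arbitrarily small by choosing $\delta_1$ small.

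For (b) I split
\begin{align*}
\nabla_{\vartheta_2}^2\mathcal{L}_n^{(h)}(\vartheta)-\nabla_{\vartheta_2}^2\mathcal{L}_n^{(h)}(\vartheta^0)=\nabla_{\vartheta_2}^2\mathcal{L}_{n,1}^{(h)}(\vartheta)+\bigl[\nabla_{\vartheta_2}^2\mathcal{L}_{n,2}^{(h)}(\vartheta_2)-\nabla_{\vartheta_2}^2\mathcal{L}_{n,2}^{(h)}(\vartheta_2^0)\bigr].
\end{align*}
The first piece is handled exactly as in (a) but without the $\sqrt{n}$ rescaling, giving an $\mathcal{O}_p(\delta_1^2 n^{1-2\gamma}+\delta_1 n^{-\gamma})$-bound which is already $o_p(1)$ whenever $\gamma>1/2$. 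The second piece depends only on the stationary processes $(Y_{st}^{(h)},\Delta Y^{(h)})$; by Birkhoff's ergodic theorem combined with the uniform-in-$\vartheta_2$ convergence results of Appendix~\ref{Appendix: Auxiliary results}, $\nabla_{\vartheta_2}^2\mathcal{L}_{n,2}^{(h)}(\vartheta_2)$ converges uniformly on $\Theta_2$ to a continuous deterministic limit $\nabla_{\vartheta_2}^2\mathbfcal{L}_2^{(h)}(\vartheta_2)$. Continuity of this limit, together with compactness of $\mathcal{B}(\vartheta_2^0,\delta_2)$, then allows me to choose $\delta_2>0$ so that the supremum over $\mathcal{B}(\vartheta_2^0,\delta_2)$ is at most $\tau/2$ with probability at least $1-\eta$ for large $n$.

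The main technical obstacle will be verifying the claimed quadratic-in-$\vartheta_1$ structure used in (a): both $\Pi(\vartheta)C_1$ and its $\vartheta_2$-derivatives of orders one and two vanish at $\vartheta_1=\vartheta_1^0$, so the quadratic form $C_1^{\mathsf{T}}\Pi(\vartheta)^{\mathsf{T}}(V_\vartheta^{(h)})^{-1}\Pi(\vartheta)C_1$ and all its $\vartheta_2$-derivatives up to order two are of squared order in $\|\vartheta_1-\vartheta_1^0\|$. It is precisely this squared scaling that produces the threshold $\gamma>3/4$ once the $\sqrt{n}$ prefactor is added; any weaker linear estimate would force $\gamma\geq 1$, which is incompatible with Corollary~\ref{super-consist}.
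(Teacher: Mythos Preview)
Your proof is correct and, for part~(a), matches the paper's argument essentially line by line: the decomposition $\mathcal{L}_n^{(h)}=\mathcal{L}_{n,1,1}^{(h)}+\mathcal{L}_{n,1,2}^{(h)}+\mathcal{L}_{n,2}^{(h)}$, the observation that $\Pi(\vartheta_1^0,\vartheta_2)C_1$ and all its $\vartheta_2$-derivatives vanish identically (so that~\eqref{EQ2} applies to the $\mathcal{L}_{n,1,1}^{(h)}$-part and yields the quadratic factor $\|\vartheta_1-\vartheta_1^0\|^2$), and the resulting orders $n^{3/2-2\gamma}$ and $n^{1/2-\gamma}$ are exactly what the paper obtains.

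For part~(b) there is a minor difference in how the purely stationary piece $\nabla_{\vartheta_2}^2\mathcal{L}_{n,2}^{(h)}(\vartheta_2)-\nabla_{\vartheta_2}^2\mathcal{L}_{n,2}^{(h)}(\vartheta_2^0)$ is handled. You argue via uniform convergence to a continuous deterministic limit (Birkhoff plus Proposition~\ref{PropUniformConvRes1}(a)), then choose $\delta_2$ small using continuity. The paper's terse ``similar to~(a)'' instead points to the Lipschitz bounds \eqref{EQ3} and \eqref{EQ5} of Proposition~\ref{Proposition3.3}, which give directly $\|\nabla_{\vartheta_2}^2\mathcal{L}_{n,2}^{(h)}(\vartheta_2)-\nabla_{\vartheta_2}^2\mathcal{L}_{n,2}^{(h)}(\vartheta_2^0)\|\leq \mathfrak{C}\,\|\vartheta_2-\vartheta_2^0\|\,\mathcal{O}_p(1)\leq \mathfrak{C}\,\delta_2\,\mathcal{O}_p(1)$. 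Both routes are valid; the Lipschitz route is slightly more economical because it avoids having to establish uniform convergence of the Hessian separately, while your route makes the role of $\delta_2$ more transparent.
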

This local stochastic equicontinuity condition for the standardized score $\sqrt{n}\nabla_{\vt_2}\mathcal{L}_n^{(h)}(\cdot,\vt_2^0)$ in $\vt_1^0$ and for the standardized Hessian matrix $\nabla^2_{\vt_2}\mathcal{L}_n^{(h)}(\cdot)$ in $\vt^0$
do not hold for general $\vt_1$ and $\vartheta$, respectively. \label{pageref3} Accordingly the stochastic equicontinuity conditions of Saikkonen~\cite{Saikkonen1995} are not satisfied.

\begin{proof}[Proof of Lemma~\ref{Proposition 6.9}] $\mbox{}$\\
(a) \, We use the upper bound
\begin{align}
    &\hspace*{-1.5cm}\sup_{\vartheta_1\in N_{n,\gamma}(\vartheta_{1}^0,\delta_1)}\|\sqrt{n}\nabla_{\vt_2}\mathcal{L}_n^{(h)}(\vartheta_1,\vt_2^0)-\sqrt{n}\nabla_{\vt_2}\mathcal{L}_n^{(h)}(\vartheta_1^0,\vt_2^0)\|\nonumber\\
    &\leq\, \sup_{\vartheta_1\in N_{n,\gamma}(\vartheta_{1}^0,\delta_1)}\|\sqrt{n}\nabla_{\vt_2}\mathcal{L}_{n,1,1}^{(h)}(\vartheta_1,\vt_2^0)-\sqrt{n}\nabla_{\vt_2}\mathcal{L}_{n,1,1}^{(h)}(\vartheta_1^0,\vt_2^0)\| \nonumber\\
    & \, \quad+\sup_{\vartheta_1\in N_{n,\gamma}(\vartheta_{1}^0,\delta_1)}\|\sqrt{n}\nabla_{\vt_2}\mathcal{L}_{n,1,2}^{(h)}(\vartheta_1,\vt_2^0)-\sqrt{n}\nabla_{\vt_2}\mathcal{L}_{n,1,2}^{(h)}(\vartheta_1^0,\vt_2^0)\|.
    \label{C1}
\intertext{Since $\Pi(\vt^0)C_1=0_{d\times c}$ and $\nabla_{\vt_2}(\Pi(\vt^0)C_1)=0_{dc\times s_2}$ we can apply \eqref{EQ2} and receive}
     &\hspace*{-1cm}\sup_{\vartheta_1\in N_{n,\gamma}(\vartheta_{1}^0,\delta_1)}\|\sqrt{n}\nabla_{\vt_2}\mathcal{L}_{n,1,1}^{(h)}(\vartheta_1,\vt_2^0)-\sqrt{n}\nabla_{\vt_2}\mathcal{L}_{n,1,1}^{(h)}(\vartheta_1^0,\vt_2^0)\| \nonumber\\&  \leq \, \mathfrak{C}
     \sup_{\vartheta_1\in N_{n,\gamma}(\vartheta_{1}^0,\delta_1)}n^{\frac{3}{2}}\|\vt_1-\vt_1^0\|^2\tr\left(\frac{1}{n^2}\sum_{k=1}^nL_{k-1}^{(h)}[L_{k-1}^{(h)}]^{\mathsf{T}}\right) \nonumber\\
     &\leq \, \mathfrak{C}n^{\frac{3}{2}-2\gamma}\tr\left(\frac{1}{n^2}\sum_{k=1}^nL_{k-1}^{(h)}[L_{k-1}^{(h)}]^{\mathsf{T}}\right)\cip 0,
\end{align}
where we used $\gamma>3/4$ and $\tr\left(\frac{1}{n^2}\sum_{k=1}^nL_{k-1}^{(h)}[L_{k-1}^{(h)}]^{\mathsf{T}}\right)=\mathcal{O}_p(1)$ due to Proposition~\ref{PropUniformConvRes1}(b).
For the second term we get by \eqref{EQ3} and \eqref{EQ5}, and similar calculations as in Lemma~\ref{Lemma 3.5a} that
\beam \label{C3}
   \lefteqn{\hspace*{-1cm} \sup_{\vartheta_1\in N_{n,\gamma}(\vartheta_{1}^0,\delta_1)}\|\sqrt{n}\nabla_{\vt_2}\mathcal{L}_{n,1,2}^{(h)}(\vartheta_1,\vt_2^0)-\sqrt{n}\nabla_{\vt_2}\mathcal{L}_{n,1,2}^{(h)}(\vartheta_1^0,\vt_2^0)\|}\nonumber\\
   &&\leq  \sup_{\vartheta_1\in N_{n,\gamma}(\vartheta_{1}^0,\delta_1)}\sqrt{n}\mathfrak{C}\|\vt_1-\vt_1^0\|U_n
   \leq \mathfrak{C}n^{\frac{1}{2}-\gamma}U_n\cip 0
\eeam
due to $\gamma>3/4$ and $U_n=\mathcal{O}_p(1)$. A combination of \eqref{C1}-\eqref{C3} proves (a). \\
(b) The proof is similar to (a).
\end{proof}

\begin{proof}[Proof of \Cref{thmAsymDisEstimators}]
The proof is similar to the proof of \Cref{thmAsymDisEstimators1} using  Proposition~\ref{PropConvergScoreVec1}, Proposition~\ref{PropConvergHessianMat}, Proposition~\ref{Lemma 2.8} and Lemma~\ref{Proposition 6.9}.
\end{proof}
%
%
%

\section{Simulation study} \label{Section: Simulation study}

In this section we want to demonstrate the validity of the proposed QML-method by a simulation study. The simulated state space processes are driven
either by a standard Brownian motion or by a NIG  (normal inverse Gaussian) Lévy process with mean value $0_m$.
The increment of an $m$-dimensional NIG L\'evy process $L(t)-L(t-1)$ has the density
\begin{align}
f_{NIG}(x;\mu,\alpha,\beta,\delta,\Delta)&= \frac{\delta \mathrm{e}^{ \delta\kappa}}{2\pi }\cdot\frac{\mathrm{e}^{ \langle \beta,x\rangle} (1+\alpha g(x))} {\mathrm{e}^{\alpha g(x)}g(x)^3}, \quad x\in\R^m, \notag
\\
\notag
\text{where }\quad g(x)&=\sqrt{\delta^2+\langle x-\mu,\Delta(x-\mu)\rangle}
\quad \text{ and }\quad \kappa^2=\alpha^2-\langle \beta, \Delta\beta \rangle >0,
\end{align}
$\mu\in\R^m$ is a location parameter, $\alpha\geq 0$ is a shape parameter, $\beta\in\R^m$ is a symmetry parameter, $\delta\geq 0$ is a scale parameter
and $\Delta\in\R^{m\times m}$ is positive semi-definite with $\det\Delta =1$ determining the dependence of the components of $(L(t))_{t\geq 0}$.
The covariance of the process is then
\begin{align*}
\Sigma_L=\delta(\alpha-\beta^\mathsf{T}\Delta\beta)^{-\frac12}\left(\Delta+ (\alpha^2-\beta^\mathsf{T}\Delta\beta)^{-1}\Delta\beta\beta^T\Delta \right).
\end{align*}
For more details on NIG Lévy processes see, e.g., Barndorff-Nielsen~\cite{barndorff1997}.
In all simulation studies we have simulated 350 independent replicates of a cointegrated state space process
on an equidistant time grid $0,0.01,\ldots, 2000$ by applying an Euler scheme to the stochastic differential equation \eqref{SPR} with
 initial value $X(0)=0_N$ and $h$ in the observation scheme is chosen as $1$.

Moreover, we use canonical representations of the state space models. On the one hand, $C_{1,\vt_1}$ are chosen on such a way that
$C_{1,\vt_1}$ are lower triangular matrices with $C_{1,\vt_1}^{\mathsf{T}}C_{1,\vt_1}=I_c$ and similarly
$C_{1,\vt_1}^{\perp}$ are lower triangular matrices with $C_{1,\vt_1}^{\perp \mathsf{T}}C_{1,\vt_1}^{\perp}=I_{d-c}$
satisfying \autoref{AssMBrownMot}, \autoref{AssMUniquePosTriForm}, and \autoref{AssMPosDefN} for a properly chosen parameter space $\Theta$.
 On the other hand, the parametrization of the stationary part $Y_{st,\vt}$ is based on the echelon canonical form as given in Schlemm and Stelzer~\cite{SchlemmStelzer2012a}
 such that as well \autoref{AssMBrownMot} and \autoref{Identifiability} are satisfied for the properly chosen parameter space $\Theta$.
The echelon canonical form is widely used in the VARMA context, see, e.g., L\"utkepohl and Poskitt \cite{Luetkepohl1996} and the textbooks of  L\"utkepohl \cite{Luetkepohl2005}, or Hannan and Deistler \cite{hannandeistler2012}. In the context of linear state space models canonical representations can also be found  in Guidorzi \cite{Guidorzi1975361}.
  For the asymptotic normality of the short-run parameters we require additionally \autoref{AssSSCov}. However, this condition cannot be checked analytically, this is only possible
  numerically.

\subsection{Bivariate state space model}
\label{ch:chapter6:sec:Simulations:subsec:2dim}
\setcounter{MaxMatrixCols}{15}
\begin{table}
  \centering
\begin{tabular}{ c| c ||c | c | c || c | c | c ||}
  \cline{3-8}
   \cline{3-8}
 \multicolumn{2}{c||}{}  &  \multicolumn{3}{ c|| }{ NIG-driven } &  \multicolumn{3}{ c|| }{ Brownian-driven }\\
\cline{2-8}	
  \hline
  \hline
   \multicolumn{1}{ |c| }{} &\multicolumn{1}{ c|| }{True} &\multicolumn{1}{ c |}{Mean}  & \multicolumn{1}{ c| }{Bias} & \multicolumn{1}{ c|| }{Std.~dev.}
    &\multicolumn{1}{ c |}{Mean}  & \multicolumn{1}{ c| }{Bias} & \multicolumn{1}{ c|| }{Std.~dev.}
  \\
\hline	
\hline	        	
\multicolumn{1}{ |c| }{$\vartheta_1$} & -1 &-0.9857 &-0.0143 & 0.0515  &-0.9895 &-0.0105  & 0.0425\\
\multicolumn{1}{ |c| }{$\vartheta_2$} & -2 & -2.0025 & \phantom{-}0.0025 & 0.0573 & -1.9934 & -0.0066 & 0.0459\\
\multicolumn{1}{ |c| }{$\vartheta_3$} & \phantom{-}1 & \phantom{-}0.9919 & \phantom{-}0.0081 & 0.0749 & \phantom{-}0.9898 & \phantom{-}0.0102 & 0.0570 \\
\multicolumn{1}{ |c| }{$\vartheta_4$} & -2 & -1.9758 & -0.0242 & 0.1126 & -1.9701 & -0.0299 & 0.0872 \\
\multicolumn{1}{ |c| }{$\vartheta_5$} & -3 & -2.9774 & -0.0226 & 0.0497  & -2.9898 & -0.0102 & 0.0324\\
\multicolumn{1}{ |c| }{$\vartheta_6$} &\phantom{-}1 & \phantom{-}1.0129 & -0.0129  & 0.1071 &\phantom{-}1.0155 & -0.0155 & 0.0789 \\
\multicolumn{1}{ |c| }{$\vartheta_7$} &\phantom{-}2 & \phantom{-}2.0005 & -0.0005 & 0.0690 & \phantom{-}2.0068 & -0.0068 & 0.0441 \\
 \hline
 \multicolumn{1}{ |c| }{$\vartheta_8$} &\phantom{-}1 & \phantom{-}1.0078 & -0.0078 & 0.0684 &\phantom{-}1.0096 & -0.0096 & 0.0482  \\
\multicolumn{1}{ |c| }{$\vartheta_9$} &\phantom{-}1 & \phantom{-}0.9872 & \phantom{-}0.0128  & 0.0761 &\phantom{-}0.9777 & \phantom{-}0.0223 & 0.0599\\
\hline
\multicolumn{1}{ |c| }{$\vartheta_{10}$} & \phantom{-}0.4751 & \phantom{-}0.4715 & \phantom{-}0.0036 & 0.0678 & \phantom{-}0.5200 & -0.0449 & 0.0518\\
\multicolumn{1}{ |c| }{$\vartheta_{11}$} & -0.1622 & -0.1572 & -0.0050 & 0.0381 & -0.1283 & -0.0339 & 0.0266\\
\multicolumn{1}{ |c| }{$\vartheta_{12}$} & \phantom{-}0.3708 & \phantom{-}0.3698 & \phantom{-}0.0010 & 0.0314 & \phantom{-}0.3195 & \phantom{-}0.0513 & 0.0213\\
 \hline
\multicolumn{1}{ |c| }{$\vartheta_{13}$} & \phantom{-}3 & \phantom{-}2.9999 & \phantom{-}0.0001 & 0.0075 &\phantom{-}2.9981 & \phantom{-}0.0019 & 0.0068 \\
  \hline
\end{tabular}
  \caption{Sample mean, bias and sample standard deviation  of 350 replicates of QML of a two-dimensional NIG-driven and Brownian-driven cointegrated state space process.}\label{TableSimNIG2}
\end{table}

As canonical parametrization of the family of cointegrated state space models we take
\begin{eqnarray*}
\begin{array}{rlrlrl}
    A_{2,\vt}& =\begin{pmatrix}\vartheta_1 &	\vartheta_2	&0	\\
0&	0&	1	\\
\vartheta_3&	\vartheta_4&	\vartheta_5	
\end{pmatrix}, \quad

& B_{2,\vt}& =\begin{pmatrix}
\vartheta_1&	\vartheta_2\\
\vartheta_6&	\vartheta_7\\
\vartheta_3+\vartheta_5\vartheta_6 &	\vartheta_4+\vartheta_5\vartheta_7
\end{pmatrix},

& C_{2,\vt} & =  \begin{pmatrix}
1&	0&	0\\
0	&1	&0 \end{pmatrix},\\

 B_{1,\vt}&  =\begin{pmatrix}
\vartheta_8	&\vartheta_9
\end{pmatrix}, \quad

&\text{vech}(\Sigma_\vt^L)&=(\vartheta_{10},\vartheta_{11},\vartheta_{12}), \quad

& C_{1,\vt}& =\begin{pmatrix}
	\frac{\vartheta_{13}^2-1}{\vartheta_{13}^2+1}\\
	\frac{2\cdot\vartheta_{13}}{\vartheta_{13}^2+1}
\end{pmatrix}.
\end{array}
\end{eqnarray*}
 This implies that we have one cointegration relation and the cointegration rank is equal to $1$. In total we have 13 parameters.
We use
\begin{align}
\label{eqtruepara2dim}
\vartheta^0=
\begin{pmatrix}
-1 & -2 & 1	&-2	&-3	&1	& 2 & 1 & 1 & 0.4751 & -0.1622 & 0.3708 &3
\end{pmatrix}^\mathsf{T}. \notag
\end{align}
In order to obtain the covariance matrix of the NIG Lévy process, we have to set the parameters of the NIG Lévy
process to
\begin{align*}
\delta=1,\quad \alpha=3,\quad \beta=\begin{pmatrix}1 \\1 \end{pmatrix},\quad \Delta=\begin{pmatrix} 1.2 & -0.5 \\ -0.5 & 1 \end{pmatrix}\quad  \text{and}\quad \mu=-\frac{1}{2\sqrt{31}}\begin{pmatrix}3 \\2 \end{pmatrix}.
\end{align*}
On this way the parameters of the stationary  process $Y_{st,\vt}$ are chosen  as in Schlemm and Stelzer \cite[Section 4.2]{SchlemmStelzer2012a},
who performed a simulation study for QML estimation of stationary state space processes.

In \autoref{TableSimNIG2} the sample mean, bias and sample standard deviation of the 350 replicates of the estimated parameters are summarized.
From this we see that in both the NIG-driven as well the Brownian motion driven model the bias and the sample standard deviation are quite low which reflect the consistency
of our estimator. Moreover, for the Brownian-motion driven model the sample standard deviation is for all parameters lower than for the NIG-driven model which
is not surprising since the Kalman filter as well as the quasi-maximum likelihood function are motivated from the Gaussian case.
In contrast, the bias in the NIG-driven model is often lower than in the Gaussian model. It attracts attention that in both models the cointegration parameter $\vt_{13}$
has the lowest bias and sample standard deviation of all estimated parameters. This is in accordance with the fact that the
consistency rate for the long-run parameters is faster than that for the short-run parameters.

Next, we investigate what happens if we use as underlying parameter space in the QML method a space which does not contain the true model.
In the first parameter space $\Theta_I$, we set $B_{2,\vt}=0_{3\times 2}$ and all other matrices as above. Hence, $Y_\vt$ for $\vt\in\Theta_I$
is integrated but not cointegrated. In the second parameter space  $\Theta_W$, we set $C_{1,\vt}=(0,1)^T$ and all other matrices
as above such that $Y_\vt$ for $\vt\in\Theta_W$ is cointegrated but the cointegration space does not model the true cointegration space. Finally, in the last parameter space
$\Theta_S$, we set $C_{1,\vt}=(0,0)^T$ and all other matrices as above such that $Y_\vt$
for $\vt\in\Theta_S$ is stationary and coincides with $Y_{\vt,st}$.
The sample mean, sample standard deviation, minimal value and maximal value of the minimum of the likelihood function for  100 replications of the Brownian motion
driven model
 in the four different spaces is presented in \autoref{TableLLF2sum}.
\begin{table}[hp]
  \centering
\begin{tabular}{ |c|c| c| c| c |}
 \hline
& $\Theta$   & $\Theta_I$ & $\Theta_W$   & $\Theta_S$   \\
& true pro.  & int. pro. & wrong coint. pro. & stat. pro.     \\
  \hline
Mean & 5.2303 & 5.2851 & 16.2713 & 23.8473   \\
  \hline
St. dev. & 0.0449 & 0.0956 & 11.3465 & 16.0159   \\
  \hline
  Min & 5.1226 & 5.1367 & \phantom{0}6.0526 & \phantom{0}9.4492  \\
  \hline
  Max & 5.3356 & 5.7509 & 79.3741 & 88.2747   \\
  \hline
\end{tabular}
  \caption{Minimum of the Likelihood function for the four different parameter spaces and the Brownian motion driven model.}\label{TableLLF2sum}
\end{table}
Of course in the space $\Theta$, containing the true model, the sample mean of the minimum of the likelihood function is lowest.
However, the sample mean for the space $\Theta_I$ is not to far away because there at least the long-run parameters can be estimated
more or less appropriate such that due to Proposition~\ref{Lemma 2.8} and Lemma~\ref{propConvEps2} we get $\inf_{\vt\in\Theta_I}\mathcal{\widehat{L}}_n^{(h)}(\vartheta)
=\inf_{\vt\in\Theta_I}\mathcal{{L}}_{n,2}^{(h)}(\vartheta_2)+o_p(1)\cip \inf_{\vt\in\Theta_I} \mathbfcal{L}_{2}^{(h)}(\vartheta_2) $.
However, the standard deviation is much lower in $\Theta$ than in $\Theta_I$.
In contrast to the spaces $\Theta_W$ and $\Theta_S$ where the likelihood function seems to diverge.
This is in accordance to the results of this paper because due to Proposition~\ref{Lemma 2.8}, Lemma~\ref{propConvEps2} and  \eqref{C.1}
we have $\inf\limits_{\vt\in\Theta_W}\mathcal{\widehat{L}}_n^{(h)}(\vartheta)\cip\infty$ and
$\inf\limits_{\vt\in\Theta_S}\mathcal{\widehat{L}}_n^{(h)}(\vartheta)\cip\infty$.

\subsection{Three-dimensional state space model}
\label{ch:chapter6:sec:Simulations:subsec:3dim}

The canonical parametrization of the cointegrated state space model has the form
\begin{eqnarray*}
\begin{array}{rlrlrl}
  A_{2,\vt}&=\begin{pmatrix}
\vartheta_1 &	\vartheta_2	&0&	\vartheta_3
\\
0&	0&	1&	0
\\
\vartheta_4&	\vartheta_5&	\vartheta_6	&\vartheta_7
\\
\vartheta_8&	\vartheta_9&	\vartheta_{10}	&\vartheta_{11}
\end{pmatrix},
\quad

&B_{2,\vt}&=\begin{pmatrix}
\vartheta_1 &	\vartheta_2	&	\vartheta_3
\\
\vartheta_{12} &	\vartheta_{13}	&	\vartheta_{14}
\\
\vartheta_4+\vartheta_6\vartheta_{12} &	\vartheta_5+\vartheta_6\vartheta_{13}&\vartheta_7+\vartheta_6\vartheta_{14}
\\
\vartheta_8+\vartheta_{10}\vartheta_{12} &	\vartheta_9+\vartheta_{10}\vartheta_{13}&\vartheta_{11}+\vartheta_{10}\vartheta_{14}
\end{pmatrix},
\quad \\

C_{2,\vt}&=\begin{pmatrix}
1&	0&	0&	0\\
0	&1	&0&	0\\
0&	0&	0&	1
\end{pmatrix}, \quad

& B_{1,\vt}&=\begin{pmatrix}
\vartheta_{15}	& \vartheta_{16} & \vartheta_{17}
\\
\vartheta_{18}	&\vartheta_{19} & \vartheta_{20}
\end{pmatrix}, \quad\\
\text{vech}(\Sigma_\vt^L)&=(\vartheta_{21},\vartheta_{22},\vartheta_{23},\vartheta_{24},\vartheta_{25},\vartheta_{26}),
\quad

&C_{1,\vt}&=\begin{pmatrix}
\frac{\vartheta_{27}^2+\vartheta_{28}^2-1}{\vartheta_{27}^2+\vartheta_{28}^2+1} &	0\\
\frac{2\cdot\vartheta_{27}}{\vartheta_{27}^2+\vartheta_{28}^2+1}	& \frac{\vartheta_{28}}{\sqrt{\vartheta_{27}^2+\vartheta_{28}^2}}\\
\frac{2\cdot\vartheta_{28}}{\vartheta_{27}^2+\vartheta_{28}^2+1}& -\frac{\vartheta_{27}}{\sqrt{\vartheta_{27}^2+\vartheta_{28}^2}}
\end{pmatrix}.
\end{array}
\end{eqnarray*}
The state space model has two common stochastic trends and the cointegration space is  a one-dimensional subspace of $\R^3$. In total we have $28$ parameters.
In \autoref{TableSimNIG3} the sample mean, bias and sample standard deviation of the estimated parameters  of 350 replicates
are summarized for both the NIG-driven as well the Brownian-motion driven model. In order to obtain the covariance matrix of the NIG Lévy process,
we have had to set the parameters of the NIG Lévy
process to
\begin{align*}
&\delta=1, \quad \alpha=3,\quad
\beta=\begin{pmatrix}1 \\1  \\1 \end{pmatrix},
\quad \Delta=\begin{pmatrix}
1.25 & -0.5 &\frac{1}{6}\sqrt{3}
\\
 -0.5 & 1  &  -\frac{1}{3}\sqrt{3}
\\
 \frac{1}{6}\sqrt{3} &  -\frac{1}{3}\sqrt{3}  & \frac{4}{3}
\end{pmatrix}
\quad \text{ and }\quad
 \mu=-\frac{1}{2\sqrt{31}}\begin{pmatrix}3 \\2 \end{pmatrix}.
\end{align*}
The results are very similar to the two-dimensional example. In most cases the sample standard deviation in the Brownian motion-driven model is lower
than in the NIG-driven model.
Moreover, the bias and the standard deviation of the long-run parameters $(\vt_{27},\vt_{28})$ are lower than the values of the other parameters.

\begin{table}
  \centering
\begin{tabular}{ c| c || c | c | c || c | c | c ||}
  \cline{3-8}
   \cline{3-8}
 \multicolumn{2}{c||}{}  &  \multicolumn{3}{ c|| }{ NIG-driven } &  \multicolumn{3}{ c|| }{ Brownian-driven }\\
\cline{2-8}	
  \hline
  \hline
   \multicolumn{1}{ |c| }{} &\multicolumn{1}{ c|| }{True} &\multicolumn{1}{ c |}{Mean}  & \multicolumn{1}{ c| }{Bias} & \multicolumn{1}{ c|| }{Std.~dev.}
    &\multicolumn{1}{ c |}{Mean}  & \multicolumn{1}{ c| }{Bias} & \multicolumn{1}{ c|| }{Std.~dev.}
  \\
\hline	
\hline	        			        	
\multicolumn{1}{ |c| }{$\vartheta_1$} & -2 & -1.9910 &-0.0090& 0.0583 & -1.9958 &-0.0042& 0.0475\\
\multicolumn{1}{ |c| }{$\vartheta_2$} & -3 & -3.0042 & \phantom{-}0.0042 & 0.0407 & -3.0005 & \phantom{-}0.0005 & 0.0339\\
\multicolumn{1}{ |c| }{$\vartheta_3$} & -3 & -3.0194 &  \phantom{-}0.0194 & 0.0456 & -3.0309 &  \phantom{-}0.0309 & 0.0401\\
\multicolumn{1}{ |c| }{$\vartheta_4$} & \phantom{-}1 & \phantom{-}0.9887  & \phantom{-}0.0113 & 0.0440 & \phantom{-}0.9987  & \phantom{-}0.0013 & 0.0381 \\
\multicolumn{1}{ |c| }{$\vartheta_5$} & \phantom{-}1 & \phantom{-}0.9977 & \phantom{-}0.0023  & 0.0351 & \phantom{-}0.9895 & \phantom{-}0.0105  & 0.0316\\
\multicolumn{1}{ |c| }{$\vartheta_6$} &-1 &  -0.9861 & -0.0139 &  0.0544 &  -0.9763  & -0.0237 & 0.0431\\
\multicolumn{1}{ |c| }{$\vartheta_7$} &\phantom{-}2 & \phantom{-}2.0122 & -0.0122  & 0.0396 & \phantom{-}2.0113 & -0.0113  & 0.0342 \\
\multicolumn{1}{ |c| }{$\vartheta_8$}& -1 &-1.0039 &  \phantom{-}0.0039 & 0.0442 &-1.0075 &  \phantom{-}0.0075 & 0.0399\\
\multicolumn{1}{ |c| }{$\vartheta_9$} & -3 & -2.9937 & -0.0063 & 0.0342  & -2.9896 & -0.0104 & 0.0348\\
\multicolumn{1}{ |c| }{$\vartheta_{10}$} &-3 & -2.9904 & -0.0096 & 0.0490  & -2.9892 & -0.0108 & 0.0444 \\
\multicolumn{1}{ |c| }{$\vartheta_{11}$} & -1 & -1.0055 & \phantom{-}0.0055  & 0.0449 & -1.0097 & \phantom{-}0.0097  & 0.0461 \\
\multicolumn{1}{ |c| }{$\vartheta_{12}$} & -1 & -1.0023  &  \phantom{-}0.0023 & 0.0386 & -1.0242 &  \phantom{-}0.0242 & 0.0367 \\
\multicolumn{1}{ |c| }{$\vartheta_{13}$} & \phantom{-}2 & \phantom{-}1.9984 & \phantom{-}0.0016 & 0.0363 & \phantom{-}2.0077 & -0.0077 & 0.0295\\
\multicolumn{1}{ |c| }{$\vartheta_{14}$} & \phantom{-}1 &  \phantom{-}1.0034 &  -0.0034  &  0.0353 &  \phantom{-}0.9740 &  \phantom{-}0.0260 & 0.0353\\
 \hline
\multicolumn{1}{ |c| }{$\vartheta_{15}$} & \phantom{-}1 &  \phantom{-}0.9984  & \phantom{-}0.0016 & 0.0351 & \phantom{-}1.0175  & -0.0175 & 0.0284 \\
\multicolumn{1}{ |c| }{$\vartheta_{16}$} & \phantom{-}0 &  -0.0345 & \phantom{-}0.0345 & 0.0644  &  -0.0361 & \phantom{-}0.0361 & 0.0513\\
\multicolumn{1}{ |c| }{$\vartheta_{17}$} & \phantom{-}1 &  \phantom{-}0.9840 & \phantom{-}0.0160 & 0.0521 &  \phantom{-}0.9623 & \phantom{-}0.0377 & 0.0417\\
\multicolumn{1}{ |c| }{$\vartheta_{18}$} & \phantom{-}1 &  \phantom{-}1.0010 & -0.0010 & 0.0314 &  \phantom{-}0.9877 & \phantom{-}0.0123 & 0.0303 \\
\multicolumn{1}{ |c| }{$\vartheta_{19}$} & -2 &  -1.9841 & -0.0159 &  0.0388 &  -1.9868 & -0.0132 & 0.0306 \\
\multicolumn{1}{ |c| }{$\vartheta_{20}$} & \phantom{-}0 &  \phantom{-}0.0111 & -0.0111 & 0.0347 &  -0.0090 & \phantom{-}0.0090 & 0.0362\\
 \hline
\multicolumn{1}{ |c| }{$\vartheta_{21}$} & \phantom{-}0.5310 & \phantom{-}0.5279   & \phantom{-}0.0031 & 0.0605 & \phantom{-}0.5849   & -0.0539 & 0.0478 \\
\multicolumn{1}{ |c| }{$\vartheta_{22}$} & -0.1934 & -0.1870 & -0.0064 & 0.0385  & -0.2037 & \phantom{-}0.0103 & 0.0328 \\
\multicolumn{1}{ |c| }{$\vartheta_{23}$} & \phantom{-}0.1678 & \phantom{-}0.1678 & \phantom{-}0.0000 & 0.0467  & \phantom{-}0.1513 & \phantom{-}0.0165 & 0.0396\\
\multicolumn{1}{ |c| }{$\vartheta_{24}$} & \phantom{-}0.3784 &  \phantom{-}0.3816 & -0.0032 & 0.0293 &  \phantom{-}0.4209 & -0.0425 & 0.0259\\
\multicolumn{1}{ |c| }{$\vartheta_{25}$} & -0.2227 & -0.2127 &\phantom{-}0.0100 & 0.0334  & -0.2209 &-0.0018 & 0.0300\\
\multicolumn{1}{ |c| }{$\vartheta_{26}$} & \phantom{-}0.5632 & \phantom{-}0.5585 & \phantom{-}0.0047 & 0.0476  & \phantom{-}0.4814 & \phantom{-}0.0818 & 0.0356 \\
 \hline
\multicolumn{1}{ |c| }{$\vartheta_{27}$} & \phantom{-}1 & \phantom{-}1.0002 & \phantom{-}0.0002 & 0.0030  & \phantom{-}0.9995 & \phantom{-}0.0005 & 0.0033 \\
\multicolumn{1}{ |c| }{$\vartheta_{28}$} & \phantom{-}2 & \phantom{-}2.0000 & \phantom{-}0.0000 & 0.0079 & \phantom{-}2.0004 & -0.0004 & 0.0091\\
  \hline
\end{tabular}
  \caption{Sample mean, bias and sample standard deviation  of 350 replicates of QML estimators of a three-dimensional NIG-driven and Brownian-driven cointegrated state space process.}
  \label{TableSimNIG3}
\end{table}

\section{Conclusion} \label{sec:conclusion}
The main contribution of the present paper is the development of a QML estimation procedure for the parameters of cointegrated solutions of continuous-time
linear state space models sampled equidistantly allowing flexible margins.
We showed that the QML estimator for the long-run parameter is super-consistent and that of the short-run parameter is consistent.
Moreover,  the QML estimator for the long-run parameter converges with a $n$-rate to a mixed normal distribution, whereas the
short-run parameter converges with a $\sqrt{n}$-rate to a normal distribution. In the simulation study, we saw that the estimator works quite well in practice.

In this paper, we lay the mathematical basis for QML for cointegrated solutions of state-space models. In a separate paper Fasen-Hartmann and Scholz~\cite{FasenScholz17}
we present an algorithm to construct canonical forms for the state space model satisfying  the assumptions of this paper, which is necessary to apply the method to data.
 We decided to split the paper
because the introduction into a canonical form  is quite lengthy and would blow up
the present paper.
Moreover, a drawback of our estimation procedure is that we assume that the cointegration rank is known in advance which is not the
case in reality. First, we have  to estimate and test the cointegration rank. For this it is possible to incorporate some
 well-known results for estimating and testing the cointegration rank for cointegrated VARMA processes
as, e.g., presented in Bauer and Wagner~\cite{BauerWagner2002b}, Lütkepohl and Claessen~\cite{LuetkepohlClaessen}, Saikkonen~\cite{Saikkonen92}, Yap and Reinsel~\cite{YapReinsel95a}. This will
also be considered in Fasen-Hartmann and Scholz~\cite{FasenScholz17}. Some parts of Fasen-Hartmann and Scholz~\cite{FasenScholz17} can already be found in Scholz~\cite{Scholz}.

\subsection*{Acknowledgement}
We would like to thank Eckhard Schlemm and Robert Stelzer, who kindly provided the MATLAB code for the simulation and parameter estimation
of stationary state space  processes. We adapted the code in order to include not only stationary state space models, but also cointegrated state space
models.

\begin{appendix}

\section{Auxiliary results} \label{Appendix: Auxiliary results}

\subsection{Asymptotic results}
For the derivation of the asymptotic behavior of our estimators we require
the asymptotic behavior of the standardized score vector and the standardized  Hessian matrix. To obtain these asymptotic results
 we use the next proposition.

\begin{proposition}
\label{PropUniformConvRes1} \label{PropLimitResultsFuncofDYY}
Let \autoref{AssMBrownMot} hold.
Furthermore, let $(L_k^{(h)})_{k\in\N_0}:=(L(kh))_{k\in\N_0}$ be the Lévy process sampled at distance $h$ and $\Delta L_k^{(h)}=L(kh)-L((k-1)h)$.
 Define for $n\in\N,k\in\N_0$,
\beao
    \xi_k^{(h)}:= \left( \begin{array}{l}
        \Delta Y_k^{(h)}\\
        Y_{st,k}^{(h)}\\
        \Delta L_k^{(h)}
    \end{array}
    \right) \quad \text{ and } \quad  S_n^{(h)}:=\sum_{k=1}^n\xi_k^{(h)}.
\eeao
Let $\overline{\mathsf{l}}(z,\vartheta):=\sum_{i=0}^{\infty}\overline{\mathsf{l}}_i(\vartheta)z^i$ and $\underline{\mathsf{l}}(z,\vartheta):=\sum_{i=0}^{\infty}\underline{\mathsf{l}}_i(\vartheta)z^i$, $\vartheta\in\Theta$, $z\in\C$, where $(\overline{\mathsf{l}}_i(\vartheta))_{i\in\N_0}$ is a deterministic uniformly exponentially bounded continuous
 matrix sequence in $\R^{\overline{d}\times (2d+m)}$ and similarly $(\underline{\mathsf{l}}_i(\vartheta))_{i\in\N_0}$ is a nonstochastic uniformly exponentially bounded continuous
 matrix sequence in $\R^{\underline{d}\times (2d+m)}$. Moreover, $\overline{\Pi}(\vt)\in \R^{\overline{d}\times (2d+m)},\,\underline{\Pi}(\vt)\in \R^{\underline{d}\times (2d+m)}$ are continuous matrix functions as well. We write \linebreak
$\underline{\mathsf{l}}(\mathsf{B},\vartheta)\xi^{(h)}=(\underline{\mathsf{l}}(\mathsf{B},\vartheta)\xi_k^{(h)})_{k\in\N_0}$ with $\underline{\mathsf{l}}(\mathsf{B},\vartheta)\xi_k^{(h)}=
\sum_{i=0}^{\infty}\overline{\mathsf{l}}_i(\vartheta)\xi_{k-i}^{(h)}$ and similarly $\overline{\mathsf{l}}(\mathsf{B},\vartheta)\xi^{(h)}$. \linebreak
Let $(W(r))_{0\leq r\leq 1}=((W_1(r)^{\mathsf{T}},W_2(r)^{\mathsf{T}},W_3(r)^{\mathsf{T}})^{\mathsf{T}})_{0\leq r\leq 1}$ be the Brownian motion as defined on p.~\pageref{Brownian motion}.
Then, 
\begin{itemize}
\item[(a)] ${\displaystyle
  \sup_{\vt\in\Theta}\left\|\frac{1}{n}\sum_{k=1}^n[\overline{\mathsf{l}}(\mathsf{B},\vartheta) {\xi}_k^{(h)}] [\underline{\mathsf{l}}(\mathsf{B},\vartheta){\xi}_{k+j}^{(h)}]^\mathsf{T}   - \E\left[[\overline{\mathsf{l}}(\mathsf{B},\vartheta) {\xi}_1^{(h)}] [ \underline{\mathsf{l}}(\mathsf{B},\vartheta){\xi}_{1+j}^{(h)}]^\mathsf{T}   \right]\right\|\ccip 0
}$, \quad $j\in\N_0$,
\item[(b)] ${\displaystyle
n^{-2}\sum\limits_{k=1}^{n}
    \overline{\Pi}(\vartheta)S_{k-1}^{(h)}[S_{k-1}^{(h)}]^{\mathsf{T}}\underline{\Pi}(\vartheta)^\mathsf{T}
    \ccid
    \overline{\Pi}(\vartheta) \int_0^1 W(r)W(r)^\mathsf{T}\,\dif r\,\underline{\Pi}(\vartheta)^\mathsf{T},
     }$
\item[(c)]
${\displaystyle
   n^{-1}\sum\limits_{k=1}^{n}\overline{\Pi}(\vartheta)S_{k-1}^{(h)}[\underline{\mathsf{l}}(\mathsf{B},\vartheta)\xi_{k}^{(h)}]^\mathsf{T}
      \ccid \overline{\Pi}(\vartheta)\int_0^1 W(r)\dif W(r)^\mathsf{T} \underline{\mathsf{l}}(1,\vartheta)^{\mathsf{T}}
 +\sum_{j=1}^\infty\Gamma_{\overline{\Pi}(\vt)\xi^{(h)},\underline{\mathsf{l}}(\mathsf{B},\vartheta)\xi^{(h)}}(j).
}$
\end{itemize}
The stated weak convergence results  also hold jointly.
\end{proposition}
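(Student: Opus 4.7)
The plan is to reduce all three parts to two ingredients: (i) a functional central limit theorem (FCLT) for $n^{-1/2}S_{\lfloor n\cdot\rfloor}^{(h)}$ and (ii) a uniform ergodic theorem for stationary causal linear filters of mixing sequences. Note first that by the state space representation of $Y_{st}^{(h)}$ and the definition of $\Delta Y^{(h)}$ and $\Delta L^{(h)}$, the process $(\xi_k^{(h)})_{k\in\Z}$ is a stationary causal linear filter of the iid increments of the Lévy process $L$ on intervals of length $h$. In particular, $\xi_k^{(h)} = \sum_{j=0}^\infty \psi_j \eta_{k-j}$, where $\eta_k$ is a square-integrable iid innovation built from the $h$-increments of $L$ together with $\int_{(k-1)h}^{kh}\e^{A_2(kh-u)}B_2\,dL(u)$, and $(\psi_j)_{j\in\N_0}$ are the coefficients defined in \eqref{def psi}. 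A direct calculation confirms that $\psi(1)\Cov(\eta_1)\psi(1)^\mathsf{T}=\Sigma_W$ as defined in \eqref{Cov B}. Combined with \autoref{AssMBrownMot}(A3) ($4{+}\delta$ moments) and the strong mixing with geometric rate established in Lemma~\ref{Lemma 4.5}, the standard invariance principle for linear processes of mixing sequences (e.g.\ Phillips and Solo~\cite{Phillips1991} style, or more directly Hansen~\cite{hansen1992}) yields $n^{-1/2} S_{\lfloor n\cdot\rfloor}^{(h)} \cid W(\cdot)$ in the Skorokhod space $D([0,1],\R^{2d+m})$.

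For part~(a), fix $j\in\N_0$. The sequence $\big([\overline{\mathsf{l}}(\mathsf{B},\vartheta)\xi_k^{(h)}][\underline{\mathsf{l}}(\mathsf{B},\vartheta)\xi_{k+j}^{(h)}]^\mathsf{T}\big)_{k\in\Z}$ is stationary and ergodic for each fixed $\vartheta$, so Birkhoff's ergodic theorem gives pointwise convergence. Upgrading to uniformity over the compact set $\Theta$ proceeds along the lines of Schlemm and Stelzer~\cite[Lemma 2.1]{SchlemmStelzer2012a}: for any truncation level $\mathsf{M}$ the partial filter $\sum_{i=0}^{\mathsf{M}}\overline{\mathsf{l}}_i(\vartheta)\xi_{k-i}^{(h)}$ is continuous in $\vartheta$ on $\Theta$ and the ergodic theorem applies uniformly by a standard compactness argument (cover $\Theta$ by finitely many balls and use continuity of the limit). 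The tail $\sum_{i>\mathsf{M}}\overline{\mathsf{l}}_i(\vartheta)\xi_{k-i}^{(h)}$ is bounded in $L^2$ uniformly in $\vartheta$ and $k$ by $\mathfrak{C}\rho^{\mathsf{M}}$ because of the uniform exponential bound on the coefficients, so sending $\mathsf{M}\to\infty$ after $n\to\infty$ delivers the uniform convergence.

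Part~(b) is the cleanest: by the FCLT and the continuous mapping theorem applied to the continuous functional $f\mapsto \int_0^1 f(r)f(r)^\mathsf{T}\,\dif r$ on $D([0,1],\R^{2d+m})$ (after a Riemann-sum approximation $n^{-2}\sum_{k=1}^n S_{k-1}^{(h)}[S_{k-1}^{(h)}]^\mathsf{T} = \int_0^1 [n^{-1/2}S_{\lfloor nr\rfloor}^{(h)}][n^{-1/2}S_{\lfloor nr\rfloor}^{(h)}]^\mathsf{T}\,\dif r+o_p(1)$), we obtain the convergence pointwise in $\vartheta$. Uniformity over $\Theta$ follows from the continuity of $\overline{\Pi}(\vartheta)$ and $\underline{\Pi}(\vartheta)$ and compactness: the difference between $\vartheta$ and a nearby $\vartheta'$ is controlled by the modulus of continuity of $\overline{\Pi},\underline{\Pi}$ times $n^{-2}\sum \|S_{k-1}^{(h)}\|^2 = \mathcal{O}_p(1)$.

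Part~(c) is the main obstacle and the one that requires the most care. The strategy is to apply a Beveridge--Nelson decomposition to $\underline{\mathsf{l}}(\mathsf{B},\vartheta)$: writing $\underline{\mathsf{l}}(z,\vartheta) = \underline{\mathsf{l}}(1,\vartheta) + (1-z)\widetilde{\underline{\mathsf{l}}}(z,\vartheta)$ with $\widetilde{\underline{\mathsf{l}}}(z,\vartheta) = -\sum_{i=0}^\infty z^i \sum_{j>i}\underline{\mathsf{l}}_j(\vartheta)$ still uniformly exponentially bounded. Substituting yields
\begin{align*}
\frac{1}{n}\sum_{k=1}^n \overline{\Pi}(\vartheta)S_{k-1}^{(h)}[\underline{\mathsf{l}}(\mathsf{B},\vartheta)\xi_k^{(h)}]^\mathsf{T}
= \frac{1}{n}\sum_{k=1}^n \overline{\Pi}(\vartheta)S_{k-1}^{(h)}[\underline{\mathsf{l}}(1,\vartheta)\xi_k^{(h)}]^\mathsf{T} + R_n(\vartheta),
\end{align*}
where $R_n(\vartheta)$ is a telescoping remainder that, together with a small end-effect term, reduces to sample covariances of the form handled by part~(a) and thereby converges to $\sum_{j=1}^\infty \Gamma_{\overline{\Pi}(\vt)\xi^{(h)},\underline{\mathsf{l}}(\mathsf{B},\vartheta)\xi^{(h)}}(j) - \overline{\Pi}(\vartheta)\Gamma_{\xi^{(h)}}(0)\underline{\mathsf{l}}(1,\vartheta)^\mathsf{T}$ terms that recombine into the stated bias. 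The first sum is of the form $\sum_{k=1}^n S_{k-1}^{(h)}\xi_k^{(h)\mathsf{T}}$ postmultiplied by $\underline{\mathsf{l}}(1,\vartheta)^\mathsf{T}$, and by Hansen's~\cite{hansen1992} martingale-approximation-based convergence-to-stochastic-integrals theorem (whose hypotheses are met thanks to Lemma~\ref{Lemma 4.5}, the linear representation, and the $4+\delta$ moments), this quantity converges jointly to $\int_0^1 W(r)\,\dif W(r)^\mathsf{T} + \Gamma^*$, where $\Gamma^*$ is exactly the one-sided long-run covariance that, after combining with the bias from the Beveridge--Nelson remainder, yields the claimed $\sum_{j=1}^\infty\Gamma_{\overline{\Pi}(\vt)\xi^{(h)},\underline{\mathsf{l}}(\mathsf{B},\vartheta)\xi^{(h)}}(j)$ term. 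Uniformity in $\vartheta$ is obtained as in part~(b) from compactness and continuity of $\overline{\Pi}$ and $\underline{\mathsf{l}}(1,\cdot)$, together with the uniform bound on the filter coefficients.

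Finally, joint convergence in~(a)--(c) follows because all three limits are continuous functionals of the common process $n^{-1/2}S_{\lfloor n\cdot\rfloor}^{(h)}$ and of partial sums of stationary filters of $(\xi_k^{(h)})$, so the joint distributional limit is obtained by the continuous mapping theorem applied to the joint FCLT on the enlarged Skorokhod space.
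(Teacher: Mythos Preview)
Your proposal is essentially correct and follows the same high-level route as the paper: you establish the MA representation $\xi_k^{(h)}=\sum_{j\ge0}\psi_j\eta_{k-j}$ with iid innovations $\eta_k$, derive the FCLT for $n^{-1/2}S^{(h)}_{\lfloor n\cdot\rfloor}$, and then read off (a)--(c) via an ergodic theorem, the continuous mapping theorem, and a Beveridge--Nelson reduction to $\sum S_{k-1}\xi_k^{\mathsf T}$. The paper proceeds identically up to the MA representation (its Lemma~\ref{Lemma auxiliary}, which also uses Johansen's Theorem~B.13 for (d)--(e) in place of Hansen), but then, rather than redo the truncation and Beveridge--Nelson arguments, it simply verifies Assumptions~4.1--4.3 of Saikkonen~\cite{Saikkonen1993} and invokes his Theorems~4.1 and~4.2(iii),(iv), which package exactly the arguments you sketch. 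So your approach is not different in substance, only in that you reprove Saikkonen's machinery in situ; the paper's version is shorter but less self-contained.

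Two places in your write-up would need tightening in a final version. First, in (c) the bookkeeping of the bias is glossed over: after Beveridge--Nelson and summation by parts the remainder contributes $\overline{\Pi}(\vartheta)\,\E[\xi_1^{(h)}\widetilde{\xi}_1^{(h)}(\vartheta)^{\mathsf T}]$ plus an $o_p(1)$ boundary term, and one has to check explicitly that this combines with the one-sided long-run covariance $\overline{\Pi}(\vartheta)\sum_{j\ge1}\Gamma_{\xi^{(h)}}(j)\,\underline{\mathsf l}(1,\vartheta)^{\mathsf T}$ from the stochastic-integral limit to give the stated $\sum_{j\ge1}\Gamma_{\overline{\Pi}(\vartheta)\xi^{(h)},\underline{\mathsf l}(\mathsf B,\vartheta)\xi^{(h)}}(j)$; this identity holds but is not entirely obvious. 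Second, your justification of joint convergence (``continuous functionals of $n^{-1/2}S_{\lfloor n\cdot\rfloor}$'') is imprecise because the stochastic integral $\int_0^1 W\,\dif W^{\mathsf T}$ is \emph{not} a continuous functional on Skorokhod space; joint convergence really comes from the fact that the pair $(n^{-1/2}S_{\lfloor n\cdot\rfloor}^{(h)},\,n^{-1}\sum S_{k-1}^{(h)}\xi_k^{(h)\mathsf T})$ converges jointly, which is what Johansen~B.13 or Hansen's theorem actually deliver.
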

Before we state the proof of Proposition~\ref{PropLimitResultsFuncofDYY} we need some  auxiliary results.

\begin{lemma}  \label{Lemma auxiliary}
Let $\psi$ be defined as in \eqref{def psi}. Then, the following statements hold.
\begin{itemize}
    \item[(a)] $\E(\xi_k^{(h)})=0_{2d+m}$ and $\E\|\xi_k^{(h)}\|^{4}<\infty$.
    \item[(b)] $\frac{1}{n}\sum_{k=1}^n\xi_k^{(h)}\cip 0_{2d+m}$ \quad and \quad
                $\frac{1}{n}\sum_{k=1}^n\xi_k^{(h)}[\xi_{k+l}^{(h)}]^{\mathsf{T}}\cip \E(\xi_1^{(h)}[\xi_{1+l}^{(h)}]^{\mathsf{T}})=:\Gamma_{\xi^{(h)}}(l)$, \quad $l\in\N_0$.
    \item[(c)] $\sum_{l=0}^\infty \E\|\xi_1^{(h)}[\xi_{1+l}^{(h)}]^{\mathsf{T}}\|<\infty$.
    \item[(d)] $\left(\frac{1}{\sqrt{n}}S_{\lfloor nr\rfloor }^{(h)}\right)_{0\leq r\leq 1}\cid (\psi(1)W^*(r))_{0\leq r\leq 1}$ where
        $(W^*(r))_{0\leq r\leq 1}$ is a $(m+(N-c))$-dimensional Brownian motion with covariance matrix
        \beao
            \Sigma_{W^*}=\int_0^h\left(\begin{array}{cc}
        \Sigma_L & \Sigma_LB_2^T\e^{A_2^Tu} \\
         \e^{A_2u}B_2\Sigma_L & \e^{A_2u}B_2\Sigma_L B_2^T\e^{A_2^Tu}
    \end{array}\right)\,du
        \eeao
        and $\psi$ is defined as in \eqref{def psi}.
    \item[(e)] $\frac{1}{n}\sum_{k=2}^nS_{k-1}^{(h)}[\xi_k^{(h)}]^{\mathsf{T}}\cid \psi(1) \int_0^1W^*(r)\dif W^*(r)^{\mathsf{T}}\psi(1)^{\mathsf{T}}+\sum_{l=1}^\infty\Gamma_{\xi^{(h)}}(l)$.
\end{itemize}
\end{lemma}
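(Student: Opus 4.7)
The plan is to represent $\xi_k^{(h)}$ as a one-sided linear process driven by an iid sequence, and then to reduce all five parts to standard limit theorems for linear processes. Introduce the $(m+N-c)$-dimensional iid sequence
\[
 e_k := \begin{pmatrix} \Delta L_k^{(h)} \\ \int_{(k-1)h}^{kh} e^{A_2(kh-u)} B_2\, dL(u) \end{pmatrix},
\]
whose existence as an iid sequence is clear from the independent increments of $L$, and whose covariance matrix is precisely $\Sigma_{W^*}$. Using the moving-average representation $X_{st,k}^{(h)} = \sum_{j=0}^\infty e^{A_2 hj} [e_{k-j}]_2$ and the identity $\Delta Y_k^{(h)} = C_1 B_1 \Delta L_k^{(h)} + Y_{st,k}^{(h)} - Y_{st,k-1}^{(h)}$, one checks by direct computation that $\xi_k^{(h)} = \sum_{j=0}^\infty \psi_j e_{k-j}$ with $\psi_j$ as in \eqref{def psi}. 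Since (A4) ensures that all eigenvalues of $A_2$ have strictly negative real parts, the coefficient sequence satisfies $\|\psi_j\| \leq \mathfrak{C}\rho^j$ for some $0<\rho<1$.

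With this at hand, part (a) is immediate: $\E e_k = 0$ because $\E L(1) = 0$, and $\E\|e_k\|^{4} < \infty$ is inherited from $\E\|L(1)\|^{4+\delta}<\infty$ in (A3); a standard $L^4$-estimate for MA processes then gives $\E\|\xi_k^{(h)}\|^4 < \infty$. Part (c) follows from the identity $\Gamma_{\xi^{(h)}}(l) = \sum_{j=0}^\infty \psi_{j+l}\Sigma_{e}\psi_j^{\mathsf{T}}$, whose norm is of order $\rho^l$. For part (b), the MA representation in iid innovations makes $(\xi_k^{(h)})_{k\in\Z}$ stationary and ergodic (alternatively, this follows from the strong mixing property of Lemma~\ref{Lemma 4.5}); Birkhoff's ergodic theorem then yields both claims, since $\E\|\xi_1^{(h)} [\xi_{1+l}^{(h)}]^{\mathsf{T}}\| <\infty$ by (a) and Cauchy--Schwarz.

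Part (d) is the main step. I would invoke the Beveridge--Nelson decomposition
\[
 \psi(z) = \psi(1) + (z-1)\widetilde\psi(z), \qquad \widetilde\psi(z):=\sum_{j=0}^\infty \widetilde\psi_j z^j,\quad \widetilde\psi_j := \sum_{i=j+1}^\infty \psi_i,
\]
so that, with $\widetilde e_k := \widetilde\psi(\mathsf{B}) e_k$, one obtains $\xi_k^{(h)} = \psi(1) e_k + \widetilde e_{k-1} - \widetilde e_k$. Telescoping gives $S_n^{(h)} = \psi(1)\sum_{k=1}^n e_k + \widetilde e_0 - \widetilde e_n$. The exponential decay of $\psi_j$ implies $\E\|\widetilde e_k\|^2<\infty$ and stationarity, from which a standard maximal inequality argument yields $\sup_{0\leq r\leq 1}n^{-1/2}\|\widetilde e_{\lfloor nr\rfloor}\| = o_p(1)$. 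The classical multivariate Donsker theorem applied to the iid partial sums $n^{-1/2}\sum_{k=1}^{\lfloor nr\rfloor} e_k$ gives convergence to $W^*$, and multiplying by $\psi(1)$ closes the argument.

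Part (e) combines (d) with the Phillips--Solo / Kurtz--Protter scheme for stochastic integrals of linear processes. Substituting the Beveridge--Nelson decomposition for both $S_{k-1}^{(h)}$ and $\xi_k^{(h)}$ into $n^{-1}\sum_{k=2}^n S_{k-1}^{(h)}[\xi_k^{(h)}]^{\mathsf{T}}$ produces three contributions: (i) the main term $\psi(1)\cdot n^{-1}\sum_{k=2}^n \bigl(\sum_{i<k} e_i\bigr) e_k^{\mathsf{T}}\cdot \psi(1)^{\mathsf{T}}$, which converges jointly with (d) to $\psi(1)\int_0^1 W^*(r)\,dW^*(r)^{\mathsf{T}}\psi(1)^{\mathsf{T}}$ by the standard iid stochastic-integral convergence; (ii) cross terms involving $\widetilde e_k$, which are $o_p(1)$ after telescoping and using $\sup_k n^{-1/2}\|\widetilde e_k\|=o_p(1)$; (iii) a deterministic bias contribution generated by the ergodic averages of the form $n^{-1}\sum_k \widetilde e_{k-1}(-\widetilde e_k)^{\mathsf{T}}$, which by Birkhoff converges to an expectation that, after unwinding the truncated covariances, equals exactly $\sum_{l=1}^\infty \Gamma_{\xi^{(h)}}(l)$. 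The main technical obstacle is to carry out the decomposition and the three convergences jointly (not only separately); this will be handled via a Skorokhod representation together with the continuous mapping theorem, making use of the joint convergence of $(n^{-1/2}S_{\lfloor n\cdot\rfloor}^{(h)}, n^{-1}\sum_{k=1}^n \xi_k^{(h)}[\xi_{k+l}^{(h)}]^{\mathsf{T}})$ that already follows from combining (b) and (d).
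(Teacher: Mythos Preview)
Your proposal is correct and, for parts (a)--(c), essentially identical to the paper's argument: the same MA representation $\xi_k^{(h)}=\sum_{j\geq 0}\psi_j\,e_{k-j}$ in iid innovations $e_k=(\Delta L_k^{(h)\,\mathsf T},R_k^{(h)\,\mathsf T})^{\mathsf T}$, then Assumption~A for (a), Birkhoff for (b), and the geometric decay of $\psi_j$ for (c).

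For (d) and (e) there is a genuine difference of route. The paper does not carry out any Beveridge--Nelson decomposition itself; once the MA representation is in place it simply invokes Johansen~\cite[Theorem~B.13]{Johansen1995}, which packages the functional CLT and the stochastic-integral limit (including the one-sided long-run bias $\sum_{l\geq1}\Gamma_{\xi^{(h)}}(l)$) for linear processes in iid noise. Your argument is precisely the proof of that theorem: BN decomposition, Donsker for the iid partial sums, martingale convergence for the leading bilinear term, and ergodic averages for the bias. What you gain is self-containment (no external black-box citation); what the paper gains is brevity. Both are valid.

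One small inaccuracy worth flagging: in your sketch for (e) you attribute the bias to a single ergodic average $n^{-1}\sum_k\widetilde e_{k-1}(-\widetilde e_k)^{\mathsf T}$. In fact, after expanding $S_{k-1}^{(h)}\xi_k^{(h)\,\mathsf T}$ via the BN decomposition, the bias arises from \emph{two} ergodic pieces --- one from the Abel-summed cross term $\psi(1)\,n^{-1}\sum_k e_k\widetilde e_k^{\mathsf T}$ and one from $-n^{-1}\sum_k\widetilde e_{k-1}(\widetilde e_{k-1}-\widetilde e_k)^{\mathsf T}$ --- and only after combining their limits does one recover $\sum_{l\geq1}\Gamma_{\xi^{(h)}}(l)$. (Also, your displayed identity for $\Gamma_{\xi^{(h)}}(l)$ has the factors transposed; it should be $\sum_j\psi_j\Sigma_e\psi_{j+l}^{\mathsf T}$, though this is immaterial for the norm bound in (c).) These are bookkeeping points, not a gap in the strategy.
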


\begin{proof}
We shortly sketch the proof. The sequence $(\xi_k^{(h)})_{k\in\N}$ has the MA-representation
\beam \label{MA}
    \left( \begin{array}{l}
        \Delta Y_k^{(h)}\\
        Y_{st,k}^{(h)}\\
        \Delta L_k^{(h)}
    \end{array}
    \right)=\xi_k^{(h)}=\sum_{j=-\infty}^k\psi_{k-j}\eta_j^{(h)}
\eeam
with the iid sequence
$
    \eta_k^{(h)}:=\left( \begin{array}{l}
        \Delta L_k^{(h)\,\mathsf{T}},\,\,
        R_k^{(h)\,{\mathsf{T}}}
    \end{array}
    \right)^{\mathsf{T}}
$
and $R_k^{(h)}:=\int_{(k-1)h}^{kh}\e^{A_2(kh-u)}B_2\,\dif L_u$.   Hence,  $(\xi_k^{(h)})_{k\in\N}$ is  stationary and ergodic
 as a  measurable map of a stationary ergodic process (see  Krengel~\cite[Theorem 4.3 in Chapter 1]{Krengel1985}). \\[2mm]
(a) \, is due to \autoref{AssMBrownMot}.\\[2mm]
(b) \, is a direct consequence of Birkhoff's ergodic theorem.\\[2mm]
(c) \, follows from $\E\|\eta_k^{(h)}\|^4<\infty$, $\|\psi_j\|\leq \mathfrak{C}\rho^j$ for some $\mathfrak{C}>0$, $0<\rho<1$ and the MA-representation \eqref{MA}. \\[2mm]
(d,e) \, are conclusions of Johansen \cite[Theorem B.13]{Johansen1995} and the MA-representation \eqref{MA}.
\end{proof}

\begin{proof}[Proof of Proposition~\ref{PropLimitResultsFuncofDYY}]
(a) \, The proof   follows directly by Theorem 4.1  of Saikkonen \cite{Saikkonen1993} and the comment of Saikkonen~\cite[p.163, line 4]{Saikkonen1993} if we can show that Assumption 4.1 and 4.2 of that paper are satisfied.
Since we have uniformly exponentially bounded families of matrix sequences,  Saikkonen \cite[Assumption 4.1]{Saikkonen1993} is obviously satisfied.
Saikkonen \cite[Assumption 4.2]{Saikkonen1993} is satisfied due to Lemma~\ref{Lemma auxiliary}.

Note that we have two different coefficient matrices, whereas the results in Saikkonen \cite{Saikkonen1993} are proved for the same coefficient matrix.
However, Saikkonen~\cite[Theorem 4.1]{Saikkonen1993} also holds if the coefficient matrices are different as long as each sequence of matrix coefficients satisfies the necessary conditions as mentioned in the paper of Saikkonen~\cite[p. 163]{Saikkonen1993}.\\
(b,c) \, Due to Lemma~\ref{Lemma auxiliary}, Saikkonen~\cite[Assumption 4.3]{Saikkonen1993} is satisfied as well. Hence, we can conclude
the weak convergence result from Saikkonen~\cite[Theorem 4.2(iii)]{Saikkonen1993} and \cite[Theorem 4.2(iv)]{Saikkonen1993}, respectively.
 \end{proof}

\subsection{Lipschitz continuity results}

A kind of local Lipschitz continuity in $\vt^0$ for the processes in Proposition~\ref{PropUniformConvRes1} is presented next. The local Lipschitz continuity in $\vt^0$  implies, in particular,
local stochastic equicontinuity in $\vt^0$.
However, this kind of local Lipschitz continuity in $\vt^0$ is stronger than local stochastic equicontinuity in $\vt^0$ so that we are
 not able to apply the stochastic equicontinuity results of Saikkonen~\cite{Saikkonen1993,Saikkonen1995} directly. The stochastic equicontinuity
 of the process in Proposition~\ref{Proposition3.3}(a) and (c) can be deduced with some effort from Saikkonen~\cite{Saikkonen1993,Saikkonen1995} but the process in
 Proposition~\ref{Proposition3.3}(b)
 is not covered in these papers.

%
\begin{proposition} \label{Proposition3.3}
Let the assumption and notation of Proposition~\ref{PropUniformConvRes1} hold. Assume further that $\overline{\Pi}(\vt)$,
$\underline{\Pi}(\vt)$ are Lipschitz-continuous and the sequence of matrix functions
 $(\nabla_\vt(\overline{\mathsf{l}}_i(\vartheta)))_{i\in\N_0}$  and \linebreak $(\nabla_\vt(\underline{\mathsf{l}}_i(\vartheta)))_{i\in\N_0}$ are uniformly
 exponentially bounded.
\begin{itemize}
  \item[(a)] Define $\mathcal{X}_n(\vt)=\sum\limits_{k=1}^{n}
    \overline{\Pi}(\vartheta)S_{k-1}^{(h)}[S_{k-1}^{(h)}]^{\mathsf{T}}\underline{\Pi}(\vartheta)^\mathsf{T}$.
    Then,     \beam \label{EQ1}
        \|\mathcal{X}_n(\vt)-\mathcal{X}_n(\vt^0)\|\leq \mathfrak{C}\|\vt-\vt^0\|\left\|\sum\limits_{k=1}^{n}S_{k-1}^{(h)}[S_{k-1}^{(h)}]^{\mathsf{T}}\right\|.
    \eeam
    If additionally  $\underline{\Pi}(\vt^0)=0_{\underline{d}\times(2d+m)}$ and $\overline{\Pi}(\vt^0)=0_{\overline{d}\times(2d+m)}$ then
     \beam \label{EQ2}
        \|\mathcal{X}_n(\vt)-\mathcal{X}_n(\vt^0)\|\leq \mathfrak{C}\|\vt-\vt^0\|^2\left\|\sum\limits_{k=1}^{n}S_{k-1}^{(h)}[S_{k-1}^{(h)}]^{\mathsf{T}}\right\|.
    \eeam
  \item[(b)] Define $\mathcal{X}_n(\vt)=\sum\limits_{k=1}^{n}\overline{\Pi}(\vartheta)S_{k-1}^{(h)}[\underline{\mathsf{l}}(\mathsf{B},\vartheta)\xi_{k}^{(h)}]^\mathsf{T}$.
  Then,   \beam \label{EQ3}
    \|\mathcal{X}_n(\vt)-\mathcal{X}_n(\vt^0)\| \leq \mathfrak{C} n\|\vt-\vt^0\| V_n
  \eeam
  where
  \beam \label{EQ4}
        V_n&=& \left\|\frac{1}{n}\sum_{k=1}^n S_{k-1}^{(h)}[\xi_{k}^{(h)}]^T\right\|+\|S_{n}^{(h)}\|[\mathsf{k}_\rho(\mathsf{B})\|\xi_n^{(h)}\|]
        +\frac{1}{n}\sum_{k=1}^n\|\Delta S_{k}^{(h)}\|[\mathsf{k}_\rho(\mathsf{B})\|\xi_k^{(h)}\|]\nonumber\\
        &&\quad+\left\|\frac{1}{n}\sum\limits_{k=1}^nS_{k-1}^{(h)}\left[\underline{\mathsf{l}}(\mathsf{B},\vartheta^0)\xi_k^{(h)}\right]^\mathsf{T}\right\|\nonumber\\
        &=&\mathcal{O}_p(1), \quad
  \eeam
  $\mathsf{k}_\rho(z)=\mathfrak{c}\sum_{j=0}^\infty\rho^j z^j$ for some $0<\rho<1$, $\mathfrak{c}>0$, and $\mathsf{k}_\rho(\mathsf{B})\|\xi_k^{(h)}\|:=\mathfrak{c}\sum_{j=0}^\infty\rho^j\|\xi_{k-j}^{(h)}\|$.
  \item[(c)] Define $\mathcal{X}_n(\vt)=\sum\limits_{k=1}^{n}[\overline{\mathsf{l}}(\mathsf{B},\vartheta)\xi_{k}^{(h)}][\underline{\mathsf{l}}(\mathsf{B},\vartheta)\xi_{k}^{(h)}]^\mathsf{T}$.
  Then, there exists a random variable  \linebreak $ Q_n=\mathcal{O}_p(1)$ so that
  \beam \label{EQ5}
    \|\mathcal{X}_n(\vt)-\mathcal{X}_n(\vt^0)\| \leq \mathfrak{C} n\|\vt-\vt^0\| Q_n.
  \eeam
\end{itemize}
In particular, $V_n+Q_n=\mathcal{O}_p(1)$.
\end{proposition}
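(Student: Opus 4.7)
The overarching strategy is to split $\mathcal{X}_n(\vt)-\mathcal{X}_n(\vt^0)$ into pieces where either $\overline{\Pi}$, $\underline{\Pi}$ or the filter coefficients can be replaced by their increments, then apply Lipschitz continuity together with a Phillips-Solo / Abel summation trick to cope with the non-stationary factor $S_{k-1}^{(h)}$.

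For part (a), I would simply write
\beao
\mathcal{X}_n(\vt)-\mathcal{X}_n(\vt^0)
&=&[\overline{\Pi}(\vt)-\overline{\Pi}(\vt^0)]\Big(\textstyle\sum_k S_{k-1}^{(h)}[S_{k-1}^{(h)}]^\mathsf{T}\Big)\underline{\Pi}(\vt)^\mathsf{T}\\
&&\quad+\overline{\Pi}(\vt^0)\Big(\textstyle\sum_k S_{k-1}^{(h)}[S_{k-1}^{(h)}]^\mathsf{T}\Big)[\underline{\Pi}(\vt)-\underline{\Pi}(\vt^0)]^\mathsf{T},
\eeao
use submultiplicativity of $\|\cdot\|$, Lipschitz continuity of $\overline{\Pi},\underline{\Pi}$, and the fact that continuous functions on the compact set $\Theta$ are bounded, to obtain \eqref{EQ1}. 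If both $\overline{\Pi}(\vt^0)=0$ and $\underline{\Pi}(\vt^0)=0$, the second term vanishes and $\underline{\Pi}(\vt)$ in the first term gets replaced by $\underline{\Pi}(\vt)-\underline{\Pi}(\vt^0)$, producing the extra factor of $\|\vt-\vt^0\|$ needed for \eqref{EQ2}.

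For part (b), I would first decompose
\beao
\mathcal{X}_n(\vt)-\mathcal{X}_n(\vt^0)
&=&[\overline{\Pi}(\vt)-\overline{\Pi}(\vt^0)]\textstyle\sum_k S_{k-1}^{(h)}[\underline{\mathsf{l}}(\mathsf{B},\vt)\xi_k^{(h)}]^\mathsf{T}\\
&&\quad+\overline{\Pi}(\vt^0)\textstyle\sum_k S_{k-1}^{(h)}\big[(\underline{\mathsf{l}}(\mathsf{B},\vt)-\underline{\mathsf{l}}(\mathsf{B},\vt^0))\xi_k^{(h)}\big]^\mathsf{T}.
\eeao
The Lipschitz bound on $\overline{\Pi}$ handles the leading $\|\vt-\vt^0\|$ in the first term, while the mean value theorem together with the uniform exponential bound on $(\nabla_\vt\underline{\mathsf{l}}_j(\vt))_{j}$ yields $\|\underline{\mathsf{l}}_j(\vt)-\underline{\mathsf{l}}_j(\vt^0)\|\leq \mathfrak C\rho^j\|\vt-\vt^0\|$ in the second term. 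The key remaining task, and the main technical obstacle, is to bound $\|\sum_{k=1}^n S_{k-1}^{(h)}[\underline{\mathsf{l}}(\mathsf{B},\vt)\xi_k^{(h)}]^\mathsf{T}\|$ by $n V_n$ uniformly in $\vt$. For this I would use the Phillips-Solo decomposition
\beao
\underline{\mathsf{l}}(\mathsf{B},\vt)=\underline{\mathsf{l}}(1,\vt)-(1-\mathsf{B})\tilde{\underline{\mathsf{l}}}(\mathsf{B},\vt),\qquad \tilde{\underline{\mathsf{l}}}_j(\vt):=\textstyle\sum_{i=j+1}^\infty\underline{\mathsf{l}}_i(\vt),
\eeao
where $\sup_\vt\|\tilde{\underline{\mathsf{l}}}_j(\vt)\|\leq \mathfrak C\rho^j$ by the uniform exponential bound on $(\underline{\mathsf{l}}_j(\vt))$. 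Inserting this into the partial sum, Abel summation on the telescoping $(1-\mathsf B)$-piece gives
\beao
\textstyle\sum_{k=1}^n S_{k-1}^{(h)}[\underline{\mathsf{l}}(\mathsf{B},\vt)\xi_k^{(h)}]^\mathsf{T}
&=&\textstyle\sum_{k=1}^n S_{k-1}^{(h)}\xi_k^{(h)\mathsf{T}}\underline{\mathsf{l}}(1,\vt)^\mathsf{T} -S_{n-1}^{(h)}[\tilde{\underline{\mathsf{l}}}(\mathsf{B},\vt)\xi_n^{(h)}]^\mathsf{T}\\
&&\quad+\textstyle\sum_{k=1}^{n-1}\xi_k^{(h)}[\tilde{\underline{\mathsf{l}}}(\mathsf{B},\vt)\xi_k^{(h)}]^\mathsf{T}+\,(\text{initial boundary term}),
\eeao
and the norm of each piece is bounded, by the geometric bound on $\tilde{\underline{\mathsf{l}}}_j(\vt)$, by the corresponding summand of $nV_n$ with $\mathsf{k}_\rho(\mathsf{B})\|\xi_k^{(h)}\|$ absorbing the geometric tail. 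Applying the same decomposition to the other sum with $\underline{\mathsf{l}}(\mathsf{B},\vt)-\underline{\mathsf{l}}(\mathsf{B},\vt^0)$ in place of $\underline{\mathsf{l}}(\mathsf{B},\vt)$ produces an extra factor $\|\vt-\vt^0\|$ and an analogous bound; putting everything together yields \eqref{EQ3}. The tightness $V_n=\mathcal O_p(1)$ then follows from Lemma~\ref{Lemma auxiliary} (for $\frac{1}{n}\sum_k S_{k-1}^{(h)}\xi_k^{(h)\mathsf T}$ and for $\frac{1}{n}\sum_k\|\xi_k^{(h)}\|\,\mathsf{k}_\rho(\mathsf{B})\|\xi_k^{(h)}\|$ via stationarity and a Cauchy--Schwarz/geometric bound), Proposition~\ref{PropUniformConvRes1}(a) applied with the constant filters (for the last summand), and the functional CLT $n^{-1/2}S_n^{(h)}\Rightarrow \psi(1)W^*(1)$ in Lemma~\ref{Lemma auxiliary}(d) together with $\mathsf{k}_\rho(\mathsf{B})\|\xi_n^{(h)}\|=\mathcal O_p(1)$ to control the boundary term $\|S_n^{(h)}\|\mathsf{k}_\rho(\mathsf{B})\|\xi_n^{(h)}\|$ (using that $\|S_n^{(h)}\|=\mathcal O_p(\sqrt n)$ and that the boundary term is divided by $n$ inside $nV_n$).

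For part (c), I would write
\beao
\mathcal{X}_n(\vt)-\mathcal{X}_n(\vt^0)
=&\textstyle\sum_k[(\overline{\mathsf{l}}(\mathsf{B},\vt)-\overline{\mathsf{l}}(\mathsf{B},\vt^0))\xi_k^{(h)}][\underline{\mathsf{l}}(\mathsf{B},\vt)\xi_k^{(h)}]^\mathsf{T}\\
&\quad+\textstyle\sum_k[\overline{\mathsf{l}}(\mathsf{B},\vt^0)\xi_k^{(h)}][(\underline{\mathsf{l}}(\mathsf{B},\vt)-\underline{\mathsf{l}}(\mathsf{B},\vt^0))\xi_k^{(h)}]^\mathsf{T},
\eeao
apply the mean value theorem as in (b) to both differences and Cauchy-Schwarz, and bound the result by $n\|\vt-\vt^0\|\cdot Q_n$ with $Q_n=\frac{1}{n}\sum_k[\mathsf{k}_\rho(\mathsf{B})\|\xi_k^{(h)}\|]^2$. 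Stationarity and finite second moments of $\xi_k^{(h)}$ (Lemma~\ref{Lemma auxiliary}(a)) together with Birkhoff's ergodic theorem imply $Q_n=\mathcal O_p(1)$, completing the proof.
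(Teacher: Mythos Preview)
Your proposal is correct and follows essentially the same route as the paper: a telescoping split into increments of $\overline{\Pi}$, $\underline{\Pi}$ or the filter coefficients, combined with the Beveridge--Nelson (Phillips--Solo) decomposition and Abel summation to tame the partial sum $S_{k-1}^{(h)}$.

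Two small points. In part~(b) your split has $[\overline{\Pi}(\vt)-\overline{\Pi}(\vt^0)]$ paired with $\underline{\mathsf{l}}(\mathsf{B},\vt)$, whereas the paper pairs it with $\underline{\mathsf{l}}(\mathsf{B},\vt^0)$. The paper therefore applies Beveridge--Nelson only to the \emph{difference} filter $\underline{\mathsf{l}}^\nabla=\underline{\mathsf{l}}(\cdot,\vt)-\underline{\mathsf{l}}(\cdot,\vt^0)$ and handles the other piece directly as the fourth summand of $V_n$; your version applies Phillips--Solo twice (once to $\underline{\mathsf{l}}(\mathsf{B},\vt)$, once to the difference) and in effect only needs the first three summands of $V_n$. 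Both are fine. Second, for the tightness of the last summand of $V_n$ you cite Proposition~\ref{PropUniformConvRes1}(a), but that part concerns products of two filtered $\xi$'s; the term $\tfrac{1}{n}\sum_k S_{k-1}^{(h)}[\underline{\mathsf{l}}(\mathsf{B},\vt^0)\xi_k^{(h)}]^\mathsf{T}$ involves the partial sum $S_{k-1}^{(h)}$ and is covered by part~(c).
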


\begin{proof} $\mbox{}$\\ 
(a) \, We have the upper bound
\beao
    \lefteqn{\|\mathcal{X}_n(\vt)-\mathcal{X}_n(\vt^0)\|}\\
    &&\leq
    \left\|\sum\limits_{k=1}^{n}
    [\overline{\Pi}(\vartheta)-\overline{\Pi}(\vartheta^0)]S_{k-1}^{(h)}[S_{k-1}^{(h)}]^{\mathsf{T}}\underline{\Pi}(\vartheta)^\mathsf{T}\right\|
    + \left\|\sum\limits_{k=1}^{n}\overline{\Pi}(\vartheta^0)S_{k-1}^{(h)}[S_{k-1}^{(h)}]^{\mathsf{T}}[\underline{\Pi}(\vartheta)-\underline{\Pi}(\vartheta^0)]^\mathsf{T}\right\|\\
    &&\leq\left(\|\overline{\Pi}(\vartheta)-\overline{\Pi}(\vartheta^0)\|\|\underline{\Pi}(\vartheta)\|+\|\underline{\Pi}(\vartheta)-\underline{\Pi}(\vartheta^0)\|\|\overline{\Pi}(\vartheta^0)\|\right)
    \left\|\sum\limits_{k=1}^{n}S_{k-1}^{(h)}[S_{k-1}^{(h)}]^{\mathsf{T}}\right\|.
\eeao
Since $\overline{\Pi}(\vt)$ and $\underline{\Pi}(\vt)$ are Lipschitz continuous, $$\max(\|\overline{\Pi}(\vartheta)-\overline{\Pi}(\vartheta^0)\|,\|\underline{\Pi}(\vartheta)-\underline{\Pi}(\vartheta^0)\|)\leq \mathfrak{C}\|\vt-\vt^0\|$$
and $\sup_{\vt\in\Theta}\max(\|\overline{\Pi}(\vartheta)\|,\|\underline{\Pi}(\vartheta)\|)\leq \mathfrak{C}$. Thus, \eqref{EQ1} holds.
Moreover, \eqref{EQ2} is valid because for $\underline{\Pi}(\vt^0)=0_{\underline{d},2d+m}$, $\overline{\Pi}(\vt^0)=0_{\overline{d},2d+m}$ the upper bound
\beao
    \|\mathcal{X}_n(\vt)-\mathcal{X}_n(\vt^0)\|=\|\mathcal{X}_n(\vt)\|
        &\leq& \|\underline{\Pi}(\vt)\| \|\overline{\Pi}(\vt)\| \left\|\sum\limits_{k=1}^{n}S_{k-1}^{(h)}[S_{k-1}^{(h)}]^{\mathsf{T}}\right\|\\
        &\leq& \|\underline{\Pi}(\vt)-\underline{\Pi}(\vt^0)\| \|\overline{\Pi}(\vt)-\overline{\Pi}(\vt^0)\| \left\|\sum\limits_{k=1}^{n}S_{k-1}^{(h)}[S_{k-1}^{(h)}]^{\mathsf{T}}\right\|\\
        &\leq &\mathfrak{C}\|\vt-\vt^0\|^2 \left\|\sum\limits_{k=1}^{n}S_{k-1}^{(h)}[S_{k-1}^{(h)}]^{\mathsf{T}}\right\|
\eeao
is valid.\\[2mm]
(b) \, Using  a Taylor expansion leads to
\begin{eqnarray*}
    \text{vec}(\underline{\mathsf{l}}(z,\vartheta))-\text{vec}(\underline{\mathsf{l}}(z,\vartheta^0))=\sum_{j=0}^\infty[ \text{vec}(\underline{\mathsf{l}}_j(\vt))- \text{vec}(\underline{\mathsf{l}}_j(\vt^0))]z^j
        =\sum_{j=0}^\infty \nabla_{\vt}\text{vec}(\underline{ \mathsf{l}}_j^*(\underline{\vt(j)}))(\vt-\vt^0) z^j,
\end{eqnarray*}
 where $\text{vec}(\underline{ \mathsf{l}}_j^*(\underline{\vt(j)}))$ denotes the matrix whose $i^{th}$ row is equal to the $i^{th}$ row of $\text{vec}(\underline{\mathsf{l}}_j(\underline{\vt^{i}(j)}))$ with $\underline{\vartheta^i(j)}\in\Theta$ such that $\Vert \underline{\vartheta^i(j)}-\vartheta^0\Vert \leq\Vert \vartheta-\vartheta^0\Vert$ for $i=1,\ldots,\underline{d}(2d+m)$.
 Due to assumption, \linebreak  $\|\nabla_{\vt}\text{vec}(\underline{ \mathsf{l}}_j^*(\underline{\vt(j)}))\|\leq \mathfrak{C} \rho^j$ for $j\in\N_0$ and some $0<\rho<1$ so that
\begin{eqnarray} \label{C.6}
    \|\underline{\mathsf{l}}(z,\vartheta)-\underline{\mathsf{l}}(z,\vartheta^0)\|\leq \mathsf{k}_\rho(|z|)\|\vt-\vt^0\|.
\end{eqnarray}
 Hence,
\begin{eqnarray} \label{C.4}
 \|\left(\underline{\mathsf{l}}(\mathsf{B},\vartheta)-\underline{\mathsf{l}}(\mathsf{B},\vartheta^0)\right)\xi_k^{(h)}\|\leq \|\vt-\vt^0\|\left[\mathsf{k}_\rho(\mathsf{B})\|\xi_k^{(h)}\|\right].
 \end{eqnarray}
 Define $\underline{\mathsf{l}}^{\nabla}(z,\vartheta,\vartheta^0):=\underline{\mathsf{l}}(z,\vartheta)-\underline{\mathsf{l}}(z,\vartheta^0)
=:\sum_{j=0}^\infty \underline{\mathsf{l}}_j^{\nabla}(\vt,\vt^0)z^j$. Then, we apply a Beveridge-Nelson decomposition (see Saikkonen \cite[(9)]{Saikkonen1993}) to get $$\underline{\mathsf{l}}^{\nabla}(\mathsf{B},\vt,\vt^0)\xi_{k}^{(h)}=\underline{\mathsf{l}}^{\nabla}(1,\vt,\vt^0)\xi_{k}^{(h)}+\eta_k(\vt,\vt^0)-\eta_{k-1}(\vt,\vartheta^0)$$ with $\eta_k(\vt,\vartheta^0):=-\sum_{j=0}^\infty \sum_{i=j+1}^\infty \underline{\mathsf{l}}_i^{\nabla}(\vt,\vt^0)\xi_{k-j}^{(h)}$. Thus,
\begin{eqnarray*}
        &&\hspace*{-0.8cm}\frac{1}{n}\sum\limits_{k=1}^n\overline{\Pi}(\vt)S_{k-1}^{(h)}\left[\left(\underline{\mathsf{l}}(\mathsf{B},\vartheta)-\underline{\mathsf{l}}(\mathsf{B},\vartheta^0)\right)\xi_k^{(h)}\right]^\mathsf{T}\\
        &&\hspace*{-0.8cm}\quad=\overline{\Pi}(\vt)\frac{1}{n}\sum_{k=1}^n S_{k-1}^{(h)} [ \xi_{k}^{(h)}]^{\mathsf{T}}\underline{\mathsf{l}}^{\nabla}(1,\vt,\vt^0)^{\mathsf{T}}+\overline{\Pi}(\vt)\frac{1}{n}\sum_{k=1}^nS_{k-1}^{(h)}[\eta_k(\vt,\vt^0)]^{\mathsf{T}}
        -\overline{\Pi}(\vt)\frac{1}{n}\sum_{k=1}^nS_{k-1}^{(h)}[\eta_{k-1}(\vt,\vt^0)]^\mathsf{T}\\
        &&\hspace*{-0.8cm}\quad=\overline{\Pi}(\vt)\frac{1}{n}\sum_{k=1}^n S_{k-1}^{(h)}[\xi_{k}^{(h)}]^{\mathsf{T}}\underline{\mathsf{l}}^{\nabla}(1,\vt,\vt^0)^\mathsf{T}+\overline{\Pi}(\vt)S_{n}^{(h)}[\eta_{n}(\vt,\vt^0)]^T
        -\overline{\Pi}(\vt)\frac{1}{n}\sum_{k=1}^n\Delta S_{k}^{(h)}[\eta_k(\vt,\vt^0)]^\mathsf{T}.
\end{eqnarray*}
Due to \eqref{C.6}, $\|\underline{\mathsf{l}}^{\nabla}(1,\vt,\vt^0)\|\leq \mathfrak{C}\|\vt-\vt^0\|$ and $\|\underline{\mathsf{l}}_j^{\nabla}(\vt,\vt^0)\|\leq \mathfrak{C} \|\vt-\vt^0\|\rho^j$ so that \linebreak
$\|\eta_k(\vt,\vartheta^0)\|\leq \|\vt-\vt^0\|\left[\mathsf{k}_\rho(\mathsf{B})\|\xi_k^{(h)}\|\right]$ as well.  Finally,  we receive
\beam \label{B1}
    \lefteqn{\left\|\overline{\Pi}(\vt)\frac{1}{n}\sum\limits_{k=1}^nS_{k-1}^{(h)}\left[\left(\underline{\mathsf{l}}(\mathsf{B},\vartheta)-\underline{\mathsf{l}}(\mathsf{B},\vartheta^0)\right)\xi_k^{(h)}\right]^\mathsf{T}\right\|}\\
    &&\leq \mathfrak{C}\|\vt-\vt^0\|\left[ \left\|\frac{1}{n}\sum_{k=1}^n S_{k-1}^{(h)}[\xi_{k}^{(h)}]^{\mathsf{T}}\right\|+\|S_{n}^{(h)}\|\left[\mathsf{k}_\rho(\mathsf{B})\|\xi_n^{(h)}\|\right]
        +\frac{1}{n}\sum_{k=1}^n\|\Delta S_{k}^{(h)}\|\left[\mathsf{k}_\rho(\mathsf{B})\|\xi_k^{(h)}\|\right]\right],  \nonumber \quad  \quad
\eeam
and
\beam \label{B2}
    {\left\|[\overline{\Pi}(\vt)-\overline{\Pi}(\vt^0)]\frac{1}{n}\sum\limits_{k=1}^nS_{k-1}^{(h)}\left[\underline{\mathsf{l}}(\mathsf{B},\vartheta^0)\xi_k^{(h)}\right]^{\mathsf{T}}\right\|}
        \leq \mathfrak{C}\|\vt-\vt^0\|\left\|\frac{1}{n}\sum\limits_{k=1}^nS_{k-1}^{(h)}\left[\underline{\mathsf{l}}(\mathsf{B},\vartheta^0)\xi_k^{(h)}\right]^\mathsf{T}\right\|. \quad
\eeam
Then, \eqref{B1} and \eqref{B2} result in \eqref{EQ3}.
 \\[2mm]
It remains to prove \eqref{EQ4}. The first term $\frac{1}{n}\sum_{k=1}^n S_{k-1}^{(h)}[\xi_{k}^{(h)}]^T$ in the definition of $V_n$ is $\mathcal{O}_p(1)$ by Lemma~\ref{Lemma auxiliary}.
Moreover, $\frac{1}{n}S_n^{(h)}=\frac{1}{n}\sum_{k=1}^n\xi_k^{(h)}=\mathcal{O}_p(1)$ by Birkhoff's Ergodic Theorem; similarly the third term
 $\frac{1}{n}\sum_{k=1}^n\|\Delta S_{k}^{(h)}\|\left[\mathsf{k}_\rho(\mathsf{B})\|\xi_k^{(h)}\|\right]$ is $\mathcal{O}_p(1)$  by Birkhoff's Ergodic Theorem.
Finally, the last term $\frac{1}{n}\sum\limits_{k=1}^nS_{k-1}^{(h)}\left[\underline{\mathsf{l}}(\mathsf{B},\vartheta^0)\xi_k^{(h)}\right]^\mathsf{T}$ is $\mathcal{O}_p(1)$ by Proposition~\ref{PropUniformConvRes1}(c). \\[2mm]
(c) \, The proof is similarly to the proof of (b).
\end{proof}

\end{appendix}

\section{Properties of the pseudo-innovations} \label{Section: Properties linear innovation}

In this section we present some probabilistic properties of the pseudo-innovations. Therefore, we start with an auxiliary lemma on
the functions defining the pseudo-innovations and the prediction covariance matrix
which we require for the  pseudo-innovations to be partial differentiable.

\begin{lemma} \label{Lemma 2.3}
Let \autoref{AssMBrownMot} hold.
\begin{itemize}
    \item[(a)] The matrix functions $\Pi(\vt)$, $\mathsf{k}(z,\vt)$, $V_\vt^{(h)}$ and $(V_\vt^{(h)})^{-1}$ are Lipschitz continuous on $\Theta$
        and three times partial differentiable.
    \item[(b)] $\sup_{\vt\in\Theta}\|(V_\vt^{(h)})^{-1}\|\leq \mathfrak{C}$.
    \item[(c)] $\inf_{\vt\in\Theta}\sigma_{\min}((V_\vt^{(h)})^{-1})>0$.
\end{itemize}
\end{lemma}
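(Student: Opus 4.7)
The plan is to prove items (a)--(c) in sequence, reducing (b) and (c) to (a) together with the compactness of $\Theta$. Throughout, I would lean on Assumption~\ref{AssMBrownMot}(A8) (three-times continuous differentiability of the primitive matrices and of $C_{1,\vt}^{\perp}$) and on the fact that $\Theta$ is compact, so that any continuous function on $\Theta$ achieves its extrema.

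For (a), the central object is $\Omega_\vt^{(h)}$, the stabilizing solution of the discrete-time algebraic Riccati equation stated in Proposition~\ref{psin}. First I would note that $\Sigma_\vt^{(h)}$ is three times differentiable in $\vt$ because (A8) gives smoothness of the integrand and the integral is over the fixed interval $[0,h]$. Under (A3), (A5), (A9), (A10), the pair $(\e^{A_\vt h},C_\vt)$ is detectable and $(\e^{A_\vt h},\Sigma_\vt^{(h) 1/2})$ is stabilizable, so standard results on the DARE (see e.g.\ Lancaster--Rodman) guarantee the existence of a unique stabilizing positive-definite solution $\Omega_\vt^{(h)}$, and that this solution is a real-analytic function of the coefficient matrices, hence three times continuously differentiable in $\vt$ by (A8). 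Consequently $V_\vt^{(h)}=C_\vt\Omega_\vt^{(h)}C_\vt^{\mathsf{T}}$ and $K_\vt^{(h)}=\e^{A_\vt h}\Omega_\vt^{(h)}C_\vt^{\mathsf{T}}(C_\vt\Omega_\vt^{(h)}C_\vt^{\mathsf{T}})^{-1}$ inherit the same smoothness. Since all these functions are $C^1$ on the compact set $\Theta$, they are Lipschitz continuous on $\Theta$.

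Next I would handle $\mathsf{k}(z,\vt)$ and $\Pi(\vt)$. By Scholz~\cite[Lemma 4.6.7]{Scholz} the eigenvalues of $F_\vt:=\e^{A_\vt h}-K_\vt^{(h)}C_\vt$ lie strictly inside the unit disk; since $\vt\mapsto F_\vt$ is continuous and $\Theta$ compact, there exists $\rho\in(0,1)$ with $\sup_{\vt\in\Theta}\rho(F_\vt)\leq\rho$. Using the Gelfand spectral radius formula together with a standard Jordan-normal-form argument one obtains $\sup_{\vt\in\Theta}\|F_\vt^{j}\|\leq\mathfrak{C}\rho^{j}$ (and analogous bounds for derivatives of $F_\vt^j$ up to third order via the product rule), so the series defining $\mathsf{k}_j(\vt)=\sum_{i=j}^{\infty}C_\vt F_\vt^{i}K_\vt^{(h)}$ and $\Pi(\vt)=\alpha(\vt)C_{1,\vt}^{\perp\,\mathsf{T}}$ converge uniformly together with their derivatives up to order three, establishing $C^3$-smoothness and, via compactness, Lipschitz continuity of $\mathsf{k}(z,\vt)$ (for $|z|$ on compact subsets) and $\Pi(\vt)$. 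Finally, $(V_\vt^{(h)})^{-1}$ is Lipschitz and $C^3$ once $V_\vt^{(h)}$ is positive definite on $\Theta$, by the smoothness of matrix inversion.

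For (b) and (c), the key reduction is to uniform positive definiteness of $V_\vt^{(h)}$ on $\Theta$. One has $V_\vt^{(h)}=C_\vt\Omega_\vt^{(h)}C_\vt^{\mathsf{T}}$ where $\Omega_\vt^{(h)}$ is the stabilizing (positive-definite) solution of the DARE; combined with full row rank of $C_\vt$ from (A9), this yields $V_\vt^{(h)}>0$ for every $\vt\in\Theta$. By (a), $\vt\mapsto\lambda_{\text{min}}(V_\vt^{(h)})$ and $\vt\mapsto\lambda_{\text{max}}(V_\vt^{(h)})$ are continuous on the compact set $\Theta$, hence
\begin{align*}
0<\inf_{\vt\in\Theta}\lambda_{\text{min}}(V_\vt^{(h)})\leq\sup_{\vt\in\Theta}\lambda_{\text{max}}(V_\vt^{(h)})<\infty.
\end{align*}
This immediately gives (b) since $\|(V_\vt^{(h)})^{-1}\|=\lambda_{\text{min}}(V_\vt^{(h)})^{-1}$ up to a dimensional constant (the Frobenius and spectral norms on a fixed finite dimension are equivalent), and it gives (c) since $\sigma_{\text{min}}((V_\vt^{(h)})^{-1})=\lambda_{\text{max}}(V_\vt^{(h)})^{-1}$.

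The main obstacle I would expect is the smoothness of $\Omega_\vt^{(h)}$ with respect to $\vt$: this is the step where one really has to invoke the DARE machinery (either the implicit function theorem at each $\vt\in\Theta$ applied to the Riccati residual $F(\vt,\Omega)=0$, whose partial Jacobian in $\Omega$ is invertible precisely because of the stabilizing property of $\Omega_\vt^{(h)}$, or the ready-made perturbation results in the control-theory literature). The other technical point is the uniform exponential decay of $F_\vt^j$ and of its derivatives up to order three, which requires slightly more than the continuity of the spectral radius, but is standard once the uniform bound $\rho<1$ is in hand.
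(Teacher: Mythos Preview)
Your proposal is correct and follows essentially the same route as the paper's proof: smoothness of the primitives (via (A8)) propagates to the Riccati solution $\Omega_\vt^{(h)}$ and hence to $V_\vt^{(h)}$, $K_\vt^{(h)}$, $\Pi(\vt)$ and $\mathsf{k}(z,\vt)$; (b) and (c) then follow from (a), positive definiteness of $V_\vt^{(h)}$, and compactness of $\Theta$. The paper simply outsources all of part~(a) and the non-singularity of $V_\vt^{(h)}$ to Scholz~\cite[Lemma~5.9.1 and Lemma~5.9.3]{Scholz}, whereas you have written out the underlying DARE argument and the uniform exponential bound on $F_\vt^j$ explicitly.

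One small point worth tightening: you assert that the stabilizing DARE solution $\Omega_\vt^{(h)}$ is positive \emph{definite}, and then deduce $V_\vt^{(h)}=C_\vt\Omega_\vt^{(h)}C_\vt^{\mathsf{T}}>0$ from full row rank of $C_\vt$. In general the stabilizing solution is only guaranteed positive semi-definite; strict positivity requires an additional controllability/reachability argument for the noise in the state equation (which does hold here under (A3), (A5), (A9)). The paper sidesteps this by citing Scholz~\cite[Lemma~5.9.1]{Scholz} directly for the non-singularity of $V_\vt^{(h)}$, which is really the only fact needed for (b), (c), and for the smoothness of $(V_\vt^{(h)})^{-1}$.
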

\begin{proof}
(a)  is a consequence of \autoref{AssMBrownMot} and Scholz~\cite[Lemma 5.9.3]{Scholz}. However, Scholz~\cite[Lemma 5.9.3]{Scholz} shows only the twice continuous differentiability but the proof of the existence of the third partial differential is analog.\\
(b) follows from (a) and the compactness of $\Theta$.\\
(c) Due to  Scholz~\cite[Lemma 5.9.1]{Scholz} the matrix $(V_\vt^{(h)})^{-1}$ is non-singular. Hence, we can conclude the statement from (a) and the compactness of $\Theta$.
\end{proof}
%
%
%
Thus, the pseudo-innovations are three times differentiable and we receive
an analog version of Lemma~\ref{LemInnovaSeqProp}.
\begin{lemma}
\label{LemDerInnovaSeqProp}
Let \autoref{AssMBrownMot} hold and let $u,v\in\{1,\ldots,s\}$. Then, the following results hold.
\begin{enumerate}
\item[(a)] \label{LemDerInnovaSeqPropii}
  The  matrix sequence $(\partial_v{\mathsf{k}}_j(\vartheta))_{j\in\N}$ in $\R^{d\times d}$ is uniformly exponentially bounded such that \linebreak
$
    \partial_v\varepsilon_k^{(h)}(\vartheta)=-\partial_v\Pi(\vartheta)^\mathsf{T}Y_{k-1}^{(h)}-\sum_{j=1}^\infty\partial_v{\mathsf{k}}_j(\vartheta) \Delta Y_{k-j}^{(h)}.
$
\item[(b)] \label{LemDerInnovaSeqPropiv} The matrix sequence $(\partial_{u,v}{\mathsf{k}}_j(\vartheta))_{j\in\N}$ in $\R^{d\times d}$ is uniformly exponentially bounded  such that
 $   \partial_{u,v}\varepsilon_k^{(h)}(\vartheta)
    =-\partial_{u,v}\Pi(\vartheta)^\mathsf{T}Y_{k-1}^{(h)}-\sum_{j=1}^\infty \partial_{u,v} {\mathsf{k}}_j(\vartheta) \Delta Y_{k-j}^{(h)}.
$
\end{enumerate}
\end{lemma}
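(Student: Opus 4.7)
The plan is to follow the same strategy as in the proof of Lemma~\ref{LemInnovaSeqProp}, reducing everything to the uniform exponential decay of $F_\vt^j$ where $F_\vt := \mathrm{e}^{A_\vt h} - K_\vt^{(h)} C_\vt$, and then justifying term-by-term differentiation of the defining series. Recall that $\mathsf{k}_j(\vartheta) = \sum_{i=j}^{\infty} C_\vt F_\vt^{i} K_\vt^{(h)}$. By \autoref{AssMBrownMot} the map $\vt \mapsto F_\vt$ is three times continuously differentiable on the compact parameter space $\Theta$ (use Lemma~\ref{Lemma 2.3}(a) together with the standard results on differentiability of the solution of the discrete-time algebraic Riccati equation used in Scholz~\cite[Lemma 4.6.7]{Scholz}). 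Since every $F_\vt$ has all eigenvalues strictly inside the unit circle and $\Theta$ is compact, there exist constants $\mathfrak{C}>0$ and $0<\rho<1$ such that $\sup_{\vt\in\Theta}\|F_\vt^{i}\|\leq \mathfrak{C}\rho^{i}$ for every $i\in\N_0$; fix any $\rho<\rho'<1$.

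First I would address part (a). Differentiating the summand $C_\vt F_\vt^{i} K_\vt^{(h)}$ by the product rule yields
\begin{align*}
\partial_v\bigl(C_\vt F_\vt^{i} K_\vt^{(h)}\bigr) = (\partial_v C_\vt) F_\vt^{i} K_\vt^{(h)} + C_\vt \Bigl(\sum_{l=0}^{i-1} F_\vt^{l}(\partial_v F_\vt) F_\vt^{i-1-l}\Bigr) K_\vt^{(h)} + C_\vt F_\vt^{i}(\partial_v K_\vt^{(h)}).
\end{align*}
Using Lemma~\ref{Lemma 2.3}(a), the continuous factors $\partial_v C_\vt, \partial_v F_\vt, \partial_v K_\vt^{(h)}, C_\vt, K_\vt^{(h)}$ are uniformly bounded on $\Theta$, so the $i$-th summand has norm at most $\mathfrak{C}\,(i+1)\rho^{i-1}$. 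Since $(i+1)\rho^{i-1}\leq \mathfrak{C}'(\rho')^{i}$, summation over $i\geq j$ gives $\|\partial_v\mathsf{k}_j(\vartheta)\|\leq \mathfrak{C}''(\rho')^{j}$ uniformly in $\vt\in\Theta$. This bound also yields normal convergence of the differentiated series on $\Theta$, which together with the convergence of $\sum_{i=j}^{\infty} C_\vt F_\vt^{i} K_\vt^{(h)}$ justifies pulling the derivative under the sum, so $\partial_v\mathsf{k}_j(\vartheta) = \sum_{i=j}^{\infty} \partial_v(C_\vt F_\vt^{i} K_\vt^{(h)})$. The formula for $\partial_v\varepsilon_k^{(h)}(\vartheta)$ then follows from Lemma~\ref{Lemma 2.3}(a) and the definition of $\varepsilon_k^{(h)}(\vartheta)$, interchanging differentiation and the infinite sum once more; this interchange is justified by the geometric decay $\sup_{\vt\in\Theta}\|\partial_v\mathsf{k}_j(\vartheta)\|\leq \mathfrak{C}(\rho')^{j}$ combined with $\E\|\Delta Y_k^{(h)}\|^{2}<\infty$, which makes the series of derivatives converge in $L^2$ uniformly on $\Theta$.

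For part (b) the same argument is iterated. Differentiating the above expression once more produces at most $\mathfrak{C}\,(i+1)^{2}\rho^{i-2}$ terms, which is still bounded by $\mathfrak{C}'(\rho')^{i}$, so $\sup_{\vt\in\Theta}\|\partial_{u,v}\mathsf{k}_j(\vartheta)\|\leq \mathfrak{C}(\rho')^{j}$ and the second-order termwise differentiation is legitimized exactly as before. The stated formula for $\partial_{u,v}\varepsilon_k^{(h)}(\vartheta)$ then follows from Lemma~\ref{Lemma 2.3}(a) and differentiating the series for $\partial_v\varepsilon_k^{(h)}(\vartheta)$ once more.

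The main technical obstacle is the bookkeeping in the Leibniz-type expansion of $\partial_{u,v} F_\vt^{i}$, which produces of order $i^{2}$ summands; the key observation that saves the day is that the polynomial factors in $i$ are harmless once we replace $\rho$ by any $\rho'\in(\rho,1)$, so the absorption $(i+1)^{k}\rho^{i}\leq \mathfrak{C}(\rho')^{i}$ is what ultimately delivers the uniform exponential bound. All remaining steps — justification of termwise differentiation and of exchanging the derivative with the infinite sum defining $\mathsf{k}(\mathsf{B},\vt)\Delta Y_k^{(h)}$ — are standard consequences of uniform (normal) convergence on $\Theta$.
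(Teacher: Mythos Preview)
Your proposal is correct and is essentially the argument the paper has in mind: the paper simply says the proof ``is analog to Schlemm and Stelzer~\cite[Lemma 2.11]{SchlemmStelzer2012a}'', and that lemma proceeds exactly as you do --- product-rule expansion of $\partial_v(C_\vt F_\vt^{i} K_\vt^{(h)})$ and $\partial_{u,v}(C_\vt F_\vt^{i} K_\vt^{(h)})$, yielding polynomial-in-$i$ many terms each bounded by $\mathfrak{C}\rho^{i}$, followed by the absorption $(i+1)^{k}\rho^{i}\leq \mathfrak{C}(\rho')^{i}$ for any $\rho<\rho'<1$. One minor point: when you justify interchanging $\partial_v$ with the sum $\sum_j \mathsf{k}_j(\vartheta)\Delta Y_{k-j}^{(h)}$, the cleaner argument is pathwise --- for a.e.\ $\omega$ the series $\sum_j (\rho')^{j}\|\Delta Y_{k-j}^{(h)}(\omega)\|$ is finite, so the differentiated series converges uniformly in $\vartheta$ on $\Theta$ almost surely --- rather than appealing to $L^{2}$-convergence.
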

\begin{proof}
   Recall the representation given in Lemma~\ref{LemInnovaSeqProp} where  $({\mathsf{k}}_j(\vartheta))_{j\in\N}$ is an uniformly exponentially bounded matrix sequence.
   Then, the proof is analog to Schlemm and Stelzer~\cite[Lemma 2.11]{SchlemmStelzer2012a}.
\end{proof}

\begin{lemma} \label{Lemma 2.5}
Let \autoref{AssMBrownMot} hold and $i,j\in\{1,\ldots,s_2\}$. Then, \begin{itemize}
    \item[(a)]  $( \varepsilon_k^{(h)}(\vt^0)^{\mathsf{T}},\partial_{j}^{st}\varepsilon_k^{(h)}(\vartheta^0)^{\mathsf{T}},\partial_{i,j}^{st}\varepsilon_k^{(h)}(\vartheta^0)^{\mathsf{T}})_{k\in\N}$  is a stationary and ergodic sequence.
    \item[(b)] $\E\|\varepsilon_k^{(h)}(\vt^0)\|^4<\infty$, \quad $\E\|\partial_{j}^{st}\varepsilon_k^{(h)}(\vartheta^0)\|^4<\infty$ \quad and \quad $\E\|\partial_{i,j}^{st}\varepsilon_k^{(h)}(\vartheta^0)\|^4<\infty$.
    \item[(c)] $\E(\partial_i^{st}\varepsilon_{k}^{(h)}(\vartheta^0)\varepsilon_{k}^{(h)}(\vartheta^0)^{\mathsf{T}})=0_{d\times d}$
    \quad and \quad $\E(\partial_{i,j}^{st}\varepsilon_{k}^{(h)}(\vartheta^0)\varepsilon_{k}^{(h)}(\vartheta^0)^{\mathsf{T}})=0_{d\times d}$.
    \item[(d)] $\E(\varepsilon_{k}^{(h)}(\vartheta^0)\varepsilon_{k}^{(h)}(\vartheta^0)^{\mathsf{T}})=V_{\vt_0}^{(h)}$.
\end{itemize}
\end{lemma}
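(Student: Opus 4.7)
The plan is to exploit two structural facts: (i) at $\vt=\vt^0$ the pseudo-innovations coincide with the true linear innovations produced by the Kalman filter (Proposition~\ref{psin}), and (ii) the factorization $\Pi(\vt)=\alpha(\vt)C_{1,\vt_1}^{\perp\mathsf{T}}$ of Lemma~\ref{LemInnovaSeqProp} implies that $\partial^{st}_j\Pi(\vt^0)C_1=0_{d\times c}$ and $\partial^{st}_{i,j}\Pi(\vt^0)C_1=0_{d\times c}$ for every $i,j\in\{1,\dots,s_2\}$, because the short-run parameter $\vt_2$ enters only through $\alpha$ and $C_{1,\vt_1^0}^{\perp\mathsf{T}}C_1=0_{(d-c)\times c}$. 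Combining (ii) with $Y_{k-1}^{(h)}=C_1Z+C_1B_1L^{(h)}_{k-1}+Y_{st,k-1}^{(h)}$ and Lemma~\ref{LemDerInnovaSeqProp} yields the representations
\begin{align*}
\partial^{st}_{j}\varepsilon_k^{(h)}(\vt^0)&=-\partial^{st}_{j}\Pi(\vt^0)\,Y_{st,k-1}^{(h)}-\sum_{\ell=1}^\infty \partial^{st}_{j}\mathsf{k}_\ell(\vt^0)\,\Delta Y^{(h)}_{k-\ell},\\
\partial^{st}_{i,j}\varepsilon_k^{(h)}(\vt^0)&=-\partial^{st}_{i,j}\Pi(\vt^0)\,Y_{st,k-1}^{(h)}-\sum_{\ell=1}^\infty \partial^{st}_{i,j}\mathsf{k}_\ell(\vt^0)\,\Delta Y^{(h)}_{k-\ell},
\end{align*}
and a similar formula holds for $\varepsilon_k^{(h)}(\vt^0)$ itself. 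Hence all three processes are measurable functions of the single stationary ergodic sequence $(Y_{st,k}^{(h)},\Delta Y_k^{(h)})_{k\in\Z}$, whose stationarity and ergodicity follow from the Lévy-driven structure (cf.\ the $\beta$-mixing proof of Lemma~\ref{Lemma 4.5}). A standard transfer argument (Krengel~\cite{Krengel1985}, Theorem 4.3, Chapter 1) then gives part~(a).

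For part~(b) I would apply Minkowski's inequality to the three series above. Since by Assumption~\ref{AssMBrownMot}(A3) the Lévy process has $(4+\delta)$-moments, the stationary process $Y_{st}^{(h)}$ and the increment process $\Delta Y^{(h)}$ inherit finite fourth moments (using the MA-representation \eqref{MA}). The coefficient sequences $(\mathsf{k}_\ell(\vt^0))$, $(\partial^{st}_{j}\mathsf{k}_\ell(\vt^0))$, $(\partial^{st}_{i,j}\mathsf{k}_\ell(\vt^0))$ are all uniformly exponentially bounded by Lemmas~\ref{LemInnovaSeqProp} and \ref{LemDerInnovaSeqProp}, so Minkowski's inequality bounds the $L^4$-norms by convergent geometric series, yielding the claimed fourth moment bounds.

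Part~(c) is the conceptual core of the lemma. At $\vt=\vt^0$ the pseudo-innovation $\varepsilon_k^{(h)}(\vt^0)$ equals the Kalman linear innovation $\varepsilon^*_{\vt^0}(k)$, which by Proposition~\ref{psin} is orthogonal (in the $L^2$-sense) to $\overline{\mathrm{span}}\{Y^{(h)}(\ell):\ell<k\}$. The representations above show that each entry of $\partial^{st}_{j}\varepsilon_k^{(h)}(\vt^0)$ and $\partial^{st}_{i,j}\varepsilon_k^{(h)}(\vt^0)$ is an $L^2$-limit of finite linear combinations of the components of $Y^{(h)}(\ell)$ for $\ell\leq k-1$; indeed, $Y_{st,k-1}^{(h)}$ and $\Delta Y^{(h)}_{k-\ell}$ all lie in that past Hilbert space. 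Consequently every entry of the matrix $\E[\partial^{st}_{j}\varepsilon_k^{(h)}(\vt^0)\,\varepsilon_k^{(h)}(\vt^0)^{\mathsf{T}}]$ vanishes, and analogously for the second-order derivatives. The main obstacle I anticipate is the careful bookkeeping to confirm that $\partial^{st}_j\Pi(\vt^0)$ (and its second derivatives) annihilate $C_1$, so that the non-stationary components of $Y^{(h)}_{k-1}$ drop out and the resulting expressions really do lie in the past Hilbert space rather than merely being well-defined. Finally, part~(d) is immediate from Proposition~\ref{psin}, which identifies $V_{\vt^0}^{(h)}=C_{\vt^0}\Omega_{\vt^0}^{(h)}C_{\vt^0}^{\mathsf{T}}$ as the prediction error covariance of the Kalman recursion.
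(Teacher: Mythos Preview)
Your proposal is correct and follows essentially the same route as the paper: derive the representations \eqref{vareps} via the annihilation $\partial^{st}_j\Pi(\vt^0)C_1=0$, invoke Krengel's transfer theorem for (a), the exponential coefficient bounds plus (A3) for (b), the Kalman orthogonality for (c), and Proposition~\ref{psin} for (d). One small imprecision: it is not $Y_{st,k-1}^{(h)}$ itself that lies in $\overline{\mathrm{span}}\{Y^{(h)}_\ell:\ell<k\}$, but rather $\partial^{st}_j\Pi(\vt^0)Y_{st,k-1}^{(h)}=\partial^{st}_j\Pi(\vt^0)Y^{(h)}_{k-1}$ (via the annihilation you already flagged), which is exactly the step the paper uses as well.
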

\begin{proof} $\mbox{}$\\
(a) \, Representation \eqref{eqRepContCointSSMSepar} and Lemma~\ref{Lemma 2.3} yield
\begin{align*}
\Pi(\vartheta^0)Y^{(h)}_k&=\alpha(\vartheta_1^0,\vt_2^0)(C_{1,\vartheta_1^0}^{\perp})^\mathsf{T}Y^{(h)}_k=\Pi(\vartheta^0)Y_{st,k}^{(h)}, \\
\partial_i^{st}\Pi(\vartheta^0)Y^{(h)}_k&=\left(\partial_i^{st}\alpha(\vt^0)\right)(C_{1,\vartheta_1^0}^{\perp})^\mathsf{T}Y_{st,k}^{(h)}, \\
\partial_{i,j}^{st}\Pi(\vartheta^0)Y^{(h)}_k&=\left(\partial_{i,j}^{st}\alpha(\vt^0)\right)(C_{1,\vartheta_1^0}^{\perp})^\mathsf{T}Y_{st,k}^{(h)},
\end{align*}
and hence,
\beam \label{vareps}
    \varepsilon_k^{(h)}(\vartheta^0)&=&-\Pi(\vartheta^0)Y_{st,k-1}^{(h)}+ {\mathsf{k}}(\mathsf{B},\vt)\Delta Y_k^{(h)}, \notag\\
    \partial_i^{st}\varepsilon_k^{(h)}(\vartheta^0)&=&-\partial_i^{st}\Pi(\vartheta^0)^\mathsf{T}Y_{st,k-1}^{(h)}-\sum_{l=1}^\infty
    \partial_i^{st}{\mathsf{k}}_{l}(\vartheta^0) \Delta Y_{k-l}^{(h)},\\
    \partial_{i,j}^{st}\varepsilon_k^{(h)}(\vartheta^0)&=&-\partial_{i,j}^{st}\Pi(\vartheta^0)^\mathsf{T}Y_{st,k-1}^{(h)}-\sum_{l=1}^\infty \partial_{i,j}^{st}{\mathsf{k}}_{l}(\vartheta^0) \Delta Y_{k-l}^{(h)}. \notag
\eeam
These are obviously stationary processes.
Fasen-Hartmann and Scholz \cite[Proposition 5.8]{FasenScholz1} state already that $(\varepsilon_k^{(h)}(\vt^0))_{k\in\N}$ is  ergodic with finite second moments.
The same arguments lead to the  ergodicity of  $( \varepsilon_k^{(h)}(\vt^0)^{\mathsf{T}},\partial_{j}^{st}\varepsilon_k^{(h)}(\vartheta^0)^{\mathsf{T}},\partial_{i,j}^{st}\varepsilon_k^{(h)}(\vartheta^0)^{\mathsf{T}})_{k\in\N}$  . \\
(b) \, The finite fourth moment
of $(\varepsilon_k^{(h)}(\vt^0))_{k\in\N}$ and its partial derivatives are consequences of their series representation \eqref{vareps} with uniformly exponentially bounded coefficient matrices and the finite fourth moment of $Y_{st,k}^{(h)}$ and $\Delta Y_{k}^{(h)}$  due to Assumption~(A3) and Marquardt and Stelzer~\cite[Proposition 3.30]{MarquardtStelzer2007}.\\
(c) \, A consequence of \eqref{vareps} is that both  $\partial_i^{st}\varepsilon_{k}^{(h)}(\vartheta^0)$ and $\partial_{i,j}^{st}\varepsilon_k^{(h)}(\vartheta^0)$ are elements
of the Hilbert space generated by $\{Y_l^{(h)},-\infty<l<k\}$. But $\varepsilon_k^{(h)}(\vartheta^0)$ is orthogonal to the Hilbert space generated by $\{Y_l^{(h)},-\infty<l<k\}$ so that the statements follow. \\
(d) \, is a conclusion of the construction of the linear innovations by the Kalman filter.
\end{proof}

\section{Proof of Proposition~\ref{Lemma 2.8}} \label{Appendix B}

First, we present some auxiliary results for the proof of Proposition~\ref{Lemma 2.8}.

\begin{lemma} \label{Lemma2.9}
Let \autoref{AssMBrownMot} and \ref{Ass:int} hold. Define
$X^{(h)}_1(\vt)=\sum_{j=0}^{\infty} (\e^{A_\vt h}-K_\vt^{(h)} C_\vt)^{j}K_\vt^{(h)}
     Y_{-j}^{(h)}$. Then, \begin{itemize}
    \item[(a)] $\E\left(\sup_{\vt\in\Theta}\| X_1^{(h)}(\vt)\|^2\right)<\infty $ \quad and \quad $\max_{k\in\N}\left\{\frac{1}{(1+k)}\E\left(\sup_{\vt\in\Theta}\| \widehat\varepsilon_k^{(h)}(\vt)\|^2\right)\right\}<\infty$.
    \item[(b)] $\E\left(\sup_{\vt\in\Theta}\|\partial_u X_1^{(h)}(\vt)\|^2\right)<\infty$
    \quad and \quad  $\max_{k\in\N}\left\{\frac{1}{(1+k)}\E\left(\sup_{\vt\in\Theta}\|\partial_u \widehat\varepsilon_k^{(h)}(\vt)\|^2\right)\right\}<\infty$.
    \item[(c)] $\E\left(\sup_{\vt\in\Theta}\|\partial_{u,v} X_1^{(h)}(\vt)\|^2\right)<\infty$ \quad and \quad $\max_{k\in\N}\left\{\frac{1}{(1+k)}\E\left(\sup_{\vt\in\Theta}\|\partial_{u,v} \widehat\varepsilon_k^{(h)}(\vt)\|^2\right)\right\}<\infty.$
\end{itemize}
\end{lemma}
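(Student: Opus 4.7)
The proof rests on two ingredients. First, since the eigenvalues of $F_\vt:=\e^{A_\vt h}-K_\vt^{(h)}C_\vt$ lie strictly inside the unit disk uniformly in $\vt\in\Theta$ (Scholz~\cite[Lemma 4.6.7]{Scholz}, together with the continuity in (A8) and the compactness of $\Theta$), there exist $\mathfrak{C}>0$ and $0<\rho<1$ with
\[
\sup_{\vt\in\Theta}\|F_\vt^{j}K_\vt^{(h)}\|\le \mathfrak{C}\rho^{j},\qquad j\in\N_0,
\]
exactly as in Lemma~\ref{LemInnovaSeqProp}. Second, the moment assumption (A3), the representation $Y_{-j}^{(h)}=C_1Z+C_1B_1L(-jh)+Y_{st}^{(h)}(-j)$, and the stationarity of $Y_{st}^{(h)}$ together yield the sublinear bound $\E\|Y_k^{(h)}\|^{2}\le\mathfrak{C}(1+|k|)$ for every $k\in\Z$.

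For the first halves of (a)--(c) I would simply bound the defining series termwise. Minkowski's inequality in $L^{2}(\P)$ gives
\[
\Bigl(\E\sup_{\vt\in\Theta}\|X_1^{(h)}(\vt)\|^{2}\Bigr)^{\!1/2}\le \sum_{j=0}^{\infty}\sup_{\vt\in\Theta}\|F_\vt^{j}K_\vt^{(h)}\|\,\bigl(\E\|Y_{-j}^{(h)}\|^{2}\bigr)^{1/2}\le \mathfrak{C}\sum_{j=0}^{\infty}\rho^{j}\sqrt{1+j}<\infty.
\]
For the derivatives I would first observe, using the product rule $\partial_u F_\vt^{j}=\sum_{i=0}^{j-1}F_\vt^{i}(\partial_u F_\vt)F_\vt^{j-1-i}$ together with the uniform bound $\sup_{\vt}\|F_\vt^{\ell}\|\le \mathfrak{C}\rho^{\ell}$ (and continuity of $\partial_u F_\vt,\partial_{u,v}F_\vt$ on $\Theta$ by (A8)), that
\[
\sup_{\vt\in\Theta}\|\partial_u[F_\vt^{j}K_\vt^{(h)}]\|\le \mathfrak{C}(1+j)\rho^{j},\qquad \sup_{\vt\in\Theta}\|\partial_{u,v}[F_\vt^{j}K_\vt^{(h)}]\|\le \mathfrak{C}(1+j)^{2}\rho^{j},
\]
which follows in the spirit of Lemma~\ref{LemDerInnovaSeqProp}. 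Termwise differentiation of the series for $X_1^{(h)}(\vt)$ is then justified and the same Minkowski argument delivers
\[
\E\sup_{\vt\in\Theta}\|\partial_u X_1^{(h)}(\vt)\|^{2}\le \Bigl(\mathfrak{C}\sum_{j=0}^{\infty}(1+j)\rho^{j}\sqrt{1+j}\Bigr)^{\!2}<\infty,
\]
and analogously for $\partial_{u,v}X_1^{(h)}(\vt)$.

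For the approximate innovations I would unroll the Kalman recursion, obtaining
\[
\widehat\varepsilon_k^{(h)}(\vt)=Y_k^{(h)}-C_\vt F_\vt^{k-1}\widehat X_1^{(h)}(\vt)-\sum_{j=1}^{k-1}C_\vt F_\vt^{j-1}K_\vt^{(h)}Y_{k-j}^{(h)},
\]
and bound each piece by Minkowski: the contribution of $Y_k^{(h)}$ is $O(\sqrt{1+k})$, that of the initial value is $O(\rho^{k-1})$ by \autoref{Ass:int}, and the geometric sum gives
\[
\sum_{j=1}^{k-1}\mathfrak{C}\rho^{j-1}\sqrt{1+k-j}\le \mathfrak{C}\sqrt{1+k}.
\]
Squaring yields $\E\sup_{\vt}\|\widehat\varepsilon_k^{(h)}(\vt)\|^{2}\le \mathfrak{C}(1+k)$, proving the second half of (a). Differentiating the Kalman recursion produces analogous recursions for $\partial_u\widehat X_k^{(h)}(\vt)$ and $\partial_{u,v}\widehat X_k^{(h)}(\vt)$ whose coefficients obey the same polynomial-in-$j$ times $\rho^{j}$ bounds as above; combined with \autoref{Ass:int} this gives parts (b) and (c) by the same computation.

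The main technical obstacle is verifying the uniform-in-$\vt$ bounds $\sup_{\vt\in\Theta}\|\partial_u F_\vt^{j}\|\le \mathfrak{C}(1+j)\rho^{j}$ and its second-order analogue, since naively differentiating $F_\vt^{j}$ loses a polynomial factor in $j$ which we must absorb into the same geometric rate $\rho$. This is handled by picking $\rho$ slightly larger than the uniform spectral radius $\sup_{\vt\in\Theta}r(F_\vt)<1$ (still $<1$ by compactness and continuity of the spectral radius under (A8)), so that the polynomial prefactor is dominated by the slack in $\rho$; everything else is a routine application of Minkowski combined with the sublinear moment bound on $Y_k^{(h)}$.
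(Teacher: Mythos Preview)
Your proposal is correct and follows essentially the same route as the paper: use the uniform exponential bound $\sup_{\vt}\|F_\vt^{j}K_\vt^{(h)}\|\le\mathfrak{C}\rho^{j}$, the sublinear moment bound $\E\|Y_k^{(h)}\|^{2}\le\mathfrak{C}(1+|k|)$, Minkowski on the defining series for $X_1^{(h)}(\vt)$, and the unrolled Kalman recursion for $\widehat\varepsilon_k^{(h)}(\vt)$. The paper dispatches (b) and (c) with a one-line ``similarly'', whereas you spell out the product-rule bound $\sup_{\vt}\|\partial_u F_\vt^{j}\|\le\mathfrak{C}(1+j)\rho^{j}$ and the absorption of the polynomial prefactor into a slightly larger $\rho$; this extra care is welcome but does not constitute a different argument.
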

\begin{proof}
We  prove (a) exemplary for (b) and (c).
First, remark that $\E\|Y_j^{(h)}\|^2\leq \mathfrak{C}(1+|j|)$ for $j\in\Z$.
Since all eigenvalues of $(\e^{A_\vt h}-K_\vt^{(h)} C_\vt)$ lie inside the unit circle
(see Scholz~\cite[Lemma 4.6.7]{Scholz}) and all matrix functions are continuous on the compact set $\Theta$ and, hence,  bounded, we receive for some $0<\rho<1$
that $\sup_{\vt\in\Theta}\|\e^{A_\vt h}-K_\vt^{(h)} C_\vt\|\leq \rho$ and
$
    \sup_{\vt\in\Theta}\| X_1^{(h)}(\vt)\|\leq \mathfrak{C}\sum_{j=0}^{\infty}\rho^{j}\|Y_{-j}^{(h)}\|.
$
Thus,
\beao
    \E\left(\sup_{\vt\in\Theta}\| X_1^{(h)}(\vt)\|^2\right)\leq \mathfrak{C}\left(\sum_{j=0}^{\infty}\rho^{j}(\E\|Y_{-j}^{(h)}\|^2)^{1/2}\right)^2
        \leq \mathfrak{C}\left(\sum_{j=0}^{\infty}\rho^{j}(1+j)^{1/2}\right)^2<\infty.
\eeao
Similarly,
\beao
    \sup_{\vt\in\Theta}\|\widehat\varepsilon_k^{(h)}(\vt)\|\leq \|Y_k\| +\mathfrak{C}\rho^{k-1}\sup_{\vt\in\Theta}\|\widehat X_1^{(h)}(\vt)\| +\sum_{j=1}^{k-1}  \mathfrak{C}\rho^j\|Y_{k-j}^{(h)}\|,
\eeao
such that
\beao
    \E\left(\sup_{\vt\in\Theta}\|\widehat\varepsilon_k^{(h)}(\vt)\|^2\right)&\leq&
        3\E\|Y_k\|^2 +3\mathfrak{C}^2\rho^{2k-2}\E\left(\sup_{\vt\in\Theta}\|\widehat X_1^{(h)}(\vt)\|^2\right) +3
            \left(\sum_{j=1}^{k-1}  \mathfrak{C}\rho^j(\E\|Y_{k-j}^{(h)}\|^2)^{1/2}\right)^2\\
        &\leq&\mathfrak{C}\left((1+k)+\rho^{2k-2}\E\left(\sup_{\vt\in\Theta}\|\widehat X_1^{(h)}(\vt)\|^2\right)+k\left(\sum_{j=0}^{\infty} \rho^j\right)^2\right).
\eeao
Finally, due to \autoref{Ass:int}
\beao
    \max_{k\in\N}\left\{\frac{1}{(1+k)}\E\left(\sup_{\vt\in\Theta}\|\widehat\varepsilon_k^{(h)}(\vt)\|^2\right)\right\}\leq
    \mathfrak{C}\left(1+\E\left(\sup_{\vt\in\Theta}\|\widehat X_1^{(h)}(\vt)\|^2\right)+\left(\sum_{j=0}^{\infty} \rho^j\right)^2\right)<\infty.
\eeao
\end{proof}

\begin{lemma} \label{Lemma2.8}
Let \autoref{AssMBrownMot} and \ref{Ass:int} hold. Furthermore, let $u,v\in\{1,\ldots,s\}$.
\begin{itemize}
    \item[(a)] Then, there exists a positive random variable $\zeta$ with $\E(\zeta^2)<\infty$ and a constant $0<\rho<1$ so that
    $
        \sup_{\vt\in\Theta}\|\widehat\varepsilon_k^{(h)}(\vt)-\varepsilon_k^{(h)}(\vt)\|\leq \mathfrak{C}\rho^{k-1}\zeta
    $  for any $k\in\N$.
    \item[(b)] Then, there exists a positive random variable $\zeta^{(u)}$ with $\E(\zeta^{(u)})^2<\infty$ and a constant $0<\rho<1$ so that
    $
        \sup_{\vt\in\Theta}\|\partial_u\widehat\varepsilon_k^{(h)}(\vt)-\partial_u\varepsilon_k^{(h)}(\vt)\|\leq \mathfrak{C}\rho^{k-1}\zeta^{(u)}
    $  for any $k\in\N$.
    \item[(c)] Then, there exists a positive random variable $\zeta^{(u,v)}$ with $\E(\zeta^{(u,v)})^2<\infty$ and a constant $0<\rho<1$ so that
    $
        \sup_{\vt\in\Theta}\|\partial_{u,v}\widehat\varepsilon_k^{(h)}(\vt)-\partial_{u,v}\varepsilon_k^{(h)}(\vt)\|\leq \mathfrak{C}\rho^{k-1}\zeta^{(u,v)}
    $ for any $k\in\N$.
\end{itemize}
\end{lemma}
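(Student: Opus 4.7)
The plan is to exploit the fact that both the true steady-state Kalman state $X_\vt^{*}(k)$ (with initial condition $X_1^{(h)}(\vt)=\sum_{j=0}^\infty(\e^{A_\vt h}-K_\vt^{(h)}C_\vt)^j K_\vt^{(h)} Y_{-j}^{(h)}$) and the approximate one $\widehat X_k^{(h)}(\vt)$ satisfy the identical linear recursion with matrix $F_\vt:=\e^{A_\vt h}-K_\vt^{(h)}C_\vt$, only differing in the initial value. Hence, if we set $D_k(\vt):=\widehat X_k^{(h)}(\vt)-X_\vt^{*}(k)$, then $D_k(\vt)=F_\vt D_{k-1}(\vt)=F_\vt^{k-1}D_1(\vt)$, and since $\widehat\varepsilon_k^{(h)}(\vt)-\varepsilon_k^{(h)}(\vt)=-C_\vt D_k(\vt)$, all three claims reduce to bounds on $F_\vt^{k-1}$ and its $\vt$-derivatives.

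The first step will therefore be to establish a uniform exponential bound: there exist $\mathfrak{C}>0$ and $\rho\in(0,1)$ with $\sup_{\vt\in\Theta}\|F_\vt^{j}\|\leq \mathfrak{C}\rho^{j}$ for every $j\in\N_0$. This rests on Scholz~\cite[Lemma 4.6.7]{Scholz}, which guarantees that the spectral radius of $F_\vt$ is strictly less than one for every $\vt$, together with the continuity of $\vt\mapsto F_\vt$ (Lemma~\ref{Lemma 2.3}(a)) and the compactness of $\Theta$. Concretely, by upper semi-continuity of the spectral radius, one picks $m\in\N$ with $\sup_{\vt\in\Theta}\|F_\vt^m\|<1$, and submultiplicativity then yields the geometric bound.

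Granting this uniform bound, part (a) is immediate: $\sup_{\vt\in\Theta}\|\widehat\varepsilon_k^{(h)}(\vt)-\varepsilon_k^{(h)}(\vt)\|\leq \mathfrak{C}\rho^{k-1}\zeta$ with $\zeta:=\sup_{\vt\in\Theta}\|C_\vt\|\cdot\sup_{\vt\in\Theta}\|D_1(\vt)\|$. Then $\E\zeta^2<\infty$ follows from $\sup_{\vt\in\Theta}\|C_\vt\|<\infty$ (continuity on $\Theta$), from Assumption~\ref{Ass:int} giving $\E\sup_{\vt}\|\widehat X_1^{(h)}(\vt)\|^2<\infty$, and from Lemma~\ref{Lemma2.9}(a) giving $\E\sup_{\vt}\|X_1^{(h)}(\vt)\|^2<\infty$.

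For (b) and (c) I would differentiate the recursion $D_k(\vt)=F_\vt D_{k-1}(\vt)$. For the first derivative this yields $\partial_u D_k(\vt)=F_\vt^{k-1}\partial_u D_1(\vt)+\sum_{j=1}^{k-1}F_\vt^{k-1-j}(\partial_u F_\vt)F_\vt^{j-1}D_1(\vt)$, and similarly for the second derivative one obtains a double-sum expression involving $\partial_{u,v}F_\vt$, $\partial_u F_\vt\cdot\partial_v F_\vt$, and one-/two-point-derivatives of $D_1(\vt)$. Using Lemma~\ref{Lemma 2.3}(a) (three times continuous differentiability and continuity of the derivatives on the compact $\Theta$) to bound $\sup_\vt\|\partial_u F_\vt\|$ and $\sup_\vt\|\partial_{u,v}F_\vt\|$ uniformly, the two sums are dominated by $\mathfrak{C}\,k\,\rho^{k-2}$ respectively $\mathfrak{C}\,k^2\,\rho^{k-3}$, and the elementary fact $k^{p}\rho^{k}\leq \mathfrak{C}\tilde\rho^{k}$ for any $\tilde\rho\in(\rho,1)$ and $p\in\N_0$ lets us absorb the polynomial factors into a slightly enlarged exponential rate. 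Defining $\zeta^{(u)}$ and $\zeta^{(u,v)}$ as the suprema over $\vt\in\Theta$ of $\|C_\vt\|$, $\|\partial_u C_\vt\|$, $\|\partial_{u,v}C_\vt\|$ times the corresponding norms of $D_1(\vt)$, $\partial_u D_1(\vt)$, $\partial_{u,v}D_1(\vt)$ (and the uniformly bounded matrices $\partial_u F_\vt$, $\partial_{u,v}F_\vt$), their square-integrability is guaranteed by Assumption~\ref{Ass:int} together with Lemma~\ref{Lemma2.9}(b,c). The main obstacle I foresee is less conceptual than bookkeeping: ensuring that the $k$-dependent polynomial prefactors arising from the Leibniz expansion of $\partial_{u,v}(F_\vt^{k-1})$ are tracked correctly so that after passing to the enlarged rate $\tilde\rho$ a single random variable $\zeta^{(u,v)}$ with finite second moment dominates the whole error uniformly in $\vt$.
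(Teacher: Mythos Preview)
Your proposal is correct and follows essentially the same route as the paper: both exploit that $\widehat\varepsilon_k^{(h)}(\vt)-\varepsilon_k^{(h)}(\vt)=-C_\vt F_\vt^{k-1}\big(\widehat X_1^{(h)}(\vt)-X_1^{(h)}(\vt)\big)$ together with the uniform exponential bound $\sup_{\vt\in\Theta}\|F_\vt^{k-1}\|\leq\mathfrak{C}\rho^{k-1}$, and then invoke Assumption~\ref{Ass:int} and Lemma~\ref{Lemma2.9} for the square-integrability of $\zeta$. For parts (b) and (c) the paper merely writes ``can be proven similarly,'' whereas you spell out the Leibniz expansion of $\partial_u(F_\vt^{k-1})$ and the absorption of the resulting polynomial prefactors $k,k^2$ into an enlarged rate $\tilde\rho\in(\rho,1)$; this is exactly the standard way to make ``similarly'' precise and is entirely in line with the paper's intent.
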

\begin{proof}
(a) \, We use the representation
\beao
    \widehat\varepsilon_k^{(h)}(\vt)-\varepsilon_k^{(h)}(\vt)=C_\vt(\e^{A_\vt h}-K_\vt^{(h)} C_\vt)^{k-1} (\widehat X_1^{(h)}(\vt)-X_1^{(h)}(\vt))
\eeao
and define $\zeta:=\sup_{\vt\in\Theta}\|\widehat X_1^{(h)}(\vt)\|+\sup_{\vt\in\Theta}\|X_1^{(h)}(\vt)\|$. Due to \autoref{Ass:int} and Lemma~\ref{Lemma2.9}(a)
we know that $\E(\zeta^2)<\infty$.  Since all eigenvalues of $(\e^{A_\vt h}-K_\vt^{(h)} C_\vt)$ lie inside the unit circle  and $C_\vt$ is bounded
as a continuous function on the compact set $\Theta$ there exists constants $\mathfrak{C}>0$ and $0<\rho<1$ so that $\sup_{\vt\in\Theta}\|C_\vt(\e^{A_\vt h}-K_\vt^{(h)} C_\vt)^{k-1}\|\leq \mathfrak{C}\rho^{k-1} $ and
\beao
        \sup_{\vt\in\Theta}\|\widehat\varepsilon_k^{(h)}(\vt)-\varepsilon_k^{(h)}(\vt)\|\leq
        \sup_{\vt\in\Theta}\|C_\vt(\e^{A_\vt h}-K_\vt^{(h)} C_\vt)\|^{k-1}\sup_{\vt\in\Theta} \|\widehat X_1^{(h)}(\vt)-X_1^{(h)}(\vt)\|\leq \mathfrak{C}\rho^{k-1}\zeta.
\eeao
(b,c) \, can be proven similarly.
\end{proof}

\begin{proof}[Proof of Proposition~\ref{Lemma 2.8}] $\mbox{}$\\
(a) \, First,
\beao
\lefteqn{\mathcal{\widehat{L}}_n^{(h)}(\vartheta)-\mathcal{L}_n^{(h)}(\vartheta)}\\
    &=& \frac{1}{n}\sum_{k=1}^n\left[(\widehat\varepsilon_k^{(h)}(\vartheta)-\varepsilon_k^{(h)}(\vartheta))^\mathsf{T}\big(V_\vt^{(h)}\big)^{-1}\widehat\varepsilon_k^{(h)}(\vartheta)
        -\varepsilon_k^{(h)}(\vartheta)^\mathsf{T}\big(V_\vt^{(h)}\big)^{-1}(\varepsilon_k^{(h)}(\vartheta)-\widehat\varepsilon_k^{(h)}(\vartheta))\right].
\eeao
Then, \begin{align*}
    &n\sup_{\vt\in\Theta}|\mathcal{\widehat{L}}_n^{(h)}(\vartheta)-\mathcal{L}_n^{(h)}(\vartheta)|\\
        &\leq
        \sup_{\vt\in\Theta}\|(V_\vt^{(h)})^{-1}\|
        \sum_{k=1}^n\left[\sup_{\vt\in\Theta}\|\widehat\varepsilon_k^{(h)}(\vartheta)-\varepsilon_k^{(h)}(\vartheta)\|\left(\sup_{\vt\in\Theta}\|\widehat\varepsilon_k^{(h)}(\vartheta)\|
        +\sup_{\vt\in\Theta}\|\varepsilon_k^{(h)}(\vartheta)\|\right)\right].
\intertext{Due to Lemma~\ref{Lemma 2.3} and Lemma~\ref{Lemma2.8}(a)}
&\leq
        \mathfrak{C}\zeta
        \sum_{k=1}^n\rho^{k}\left[\sup_{\vt\in\Theta}\|\widehat\varepsilon_k^{(h)}(\vartheta)\|
        +\sup_{\vt\in\Theta}\|\varepsilon_k^{(h)}(\vartheta)\|\right]
\end{align*}
with $\E(\zeta^2)<\infty$. From this and Cauchy-Schwarz inequality we can conclude that
\begin{align*}
    &\hspace*{-1cm}n\E\left(\sup_{\vt\in\Theta}|\mathcal{\widehat{L}}_n^{(h)}(\vartheta)-\mathcal{L}_n^{(h)}(\vartheta)|\right)\\
    &\leq \mathfrak{C}(\E\zeta^2)^{1/2}\sum_{k=1}^n\rho^k \left[\E\left(\sup_{\vt\in\Theta}\|\widehat\varepsilon_k^{(h)}(\vartheta)\|^2\right)^{1/2}+
        \E\left(\sup_{\vt\in\Theta}\|\varepsilon_k^{(h)}(\vartheta)\|^2\right)^{1/2}\right].
\intertext{An application of Lemma~\ref{Lemma2.9}(a) yields}
    &\leq  \mathfrak{C}(\E\zeta^2)^{1/2}\sum_{k=1}^\infty\rho^k(1+k)^{1/2}<\infty.
\end{align*}
This proves, $n\sup_{\vt\in\Theta}|\mathcal{\widehat{L}}_n^{(h)}(\vartheta)-\mathcal{L}_n^{(h)}(\vartheta)|=\mathcal{O}_p(1)$ so that (a) follows.
Again (b) and (c) can be proven similarly.
\end{proof}

\end{document}